\documentclass[a4paper,11pt]{article}
\usepackage[utf8x]{inputenc}

\usepackage{parskip}
\usepackage{a4wide}
\usepackage{amssymb}
\usepackage{amsmath}
\usepackage{amsthm}
\usepackage[mathscr]{euscript}
\usepackage[normalem]{ulem}
\usepackage[colorlinks=true]{hyperref}
\usepackage[usenames,dvipsnames]{xcolor}
\usepackage{appendix}
\usepackage{tikz}
\usepackage{cleveref}
\usetikzlibrary{arrows,angles}
\usetikzlibrary{matrix}
\usetikzlibrary{decorations}
\usetikzlibrary{arrows,calc,shapes,decorations.pathreplacing}
\usepackage{tikz-cd}
\usepackage{todonotes}

\hypersetup{hidelinks}

\numberwithin{equation}{section}

\theoremstyle{definition}

\newtheorem{Definition}{Definition}[section]

\newtheorem{Remark}[Definition]{Remark}
\newtheorem{Example}[Definition]{Example}

\theoremstyle{plain}

\newtheorem{Theorem}[Definition]{Theorem}
\newtheorem{Proposition}[Definition]{Proposition}
\newtheorem{Lemma}[Definition]{Lemma}
\newtheorem{Corollary}[Definition]{Corollary}

\newcommand{\R}{\mathbb R}
\newcommand{\N}{\mathbb N}

\usepackage[xcolor]{changebar}
\cbcolor{blue}
\newcommand{\Ric}{\mathrm{Ric}}

\usepackage[inline]{enumitem}
\newcommand{\enumlabelformat}{\roman}
\newcommand{\enumlabelfont}[1]{#1}
\newlength{\thelabelsep}
\setlist{labelsep=\thelabelsep}
\setlist[enumerate,1]{font=\enumlabelfont,label=(\enumlabelformat*),leftmargin=2.5em}
\setlist[itemize]{leftmargin=2.5em,label=$-$}

\newcounter{inlineenum}
\renewcommand{\theinlineenum}{\enumlabelformat{inlineenum}}

\newcommand{\blue[1]}{{#1}}

\let\epsilon\varepsilon

\makeatletter
\let\save@mathaccent\mathaccent
\newcommand*\if@single[3]{%
  \setbox0\hbox{${\mathaccent"0362{#1}}^H$}%
  \setbox2\hbox{${\mathaccent"0362{\kern0pt#1}}^H$}%
  \ifdim\ht0=\ht2 #3\else #2\fi
  }
\newcommand*\rel@kern[1]{\kern#1\dimexpr\macc@kerna}
\newcommand*\widebar[1]{\@ifnextchar^{{\wide@bar{#1}{0}}}{\wide@bar{#1}{1}}}
\newcommand*\wide@bar[2]{\if@single{#1}{\wide@bar@{#1}{#2}{1}}{\wide@bar@{#1}{#2}{2}}}
\newcommand*\wide@bar@[3]{%
  \begingroup
  \def\mathaccent##1##2{%
    \let\mathaccent\save@mathaccent
    \if#32 \let\macc@nucleus\first@char \fi
    \setbox\z@\hbox{$\macc@style{\macc@nucleus}_{}$}%
    \setbox\tw@\hbox{$\macc@style{\macc@nucleus}{}_{}$}%
    \dimen@\wd\tw@
    \advance\dimen@-\wd\z@
    \divide\dimen@ 3
    \@tempdima\wd\tw@
    \advance\@tempdima-\scriptspace
    \divide\@tempdima 10
    \advance\dimen@-\@tempdima
    \ifdim\dimen@>\z@ \dimen@0pt\fi
    \rel@kern{0.6}\kern-\dimen@
    \if#31
      \overline{\rel@kern{-0.6}\kern\dimen@\macc@nucleus\rel@kern{0.4}\kern\dimen@}%
      \advance\dimen@0.4\dimexpr\macc@kerna
      \let\final@kern#2%
      \ifdim\dimen@<\z@ \let\final@kern1\fi
      \if\final@kern1 \kern-\dimen@\fi
    \else
      \overline{\rel@kern{-0.6}\kern\dimen@#1}%
    \fi
  }%
  \macc@depth\@ne
  \let\math@bgroup\@empty \let\math@egroup\macc@set@skewchar
  \mathsurround\z@ \frozen@everymath{\mathgroup\macc@group\relax}%
  \macc@set@skewchar\relax
  \let\mathaccentV\macc@nested@a
  \if#31
    \macc@nested@a\relax111{#1}%
  \else
    \def\gobble@till@marker##1\endmarker{}%
    \futurelet\first@char\gobble@till@marker#1\endmarker
    \ifcat\noexpand\first@char A\else
      \def\first@char{}%
    \fi
    \macc@nested@a\relax111{\first@char}%
  \fi
  \endgroup
}
\makeatother

\newcommand\restr[2]{{
  \left.\kern-\nulldelimiterspace 
  #1 
  \vphantom{\big|} 
  \right|_{#2} 
  }}

\title{Timelike Ricci curvature lower bounds via optimal transport for Orlicz-type Lorentzian costs}

\author{Argam Ohanyan\thanks{{\tt argam.ohanyan@utoronto.ca}, Department of Mathematics, University of Toronto, 45 St.\ George Street, M5S 2E5 Toronto, Ontario, Canada.}\\Marta Sálamo Candal \thanks{{\tt marta.salamo.candal@univie.ac.at}, Faculty of Mathematics, University of Vienna, Oskar-Morgenstern-Platz 1, 1090 Vienna, Austria.}
}
\begin{document}

\date{\today}


\maketitle

\begin{abstract}

We study the optimal transport problem on globally hyperbolic spacetimes associated with Orlicz-type Lorentzian cost functions of the form $u \circ \ell$, where $u$ is a suitable monotonically increasing and concave function, and $\ell$ is the time separation. Our work encompasses and generalises the case $u(x) = u_p(x) = p^{-1}x^p$ for $p \in (0,1)$, as well as the more recent $p < 0$, which have been the only examples considered so far in the literature. A fundamental notion for our purposes is the property of $u$-separation for a pair of measures, which generalises McCann's $p$-separation and for which we are able to obtain strong duality to the full Orlicz-type optimisation problem. In our main results, we characterise timelike Ricci curvature lower bounds via the convexity of the relative entropy along geodesics arising from the Orlicz-type optimal transport with cost $u \circ \ell$, which is a far-reaching generalisation of McCann's seminal work in the case $u = u_p$, $p \in (0,1)$.

\vspace{1em}

\noindent
\emph{Keywords:} Lorentzian optimal transport, Orlicz--Wasserstein, entropic convexity, timelike Ricci curvature bounds
\medskip

\noindent
\emph{MSC2020:} 53B30, 53C50, 49J52, 46E30, 83C99, 49Q22
\end{abstract}
\tableofcontents

\section{Introduction}
\label{section: intro}

Orlicz spaces are a generalisation of $L^p$-spaces, allowing for more general convex functions $u$ (usually called \emph{Young functions} in the literature) in the definition of the norm, with $u(x):=x^p$ reproducing the well-known $L^p$-norm. We refer to the textbooks of Rao--Ren \cite{RaoRenTheory, RaoRenApplications} for a detailed treatment of this classical topic in analysis.

Just like in the case of $L^p$-spaces, one can study Wasserstein spaces where the distance arises from an Orlicz-type optimal transport problem, generalising the usual $L^p$-based problem in classical Wasserstein geometry. The definition is as follows: Given a compact metric space $X$ (for simplicity) and a suitable convex function $u:[0,\infty) \to [0,\infty)$, the Orlicz--Wasserstein distance $W_u:\mathcal{P}(X)^2 \to [0,\infty)$ is defined via the double infimisation problem
\begin{equation*}
    W_u(\mu,\nu):=\inf\left\{ \lambda > 0 : \inf_{\pi \in \Pi(\mu,\nu)}\int_{X^2} u\bigg( \frac{d(x,y)}{\lambda}\bigg) \, d\pi \leq 1 \right\},
\end{equation*}
see e.g.\ Kuwada \cite{KuwadaRIMS}. For $u = u_p = |\cdot|^p$, $p \geq 1$, this definition reproduces the usual $p$-Wasserstein distance $W_p$. An incomplete list of works on Orlicz--Wasserstein spaces includes works of Kuwada on gradient estimates \cite{KuwadaJfuncan, KuwadaRIMS}, Sturm \cite{SturmOrlicz} on more general classes of functions $u$, Lisini \cite{LisiniOrliczcurves} on absolutely continuous curves and their speed, as well as Kell \cite{kellphdthesis, kell2017interpolation} who studied the implication of entropic convexity along Orlicz--Wasserstein geodesics from Ricci curvature lower bounds in the spirit of Lott--Villani \cite{LottVillani:2009} and Sturm \cite{Sturm:2006a, Sturm:2006b}. Let us remark that \cite[Ch.\ 7]{kellphdthesis} provides a comprehensive and reader-friendly introduction to Orlicz--Wasserstein spaces.

As far as Lorentzian geometry and signature are concerned, methods of optimal transport have seen a significant rise in popularity over the last decade (see for example the works of Eckstein--Miller \cite{EM:17}, Suhr \cite{SuhrLorentziancost}, Kell--Suhr \cite{KellSuhrLorentziancost}). The seminal works of McCann \cite{McCann:2020} and Mondino--Suhr \cite{MS:22} have established the equivalent characterisation of timelike Ricci curvature lower (and, in the case of \cite{MS:22}, also upper) bounds via convexity properties of the Boltzmann--Shannon entropy, extended later to the Rényi entropy by Braun \cite{Braun:2023Renyi} and to Finsler spacetimes by Braun--Ohta \cite{BO:23}. This has led to a fruitful study of non-smooth Lorentzian spaces satisfying timelike Ricci curvature bounds, where powerful theorems from classical smooth spacetime geometry have been established in extremely low regularity (see e.g.\ Cavalletti--Mondino \cite{CM:20}, Braun \cite{Braun:2023Renyi, braun2024dAlembertian}, Braun--McCann \cite{BraunMcCann:2023}, Beran--Braun--Calisti--Gigli--McCann--Ohanyan--Rott--Sämann \cite{Octet}). So far, the majority of the literature involving optimal transport in the Lorentzian setting has concerned itself with studies of the $p$-Lorentz-Wasserstein distance
\begin{equation*}
    \ell_p(\mu,\nu):= \left( \sup_{\pi \in \Pi_\leq(\mu,\nu)} \int_{M^2} \ell^p \, d\pi \right)^{1/p} = u_p^{-1}\left( \sup_{\pi \in \Pi_\leq(\mu,\nu)} \int u_p \circ \ell \,  d \pi \right),
\end{equation*}
where $M$ is either a smooth or a nonsmooth spacetime, $\ell:M^2 \to \{-\infty\} \cup [0,\infty]$ is the (extended) time separation function, and $u_p(x):=x^p/p$ for $p \in (0,1]$ (the more recent works among those cited above have also considered $p < 0$, starting with \cite{Octet}). The reason for these choices of $u_p$ has to do with the concavity properties of the time separation function, much like the choices $u_p(x) = |x|^p$ for $p \geq 1$ mirror the convexity properties of the positive definite distance in metric and Riemannian geometry. One may thus conclude that the natural range of $p$ for Lorentzian $L^p$-theory is $p \in [-\infty,1]$, which has been formalised by Gigli \cite{gigli2025hyperbolic} in the context of hyperbolic Banach spaces. One may wonder whether $p = 0$ behaves like $p = 2$ in the positive definite case, i.e.\ a natural ``best" self-dual choice. However, as pointed out in \cite[Rem.\ 5.9]{gigli2025hyperbolic}, there are many equally valid self-dual concave functions $u$ besides $u_0 = \frac{1}{2} + \log$.

These observations are motivational for an Orlicz-type approach to Lorentzian optimal transport theory, both to remove the need for an (arbitrary) choice of $p$ and for a deeper understanding as to how far known results extend for general (suitable) concave functions $u$. The introduction and study of Orlicz-type optimal transportation problems on smooth globally hyperbolic spacetimes is one of our principal aims in the current work. Our main objective in this context is to extend McCann's \cite{McCann:2020} characterisation of timelike Ricci curvature lower bounds via entropic convexity to hold along $\ell_u$-geodesics of probability measures for a large class of $u$, which includes in particular all $u_p$, $p \in (-\infty,1)$. Let us give an outline of our program in the following, along with the main results.

\begin{Definition}[Admissible function]
A function $u:(0,\infty) \to \R$ which is $C^2$, satisfies $u' > 0$, $u'' < 0$, and $u':(0,\infty) \to (0,\infty)$ is surjective, is called \emph{admissible}.
\end{Definition}

For many purposes (such as the reverse triangle inequality for $\ell_u$), fewer assumptions on $u$ are necessary. However, in the technical arguments necessary to study the relationship between Ricci curvature lower bounds and entropic convexity, we found the set of assumptions we have given for an admissible function to be indispensable. Note that, in particular, all $u_p$ for $p \in (-\infty,1)$, $u_p(x) = x^p/p$ for $p \neq 0$ and $u_0(x) = \frac{1}{2} + \log(x)$, are admissible. A crucial observation is that if $u$ is admissible, so is its \emph{concave conjugate}
\begin{equation*}
    u^*(x):=\inf_{y > 0} (xy - u(y)).
\end{equation*}
This allows us to associate to the dual pair $(u,u^*)$ a dual convex Lagrangian--Hamiltonian pair $L(\cdot,x;u):T_xM \to \R \cup \{+\infty\}$, $H(\cdot, x; u^*): T_x^*M \to \R \cup \{+\infty\}$ defined by
\begin{equation*}
    L(v,x;u):=\begin{cases}
        - u(|v|_g), & v \text{ future,}\\+\infty & \text{otherwise,}
    \end{cases}
    \quad H(p,x;u^*):=\begin{cases}
        -u^*(|p|_g), & p \text{ past,}\\+\infty & \text{otherwise,}
    \end{cases}
\end{equation*}
Given $u$ admissible and $\lambda > 0$, we set $u_\lambda(x):=u(x/\lambda)$, which is also admissible. In the context of Lorentzian Orlicz theory, we need to allow our admissible functions $u$ to be unbounded above and below in general ($u_0 = \frac{1}{2} + \log$ provides a natural unbounded example), which creates some complications. For $\mu,\nu \in \mathcal P(M)$, let $\Pi^{u_\lambda}_\leq(\mu,\nu):=\{\pi \in \Pi(\mu,\nu) : (u_\lambda \circ \ell)_- \in L^1(\pi)\} \subseteq \Pi_\leq(\mu,\nu)$, where we tacitly set $u(-\infty):=-\infty$. The \emph{$u$-Lorentz--Orlicz--Wasserstein time separation} $\ell_u:\mathcal P(M)^2 \to \{-\infty\} \cup [0,\infty]$ is now defined as follows: $\ell_u(\mu,\nu):=-\infty$ if $\Pi_\leq(\mu,\nu) = \emptyset$, otherwise
\begin{equation*}
    \ell_u(\mu,\nu):=\sup \bigg\{\lambda > 0 : \sup_{\pi \in \Pi^{u_\lambda}_{\leq}(\mu,\nu)} \int_{M^2} u_{\lambda} \circ \ell \, d\pi \geq u(1)\bigg\} \cup \{0\}.
\end{equation*}
In the cases where $u$ is bounded above or bounded below, the integration requirement can be ignored, i.e.\ the supremum can be taken over $\Pi_\leq(\mu,\nu)$. The normalisation $u(1)$ is chosen precisely so that the compatibility $\ell_{u_p}(\mu,\nu) = \ell_p(\mu,\nu)$ holds. Under reasonable conditions on $(\mu,\nu)$, we can always find an \emph{optimal} coupling $\pi$, i.e.\ a coupling that satisfies (with $\lambda:= \ell_u(\mu,\nu) \in (0,\infty)$)
\begin{equation*}
    \int u_{\lambda} \circ \ell \, d\pi = \sup_{\tilde \pi \in \Pi^{u_\lambda}_\leq(\mu,\nu)} \int u_\lambda \circ \ell \, d \tilde \pi \geq u(1),
\end{equation*}
see Proposition \ref{prop: existence of optimal coupling}, and this inequality is saturated in many interesting examples. The corresponding dual problem to the one defining $\ell_u$ is given by 
\begin{equation*}
    \tilde C_u(\mu,\nu):=\inf\Bigg\{\eta>0\ :\ \inf_{\varphi\in L^1(\mu): \varphi^{(u_\eta \circ \ell)} \in L^1(\nu)}\Big\{\int\varphi\, d\mu+\int\varphi^{(u_{\eta}\circ\ell)}\, d\nu\Big\}\leq u(1)\Bigg\},
\end{equation*}
where $\varphi^{(u_\eta \circ \ell)}(x):=\sup_{y \in \mathrm{supp}(\mu)} u_\eta(\ell(x,y)) - \varphi(y).$ In this generality, the problem of duality is difficult to handle, however we identify a condition we term \emph{$u$-separation} (see Definition \ref{Definition: u-separeted new}) from which we are able to deduce strong duality (see Theorem \ref{Theorem: Duality by u-separation}). Our notion of $u$-separation agrees with McCann's $p$-separation \cite[Def.\ 4.1]{McCann:2020} in the case $u = u_p$, $p \in (0,1)$. In the case where $(\mu,\nu)$ are $u$-separated and $\mu \in \mathcal P^{ac}_c(M)$, there exists a unique optimal coupling that is induced by a map of the form $F(x):= \exp_x DH(D\varphi(x),x;u_\lambda^*)$, where $\lambda:=\ell_u(\mu,\nu) \in (0,\infty)$, see Theorem \ref{thm: characterising opt maps}. In this case, $\mu_s:=(F_s)_{\#}\mu$, with $F_s(x):=\exp_x s\,DH(D\varphi(x), x; u_\lambda^*)$, is the unique $u$-geodesic connecting $\mu$ to $\nu$, and moreover $\mu_s \in \mathcal P^{ac}_c(M)$ for all $s < 1$ (see Corollary \ref{cor: lag charact og geodesics}). 

Having established these properties of the Orlicz-type transport in Section \ref{section: OT}, which are in analogy to the usual transport for $u_p$, we can proceed with the usual Jacobi field computations in order to relate entropic convexity with timelike Ricci curvature bounds (see Section \ref{Section: Ricci and entropic convexity}). Before continuing, let $V \in C^2(M)$, recall that the Bakry-Émery Ricci curvature $\Ric^{(N,V)}$ with $N \in [n,\infty]$ (where the case $N = n$ means $V = 0$ by convention; and $N = \infty$ is understood in the limit sense) is defined by
\begin{equation*}
    \Ric^{(N,V)} :=\Ric + \mathrm{Hess}_g(V) + \frac{dV \otimes dV}{N-n},
\end{equation*}
and the Boltzmann--Shannon entropy $E_V:\mathcal P(M) \to [-\infty,\infty]$ is defined by
\begin{equation*}
        E_V(\mu):= \int \frac{d\mu}{d\mathfrak m} \log \frac{d\mu}{d\mathfrak m} \, d\mathfrak m
    \end{equation*}
    if this expression is well-defined in $[-\infty,\infty]$, and set to $-\infty$ otherwise, where $\mathfrak m:=e^{-V} \mathrm{vol}_g$.
The result of our efforts in Subsection \ref{subsection: entropic convexity from Ricci} and Subsection \ref{Subsection: Ricci from entropic convexity} is collected in the following theorem.

\begin{Theorem}[Equivalence of timelike Ricci bounds and entropic convexity along $u$-geodesics]
Let $(M,g)$ be a globally hyperbolic spacetime, $u:(0,\infty) \to \R$ admissible, $V \in C^2(M)$, $K \in \R$, $N \in [n,\infty]$, and set $\mathfrak m:=e^{-V} \mathrm{vol}_g$.
\begin{enumerate}
    \item If $\Ric^{(N,V)}(v,v) \geq K\,g(v,v)$ for every timelike $v \in TM$, then for every $u$-separated and absolutely continuous pair $(\mu_0,\mu_1) \in \mathcal{P}^{ac}_c(M)^2$, letting $\mu_s$ be the unique $u$-geodesic from $\mu_0$ to $\mu_1$ and $\pi$ the unique optimal coupling, $e(s):=E_V(\mu_s)$ is semiconvex and satisfies
    \begin{equation*}
        e'' - \frac{1}{N} (e')^2 \geq  K \|\ell\|_{L^2(\pi)}
    \end{equation*}
    in the distributional sense. If moreover $K \geq 0$, then in fact
    \begin{equation*}
        e'' - \frac{1}{N} (e')^2 \geq K \,\ell_u(\mu_0,\mu_1)^2.
    \end{equation*}
    \item If $\Ric^{(N,V)}(v,v) < K\, g(v,v)$ for some $v \in T_xM$, then, supported in any arbitrarily small neighborhood of $x$, one may find a pair of measures $(\mu_0,\mu_1) \in \mathcal P^{ac}_c(M)^2$ with $\mathrm{supp}(\mu_0) \times \mathrm{supp}(\mu_1) \subseteq \{\ell > 0\}$ such that $e(s) = E_V(\mu_s)$ is $C^2$ and satisfies
    \begin{equation*}
        e'' - \frac{1}{N} (e')^2 < K\, \ell_u(\mu_0,\mu_1)^2.
    \end{equation*}
\end{enumerate}
\end{Theorem}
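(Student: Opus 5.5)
For part (i), I follow McCann's Lagrangian strategy, with the Orlicz structure entering only through the map $F_s$. Since $(\mu_0,\mu_1)$ is $u$-separated and $\mu_0\in\mathcal P^{ac}_c(M)$, Theorem \ref{thm: characterising opt maps} and Corollary \ref{cor: lag charact og geodesics} apply. They give $\lambda=\ell_u(\mu_0,\mu_1)\in(0,\infty)$, a potential $\varphi$, and the maps $F_s(x)=\exp_x s\,DH(D\varphi(x),x;u_\lambda^*)$ with $\mu_s=(F_s)_\#\mu_0\in\mathcal P^{ac}_c(M)$ for $s<1$.

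Write $\rho_s$ for the $\mathfrak m$-density of $\mu_s$. The Monge--Ampère (change of variables) identity $\rho_0(x)=\rho_s(F_s(x))\,J_s(x)$ holds $\mu_0$-a.e., where $J_s=e^{-V(F_s)+V}\det DF_s$. From it I get the representation
\[
e(s)=E_V(\mu_0)-\int\log J_s\,d\mu_0 .
\]
To justify it, I need $\varphi$ to be twice differentiable $\mu_0$-a.e. (Alexandrov) and $DF_s$ to be nondegenerate for $s<1$. I would use the semiconvexity of $\varphi$ that comes from the $u_\lambda\circ\ell$-convexity on the separated region, where $\ell$ is smooth.

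Along each geodesic $s\mapsto F_s(x)$ the velocity $\dot\gamma$ is timelike with $|\dot\gamma|_g=\ell(x,F_1(x))$. The matrix $DF_s$ is a matrix of Jacobi fields, because $F_s$ is the time-$s$ map of a family of geodesics. The standard Riccati/Raychaudhuri computation of the form used by McCann then gives
\[
-\frac{d^2}{ds^2}\log J_s \ge \Ric^{(N,V)}(\dot\gamma,\dot\gamma)+\frac1N\Big(\frac{d}{ds}\log J_s\Big)^2 \ge K\,\ell(x,F_1x)^2+\frac1N\Big(\frac{d}{ds}\log J_s\Big)^2 .
\]
The sign of the curvature term uses $g(\dot\gamma,\dot\gamma)=-|\dot\gamma|^2$ together with the timelike sign convention, and the trace of the second fundamental form needs symmetry of $D\dot\gamma$. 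That symmetry holds because $D\varphi$ is a gradient. Integrating over $\mu_0$ and applying Jensen to $(\int \tfrac{d}{ds}\log J\,d\mu_0)^2$ gives
\[
e''-\tfrac1N(e')^2\ge K\int\ell^2\,d\pi ,
\]
which is $K\|\ell\|^2_{L^2(\pi)}$ (I read the stated bound as the squared norm). The distributional formulation and semiconvexity follow as in McCann: I approximate with $s\in[\delta,1-\delta]$, use uniform bounds on Jacobi fields over compact sets, and pass to the limit with Fatou, using lower semicontinuity at the endpoints.

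For $K\ge0$ I need $\int\ell^2\,d\pi\ge\lambda^2$. Optimality gives $\int u(\ell/\lambda)\,d\pi\ge u(1)$. Since $u$ is concave and increasing, Jensen gives $u(\int\ell/\lambda\,d\pi)\ge u(1)$, hence $\int\ell\,d\pi\ge\lambda$. Then Cauchy--Schwarz gives $\int\ell^2\,d\pi\ge(\int\ell\,d\pi)^2\ge\lambda^2$.

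For part (ii) I construct a counterexample near $x$. Take a unit timelike $v$ with $\Ric^{(N,V)}(v,v)<K g(v,v)$, and choose a smooth potential $\varphi$ near $x$ with $DH(D\varphi(x),x;u^*_\lambda)=\tau v$ and vanishing Hessian at $x$ in normal coordinates. Since $u'$ is a bijection, the gradient $D\varphi$ can be prescribed to yield any prescribed future timelike velocity.

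Let $\mu_0$ be uniform on a small ball $B_\epsilon(x)$ and $\mu_1=(F_1)_\#\mu_0$. The main obstacle is showing that $F_1$ is $u_\lambda\circ\ell$-optimal with $\lambda$ exactly equal to $\ell_u(\mu_0,\mu_1)$. This is a fixed-point issue, because $\lambda$ enters $F$. I would avoid it by first fixing $\lambda=1$ and scaling $\varphi$ so that $\int u(\ell(y,F_1y))\,d\mu_0=u(1)$. This is possible by continuity and monotonicity in the scale, using surjectivity of $u'$. I would then invoke the converse part of Theorem \ref{thm: characterising opt maps}, namely that $u_1\circ\ell$-convexity of small $C^2$ perturbations of a linear-type potential gives optimality. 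Combined with the duality of Theorem \ref{Theorem: Duality by u-separation}, this gives $\ell_u=1$.

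For the $N<\infty$ case I also need the $\frac1N$ Jensen and Raychaudhuri inequalities to be nearly equalities. I arrange this by making $D\dot\gamma$ equal to the multiple of the identity that saturates the $dV$ term, choosing the Hessian of $\varphi$ at $x$ appropriately, as in McCann's proof. In the limit $\epsilon\to0$, with $\ell\approx\tau\approx1$ on the supports, the computed $e''-\frac1N(e')^2$ tends to $\Ric^{(N,V)}(v,v)\tau^2$ at $s=0$. This is $<K\tau^2\approx K\ell_u^2$, and the strict inequality survives for small $\epsilon$ by continuity. Here $e$ is $C^2$ because everything is smooth.
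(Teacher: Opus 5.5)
Part (i) is the paper's argument. The paper also writes $e(s)$ through the Monge--Amp\`ere identity along $F_s$ and uses that $A_s=\tilde DF_s$ is a Jacobi matrix. It then combines Jensen, $\operatorname{Tr}(B_s^2)\ge\frac1n(\operatorname{Tr}B_s)^2$ and the splitting $(a-b)^2\le(1+\varepsilon^{-1})a^2+(1+\varepsilon)b^2$. Doing the $\frac1N$-step pointwise before integrating, as you do, is equivalent. Your $K\ge0$ step (Jensen for concave $u$, then Cauchy--Schwarz) is exactly the paper's, and you are right that the bound is $K\|\ell\|_{L^2(\pi)}^2$.

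One justification is wrong: here $D\dot\gamma$ is \emph{not} symmetric. At $s=0$ it equals $D^2H(D\varphi,x;u_\lambda^*)\,\tilde D^2\varphi$, a product of two symmetric forms. This product is $g$-self-adjoint only if $\nabla\varphi$ is an eigenvector of $\tilde D^2\varphi$, so ``$D\varphi$ is a gradient'' does not give symmetry as it would for the Riemannian cost $d^2/2$. What it does give is $B_s=D^2H(p)\,S_s$ with $S_s:=D^2L(\dot\gamma)A_s'A_s^{-1}$ symmetric for all $s$. Indeed, the Wronskian $A_s^*P_s-P_s^*A_s$ with $P_s=D^2L(\dot\gamma)A_s'$ is constant, because $D^2L(\dot\gamma)$ is parallel and $D^2L(\dot\gamma)\mathcal R$ is symmetric. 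It vanishes at $s=0$ since $P_0=\tilde D^2\varphi$. As $D^2H(p)$ is positive definite on past timelike covectors, $B_s$ is conjugate to the symmetric matrix $D^2H(p)^{1/2}S_sD^2H(p)^{1/2}$, which is all the trace inequality needs; this is the paper's $\sqrt{D^2H}$ remark. Also note that with the paper's signature $(+,-,\dots,-)$ one has $g(\dot\gamma,\dot\gamma)=+\ell^2$.

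Part (ii) uses the same McCann-type construction as the paper: uniform $\mu_0$ on a shrinking ball, and a potential prescribed via Lemma~\ref{lem: prescribing} so that $B_0$ is the multiple of the identity saturating the $dV$-term. Your normalisation step, however, is a genuine improvement. The paper builds $F$ from a $u_t\circ\ell$-convex potential with $|v_t|_g=t$ and only uses $\ell_u(\mu_0^{(r)},\mu_1^{(r)})\to t$. Identifying $F_{s\#}\mu_0^{(r)}$ with the $u$-geodesic via Remark~\ref{Remark: simple u separation} and Theorem~\ref{thm: characterising opt maps} needs $\ell_u(\mu_0^{(r)},\mu_1^{(r)})=t$ exactly, since for general admissible $u$ the $u_\lambda\circ\ell$-optimal map depends on $\lambda$. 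Enforcing $\int u_\lambda(\ell(y,F_1y))\,d\mu_0=u(1)$ closes this.

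There is no ``converse part'' of Theorem~\ref{thm: characterising opt maps} to invoke. Instead, check Definition~\ref{Definition: u-separeted new} directly with the data $(\lambda,\varphi,\varphi^{(u_\lambda\circ\ell)},(\mathrm{id}\times F_1)_\#\mu_0)$. Items (i)--(ii) come from $u_\lambda\circ\ell$-convexity and compact support in $\{\ell>0\}$, and item (iii) is your normalisation. Theorem~\ref{Theorem: Duality by u-separation} and Corollary~\ref{cor: lag charact og geodesics} then give $\ell_u=\lambda$ and show that $F_{s\#}\mu_0$ is the unique $u$-geodesic.

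Two points need repair.
\begin{enumerate}
\item Do not fix $\lambda=1$ with $|v|_g\approx 1$. Then $\operatorname{supp}\mu_1$ lies near $\exp_x v$, which need not be in a small neighbourhood of $x$, nor away from $\operatorname{sing}(\ell)$, nor even defined if $M$ is timelike incomplete. Moreover, the strict Hessian condition of Lemma~\ref{lem: prescribing} for $\tilde D^2\varphi(x)=\alpha\,D^2L(v)$ with $\alpha=-\frac{DV(v)}{N-n}$ requires roughly $\alpha>-1$, and can fail at unit scale. Work at scale $\lambda=t\ll 1$ with velocity $\approx tv$, as the paper does; this is harmless because the defect $\Ric^{(N,V)}(tv,tv)<Kt^2$ is $2$-homogeneous. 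Alternatively, rescale $g$, under which the whole statement is invariant.
\item Your scaling factor $c_\epsilon\to 1$ varies with $\epsilon$. So the neighbourhood on which Lemma~\ref{lem: prescribing} makes $c_\epsilon\varphi$ a $u_\lambda\circ\ell$-convex function must be uniform for $c$ near $1$. This follows from openness of the strict second-order condition, but it should be stated.
\end{enumerate}
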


In Subsection \ref{Subsection: relaxation}, we relax the assumption of $u$-separation to obtain a weak entropic convexity inequality for a larger class of measures, in analogy with McCann \cite[Sec.\ 7]{McCann:2020}. The main difference between our setting and McCann's is that we make several technical assumptions which are implied by McCann's assumption of attainment and finiteness of the infimum in the Kantorovich duality problem, cf.\ Remark \ref{Remark: comparing relaxation with McCann}, in the case $u = u_p$, $p \in (0,1)$. The precise result we obtain is the following:

\begin{Theorem}[Weak convexity from timelike lower Ricci curvature bounds for more general measures]
        Let $(M,g)$ be a globally hyperbolic spacetime. Fix $V\in C^2(M)$ bounded, $N \in [n,\infty]$, and $u:(0,\infty)\to\R$ admissible, by convention $V = 0$ if $N = n$. If $\operatorname{Ric}^{(N,V)}(v,v)\geq Kg(v,v)\geq0$ holds in every timelike direction $v\in TM$, then for the set $Q\subseteq\mathcal{P}^{ac}(M)^2$ of measures $(\mu,\nu)$ such that 
        \begin{enumerate}
            \item $\lambda:=\ell_u(\mu,\nu)\in(0,\infty)$, \item there exist lower semicontinuous functions $a,b:M \to \R$ with $a \in L^1(\mu)$, $b \in L^1(\nu)$ such that $u_{\lambda} \circ \ell \leq a \oplus b$ on $\mathrm{supp}(\mu \times \nu)$, and \item  there exists an optimal coupling $\pi\in\Pi^{u_{\lambda}}_{\leq}(\mu,\nu)$ with $\ell>0$ holding $\pi$-a.e.\ which satisfies $\int u_{\lambda} \circ \ell \, d\pi = u(1)$, 
        \end{enumerate}
            weak entropic convexity holds in the following sense: For each $(\mu,\nu) \in Q$ there exists a $u$-geodesic $(\mu_s)_{0 \leq s \leq 1}$ with $\mu_0 = \mu$, $\mu_1 = \nu$, such that the relative entropy $e(s):=E_V(\mu_s)$ is semiconvex and satisfies
        \begin{equation*}
            e'' - \frac{1}{N}(e')^2 \geq K\, \ell_u(\mu,\nu)^2
        \end{equation*}
        in the distributional sense.
    \end{Theorem}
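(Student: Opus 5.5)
The plan is the approximation argument of McCann \cite[Sec.\ 7]{McCann:2020}: cut $(\mu,\nu)$ into $u$-separated, compactly supported pieces, apply part (i) of the preceding theorem to each piece, and glue and pass to the limit using the stability of $u$-geodesics and the lower semicontinuity of the entropy.

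\textbf{Step 1 (approximating pairs).} Fix $(\mu,\nu)\in Q$, with $\lambda=\ell_u(\mu,\nu)$ and the optimal coupling $\pi$ from (iii). Since $\ell>0$ $\pi$-a.e., I can choose an increasing sequence of compact sets $K_j\subseteq\{\ell>0\}$ with $\pi(K_j)\to 1$. I then decompose $\pi|_{K_j}$ into finitely many pieces $\pi_{j,i}$, each carried by a product $A_{j,i}\times B_{j,i}$ of small compact sets lying in $\{\ell>0\}$. By $c$-cyclical monotonicity of $\pi$ for the cost $u_\lambda\circ\ell$ (here I use that $\pi$ is optimal), each $\pi_{j,i}$ is optimal between its marginals.

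The difficulty is that a piece need not be $u$-separated, and the relevant parameter $\ell_u$ of a piece need not equal $\lambda$. To handle this I will use the potentials produced by the duality theory. Condition (ii) together with Theorem \ref{Theorem: Duality by u-separation}, or a localized version of its proof, should give a $(u_\lambda\circ\ell)$-concave potential $\varphi$ on $\mathrm{supp}\,\mu$ whose contact set contains $\mathrm{supp}\,\pi$. Restricted to the compact product of the pieces, where $\ell$ is bounded below by a positive constant, this potential should witness $u$-separation of the normalized pieces in the sense of Definition \ref{Definition: u-separeted new}, with the scale adjusted to the cost $u_\lambda$.

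\textbf{Step 2 (convexity on pieces).} On each piece I build the geodesic from the map $F_s(x)=\exp_x s\,DH(D\varphi(x),x;u^*_\lambda)$, as in Theorem \ref{thm: characterising opt maps} and Corollary \ref{cor: lag charact og geodesics}, all with the single common $\lambda$. The Jacobi field computation of Subsection \ref{subsection: entropic convexity from Ricci} gives, pointwise along each transport ray, the inequality for the Jacobian in which the Ricci term is bounded below by $K\,\ell(x,F(x))^2\ge 0$, using $K\ge0$. I then sum the pieces. The entropy of a mixture of measures whose densities are disjoint at each time $s<1$ decomposes; disjointness follows from injectivity of the map, i.e.\ from the unique optimal map. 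This gives the distributional inequality $e''-\frac1N(e')^2\geq K\int\ell^2\,d\pi$ for the restricted geodesic, with the obvious normalization.

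\textbf{Step 3 (comparing with $\ell_u$).} I need $\int \ell^2\,d\pi\ge \ell_u(\mu,\nu)^2$, or its correctly normalized analogue. Since $\int u_\lambda\circ\ell\,d\pi=u(1)$ and $u$ is concave and increasing, Jensen's inequality gives $u(\int \ell/\lambda\, d\pi)\ge u(1)$, hence $\int\ell\,d\pi\ge\lambda$. Cauchy--Schwarz then gives $\|\ell\|_{L^2(\pi)}\ge \lambda$. This is exactly where the saturation assumption in (iii) is used.

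\textbf{Step 4 (limit $j\to\infty$).} The approximating curves $\mu^j_s$ are obtained by renormalizing $\pi|_{K_j}$. I will show they are tight, using global hyperbolicity and causal compactness of $J(\mathrm{supp}\,\mu\cap\cdot,\cdot)$ on compacts, and extract a limit curve $\mu_s$ with endpoints $\mu$ and $\nu$. Lower semicontinuity of $\ell_u$-optimality shows that $\mu_s$ is a $u$-geodesic. Semiconvexity inequalities with uniform constants pass to limits, provided the entropies converge at the endpoints. Boundedness of $V$ makes $E_V$ comparable with the unweighted entropy, and the entropies of the restricted, renormalized endpoint measures converge to $E_V(\mu)$ and $E_V(\nu)$ by dominated convergence.

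I expect Step 1 to be the main obstacle: producing from (ii)–(iii) potentials that certify $u$-separation of the pieces at the common scale $\lambda$, given that $u$ may be unbounded and the pieces have rescaled masses.
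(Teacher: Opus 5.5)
Your overall scheme is the one the paper uses: McCann's \cite[Sec.\ 7]{McCann:2020} strategy, implemented in Theorem \ref{thm: relaxing separation}, Theorem \ref{thm: displacement hessian of the entropy ii} and the proof of Theorem \ref{Theorem: Weak convexity from Ricci}. Your Step 3 is exactly the paper's closing chain $\|\ell\|_{L^2(\pi)}^2\geq(\int\ell\,d\pi)^2\geq\lambda^2$.

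\textbf{The gap is in Step 4.} The conclusion is a distributional inequality that is nonlinear in $e'$. Its integrated two-point form is stable under lower semicontinuity at the interior time. However, that form yields $e''-\frac1N(e')^2\geq K\lambda^2$ only if it holds on \emph{every} subinterval $[a,b]\subseteq[0,1]$. So you need $e_j(a)\to e(a)$ at interior times, not only at $s\in\{0,1\}$, and nothing in your argument provides this:
\begin{itemize}
\item lower semicontinuity of $E_V$ under narrow convergence fails in general when $\mathfrak m(M)=\infty$, since a vanishing amount of mass spread over a huge region can carry entropy tending to $-\infty$;
\item $e(s)=-\infty$ can genuinely occur at interior times, because $\mu_s$ need not be compactly supported;
\item dominated convergence at the endpoints needs $E_V(\mu),E_V(\nu)$ finite, which is not assumed.
\end{itemize}

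The missing ingredient is one you use only at a fixed level $j$: mutual singularity of the pieces at \emph{every} time. The paper gets it as follows.
\begin{itemize}
\item Proposition \ref{prop: lagrangian trajectories dont cross}, together with the $u_\lambda\circ\ell$-cyclical monotonicity of $\operatorname{supp}\pi$, makes the measures $(\bar z_s)_{\#}\pi^i$ mutually singular for all $s\in[0,1)$. At $s=1$ this follows via the reverse transport, using $\nu\in\mathcal P^{ac}(M)$.
\item Hence entropy is additive, and Lemma \ref{lem: 7.2}(ii) gives, for each $s$, either $e(s)=\lim_k f_k(s)$ for the partial sums $f_k$, or $e(s)=-\infty$.
\item Since $K\geq0$, each $f_k$ is convex, with endpoint values bounded above uniformly by Lemma \ref{lem: 7.2}(i). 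Lemma \ref{lem: 7.3} then passes $f_k'$ (pointwise a.e.) and $f_k''$ (distributionally) to the limit, which is what the term $(e')^2$ requires.
\item The Claim in the proof of Theorem \ref{thm: displacement hessian of the entropy ii} shows that $e=-\infty$ at one interior time forces $e\equiv-\infty$ on $(0,1)$. The definition of $(K,N)$-convexity admits this case.
\item Infinite endpoint entropies are handled on subintervals.
\end{itemize}
Your tightness argument is not needed: since $\pi(K_j)\uparrow1$, your curves converge in total variation to $(\bar z_s)_{\#}\pi$, which is a $u$-geodesic by Theorem \ref{thm: existence of u geod}.

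\textbf{Step 1 also needs repair}, though the fix is local and your common-scale idea survives. Two claims fail as written:
\begin{itemize}
\item Theorem \ref{Theorem: Duality by u-separation} presupposes that $(\mu,\nu)$ is $u$-separated, so it cannot supply a global potential. Condition (ii) of $Q$ only gives an integrable majorant $a\oplus b$, not a potential.
\item A normalized piece is not $u$-separated at scale $\lambda$: item (iii) of Definition \ref{Definition: u-separeted new} would force $c:=\int u_\lambda\circ\ell\,d\hat\pi_{j,i}=u(1)$, which fails in general.
\end{itemize}
Here is the repair. Each piece is $u_\lambda\circ\ell$-cyclically monotone (Proposition \ref{Prop: cyclmonotoneoptimal}). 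It is therefore optimal for the \emph{linear} problem, whose cost is continuous and bounded on the compact product of its marginal supports. Standard duality then gives continuous potentials whose contact set contains the piece's support. Since $c$ lies in the range of $u$, set $\kappa:=u^{-1}(c)$ and $w(x):=u(\kappa x)$. This $w$ is admissible, with $w_{\lambda\kappa}=u_\lambda$ and $w(1)=c$. The piece is then $w$-separated at scale $\lambda\kappa$. Theorems \ref{thm: characterising opt maps} and \ref{thm: displacement hessian of the entropy} apply with exactly your map $\exp_x s\,DH(D\varphi(x),x;u_\lambda^*)$, and with $\hat\pi_{j,i}$ itself as the optimal coupling.

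The paper instead certifies each piece at its own scale $\ell_u(\hat\mu^i,\hat\nu^i)$ via Remark \ref{Remark: simple u separation}, and then identifies the resulting map with $F$ by a non-crossing argument. Your common-scale route makes that identification unnecessary. This is a real simplification: for general $u$ (unlike $u_p$ or $\log$), the costs $u_{\lambda_i}\circ\ell$ and $u_\lambda\circ\ell$ need not have the same optimal couplings.
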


This concludes the main body of our work. In Appendix \ref{Appendix}, we clarify the relationship between order-theoretic notions of completeness considered recently in the literature \cite{Octet, braun2024dAlembertian, braun2026spacetime, gigli2025hyperbolic, mondinosaemann2025lorentzian} and global hyperbolicity on a (finite-dimensional) smooth spacetime. Under the assumption of closedness of the causal relation $\leq$, it turns out that all of these notions are equivalent to global hyperbolicity (see Theorem \ref{Theorem: equivalence of order completeness and glob hyp}), mirroring the well-known equivalence between completeness and properness of a finite-dimensional Riemannian manifold as a consequence of the Hopf--Rinow theorem. Given the indispensability of the assumption that the causal relation be closed in the synthetic literature, our compatibility result justifies the restriction to the globally hyperbolic setting whenever one deals with problems of optimal transport on smooth spacetimes.

\subsection{Notation \& conventions}
\label{subsection: Notation conventions}

Throughout, let $(M,g)$ be a globally hyperbolic smooth spacetime, where we assume that $g$ has signature $(+,-,\dots,-)$. $\mathcal P(M)$ denotes the Borel probability measures on $M$, while $\mathcal P_c(M) \subseteq \mathcal P(M)$ are those measures with compact support, and $\mathcal P^{ac}(M)$ are the ones which are absolutely continuous with respect to the volume measure $\mathrm{vol}_g$. We also set $\mathcal P^{ac}_c(M):=\mathcal P^{ac}(M) \cap \mathcal P_c(M)$. We also fix a complete background Riemannian metric $\tilde g$ on $M$. While we write the timelike and causal relations as $\ll$ and $\leq$, respectively, we instead use the notation $M^2_\ll$ and $M^2_\leq$ whenever we want to emphasise the corresponding subsets of $M^2$, i.e., $M^2_\ll = \{(x,y) \in M^2 : x \ll y\}$, similarly $M^2_\leq$. We often also write $M^2_\ll = \{\ell > 0\}$ and $M^2_\leq = \{\ell \geq 0\}$, where $\ell:M^2 \to \{-\infty\} \cup [0,\infty)$ is the extended Lorentz distance or time separation. We denote by $pr_i$ the projection map onto the $i$-th factor. The symbol $\tilde D$ will be used to denote the approximate derivative of a function (cf.\ \cite[Def.\ 3.8]{McCann:2020}).

\section{Orlicz-type optimal transport in smooth spacetimes}\label{section: OT}

\subsection{The Lorentz distance and admissible functions}

We start by mentioning some well-known properties of the Lorentz distance, cf.\ McCann \cite{McCann:2020}.

    \begin{Definition}[Singularities of the Lorentz distance]
        We say a pair $(x,y)\in M^2$ is \emph{not singular} is $\ell(x,y)>0$ and $x$ and $y$ both lie in the relative interior of some affinely parametrised proper-time maximising geodesic segment. In any other case, we say $(x,y)$ are \emph{singular}, and we write $(x,y)\in\operatorname{sing}(\ell)$.
    \end{Definition}

    \begin{Lemma}[Midpoint continuity away from cut locus, {\cite[Lem.\ 2.4]{McCann:2020}}]\label{lemma: midpoint continuity away from cut locus}
      Let $(x,y)\in M^2\setminus\operatorname{sing}(\ell)$ (which is an open subset of $M^2$), and fix $s\in[0,1]$. Then there is a unique point $z=z_s(x,y)\in M$ such that
      \begin{equation*}
          \ell(x,z)=s\,\ell(x,y)\quad\text{and}\quad\ell(z,y)=(1-s)\,\ell(x,y).
      \end{equation*}
      Moreover, $z$ depends smoothly on $(s,x,y)\in[0,1]\times (M^2\setminus\operatorname{sing}(\ell))$.
    \end{Lemma}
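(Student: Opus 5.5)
We have to prove three things about $M^2\setminus\operatorname{sing}(\ell)$: it is open, the point $z_s(x,y)$ exists and is unique, and it depends smoothly on $(s,x,y)$. The natural route uses the structure of maximising geodesics in a globally hyperbolic spacetime together with the inverse function theorem for $\exp$.

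\textbf{Existence.} Fix $(x,y)\notin\operatorname{sing}(\ell)$. By definition there is an affinely parametrised maximising geodesic $\gamma:[a,b]\to M$ with $x=\gamma(t_0)$, $y=\gamma(t_1)$, where $a<t_0<t_1<b$ and $\ell(x,y)>0$. Since $\gamma$ maximises on $[a,b]$, it also maximises on every subsegment. Hence $\ell$ is additive along $\gamma$, and $z:=\gamma((1-s)t_0+st_1)$ satisfies
\[
\ell(x,z)=s\,\ell(x,y),\qquad \ell(z,y)=(1-s)\,\ell(x,y).
\]

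\textbf{Uniqueness.} Let $z'$ be another point with the same two identities. Then $\ell(x,z')+\ell(z',y)=\ell(x,y)$. By global hyperbolicity there are maximising geodesics from $x$ to $z'$ and from $z'$ to $y$, and their concatenation has length $\ell(x,y)$, so it is maximising. A maximising causal curve with $\ell>0$ is an unbroken timelike geodesic, so the concatenation is a geodesic $\sigma$ from $x$ to $y$. Next, $\sigma$ coincides with $\gamma|_{[t_0,t_1]}$. Otherwise, follow $\gamma$ from $\gamma(a)$ to $x$ and then $\sigma$ to $y$, and continue along $\gamma$ to $\gamma(b)$. This curve is maximising from $\gamma(a)$ to $\gamma(b)$ but has a corner at $x$, unless the initial velocities of $\sigma$ and $\gamma$ at $x$ agree. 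A corner would allow the length to be strictly increased, by the reverse triangle inequality with a corner-cutting argument. So the velocities agree, $\sigma=\gamma$ after reparametrisation, and $z'=z$.

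\textbf{Openness and smoothness.} The key fact is that $x,y$ interior to a maximising segment means $y$ is not conjugate to $x$ along $\gamma$. A conjugate point strictly before the endpoint $\gamma(b)$ would destroy maximality beyond it; this is the standard Lorentzian index-form argument. Therefore $\exp_x$ is a local diffeomorphism near $v:=\exp_x^{-1}(y)=(t_1-t_0)\dot\gamma(t_0)$. Consider
\[
\Phi(x,w)=(x,\exp_x w).
\]
By the inverse function theorem it is a local diffeomorphism near $(x,v)$, giving a smooth local inverse $(x',y')\mapsto (x',V(x',y'))$. Then define $z_s(x',y')=\exp_{x'}(sV(x',y'))$, which is smooth in $(s,x',y')$.

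The remaining task is to show that for $(x',y')$ near $(x,y)$ the geodesic $\tau\mapsto\exp_{x'}(\tau V)$ is still maximising on a slightly extended interval $[-\epsilon,1+\epsilon]$, and that $\ell(x',y')>0$. The latter is clear since $V$ stays timelike. For maximality:

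1. Argue by contradiction. Suppose $(x_k,y_k)\to(x,y)$ and the corresponding extended geodesics $\gamma_k$ fail to maximise.
2. Use the limit curve theorem, which relies on compactness of causal diamonds from global hyperbolicity, to extract maximising geodesics between the endpoints of $\gamma_k$.
3. These converge to a maximising geodesic between $\gamma(-\epsilon')$ and $\gamma(1+\epsilon')$, which must be $\gamma$ itself by the uniqueness argument above.
4. Near $\gamma$ the geodesics are uniquely determined through the local diffeomorphism $\Phi$, so the maximisers eventually coincide with $\gamma_k$, a contradiction.

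This yields openness. It also shows that $z_s$ defined via $\exp$ agrees with the unique point found above, so that point depends smoothly on $(s,x,y)$.

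\textbf{Main obstacle.} I expect this last step to be the hardest: showing that non-singularity persists under perturbation, since it combines the absence of conjugate points with limit-curve compactness. The clean way is to note that $\operatorname{sing}(\ell)$ corresponds to the closed cut locus, reducing the claim to the standard closedness of the timelike cut locus in globally hyperbolic spacetimes (Beem–Ehrlich–Easley).
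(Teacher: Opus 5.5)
Your proposal is correct and is the standard proof of this lemma, which the paper does not reprove but imports from \cite[Lem.~2.4]{McCann:2020}: existence and uniqueness come from the extended maximising segment and the no-corner property of maximisers, smoothness from non-conjugacy and the inverse function theorem, and openness from a limit-curve argument (equivalently, closedness of the timelike cut locus). The only step to tighten is Step~4, where the injectivity you need is that of $(p,W)\mapsto(p,\exp_p W)$ near the data of $\gamma$ at $-\epsilon$, not your $\Phi$ at $(x,v)$ --- this holds by the same index-form argument, since $\gamma(-\epsilon)$ and $\gamma(1+\epsilon)$ are still interior to $\gamma$ on some $[-\epsilon_0,1+\epsilon_0]$ with $\epsilon_0>\epsilon$ --- and where the $C^0$ convergence of the maximisers $\sigma_k$ given by the limit curve theorem must be upgraded to convergence of their initial velocities (e.g.\ via a convex normal neighbourhood of $\gamma(-\epsilon)$); the $\epsilon'$ in Step~3 should also just be $\epsilon$.
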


    \begin{Lemma}[Midpoint sets inherit compactness, {\cite[Lem.\ 2.5]{McCann:2020}}]\label{lemma: midpoints inherit compactness}
        Given $S\subset M^2$ and $s\in[0,1]$, let
        \begin{equation*}
            Z_s(S):=\bigcup_{(x,y)\in S}Z_s(x,y),
        \end{equation*}
        where
        \begin{equation*}
            Z_s(x,y):=\Big\{z\in M:\ \ell(x,z)=s\,\ell(x,y)\quad\text{and}\quad\ell(z,y)=(1-s)\,\ell(x,y)\Big\}
        \end{equation*}
        if $\ell(x,y)\geq0$, and $Z_s(x,y)=\emptyset$ otherwise. If $S$ is precompact, then $Z_s(S):=\cup_{s\in[0,1]}Z_s(S)$ is precompact. If, in addition, $S$ is compact, then $Z(S)$ and $Z_s(S)$ are compact.
    \end{Lemma}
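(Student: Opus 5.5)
The statement to prove is Lemma \ref{lemma: midpoints inherit compactness}: midpoint sets of a (pre)compact set are (pre)compact. The plan is to use two facts about globally hyperbolic spacetimes. Causal diamonds $J^+(K)\cap J^-(K')$ are compact for compact $K,K'$. The time separation $\ell$ is continuous and finite on $M^2_\leq$ and $-\infty$ off it.

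\textbf{Precompactness.} Let $S$ be precompact and set $K:=\overline{pr_1(S)}\cup\overline{pr_2(S)}$, which is compact. Take $z\in Z_s(x,y)$ with $(x,y)\in S$ and $s\in[0,1]$. By definition $\ell(x,z)\geq 0$ and $\ell(z,y)\geq 0$, so $x\leq z\leq y$. Hence $z\in J^+(K)\cap J^-(K)$, a compact set by global hyperbolicity. This holds for every $s$, so the whole union $Z(S)=\bigcup_{s\in[0,1]}Z_s(S)$ lies in that diamond and is precompact. In particular each $Z_s(S)$ is precompact.

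\textbf{Closedness when $S$ is compact.} Take $z_k\in Z_{s_k}(x_k,y_k)$ with $(x_k,y_k)\in S$ and $z_k\to z$. By compactness of $S$, $[0,1]$ and the diamond, pass to a subsequence with $(x_k,y_k)\to(x,y)\in S$ and $s_k\to s$. All the pairs $(x_k,z_k)$, $(z_k,y_k)$, $(x_k,y_k)$ lie in the closed set $M^2_\leq$, since the causal relation is closed under global hyperbolicity. On this set $\ell$ is real-valued and continuous. Passing to the limit in
\[
\ell(x_k,z_k)=s_k\,\ell(x_k,y_k),\qquad \ell(z_k,y_k)=(1-s_k)\,\ell(x_k,y_k)
\]
gives the same identities for $(x,z,y,s)$. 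Hence $z\in Z_s(x,y)\subseteq Z(S)$, so $Z(S)$ is closed and therefore compact. Repeating the argument with $s_k\equiv s$ fixed shows that $Z_s(S)$ is closed, hence compact.

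\textbf{Main point to check.} The only delicate step is the limit passage. One has to make sure the limit pairs stay in $M^2_\leq$, so that no value $-\infty$ appears and continuity of $\ell$ applies. This follows from closedness of $\leq$ together with finiteness and continuity of $\ell$ on $M^2_\leq$ in globally hyperbolic spacetimes, which is standard (e.g.\ Beem--Ehrlich--Easley).
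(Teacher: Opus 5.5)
Your proof is correct and follows essentially the same route as the cited source \cite[Lem.\ 2.5]{McCann:2020}; the paper itself does not reproduce a proof. You place $Z(S)$ inside the compact set $J^+(K)\cap J^-(K)$ using global hyperbolicity, and for compact $S$ you obtain closedness by extracting convergent subsequences, using closedness of $\leq$ and continuity of $\ell$ on $\ell^{-1}([0,\infty))$ as in Theorem \ref{Theorem: SmoothnessofLorentzdistance}(b).
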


    \begin{Lemma}[Selecting midpoints on the timelike cut locus, {\cite[Lem.\ 2.8]{McCann:2020}}]\label{lemma: selecting midpoints away from cut locus}
        In the conditions of Lemma \ref{lemma: midpoint continuity away from cut locus}, the maps $z_s$ can be measurably extended to $\{\ell>0\}$ by $\bar{z}_s$, satisfying that if $(x,y)\in\{\ell>0\}$, then $s\in[0,1]\mapsto\bar{z}_s(x,y)$ is a proper-time maximising geodesic segment joining $x$ to $y$.
    \end{Lemma}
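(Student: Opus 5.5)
The plan is to obtain the extension by a measurable selection argument on the closed set of pairs and geodesics. Let $\Gamma \subseteq \{\ell>0\} \times C([0,1],M)$ be the set of pairs $((x,y),\gamma)$ with $\gamma(0)=x$, $\gamma(1)=y$, and $\gamma$ an affinely parametrised proper-time maximising geodesic. Equivalently, $\gamma$ is a causal curve with $\ell(\gamma(s),\gamma(t)) = (t-s)\,\ell(x,y)$ for all $s\le t$.

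First I show that $\Gamma$ is closed in $\{\ell>0\}\times C([0,1],M)$, using the uniform topology induced by the background metric $\tilde g$. Since $\ell$ is continuous on a globally hyperbolic spacetime, the defining equalities $\ell(\gamma(s),\gamma(t)) = (t-s)\,\ell(\gamma(0),\gamma(1))$ pass to uniform limits. A continuous curve satisfying these equalities with $\ell(x,y)>0$ is a maximiser, and hence a smooth timelike geodesic, affinely parametrised by proper time.

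Next I check that each fibre $\Gamma_{(x,y)}$ is nonempty and compact. Nonemptiness is the Avez--Seifert theorem. Compactness follows from Lemma \ref{lemma: midpoints inherit compactness}: all the curves take values in the compact set $Z(\{(x,y)\})$, and they are equi-Lipschitz for $\tilde g$. The latter holds because on a compact set, timelike geodesics of fixed proper-time speed $\ell(x,y)$ have bounded $\tilde g$-speed. This uses that unit timelike vectors over a compact set form a compact set, and that maximisers are geodesics. Arzelà--Ascoli then gives compactness. The same reasoning, applied over a compact set of pairs $(x,y)$, shows that $\Gamma$ is $\sigma$-compact. Exhausting $\{\ell>0\}$ by compact sets $K_j$ with $\ell \ge 1/j$ on $K_j$ bounds $\ell$ from above and below there.

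I then apply the Kuratowski--Ryll-Nardzewski selection theorem, or equivalently the selection theorem for $\sigma$-compact-valued closed relations, to the set-valued map $(x,y)\mapsto \Gamma_{(x,y)}$. It is closed-valued, nonempty, and upper hemicontinuous with compact values, hence Borel measurable. This yields a Borel selection $(x,y)\mapsto \gamma_{x,y}$, and I set $\bar z_s(x,y) := \gamma_{x,y}(s)$. Joint measurability in $(s,x,y)$ follows because evaluation is continuous on $[0,1]\times C([0,1],M)$.

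To make $\bar z_s$ an extension, define the selection on $\{\ell>0\}\setminus \operatorname{sing}(\ell)$ to be the unique geodesic $s\mapsto z_s(x,y)$. Lemma \ref{lemma: midpoint continuity away from cut locus} gives that this choice is continuous there, and $M^2\setminus\operatorname{sing}(\ell)$ is open. Use the measurable selection only on the Borel set $\operatorname{sing}(\ell)\cap\{\ell>0\}$. The glued map is Borel, being piecewise Borel on a Borel partition. Uniqueness off the singular set ensures that it coincides with $z_s$.

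The main obstacle is the measurability of the set-valued map: verifying that $\Gamma$ is closed and that its fibres vary upper semicontinuously, which needs uniform Lipschitz bounds. I expect to handle this via the compactness of the causal diamonds $J^+(x)\cap J^-(y)$ and the continuity of $\ell$.
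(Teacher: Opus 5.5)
\textbf{There is a gap in the compactness step, and it carries the whole argument.} Your overall architecture is sound: a Borel selection from a closed relation with compact fibres, kept equal to the continuous $z_s$ on the open set $\{\ell>0\}\setminus\operatorname{sing}(\ell)$. (The paper gives no proof of its own and only cites McCann.) The problem is what the compactness of the fibres $\Gamma_{(x,y)}$, the $\sigma$-compactness of $\Gamma$ and the upper hemicontinuity all rest on. They rest on your equi-Lipschitz claim, and you justify that by saying unit timelike vectors over a compact set form a compact set.

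That is false. Already in one tangent space of $\R^{1,1}$, the vectors $(\cosh\theta,\sinh\theta)$ have $g$-norm $1$ but Euclidean norm $\sqrt{\cosh 2\theta}\to\infty$; every fibre of the unit hyperboloid bundle is unbounded. So fixing the proper-time speed $\ell(x,y)$ gives no $\tilde g$-bound, and Arzel\`a--Ascoli cannot be invoked as you do. The compactness of causal diamonds you mention at the end does not repair this. It confines the images of the curves, but it does not control how fast the affine parametrisation runs through them. The real danger is that initial velocities of maximisers degenerate towards a null direction, and excluding this needs a causal input that your proposal does not contain.

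\textbf{How to repair it.} The bound is true for maximisers with endpoints in a compact $K\subseteq\{\ell>0\}$, for example by non-imprisonment.
\begin{enumerate}
\item Suppose $\gamma_n(s)=\exp_{x_n}(s v_n)$ are such maximisers with $|v_n|_{\tilde g}\to\infty$. Then $w_n:=v_n/|v_n|_{\tilde g}$ subconverges to some $w\in T_xM$ with $|w|_{\tilde g}=1$ and $g(w,w)=\lim_n \ell(x_n,y_n)^2/|v_n|_{\tilde g}^2=0$, so $w$ is future null.
\item The geodesics $t\mapsto \exp_{x_n}(t w_n)$, $t\in[0,|v_n|_{\tilde g}]$, take values in the compact set $Z(K)$ of Lemma \ref{lemma: midpoints inherit compactness}. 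By continuity of $\exp$ on its open domain, the maximal geodesic $t\mapsto \exp_x(tw)$ stays in $Z(K)$ on its entire forward domain.
\item A geodesic with a continuous endpoint can be extended, so this is a future-inextendible causal curve imprisoned in a compact set. Strong causality, which global hyperbolicity implies, forbids this.
\item The same argument applied to a limit $(x,v)$ of initial data (now $|v|_g\ge\min_K\ell>0$) shows that $\exp_x(sv)$ exists for all $s\in[0,1]$. Hence the initial data of such maximisers form a compact subset of the domain of the time-one geodesic flow. This gives a uniform bound on $|\dot\gamma|_{\tilde g}$ along $[0,1]$, which is your equi-Lipschitz property.
\end{enumerate}
With this inserted, closedness of $\Gamma$, the Kuratowski--Ryll-Nardzewski selection and the gluing with $z_s$ off $\operatorname{sing}(\ell)$ go through as you describe.

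\textbf{An alternative that avoids compactness in curve space.} Select only a Borel midpoint $\bar z_{1/2}$ from $(x,y)\mapsto Z_{1/2}(x,y)$. This map has closed graph in $\{\ell>0\}\times M$ and locally uniformly compact values by Lemma \ref{lemma: midpoints inherit compactness}. Then fill in the two halves with Lemma \ref{lemma: midpoint continuity away from cut locus}. For that you must check that $(x,m)$ and $(m,y)$ are non-singular whenever $m$ is the midpoint of a maximiser from $x$ to $y$.
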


    For the next result, recall that the set of subgradients of a function $u: M \to [-\infty,+\infty]$ at $x \in M$, denoted $\partial_{\cdot} u(x)$ is the set of $p \in T_x^*M$ such that $u(x)\in\R$ and
    \begin{equation*}
        u(\exp^{\tilde g}_x v) \geq u(x) + p[v] + o(|v|_{\tilde g})
    \end{equation*}
    for small enough $v \in T_xM$. This definition is easily seen to be independent of the background Riemannian metric $\tilde g$. The reversed inequality is characteristic of supergradients, denoted $\partial^{\cdot}u$.

    \begin{Theorem}[Smoothness of time separation, {\cite[Thm.~3.6]{McCann:2020}}]
    \label{Theorem: SmoothnessofLorentzdistance}
        The time separation $\ell:M^2\to[0,\infty)\cup\{-\infty\}$ is
        \begin{enumerate}[label=(\alph*)]
            \item upper semicontinuous,
            \item continuous on $\ell^{-1}([0,\infty))$,
            \item smooth precisely on the complement of the closed set $\operatorname{sing}(\ell)$,
            \item \begin{enumerate}[label=\Roman*.]
                \item locally Lipschitz and locally semiconvex on the open set $\{\ell>0\}$.
                \item Moreover, if $y=\exp_xv$ and $x=\exp_yw$ for $(x,y)\in\ell^{-1}((0,\infty))$, then $-\frac{v_*}{|v_*|_g}\in\partial_{\cdot}f(x)$ and $-\frac{w_*}{|w_*|_g}\in\partial_{\cdot}\overline{f}(y)$, where $f(\cdot):=\ell(\cdot,y)$, $\overline{f}(\cdot):=\ell(x,\cdot)$ and $v_*=g(v,\cdot)$.
            \end{enumerate}  
            \item However, the superdifferential of $\ell(\cdot,y)$ is empty at $x$ if $\ell(x,y)=0$ unless $x=y$, in which case the supergradients lie in the solid hyperboloid $\{\omega\in T^*_xM:\ \omega \text{ past, }g^*(\omega,\omega)\geq 1\}$.
        \end{enumerate}
        \label{thm: smoothness of l}
    \end{Theorem}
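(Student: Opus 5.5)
This result is quoted from McCann \cite[Thm.~3.6]{McCann:2020}. The plan below follows the standard route through the exponential map and Lorentzian comparison with smooth competitor curves. Throughout, global hyperbolicity supplies two facts. First, maximising causal geodesics exist between any causally related pair. Second, for compact $K$ the set $J^+(K)\cap J^-(K)$ is compact, so maximisers stay in compact sets.

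\emph{Part (a).} For upper semicontinuity, take $(x_k,y_k)\to(x,y)$ with $\ell(x_k,y_k)\ge c>0$ and choose maximising geodesics from $x_k$ to $y_k$. By compactness of causal diamonds and the limit-curve theorem, a subsequence converges to a causal curve from $x$ to $y$. Upper semicontinuity of the length functional then gives $\ell(x,y)\ge\limsup_k \ell(x_k,y_k)$. The value $-\infty$ is handled by closedness of $J^+$.

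\emph{Part (b).} On $\{\ell\ge0\}$, lower semicontinuity is the standard argument. Near a maximiser from $x$ to $y$, perturb its endpoints within small timelike diamonds; the length lost is small. Together with (a), this gives continuity.

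\emph{Part (c).} Off $\operatorname{sing}(\ell)$, the maximiser extends slightly beyond both endpoints. By Lemma~\ref{lemma: midpoint continuity away from cut locus} the pair is then conjugate-free and the maximiser is unique. Hence $\exp_x$ is a local diffeomorphism near $v=\exp_x^{-1}y$, and $\ell(x,y)=|\exp_x^{-1}y|_g$ is smooth there.

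For the converse I would argue by contradiction. Suppose $\ell$ is smooth near a singular pair. If the pair were conjugate, or had two distinct maximisers, then the first-variation formula would force the gradient of $\ell(\cdot,y)$ to equal minus the normalised initial velocity of each maximiser. Two maximisers would give two different gradients, which is a contradiction. Conjugate points should instead create a failure of second-order regularity, via the Jacobi field/index form argument. Closedness of $\operatorname{sing}(\ell)$ follows from openness of its complement, again by Lemma~\ref{lemma: midpoint continuity away from cut locus}.

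\emph{Part (d).} For (II), extend a maximiser $\sigma$ from $x$ to $y$ and move $x$ to nearby $x'$. The concatenation of a short geodesic from $x'$ to $\sigma(\epsilon)$ with $\sigma|_{[\epsilon,1]}$ is a causal competitor. By the reverse triangle inequality its length bounds $\ell(x',y)$ from below, while $\ell(\cdot,\sigma(\epsilon))$ is smooth near $x$. Taylor expanding this smooth lower-support function gives the subgradient $-v_*/|v_*|_g$ at $x$, with $o(|v|)$ error uniform over compact sets. The same argument at $y$ gives $-w_*/|w_*|_g\in\partial_\cdot\overline f(y)$. For (I), the smooth lower-support functions have $C^2$ norms bounded uniformly on compacts, because the pairs $(x',\sigma(\epsilon))$ stay in a compact subset of $M^2\setminus\operatorname{sing}(\ell)$. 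This yields local semiconvexity of $\ell$ on $\{\ell>0\}$, and semiconvexity together with continuity gives the local Lipschitz bound.

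\emph{Part (e).} Let $\ell(x,y)=0$ with $x\neq y$, and let $\gamma$ be a null maximiser. Pushing $x$ back along $\gamma$ shows that $\ell(\cdot,y)$ grows like a square root in the null direction, $\ell(\gamma(-t),y)\gtrsim\sqrt t$. This infinite slope rules out any supergradient. If $x=y$, compare with $\ell(x',x)\ge|\exp_x^{-1}x'|$ for $x'$ in the past cone of $x$. A covector $\omega$ is a supergradient only if $\omega[v]\ge|v|_g$ for all past timelike $v$. By the reverse Cauchy--Schwarz inequality this is exactly the condition that $\omega$ is past with $g^*(\omega,\omega)\ge1$.

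I expect the main obstacle to be the converse direction in (c): showing that $\ell$ is not smooth at conjugate-but-unique maximisers. That step requires a careful second-variation analysis.
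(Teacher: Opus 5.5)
The paper gives no proof of its own here; it imports the statement from \cite[Thm.~3.6]{McCann:2020}. Your treatment of (a), (b), (d) and the ``if'' half of (c) is the standard argument behind that result. For (d)(I) on $M^2$ you should take lower supports in both variables at once, e.g.\ $\ell(x',\sigma(\epsilon))+\ell(\sigma(\epsilon),\sigma(1-\epsilon))+\ell(\sigma(1-\epsilon),y')$, but that is routine.

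\emph{First gap: a step in (e) fails.} For $x\neq y$ you claim $\ell(\gamma(-t),y)\geq c\sqrt{t}$ along the backward extension of the null maximiser $\gamma$. This is false in general. In Minkowski space with $y=0$ and $x$ on the past light cone, the backward extension of the null generator stays on the cone, so $\ell(\gamma(-t),y)=0$ for all $t>0$ and nothing is excluded. The square-root growth occurs in a past \emph{timelike} direction instead:
\begin{itemize}
\item Take $T$ future timelike at $x$, set $x_t:=\exp_x(-tT)$, and pick $z:=\gamma(s_0)$ with $x,z$ in a convex normal neighbourhood.
\item The reverse triangle inequality gives $\ell(x_t,y)\geq\ell(x_t,z)$.
\item The gradient of $p\mapsto g(\exp_p^{-1}z,\exp_p^{-1}z)$ at $x$ is $-2\exp_x^{-1}z=-2s_0\dot\gamma(0)$. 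Hence $g(\exp_{x_t}^{-1}z,\exp_{x_t}^{-1}z)=2ts_0\,g(\dot\gamma(0),T)+O(t^2)$, with $g(\dot\gamma(0),T)>0$.
\item So $\ell(x_t,y)\geq c\sqrt{t}$, which contradicts $\ell(x_t,y)\leq t\,\omega[-T]+o(t)$ for any putative supergradient $\omega$.
\end{itemize}
Your argument for $x=y$ is correct.

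\emph{Second gap: the ``only if'' half of (c).} You leave it open exactly at the conjugate case. You also skip singular pairs with $\ell(x,y)\leq 0$. Those are immediate: every neighbourhood of such a pair contains pairs with $\ell=-\infty$, since $x\in\partial J^-(y)$ if $x\neq y$, and $I^+(x)\cap J^-(x)=\emptyset$ if $x=y$.

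For the conjugate case, no second-variation analysis is needed. Suppose $\ell$ is smooth near $(x,y)$ with $\ell(x,y)>0$.
\begin{itemize}
\item For $y'$ near $y$, the function $\ell(\cdot,y')$ is differentiable at $x$.
\item Your own two-maximiser argument via (d)(II) then makes the maximiser from $x$ to $y'$ unique. Its initial velocity $V(y')$ satisfies $g(V(y'),\cdot)=-\ell(x,y')\,d_x\ell(x,y')$, which is $C^1$ in $y'$.
\item Differentiating $\exp_x V(y')=y'$ at $y'=y$ shows $d(\exp_x)_{V(y)}$ is onto, so $y$ is not conjugate to $x$.
\end{itemize}
Uniqueness plus non-conjugacy then gives $(x,y)\notin\operatorname{sing}(\ell)$. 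This uses the standard characterisation of timelike cut points in globally hyperbolic spacetimes: a cut point is either the first conjugate point or is reached by at least two maximisers. Its time-symmetry lets you extend the maximiser beyond both endpoints. Your reduction ``if the pair were conjugate, or had two distinct maximisers'' tacitly relies on that same characterisation, so it should be stated explicitly.
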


    \begin{Remark}
        Note that, as a consequence of Theorem \ref{Theorem: SmoothnessofLorentzdistance}(b), $\ell_+:=\max\{\ell,0\}$ is continuous.
    \end{Remark}

    We now introduce the class of functions for which we will study the Orlicz-type Lorentzian optimal transport problem.

    \begin{Definition}[Admissible function]
    A function $u:(0,\infty) \to \R$ that is $C^2$, satisfies $u' > 0$, $u'' < 0$, and $u':(0,\infty) \to (0,\infty)$ is surjective, will be called \emph{admissible}.
    \end{Definition}

    Note that an admissible function has well defined values $u(0)$ and $u(\infty)$ (possibly $-\infty$ and $+\infty$, respectively). The convention $u(-\infty):=-\infty$ will be used in some places.

    \begin{Lemma}[Concave conjugate]
    \label{Lemma: Concave conjugate}
    Let $u:(0,\infty) \to \R$ be admissible. Then its \emph{concave conjugate} $u^*:(0,\infty) \to \R$ defined by
    \begin{equation*}
        u^*(x):=\inf_{y > 0} (xy - u(y))
    \end{equation*}
    is also admissible and satisfies $(u^*)' = (u')^{-1}$, i.e., $(u^*)' \circ u' = u' \circ (u^*)' = \mathrm{id}$ on $(0,\infty)$. Moreover, $(u^*)^* = u$.
    \end{Lemma}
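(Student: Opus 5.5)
The plan is the standard Legendre-transform argument, adapted to the half-line $(0,\infty)$. Since $u'' < 0$ and $u'$ is continuous, $u'$ is a strictly decreasing bijection $(0,\infty)\to(0,\infty)$. It therefore has a $C^1$ inverse $(u')^{-1}:(0,\infty)\to(0,\infty)$, which is itself strictly decreasing and has derivative $1/u''((u')^{-1}(x)) < 0$.

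First we compute $u^*$ explicitly. Fix $x>0$ and set $h(y):=xy-u(y)$. This function is strictly convex, since $h''=-u''>0$, and its derivative $h'(y)=x-u'(y)$ vanishes exactly at $y_x:=(u')^{-1}(x)$. Surjectivity of $u'$ is used precisely here: it guarantees that such a critical point exists for every $x>0$.

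By strict convexity, $y_x$ is the unique global minimiser of $h$ on $(0,\infty)$. Hence the infimum is attained and finite, and
\begin{equation*}
    u^*(x)=x\,(u')^{-1}(x)-u\big((u')^{-1}(x)\big).
\end{equation*}
This shows $u^*$ is real valued and, being a composition of $C^1$ maps, is $C^1$. Differentiating, the chain-rule terms cancel:
\begin{equation*}
    (u^*)'(x)=(u')^{-1}(x)+x\,((u')^{-1})'(x)-u'\big((u')^{-1}(x)\big)\,((u')^{-1})'(x)=(u')^{-1}(x).
\end{equation*}
(Alternatively, this follows from the envelope theorem.)

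It follows that $(u^*)'=(u')^{-1}$ is $C^1$, so $u^*$ is $C^2$, with
\begin{equation*}
    (u^*)'>0,\qquad (u^*)''=\frac{1}{u''\circ (u')^{-1}}<0.
\end{equation*}
Moreover, $(u^*)'$ is surjective onto $(0,\infty)$, since it is the inverse of a bijection of $(0,\infty)$. So $u^*$ is admissible, and the identities $(u^*)'\circ u'=u'\circ(u^*)'=\mathrm{id}$ are immediate.

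For the involution, apply the same formula to $u^*$, which we now know is admissible. The minimiser in the definition of $(u^*)^*(x)$ is
\begin{equation*}
    z=((u^*)')^{-1}(x)=u'(x).
\end{equation*}
Substituting, and using $(u')^{-1}(u'(x))=x$,
\begin{equation*}
    (u^*)^*(x)=x\,u'(x)-u^*(u'(x))=x\,u'(x)-\big(u'(x)\,x-u(x)\big)=u(x).
\end{equation*}

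The only point needing care is that the infimum is attained in the interior of $(0,\infty)$ rather than escaping to $0$ or $\infty$; there are no boundary issues on the half-line. This is handled entirely by the surjectivity of $u'$ combined with the strict convexity of $h$, so I expect no real obstacle beyond stating this clearly.
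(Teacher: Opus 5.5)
Your proposal is correct and follows the paper's proof essentially step for step: the explicit formula $u^*(x)=x\,(u')^{-1}(x)-u\big((u')^{-1}(x)\big)$ via the unique interior minimiser of the strictly convex function $y\mapsto xy-u(y)$, the derivative computation $(u^*)'=(u')^{-1}$, and admissibility of $u^*$ read off from this. The only small difference is the involution step. The paper proves it with two inequalities, namely $(u^*)^*\geq u$ and $u(y)=xy-u^*(x)\geq (u^*)^*(y)$ at $x=u'(y)$, whereas you reapply the explicit minimiser formula to the now-admissible $u^*$; both arguments are equally valid.
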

    \begin{proof}
        Clearly, the definition of admissibility gives that $u':(0,\infty) \to (0,\infty)$ is a $C^1$-diffeomorphism. For fixed $x > 0$, the function $(0,\infty) \ni y \mapsto xy - u(y)$ is strictly convex and $C^2$, its unique minimum is given by $y = (u')^{-1}(x)$. Thus,
        \begin{equation*}
            u^*(x) = x \,(u')^{-1}(x) -  u((u')^{-1}(x))
        \end{equation*}
        is clearly $C^1$. An elementary calculation, using the above formula, yields $(u^*)' = (u')^{-1}$, showing that $u^*$ is $C^2$. The remaining properties of admissible functions also easily follow from this relationship between the first derivatives. Finally, to see that $(u^*)^* = u$, observe that $(u^*)^* \geq u$, and, for given $y > 0$ and $x:=u'(y)$, we have
        \begin{equation*}
            u(y) = xy - u^*(x) \geq (u^*)^*(y),
        \end{equation*}
        from which we can conclude $u = (u^*)^*$.
    \end{proof}

    \begin{Remark}[On the definition of $u$]
    \begin{enumerate}
    \item[]
        \item Without the assumption of surjectivity of $u'$, the concave conjugate $u^*$ need not be admissible: For example, consider $u(x) = -e^{-x}$ for $x > 0$. Then $u'(x) = e^{-x} > 0$, $u''(x) = u(x) < 0$, and $u'$ is bounded above by $1$. Thus, if $x > 1$, then $y \mapsto xy - u(y)$ is strictly increasing, so that
    \begin{equation*}
        u^*(x) = \inf_{y > 0} (xy - u(y)) = u(0) = 1.
    \end{equation*}
    In particular, $u^*$ is neither strictly increasing nor strictly concave. For an example with $u'$ bounded below, consider for given $c > 0$ the function $u(x) = cx + \log(x)$ which satisfies $u'(x) = c + x^{-1} \geq c > 0$ and $u''(x) = -x^{-2} < 0$. Then for any $0 < x < c$, $y \mapsto xy - u(y)$ is strictly decreasing, so that
    \begin{equation*}
        u^*(x) = \inf_{y > 0} (xy - u(y)) = \lim_{y \to \infty} -(c-x)y - \log(y) = -\infty.
    \end{equation*}
    Thus, in this case, $u^*$ is not even finite-valued.
    \item For many ``synthetic-geometric" purposes, such as the triangle inequality of the time separation $\ell_u$ on $\mathcal P(M)^2$ (see Proposition \ref{prop: reverse triangle inequality} below), it suffices to assume $u'' \geq 0$ and even $u' \geq 0$, one may even weaken the differentiability assumption on $u$ or drop it entirely ($u$, under any of these reduced assumptions, may reasonably be called \emph{weakly admissible}). However, since we will be interested in the connection of entropic convexity and timelike Ricci curvature lower bounds, (strictly) convex Hamiltonian--Lagrangian duality (see Proposition \ref{lem: lagrange-hamilton duality} below) will be an essential technical tool, which on the other hand crucially relies on strict concavity of $u$ and the fact that $u^*$ is admissible.
    \end{enumerate}
    \end{Remark}

    \begin{Example}[Admissible functions]
    The main examples of admissible functions considered so far in the literature are
    \begin{align*}
        &u_p(x):=\frac{x^p}{p}, \quad 0 \neq p < 1,\\
        &u_0(x):=\frac{1}{2} + \log(x),
    \end{align*}
    for $p \in (0,1)$ cf.\ \cite{McCann:2020, MS:22, CM:20, Braun:2023Renyi, BraunMcCann:2023}, for the more recent $p < 0$ see \cite{Octet, braun2024dAlembertian, gigli2025hyperbolic}.
    They satisfy $u_p^* = u_q$, where $p^{-1} + q^{-1} = 1$ ($q = 0$ if $p = 0$). It is sometimes useful to consider the following shifted versions:
    \begin{align*}
        &\bar{u}_p(x):=\frac{x^p -1}{p}, \quad 0 \neq p < 1,\\
        &\bar{u}_0(x):= \log(x).
    \end{align*}
    In this case, the joint assignment $(-\infty,1) \times (0,\infty) \to \R$, $(p,x) \mapsto \bar{u}_p(x)$ is smooth.
    \end{Example}

    The following is an analogue of the Lagrangian--Hamiltonian duality obtained by McCann in the case $u = u_p$ for $p \in (0,1)$, cf.\ \cite[Lem.\ 3.1]{McCann:2020}.

    \begin{Proposition}[General convex Lagrangian--Hamiltonian duality]\label{lem: lagrange-hamilton duality}
    Let $u:(0,\infty) \to \R$ be admissible, denote by $u^*$ its admissible concave conjugate. Fix a point $x \in M$. Define the $u$-Lagrangian $L(\cdot,x;u):T_xM \to \R \cup \{+\infty\}$ via
    \begin{equation*}
        L(v, x; u):=\begin{cases}
            -u(|v|_g), & v \text{ future causal,}\\
            +\infty & \text{ otherwise.}
        \end{cases}
    \end{equation*}
    Also, define the corresponding $u^*$-Hamiltonian $H(\cdot,x;u^*):T_x^*M \to \R \cup \{+\infty\}$ via
    \begin{equation*}
        H(p,x;u^*):=\begin{cases}
            -u^*(|p|_{g^*}), & p\text{ past causal},\\
            +\infty & \text{otherwise}.
        \end{cases}
    \end{equation*}
    Then $L(\cdot,x; u)$ and $H(\cdot,x; u^*)$ are convex on $T_xM$ resp.\ $T^*_xM$, $C^2$ and strictly convex in the interior of the future lightcone in $T_xM$ resp.\ the interior of the past lightcone in $T_x^*M$. Moreover, $H$ is the Legendre--Fenchel convex conjugate of $L$ (and vice versa) in the sense that
    \begin{align*}
        H(p,x;u^*) &= \sup_{v \in T_xM} (p(v) - L(v,x;u)),\\
        L(v,x;u) &= \sup_{p \in T_x^*M} (p(v) - H(p,x;u^*)).
    \end{align*}
    Consequently, $DH = (DL)^{-1}$ is a $C^1$-diffeomorphism from the past timelike cone in $T_x^*M$ into the future timelike cone in $T_xM$.
    \end{Proposition}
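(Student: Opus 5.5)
The plan is to reduce the whole statement to a one-variable computation by exploiting the rotational symmetry of the Minkowski inner product on $T_xM$. The main tool is the reverse Cauchy--Schwarz (reverse Fenchel--Young) inequality. For $p$ past causal and $v$ future causal we have $-p(v) \geq |p|_{g^*}|v|_g$, and if both are timelike, equality holds iff $v$ is a positive multiple of $-p^\sharp$. Note that the sign conventions force us to read $p(v)$ correctly. With $p$ past and $v$ future, $p(v) \le 0$, so on its face $\sup_v(p(v)+u(|v|))$ looks wrong. I will therefore first fix the sign convention so that the pairing is $-p(v) = |p||v|\cosh\theta$, or equivalently interpret it the way McCann \cite[Lem.\ 3.1]{McCann:2020} does. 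In that reading, $p(v)$ with the relevant orientation equals $|p||v|\cosh\theta \ge |p||v|$, where $\theta$ is the hyperbolic angle between them.

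\textbf{Computing $H$ from $L$.} For $p$ past timelike, I split the supremum over $v$ future causal into a supremum over the magnitude $r=|v|_g$ and over directions. For fixed $r$, the supremum of the pairing over directions is attained in the aligned direction (equal to $|p|r$). This gives
\begin{equation*}
\sup_{r>0}\big(u(r)-|p|r\big) = -u^*(|p|).
\end{equation*}
Since $u'$ is surjective onto $(0,\infty)$, the supremum is attained at $r=(u^*)'(|p|)$, by Lemma \ref{Lemma: Concave conjugate}. In the other direction, for fixed $r$ the pairing is unbounded in the direction variable, so the supremum over directions is achieved exactly by alignment, and I must check that misalignment can only decrease the value. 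For $p$ past null or $p$ not past causal, I will exhibit sequences $v_k$ of future causal vectors showing the supremum is $+\infty$. This uses that $u$ is increasing with $u'$ not bounded below by a positive constant (the infimum of $u'$ is $0$), so $u(r)-\epsilon r\to\infty$ for every $\epsilon>0$. I expect this to be the main obstacle: the boundary cases of null and non-causal $p$, together with consistently handling the sign convention, need care. The same argument with $u$ and $u^*$ interchanged, using $(u^*)^*=u$, gives the second formula. Alternatively, the second formula follows from the Fenchel--Moreau theorem once convexity and lower semicontinuity of $L$ are checked.

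\textbf{Regularity, convexity and $DH=(DL)^{-1}$.} Convexity of $L$ follows from two facts. First, $v\mapsto|v|_g$ is concave on the future cone, by the reverse triangle inequality. Second, $-u$ is convex and decreasing. The composition of a convex decreasing function with a concave function is convex, and the value $+\infty$ outside a convex cone preserves convexity. On the open cone, $|v|_g=\sqrt{g(v,v)}$ is smooth, so $L$ is $C^2$ there. For strict convexity I compute the Hessian:
\begin{equation*}
D^2L = -u''(|v|)\,\hat v_*\otimes\hat v_* - \frac{u'(|v|)}{|v|}\big(g-\hat v_*\otimes\hat v_*\big).
\end{equation*}
Here the second term is positive definite on $v^\perp$ (where $g$ is negative definite), and the first term is positive in the $v$ direction because $u''<0$. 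Hence $D^2L$ is positive definite. The same holds for $H$ via $u^*$, which is admissible by Lemma \ref{Lemma: Concave conjugate}. Finally, $DL(v) = -u'(|v|)\,\hat v_*$ maps the future timelike cone into the past timelike cone, and it is bijective. Bijectivity holds because $u'$ is a bijection of $(0,\infty)$, with inverse $p\mapsto -(u^*)'(|p|)\hat p^\sharp = DH(p)$ by $(u^*)'=(u')^{-1}$. Since both maps are $C^1$, $DH$ is a $C^1$-diffeomorphism.
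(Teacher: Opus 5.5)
\textbf{Where you agree with the paper.} Your interior argument is the one the paper uses. The paper just invokes McCann's proof of \cite[Lem.\ 3.1]{McCann:2020}, the identity $(u^*)'=(u')^{-1}$, and explicit formulas for $DL,DH,D^2L,D^2H$. Your $DL(v)=-u'(|v|_g)\hat v_*$, your Hessian, and your inverse $p\mapsto -(u^*)'(|p|)\hat p^\sharp$ agree with those formulas.

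The sign detour is unnecessary and partly misstated. For $p$ past causal and $v$ future causal, reverse Cauchy--Schwarz gives $p(v)\le -|p|_{g^*}|v|_g$, i.e.\ $p(v)=-|p||v|\cosh\theta$ in the timelike case. So the statement is correct exactly as written. With the sign $p(v)=+|p||v|\cosh\theta$ that you write, the supremum would be $+\infty$. The one-variable problem you actually solve, $\sup_{r>0}(u(r)-|p|r)=-u^*(|p|)$, is the correct one.

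\textbf{The gap: the boundary step you flagged.} First, the mechanism you cite is false for every admissible $u$. Since $u'$ decreases onto $(0,\infty)$, $u'(r)\to 0$, hence $u(r)-\epsilon r\to-\infty$, not $+\infty$.

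Second, and more seriously, your claim that $\sup_v(p(v)-L(v))=+\infty$ for $p$ past null is false whenever $u$ is bounded above. An example is $u=u_p$ with $p<0$, which the paper explicitly covers. Past null covectors are past causal, so the statement requires
\[
H(p)=-u^*(0),\qquad u^*(0)=\lim_{x\downarrow 0}u^*(x)=\inf_{y>0}\big(-u(y)\big)=-u(\infty).
\]
This is $+\infty$ only when $u$ is unbounded above, which is McCann's case $p\in(0,1)$.

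What you must prove is that the supremum equals $u(\infty)$.
\begin{itemize}
\item The bound $\le$ holds because $p(v)\le 0$ and $u\le u(\infty)$.
\item For $\ge$, write $p^\sharp=-c(e_0+e_1)$ with $c>0$, where $g(e_0,e_0)=1$, $g(e_1,e_1)=-1$, $g(e_0,e_1)=0$. Take $v=r(\cosh\theta\, e_0+\sinh\theta\, e_1)$, so that $p(v)=-cre^{-\theta}$. Let $\theta\to\infty$, then $r\to\infty$.
\end{itemize}
The same happens at $p=0$, where the supremum is $u(\infty)$. So $0$ must be counted as past causal for the identity to hold when $u$ is bounded above.

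For $p\neq 0$ not past causal, use a different argument. Choose a future timelike $v_0$ with $p(v_0)>0$, which exists for $p$ spacelike or future causal. Then for $t\ge 1$,
\[
p(tv_0)+u(t|v_0|_g)\ge t\,p(v_0)+u(|v_0|_g)\to\infty,
\]
which uses only monotonicity of $u$.

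Dually, at future null $v$ the second identity requires $L(v)=-u(0)=u^*(\infty)$. The same boost computation gives this with the roles of $u$ and $u^*$ exchanged. With these corrections your proof goes through.
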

    \begin{proof}
        This follows from the same arguments used by McCann in the case of $u = u_p$ and $u^* = u_q$ \cite[Lem.\ 3.1]{McCann:2020}, using the fact that $(u^*)' = (u')^{-1}$ holds generally (cf.\ Lemma \ref{Lemma: Concave conjugate}) in conjunction with the following exact forms of $DL,DH,D^2L,D^2H$ at future timelike vectors $v \in T_xM$ resp.\ past timelike covectors $p \in T^*_xM$:
        \begin{align*}
            &DL(v,x;u) = - u'(|v|_g) \frac{v_*}{|v|_g}, \quad DH(p,x;u^*) = -(u^*)'(|p|_g) \frac{p_*}{|p|_g},\\
            &D^2L(v,x;u) = - \frac{u'(|v|_g)}{|v|_g} g + (u'(|v|_g) - |v|_g u''(|v|_g)) \frac{v_* \otimes v_*}{|v|_g^3},\\
            &D^2 H(p,x;u^*) = -\frac{(u^*)'(|p|_g)}{|p|_g} g + ((u^*)'(|p|_g) - |p|_g (u^*)''(|p|_g)) \frac{p_* \otimes p_*}{|p|_g^3}.
        \end{align*}
        Here, $v_* = g(v,\cdot) \in T_x^*M$, $p_* = g(p,\cdot) \in T_xM$, and we use $g$ to denote both the scalar product on $T_xM$ and on $T_x^*M$. 
    \end{proof}

\begin{Corollary}[Twist and non-degeneracy]\label{cor: twist and non degeneracy}
        For an admissible function $u:(0,\infty)\to\R$,
        \begin{enumerate}
            \item $u\circ\ell: M^2 \to [-\infty,+\infty)$ inherits properties (a), (b) and (d)(I) of Theorem \ref{thm: smoothness of l} from $\ell$, and is moreover $C^2$ precisely on $M^2 \setminus \operatorname{sing}(\ell)$;
            \item If there exists a supergradient $w \in T_x^*M$ of $u\circ\ell(\cdot,y)$ at the point $x$ and $\ell(x,y) > 0$, then $v = DH(w,x;u^*)$ is the unique vector such that $y = \exp_x(v)$;
            \item If $(x,y)\in\{\ell>0\}\cap\operatorname{sing}(\ell)$, then 
            \begin{equation}
                \sup_{0<|w|_{\tilde{g}}<1}\frac{u\circ\ell\left(\exp^{\tilde{g}}_xw,y\right)+u\circ\ell\left(\exp^{\tilde{g}}_x(-w),y\right)-2u\circ\ell(x,y)}{2|w|^2_{\tilde{g}}}=+\infty; \label{eq: twist&nondegeneracy2}
            \end{equation}
            \item If $(x,y)\notin\operatorname{sing}(\ell)$, then $\det \frac{\partial^2 (u \circ \ell)}{\partial x^j \partial y^i}\neq0$.
        \end{enumerate}
    \end{Corollary}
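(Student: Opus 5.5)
The plan is to reduce each item to the corresponding statement for $\ell$ in Theorem \ref{thm: smoothness of l} (McCann's twist and non-degeneracy for $p\in(0,1)$). The tools are the chain rule for $u\circ\ell$ on $\{\ell>0\}$, where $u$ is $C^2$ with $u'>0$ and $u''<0$, together with the Hamiltonian duality of Proposition \ref{lem: lagrange-hamilton duality}.

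\emph{Item (i).} Set $u(-\infty):=-\infty$ and use that $u$ extends to a monotone map $[-\infty,\infty)\to[-\infty,\infty)$, continuous in the extended sense at $0$ with $u(0)\in[-\infty,\infty)$.
\begin{itemize}
\item Upper semicontinuity and continuity on $\ell^{-1}([0,\infty))$ follow because composing with a continuous nondecreasing map preserves both properties.
\item On the open set $\{\ell>0\}$, local Lipschitz continuity and local semiconvexity follow from the corresponding properties of $\ell$, since $u$ is $C^2$ there. Semiconvexity is preserved: in charts we have $f=\ell+$ smooth convex-type correction, $u\circ f$ has second difference quotients bounded below by $u'(f)\cdot(\text{lower bound for }\ell)+u''(f)|\nabla f|^2$, and the last term is bounded by the Lipschitz constant.
\item $u\circ\ell$ is $C^2$ off $\operatorname{sing}(\ell)$ by the chain rule. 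Conversely, if $u\circ\ell$ is $C^2$ near a point with $\ell>0$, then $\ell=u^{-1}\circ(u\circ\ell)$ is $C^2$ there, since $u^{-1}$ is $C^2$ by $u'>0$. By McCann's argument, $C^2$ (indeed differentiability) fails on $\operatorname{sing}(\ell)$; at points with $\ell\le 0$ the function fails even to be finite or continuous in a neighbourhood.
\end{itemize}

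\emph{Item (ii).} Let $w$ be a supergradient of $u\circ\ell(\cdot,y)$ at $x$ with $\ell(x,y)>0$. By Theorem \ref{thm: smoothness of l}(d)II, $\ell(\cdot,y)$ has a subgradient $-v_*/|v_*|$ for any maximizing $v$ with $y=\exp_x v$. Chain rule estimates with $u$ $C^1$ give that $u'(\ell)\cdot(-v_*/|v|)$ is a subgradient of $u\circ\ell(\cdot,y)$. Having both a sub- and a supergradient forces differentiability, so $w=-u'(\ell(x,y))\,v_*/|v|_g$ with $|v|_g=\ell(x,y)$. This shows that $v$ is unique. Applying the formula $DH(p)=-(u^*)'(|p|)p_*/|p|$ with $|w|=u'(|v|)$ and $(u^*)'\circ u'=\mathrm{id}$ gives $DH(w,x;u^*)=|v|\,v/|v|=v$.

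\emph{Item (iii).} At $(x,y)\in\{\ell>0\}\cap\operatorname{sing}(\ell)$, McCann's result gives the displayed supremum $=+\infty$ for $\ell$. Write the second difference of $u\circ\ell$ via Taylor expansion, $u(a)=u(b)+u'(b)(a-b)+\tfrac12u''(\xi)(a-b)^2$. The first-order terms give $u'(\ell(x,y))$ times the second difference of $\ell$. The remainder is bounded by $C|w|^2$ using the local Lipschitz bound for $\ell$ and the boundedness of $u''$ on a compact subinterval of $(0,\infty)$. Since $u'(\ell(x,y))>0$, the supremum stays $+\infty$.

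\emph{Item (iv).} At non-singular $(x,y)$ we have
\begin{equation*}
\partial_{x^j}\partial_{y^i}(u\circ\ell)=u'(\ell)\,\partial_{x^j}\partial_{y^i}\ell+u''(\ell)\,\partial_{x^j}\ell\,\partial_{y^i}\ell .
\end{equation*}
This is a rank-one perturbation of $u'(\ell)A$, where $A$ is the mixed Hessian of $\ell$. By the matrix determinant lemma, $\det=u'^n\det A\,(1+\tfrac{u''}{u'}\,\partial_y\ell^{T}A^{-1}\partial_x\ell)$. The main obstacle is showing that the bracket does not vanish. I would first try to avoid this directly by arguing through (ii): the map $y\mapsto D_x(u\circ\ell)(x,y)=u'(\ell)D_x\ell$ is locally injective with inverse $w\mapsto\exp_xDH(w,x;u^*)$, and this inverse is a composition of a local diffeomorphism, namely $\exp_x$ restricted to non-conjugate vectors, with the $C^1$-diffeomorphism $DH$ from Proposition \ref{lem: lagrange-hamilton duality}. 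Hence the differential in $y$ is invertible, which is exactly the non-vanishing of the mixed determinant.
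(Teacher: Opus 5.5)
Your proposal is correct and follows essentially the paper's route. You use a Taylor expansion of $u$ for the semiconvexity in (i) and the blow-up in (iii), where the paper argues (iii) contrapositively via $u^{-1}$. In (ii) you squeeze a sub- and a supergradient together and then apply Lagrangian--Hamiltonian duality, as the paper does. In (iv), your left inverse $w\mapsto\exp_x DH(w,x;u^*)$ is the inverted form of the paper's step of differentiating $w=DL(\exp_x^{-1}y,x;u)$ in $y$.

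One small correction in (i): differentiability need not fail everywhere on $\{\ell>0\}\cap\operatorname{sing}(\ell)$; it can hold, for example, at conjugate cut points reached by a unique maximising segment. This does not affect your argument, since you only need failure of $C^2$ there, and that already follows from the unbounded second difference quotients in (iii).
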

    \begin{proof}
        \begin{enumerate}
            \item The properties (a), (b) and as well as the claimed regularity of $u \circ \ell$ on $M^2 \setminus \operatorname{sing}(\ell)$ are immediate given that $u$ is $C^2$ and extendible to a continuous function $u:[0,\infty) \to [-\infty,+\infty)$. 
                
                To prove the inheritance of local semiconvexity from $\ell$, we work with the following characterisation of the property: a function $f:\R^n\to[0,\infty)$ has semiconvexity constant $C$ near $\bar{x}$ if there exists $p\in\partial_\cdot f(\bar{x})$ such that 
                \begin{equation*}
                    f(x)\geq f(\bar{x})+p(x-\bar{x})-\frac{1}{2}C|x-\bar{x}|^2+O(|x-\bar{x}|^2)
                \end{equation*}
                holds for all $x$ in a small enough neighbourhood of $\bar{x}$. The idea now is to evaluate $u$ with the elements of this inequality, which will be preserved since $u'>0$. To obtain a similar expression for $u\circ f$, we use the Taylor expansion of $u$ around a point:
                \begin{equation*}
                    u(\bar{y}+y)=u(\bar{y})+u'(\bar{y})\,(y-\bar{y})+\frac{1}{2}u''(\bar{y})\,|y-\bar{y}|^2.
                \end{equation*}
                Bringing everything together, we conclude that $u\circ f$ has at $\bar{x}$ semiconvexity constant $Cu'(f(\bar{x}))-2u''(f(\bar{x}))|p|^2$. Applying this argument in Riemannian normal coordinates gives the local semiconvexity of the function $u\circ\ell(\cdot,y)$ at each point $\bar{x}$ with $\ell(\bar{x},y)>0$.

                From this, the fact that $u\circ\ell$ is locally Lipschitz follows immediately, since locally bounded convex functions are locally Lipschitz.

                \item Suppose $u\circ\ell(\cdot,y)$ admits $w\in T^*_xM$ as a supergradient. Using the one-sided chain rule \cite[Lem.~5]{McCann:01}, one can obtain that $\ell(\cdot,y)$ admits
            \begin{equation*}
                \frac{1}{u'\left(u^{-1}(u(\ell(x,y)))\right)}w=\frac{1}{u'\left(\ell(x,y)\right)}w
            \end{equation*}
            as a supergradient at $x\in M$. Since the $\ell(\cdot,y)$ is always subdifferentiable on $I^-(y)$ by Theorem \ref{Theorem: SmoothnessofLorentzdistance}, we conclude that it is thus differentiable at $x$, whence $w$ is unique. Using the chain rule and noting that $\ell(x,y) = |\exp^{-1}_xy|_g$, we obtain
            \begin{equation}
            \label{eq: w = DL}
                w = d(u \circ \ell(\cdot,y))(x) = u'( |\exp^{-1}_x y|_g) \,\frac{- \exp_x^{-1}y}{|\exp_x^{-1}y|_g} = DL(\exp_x^{-1}y,x; u).
            \end{equation}
            An application of $DH(\cdot,x; u^*)$ to this equation gives the claim.

             \item If \eqref{eq: twist&nondegeneracy2} does not hold, this would imply that the function $f(\cdot):=u\circ\ell(\cdot,y)$ has semiconcavity constant $C>0$ at some $\bar{x}$ with $(\bar{x},y)\in\{\ell>0\}\cap\operatorname{sing}(\ell)$. This would now imply that $u^{-1}\circ f$ has semiconcavity constant $\tilde{C}$ at $\bar{x}$, contradicting \cite[Thm.~3.5]{McCann:2020}. Therefore \eqref{eq: twist&nondegeneracy2} must hold.

             \item This is simply a consequence of convexity of $L$ on the future timelike cone, upon differentiating \eqref{eq: w = DL} with respect to $y$.
                \end{enumerate}
                \end{proof}

        \begin{Remark}[Solutions for the functions $u_p$]\label{rem: solutions for uq}
        Note that in the case where $u(x)=\frac{1}{p}x^p$, $0<p<1$, in equation \eqref{eq: w = DL} we can write an explicit expression for $v=DH(w,x;u^*)$. This is the result obtained in {\cite[Cor.\ 3.7]{McCann:2020}}, which gives that the geodesic joining $x$ and $y$ in the hypotheses of Corollary \ref{cor: twist and non degeneracy} is the exponential map with tangent vector $DH(w,x;q)=-\vert w\vert_{g}^{q-2}\,w$, where $H$ is the Hamiltonian defined by McCann \cite[Lem.\ 3.1]{McCann:2020}, and $q$ is the conjugate exponent to $p$,\ i.e. $q^{-1}+p^{-1}=1$. Note that this explicit solution is also valid for the more general case in which $u(x)=\frac{1}{p}x^p$, $0\neq p<1$. In the case where $p=0$, note that \eqref{eq: w = DL} can be explicitly written as $$w=-\frac{v_*}{\vert v_*\vert^2_{g}}\Big\vert_{v=\exp_x^{-1}y},$$ and it is easily obtained that $y=\exp_x -\vert w\vert^{-2}_{g}w$.
    \end{Remark}

    \subsection{The $\ell_u$-optimal transport problem for admissible functions $u$}

    Let $u:(0,\infty) \to \R$ be admissible. Given $\mu,\nu \in \mathcal{P}(M)$, we write $\Pi(\mu,\nu):=\{\pi \in \mathcal{P}(M^2) : (pr_1)_{\#} \pi = \mu, \, (pr_2)_{\#}\pi = \nu \}$ for the set of couplings between $\mu$ and $\nu$, $\Pi_\leq(\mu,\nu) = \{\pi \in \Pi(\mu,\nu) : \pi(M^2_\leq) = 1\}$ for the set of causal couplings, as well as
    \begin{equation*}
        \Pi_{\leq}^{u}(\mu,\nu):= \{ \pi \in \Pi(\mu,\nu) :  (u \circ \ell)_- \in L^1(\pi)\} \subseteq \Pi_\leq(\mu,\nu)
    \end{equation*}
    for the set of \emph{$u$-compatible causal couplings} between $\mu$ and $\nu$. While the integrability condition $(u \circ \ell)_- \in L^1(\pi)$ already forces $\pi(M^2_\leq) = 1$, we leave the subscript ``$\leq$" in the notation for clarity. Note that if $u$ is bounded below, the integrability condition is trivially satisfied. Also, note that for any $\lambda  > 0$ also $u_\lambda(x):=u(x/\lambda)$ is admissible.

    \begin{Definition}[$u$-Lorentz--Orlicz--Wasserstein time separation]
    We define the \emph{$u$-Lorentz--Orlicz--Wasserstein time separation} $\ell_u:\mathcal{P}(M)^2 \to [0,\infty) \cup \{-\infty\}$ as follows: set $\ell_u(\mu,\nu):=-\infty$ if $\Pi_{\leq}(\mu,\nu) = \emptyset$, otherwise
    \begin{equation*}
        \ell_u(\mu,\nu):=\sup \bigg\{\lambda > 0 : \sup_{\pi \in \Pi^{u_\lambda}_{\leq}(\mu,\nu)} \int_{M^2} u_\lambda \circ\ell \, d\pi \geq u(1)\bigg\} \cup \{0\}.
    \end{equation*}

    For $\mu,\nu\in\mathcal{P}(M)$ such that $\lambda:=\ell_u(\mu,\nu) \in (0,\infty)$, a coupling $\pi\in\Pi^{u_\lambda}_{\leq}(\mu,\nu)$ is said to be \emph{optimal} if 
    \begin{equation}
    \label{eq: optimal coupling inequality}
        \sup_{\pi' \in \Pi^{u_\lambda}_\leq(\mu,\nu)} \int_{M^2} u_\lambda \circ \ell \, d\pi' =\int_{M^2} u_\lambda \circ \ell \, d\pi(x,y) \geq u(1).
    \end{equation}
    \end{Definition}
    
\begin{Remark}[On $\ell_u$]
    Observe that the set of eligible $\lambda > 0$ in the definition of $\ell_u(\mu,\nu)$ is the following: It is the set of those $\lambda > 0$ for which $\Pi_{\leq}^{u_\lambda}(\mu,\nu) \neq \emptyset$, and the given integral inequality holds. If there is no eligible $\lambda > 0$ of this type while $\Pi_\leq(\mu,\nu) \neq \emptyset$, then $\ell_u(\mu,\nu) = 0$ by definition. Also note that since $u(\ell(x,y)/\lambda) \leq u(\ell(x,y)/\lambda')$ whenever $\ell(x,y) \geq 0$ and $\lambda \geq \lambda'$, it follows that whenever $(u_\lambda \circ \ell)_- \in L^1(\pi)$ for some $\pi \in \Pi_\leq(\mu,\nu)$, then also $(u_{\lambda'} \circ \ell)_- \in L^1(\pi)$. We conclude that $\Pi^{u_\lambda}_\leq(\mu,\nu) \subseteq \Pi^{u_{\lambda'}}_\leq(\mu,\nu)$ for all $(\mu,\nu) \in \mathcal{P}(M)^2$ whenever $\lambda' \leq \lambda$.
    \end{Remark}

    \begin{Lemma}[Simplifications for the sets $\Pi^{u_\lambda}_\leq$]\label{Lem: simplifications for Pi}
Let $u$ be admissible.
\begin{enumerate}
    \item If $u$ is bounded below, then for every $\mu,\nu \in \mathcal P(M)$ and every $\lambda > 0$,
    \begin{equation*}
        \Pi^{u_\lambda}_\leq(\mu,\nu) = \Pi_\leq(\mu,\nu).
        \end{equation*}
    \item If $u$ is bounded above, then for every $\mu,\nu \in \mathcal{P}(M)$, if $\ell_u(\mu,\nu) > -\infty$, we have
    \begin{equation*}
        \ell_u(\mu,\nu) =\sup \bigg\{\lambda > 0 : \sup_{\pi \in \Pi_{\leq}(\mu,\nu)} \int_{M^2} u(\lambda^{-1}\ell(x,y)) \, d\pi(x,y) \geq u(1)\bigg\} \cup \{0\},
    \end{equation*}
    i.e., the supremum over $\Pi^{u_\lambda}_\leq(\mu,\nu)$ can be replaced by the supremum over $\Pi_\leq(\mu,\nu)$. This claim also holds for arbitrary admissible $u$ if $\mu,\nu$ have compact support.
    \item If $\mu,\nu \in \mathcal P(M)$ are such that $\mathrm{supp}(\mu) \times \mathrm{supp}(\nu)$ is compact and contained in $\{\ell > 0\}$, then for every $\lambda > 0$
    \begin{equation*}
        \Pi^{u_\lambda}_\leq(\mu,\nu) = \Pi_\leq(\mu,\nu).
    \end{equation*}
\end{enumerate}
\end{Lemma}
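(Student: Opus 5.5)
For (i), I would start from the inclusion $\Pi^{u_\lambda}_\leq(\mu,\nu)\subseteq\Pi_\leq(\mu,\nu)$, which always holds, and prove the reverse one. If $u\geq c$ on $(0,\infty)$ with $c\in\R$, then $u(0)\geq c$, and hence $u_\lambda\circ\ell\geq\min\{c,0\}$ on $M^2_\leq$. The convention $u(-\infty)=-\infty$ only affects the complement of $M^2_\leq$, which is $\pi$-null for $\pi\in\Pi_\leq(\mu,\nu)$. So $(u_\lambda\circ\ell)_-\leq|c|$ holds $\pi$-a.e. Since $\pi$ is a probability measure, $(u_\lambda\circ\ell)_-\in L^1(\pi)$.

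For (ii), I would write $A_\lambda:=\sup_{\Pi^{u_\lambda}_\leq}\int u_\lambda\circ\ell\,d\pi$ and $B_\lambda:=\sup_{\Pi_\leq}\int u_\lambda\circ\ell\,d\pi$. The integral defining $B_\lambda$ makes sense in $[-\infty,\infty)$ because $u\leq C$ gives $(u_\lambda\circ\ell)_+\leq C_+$. For $\pi\in\Pi_\leq\setminus\Pi^{u_\lambda}_\leq$, the negative part is not integrable while the positive part is bounded, so $\int u_\lambda\circ\ell\,d\pi=-\infty$. Such couplings therefore cannot raise the supremum, and $A_\lambda=B_\lambda$ whenever $\Pi^{u_\lambda}_\leq\neq\emptyset$. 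If $\Pi^{u_\lambda}_\leq=\emptyset$, then $A_\lambda=\sup\emptyset=-\infty$ and also $B_\lambda=-\infty$. In either case the eligible sets of $\lambda$ coincide, because $u(1)$ is finite.

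For compactly supported $\mu,\nu$ I would instead use the continuity of $\ell_+$ (Theorem \ref{Theorem: SmoothnessofLorentzdistance}(b)). It gives boundedness of $\ell$ on the compact set $\mathrm{supp}\mu\times\mathrm{supp}\nu\cap M^2_\leq$, so $u_\lambda\circ\ell$ is bounded above $\pi$-a.e. for every $\pi\in\Pi_\leq(\mu,\nu)$, since $\pi$ is concentrated on that set. The same argument as above then applies.

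For (iii), I would note that $\ell$ is continuous and positive on the compact set $K:=\mathrm{supp}\mu\times\mathrm{supp}\nu\subseteq\{\ell>0\}$, so $0<m\leq\ell\leq m'<\infty$ on $K$. Every $\pi\in\Pi(\mu,\nu)$ is concentrated on $K$, so $\Pi_\leq(\mu,\nu)=\Pi(\mu,\nu)$. Moreover $u_\lambda\circ\ell\in[u(m/\lambda),u(m'/\lambda)]$, a bounded real interval, so it is $\pi$-integrable.

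The only delicate point I expect is bookkeeping with $\pm\infty$ and empty suprema in (ii). Everything else reduces to pointwise bounds holding $\pi$-a.e.
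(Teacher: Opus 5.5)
Your proposal is correct and follows essentially the same route as the paper. Parts (i) and (iii) come from pointwise $\pi$-a.e.\ bounds on $u_\lambda\circ\ell$, and (ii) from the observation that couplings in $\Pi_\leq(\mu,\nu)\setminus\Pi^{u_\lambda}_\leq(\mu,\nu)$ integrate $u_\lambda\circ\ell$ to $-\infty$ once $u_\lambda\circ\ell$ is bounded above (globally, or on $\mathrm{supp}(\mu)\times\mathrm{supp}(\nu)$ in the compact case); your explicit handling of $\Pi^{u_\lambda}_\leq(\mu,\nu)=\emptyset$ and of the convention $u(-\infty)=-\infty$ just fills in bookkeeping the paper leaves implicit.
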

\begin{proof}
    The claims in (i) and (iii) are elementary to check. For (ii), note that if $\pi \in \Pi_\leq(\mu,\nu) \setminus \Pi^{u_\lambda}_\leq(\mu,\nu)$, by the boundedness of $u$ from above we have that
    \begin{equation*}
        \int_{M^2} u_\lambda \circ \ell \, d\pi = -\infty < u(1),
    \end{equation*}
    so taking the supremum over $\Pi_\leq(\mu,\nu)$ does not change $\ell_u(\mu,\nu)$. The same argument holds if $\mu,\nu$ have compact support, since in this case $u_\lambda \circ \ell$ is bounded above on $\mathrm{supp}(\mu) \times \mathrm{supp}(\nu)$ for every $\lambda > 0$.
\end{proof}

    The following is easily checked.

\begin{Lemma}[Constant shifts]
For any constant $c \in \R$ and admissible function $u:(0,\infty) \to \R$, also $u+c$ is admissible and
\begin{equation*}
    \ell_{u+c} = \ell_u.
\end{equation*}
\end{Lemma}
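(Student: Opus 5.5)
The plan is to check the two claims separately, and then to show that every ingredient in the definition of $\ell_{u+c}$ coincides with the corresponding ingredient of $\ell_u$, case by case.

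\emph{Admissibility of $u+c$.} The derivatives do not change: $(u+c)' = u'$ and $(u+c)'' = u''$. So $u+c$ is $C^2$, $(u+c)'>0$, $(u+c)''<0$, and $(u+c)'$ is surjective onto $(0,\infty)$. Hence $u+c$ is admissible.

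\emph{Equality $\ell_{u+c}=\ell_u$.} Fix $\mu,\nu \in \mathcal P(M)$. If $\Pi_\leq(\mu,\nu)=\emptyset$, both sides equal $-\infty$ by definition. Otherwise, fix $\lambda>0$. By definition $(u+c)_\lambda = u_\lambda + c$, with the convention $(u+c)(-\infty) = -\infty = u(-\infty)$ off $M^2_\leq$.

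\begin{enumerate}
\item Adding a constant preserves integrability of the negative part, because $(f+c)_- \le f_- + |c|$ and $f_- \le (f+c)_- + |c|$, and $\pi$ is a probability measure. Hence
\begin{equation*}
\Pi^{(u+c)_\lambda}_\leq(\mu,\nu)=\Pi^{u_\lambda}_\leq(\mu,\nu).
\end{equation*}
The points where $\ell=-\infty$ are handled the same way for both functions: either coupling class forces $\pi(M^2_\leq)=1$.
\item For every $\pi$ in this common set, $\int (u_\lambda+c)\circ\ell\,d\pi = \int u_\lambda\circ\ell\,d\pi + c$, where the integrals are well defined in $(-\infty,+\infty]$. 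Taking suprema over the common set gives
\begin{equation*}
\sup_{\pi} \int (u+c)_\lambda\circ\ell\,d\pi = \sup_{\pi}\int u_\lambda\circ\ell\,d\pi + c.
\end{equation*}
This holds also when the set is empty (both suprema are $-\infty$) and when the supremum is $+\infty$.
\item The threshold shifts in the same way, since $(u+c)(1)=u(1)+c$.
\end{enumerate}

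Consequently the condition $\sup \int (u+c)_\lambda\circ\ell \geq (u+c)(1)$ is equivalent to $\sup\int u_\lambda\circ\ell \ge u(1)$. The sets of eligible $\lambda$ therefore coincide, and so do their suprema (with $\{0\}$ adjoined). This proves $\ell_{u+c}(\mu,\nu)=\ell_u(\mu,\nu)$.

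The only step needing any care is the bookkeeping with infinite values, namely the $-\infty$ convention off the causal set and integrals taking the value $+\infty$. This is resolved by the integrability restriction $(u_\lambda\circ\ell)_-\in L^1(\pi)$, which keeps every integral in $(-\infty,\infty]$, where adding $c$ is unambiguous.
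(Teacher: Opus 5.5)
Your proof is correct and is the routine verification the paper has in mind; the paper states the lemma as ``easily checked'' and gives no proof. The derivatives of $u+c$ equal those of $u$, the classes $\Pi^{(u+c)_\lambda}_\leq(\mu,\nu)$ and $\Pi^{u_\lambda}_\leq(\mu,\nu)$ coincide, and both the suprema and the threshold $u(1)$ shift by the same constant $c$, so the eligible $\lambda$ coincide. Your treatment of the $-\infty$ values (from $\ell=-\infty$ or $u(0)=-\infty$) through the integrability restriction is also sound.
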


\begin{Example}[$\ell_u = \ell_p$ for $u = u_p$]
    In the Lorentzian optimal transport literature (see e.g.\ \cite{McCann:2020, MS:22,CM:20,Braun:2023Renyi}), the most common function $u$ which is considered is $u = u_p$ for $p \in (0,1)$ (more recently, also $p < 0$, see e.g.\ \cite{Octet, braun2024dAlembertian}). For $0 \neq p < 1$, classically, the $p$-Lorentz-Wasserstein time separation $\ell_p:\mathcal{P}(M)^2 \to \{-\infty\} \cup [0,\infty]$ is defined as $\ell_p(\mu,\nu):=-\infty$ if $\Pi_{\leq}(\mu,\nu) = \emptyset$, otherwise
    \begin{equation*}
        \ell_p(\mu,\nu):=u_p^{-1}\left( \sup_{\pi \in \Pi_\leq(\mu,\nu)} \int_{M^2} u_p \circ \ell \, d\pi\right),
    \end{equation*}
    see \cite[Def.\ 2.12]{Octet}, where $u_p$ is understood to be extended to $u_p(0)$ and $u_p(\infty)$ (and, accordingly, also $u_p^{-1}$ is extended). We wish to show that $\ell_{u_p} = \ell_p$ for $0 \neq p < 1$.

\begin{enumerate}
    \item First, suppose $p \in (0,1)$. In this case $u_p$ is bounded below, so that $\Pi^{(u_p)_\lambda}_\leq(\mu,\nu) = \Pi_\leq(\mu,\nu)$ for every $(\mu,\nu) \in \mathcal P(M)^2$. For simplicity, let $\ell_p(\mu,\nu) < + \infty$, the case $\ell_p(\mu,\nu) = + \infty$ can easily be handled by taking limits. We suppose first that $\ell_{u_p}(\mu,\nu) > 0$. Then for every $0 < \lambda < \ell_{u_p}(\mu,\nu)$, we have that (using the $p$-homogeneity of $u_p$)
    \begin{align*}
        &\sup_{\pi \in \Pi_\leq(\mu,\nu)}\int_{M^2} u_p\left( \frac{\ell(x,y)}{\lambda}\right) \, d\pi(x,y) \geq u_p(1)\\
        \Leftrightarrow \quad &\sup_{\pi \in \Pi_\leq(\mu,\nu)}\int_{M^2} u_p(\ell(x,y)) \, d\pi(x,y) \geq u_p(\lambda).
    \end{align*}
    Inverting $u_p$, we conclude that
    \begin{align*}
        \lambda \leq u_p^{-1}\left( \sup_{\pi \in \Pi_\leq(\mu,\nu)} \int_{M^2} u_p \circ \ell \, d\pi\right) = \ell_p(\mu,\nu).
    \end{align*}
    Taking a supremum over $\lambda < \ell_{u_p}(\mu,\nu)$, we thus conclude that $\ell_{u_p}(\mu,\nu) \leq \ell_p(\mu,\nu)$. In particular, also $\ell_p(\mu,\nu) > 0$ since we assumed $\ell_{u_p}(\mu,\nu) > 0$. To prove the reverse inequality, observe that $\ell_p(\mu,\nu)$ is itself among the eligible $\lambda$ in the definition of $\ell_{u_p}$. Indeed, for any $\pi \in \Pi_\leq(\mu,\nu)$,
    \begin{align*}
        \int_{M^2} u_p\left(\frac{\ell(x,y)}{\ell_p(\mu,\nu)} \right) \, d\pi(x,y) &= \frac{1}{\ell_p(\mu,\nu)^p} \int_{M^2} u_p(\ell(x,y)) \, d\pi(x,y) \\
        &= \frac{1}{p} \frac{1}{u_p(\ell_p(\mu,\nu))} \int_{M^2} u_p(\ell(x,y)) \, d\pi(x,y).
    \end{align*}
    Taking the supremum over $\pi \in \Pi_\leq(\mu,\nu)$, the right hand side becomes $1/p = u(1)$, as claimed. Hence, $\ell_p(\mu,\nu) \leq \ell_{u_p}(\mu,\nu)$ and thus equality must hold.

    Now, suppose that $\ell_{u_p}(\mu,\nu) = 0$. Then also $\ell_p(\mu,\nu) = 0$, since otherwise the calculation above shows that $\ell_p(\mu,\nu) > 0$ is among the eligible $\lambda$ in the definition of $\ell_{u_p}$, from which $\ell_{u_p}(\mu,\nu) > 0$ would follow.

    \item Now we consider the case $p < 0$. Note that due to $u_p(\lambda^{-1} \ell(x,y)) = \lambda^{-p}\, u_p(\ell(x,y))$ for every $\lambda > 0$, we have that $\Pi_\leq^{(u_p)_\lambda}(\mu,\nu) = \Pi^{u_p}_\leq(\mu,\nu)$ for every $\lambda > 0$. Moreover, for $\pi \in \Pi_\leq(\mu,\nu) \setminus \Pi^{u_p}_\leq(\mu,\nu)$, given that $u_p < 0$, we have that $\int_{M^2} u_p \circ \ell \, d\pi = -\infty$, so that these couplings may be omitted in the definition of $u_p$:
    \begin{equation*}
        \ell_p(\mu,\nu) = u_p^{-1} \left( \sup_{\pi \in \Pi^{u_p}_\leq(\mu,\nu)}\int_{M^2} u_p \circ \ell \, d\pi \right).
    \end{equation*}
    Having made this observation, the compatibility $\ell_{u_p}(\mu,\nu) = \ell_p(\mu,\nu)$ is checked precisely as in the case $p \in (0,1)$ in (i).

    \item Let us remark on the case $p = 0$. Here, $u_0 = \log$, and $\log(\ell(x,y)/\lambda) = \log(\ell(x,y)) - \log(\lambda)$ for every $\lambda > 0$, so that for given $\pi \in \Pi_\leq(\mu,\nu)$ we have
    \begin{equation*}
        \int_{M^2} \log(\ell(x,y)/\lambda) \, d\pi = \int_{M^2} \log(\ell(x,y)) \, d\pi - \log(\lambda).
    \end{equation*}
    It follows that $\Pi^{(u_0)_\lambda}_\leq(\mu,\nu) = \Pi^{u_0}_\leq(\mu,\nu)$ for every $\lambda > 0$ and $(\mu,\nu) \in \mathcal{P}(M)^2$. While a definition of $\ell_0$ (analogous to $\ell_p$ for $0 \neq p < 1$) has not been given in the literature so far, it is reasonable to define it as $\ell_0(\mu,\nu):=-\infty$ if $\Pi_\leq(\mu,\nu) = \emptyset$, and otherwise 
    \begin{equation*}
        \ell_0(\mu,\nu):=u_0^{-1} \left( \sup_{\pi \in \Pi^{u_0}_\leq(\mu,\nu)} \int_{M^2} u_0 \circ \ell \, d \pi \right) = \exp \left( \sup_{\pi \in \Pi^{\log}_\leq(\mu,\nu)} \int_{M^2} \log \circ \ell \, d \pi\right).
    \end{equation*}
    As in the cases $0 \neq p < 1$, one readily checks that $\ell_0 = \ell_{u_0}$. In fact, the normalisation $u(1)$ in the definition of $\ell_u$ was chosen precisely in order for these compatibilities to be true (and is fairly arbitrary otherwise).
    \item The case $p = 1$ can also be covered by $\ell_u$ if one weakens the definition of admissible function to only require $u'' \leq 0$ instead of $< 0$. However, since we our primary interest in this paper is to characterise timelike Ricci curvature bounds via entropic convexity, strict concavity of $u$ will be essential, so we require it throughout for simplicity.
\end{enumerate}
    \end{Example}

The following gives a basic result on the existence of optimal couplings and their properties.

\begin{Proposition}[Existence of optimal couplings]
\label{prop: existence of optimal coupling}
Let $\mu,\nu \in \mathcal P(M)$ such that $\lambda:=\ell_u(\mu,\nu) \in (0,\infty)$, and suppose there are lower semicontinuous functions $a,b: M \to \R$ such that $a \in L^1(\mu), \, b \in L^1(\nu)$, and for some $\lambda' < \lambda$, we have $u_{\lambda'}(\ell(x,y)) \leq a\oplus b(x,y):=a(x) + b(y)$ on $\mathrm{supp}(\mu) \times \mathrm{supp}(\nu)$. Moreover, assume that $\eta \mapsto S_{\mu,\nu}(\eta)$ is right-continuous at $\eta = \lambda$, where
\begin{equation*}
    S_{\mu,\nu}(\eta):=\sup_{\pi' \in \Pi^{u_\eta}_\leq(\mu,\nu)} \int_{M^2} u_\eta \circ \ell \, d\pi'.
\end{equation*}
Then for any $\pi \in \Pi_\leq(\mu,\nu)$ and any $\lambda'' \geq \lambda'$, $(u_{\lambda''}\circ \ell)_+ \in L^1(\pi)$. In particular, in this case $\ell_u(\mu,\nu)$ can be written as
\begin{equation}
\label{eq: rewriting of ellu}
    \lambda = \ell_u(\mu,\nu) = \sup\left\{ \lambda'' \geq \lambda' : \sup_{\pi \in \Pi_\leq(\mu,\nu)} \int_{M^2} u_{\lambda''}\circ \ell \, d\pi \geq u(1)\right\}
\end{equation}
and there exists an optimal coupling $\pi \in \Pi^{u_\lambda}_\leq(\mu,\nu)$. Moreover, equality holds in the sense that
\begin{equation*}
     \int_{M^2} u_{\lambda} \circ \ell \, d\pi = \sup_{\tilde \pi \in \Pi_\leq(\mu,\nu)} \int_{M^2} u_{\lambda} \circ \ell \, d\tilde \pi = u(1).
\end{equation*}
Finally, if $u(0) = -\infty$, then any optimal coupling $\pi$ is concentrated on $\{\ell > 0\}$.
\end{Proposition}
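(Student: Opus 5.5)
We first establish the integrability claim. Fix $\pi \in \Pi_\leq(\mu,\nu)$ and $\lambda'' \geq \lambda'$. Since $u$ is increasing and $\ell \geq 0$ $\pi$-a.e., we have $u_{\lambda''}\circ\ell \leq u_{\lambda'}\circ \ell$ on $M^2_\leq$. Since $\pi$ is concentrated on $\mathrm{supp}(\mu)\times\mathrm{supp}(\nu)$, the majorant gives $u_{\lambda'}\circ\ell \leq a\oplus b$ $\pi$-a.e. Hence
\[
(u_{\lambda''}\circ\ell)_+ \leq (a\oplus b)_+ \leq |a|\oplus|b|,
\]
and the right-hand side lies in $L^1(\pi)$ because the marginals of $\pi$ are $\mu$ and $\nu$. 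Consequently $\int u_{\lambda''}\circ\ell\,d\pi$ is well defined in $[-\infty,\infty)$ for every $\pi\in\Pi_\leq(\mu,\nu)$. Couplings outside $\Pi^{u_{\lambda''}}_\leq$ give the value $-\infty < u(1)$. So, exactly as in Lemma \ref{Lem: simplifications for Pi}(ii), the supremum over $\Pi^{u_{\lambda''}}_\leq$ may be replaced by the one over $\Pi_\leq$.

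For \eqref{eq: rewriting of ellu} we also need to discard $\lambda''<\lambda'$. This follows from monotonicity: $\eta\mapsto S_{\mu,\nu}(\eta)$ is nonincreasing (the sets $\Pi^{u_\eta}_\leq$ shrink and the integrands decrease as $\eta$ grows, by the Remark on $\ell_u$). Hence the eligible set of $\lambda$ is an interval of the form $(0,\lambda)$ or $(0,\lambda]$, and since $\lambda'<\lambda$, restricting to $\lambda''\geq\lambda'$ does not change the supremum.

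Next we show $S_{\mu,\nu}(\lambda)\geq u(1)$. For every $\eta<\lambda$ the value $\eta$ is eligible by monotonicity, so $S_{\mu,\nu}(\eta)\geq u(1)$. Right-continuity at $\lambda$ alone does not give the limit from the left, so we show $S_{\mu,\nu}(\lambda) \geq \limsup_{\eta\uparrow\lambda} S_{\mu,\nu}(\eta)$ by a compactness argument.

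Take $\eta_k\uparrow\lambda$ with $\eta_k\geq\lambda'$, and $\pi_k\in\Pi_\leq(\mu,\nu)$ with $\int u_{\eta_k}\circ\ell\,d\pi_k\geq u(1)-1/k$. The set $\Pi(\mu,\nu)$ is tight, so after passing to a subsequence $\pi_k\rightharpoonup\pi$. Since $M^2_\leq$ is closed by global hyperbolicity, $\pi\in\Pi_\leq(\mu,\nu)$. Fix $m$. For $k\geq m$ we have $u_{\eta_k}\circ\ell\leq u_{\eta_m}\circ \ell$ on $M^2_\leq$, and the function $u_{\eta_m}\circ\ell$ is upper semicontinuous (Corollary \ref{cor: twist and non degeneracy}(i)) and bounded above by the lower semicontinuous integrable function $a\oplus b$. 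The standard upper semicontinuity lemma for integrals of usc functions dominated by $a\oplus b$ under weak convergence of couplings with fixed marginals (as in Villani's existence proof) then yields
\[
\int u_{\eta_m}\circ\ell\,d\pi \geq \limsup_k \int u_{\eta_k}\circ\ell\,d\pi_k \geq u(1).
\]
Letting $m\to\infty$, monotone convergence (from above, with integrable majorant) gives $\int u_\lambda\circ\ell\,d\pi\geq u(1)$. Thus $\pi\in\Pi^{u_\lambda}_\leq$ and $S_{\mu,\nu}(\lambda)\geq u(1)$.

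The same semicontinuity argument applied at the fixed parameter $\lambda$ to a maximising sequence shows that the supremum $S_{\mu,\nu}(\lambda)$ is attained. This gives an optimal coupling.

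For the equality $S_{\mu,\nu}(\lambda)=u(1)$ we use right-continuity. For $\eta>\lambda$ the value $\eta$ is not eligible, so $S_{\mu,\nu}(\eta)<u(1)$. Letting $\eta\downarrow\lambda$ gives $S_{\mu,\nu}(\lambda)\leq u(1)$, hence equality. This is exactly where the right-continuity hypothesis enters.

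Finally, suppose $u(0)=-\infty$ and $\pi$ is optimal. Then $u_\lambda\circ\ell=-\infty$ on $\{\ell=0\}$. Since $\int u_\lambda\circ\ell\,d\pi=u(1)$ is finite, this set must be $\pi$-null, so $\pi$ is concentrated on $\{\ell>0\}$.

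The main obstacle is the semicontinuity step: $u\circ\ell$ may be unbounded below and equals $-\infty$ off $M^2_\leq$, and the majorant $a\oplus b$ is only lower semicontinuous. I would handle this by truncating, replacing $a\oplus b$ by $\min(a\oplus b, R)$-type approximations, and using that integrals of lower semicontinuous functions bounded below are weakly lower semicontinuous, together with uniform integrability coming from the fixed marginals.
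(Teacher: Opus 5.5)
Your proposal is correct and follows the paper's proof essentially step for step: approximation along $\eta_k\uparrow\lambda$ with narrow compactness of $\Pi_\leq(\mu,\nu)$, the upper semicontinuity estimate for $u_{\eta}\circ\ell-a\oplus b\le 0$ under narrow convergence, monotone convergence in the scaling parameter, and right-continuity to obtain $S_{\mu,\nu}(\lambda)\le u(1)$. Your ``main obstacle'' needs no truncation, since that function is upper semicontinuous, bounded above by $0$ on $\mathrm{supp}(\mu)\times\mathrm{supp}(\nu)$, and $\int a\oplus b$ is the same for every coupling; the only other difference is that you prove attainment by a separate direct-method argument at fixed $\lambda$, whereas the paper notes that the limit coupling already attains $S_{\mu,\nu}(\lambda)$ because $\int u_\lambda\circ\ell\,d\pi\ge u(1)\ge S_{\mu,\nu}(\lambda)$, so your split merely shows in addition that attainment itself does not need right-continuity.
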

\begin{proof}
    The claim about the integrability of $(u_{\lambda''} \circ \ell)$ is trivial, since $u_{\lambda''}\circ \ell \leq u_{\lambda'} \circ \ell$ whenever $\lambda' \leq \lambda''$. From this, it is easily seen that one can rewrite $\ell_u$ in the form \eqref{eq: rewriting of ellu}, since any $\pi \in \Pi_\leq(\mu,\nu) \setminus \Pi^{u_{\lambda''}}_\leq(\mu,\nu)$ integrates $u_{\lambda''} \circ \ell$ to $-\infty$ and thus contributes nothing to the supremum.

    It remains to show that there exists an optimal coupling. To this end, pick $\lambda_k \uparrow \lambda$, and couplings $\pi_k \in \Pi_\leq(\mu,\nu)$ such that
    \begin{equation*}
        \int_{M^2} u_{\lambda_k} \circ \ell \, d\pi_k \geq u(1) - \frac{1}{k}.
    \end{equation*}
    Since $\Pi_\leq(\mu,\nu)$ is compact, up to subsequences, $(\pi_k)_{k\in\N}$ narrowly converges to some coupling $\pi \in \Pi_\leq(\mu,\nu)$. Fix now $\lambda'' \in (\lambda',\lambda)$. Then, since $u_{\lambda''} \circ \ell - a\oplus b$ is upper semicontinuous and bounded above (by $0$), using the inverse monotonicity of $\lambda'' \mapsto u_{\lambda''} \circ \ell$, we have (using the narrow convergence $\pi_k \to \pi$)
    \begin{align*}
        \int u_{\lambda''} \circ \ell - a \oplus b \, d\pi &\geq \limsup_{k \to \infty} \int u_{\lambda''} \circ \ell - a \oplus b\, d\pi_k\\
        &\geq \limsup_{k \to \infty} \int u_{\lambda_k} \circ \ell - a \oplus b \, d\pi_k\\
        &\geq u(1) - \int_M a \, d\mu - \int_M b \, d\nu.
    \end{align*}
    Adding $\int a\, d\mu + \int b \, d\nu$ to both sides and using the monotone convergence theorem for the limit $\lambda'' \uparrow \lambda$, we are able to conclude
    \begin{equation*}
        \int u_\lambda \circ \ell \, d\pi \geq u(1).
    \end{equation*}
    This implies that $\pi \in \Pi^{u_\lambda}_\leq(\mu,\nu)$. By right-continuity of $S(\eta)$ at $\eta = \lambda$, we can see that $S(\lambda) \leq u(1)$, and thus $=u(1)$ since $\pi$ saturates the supremum. Indeed, if $S(\lambda) > u(1)$, then by right-continuity $S(\lambda + \delta) > u(1)$ for some $\delta > 0$, hence $\lambda + \delta \leq \ell_u(\mu,\nu) = \lambda$, a contradiction. We conclude that $\pi$ is optimal and 
    \begin{equation*}
        \int_{M^2} u_\lambda \circ \ell \, d\pi = S(\lambda) = u(1).
    \end{equation*}
    Finally, observe that if $\pi$ is optimal and $u(0) = -\infty$, then necessarily $\pi(\{\ell > 0\}) = 1$ since otherwise $\int u_\lambda \circ \ell \, d \pi = -\infty = u(0) < u(1)$, contradicting optimality.
    \end{proof}

    \begin{Remark}[On the right-continuity of $S_{\mu,\nu}(\eta)$ at $\eta = \ell_u(\mu,\nu)$]
    \label{Remark: Right continuity}
    The assumption of right-continuity of $S_{\mu,\nu}(\eta)$ at $\eta = \lambda$ in the previous result, where
    \begin{equation*}
        S_{\mu,\nu}(\eta):=\sup_{\pi' \in \Pi^{u_\eta}_\leq(\mu,\nu)} \int_{M^2} u_\eta \circ \ell \, d\pi',
    \end{equation*}
    is trivial in many cases. For example, if $u=u_p$ for $0 \neq p < 1$, then
    \begin{equation*}
        (S_p)_{\mu,\nu}(\eta) = \eta^{-p} \ell_p(\mu,\nu)^p, 
    \end{equation*}
    and if $u = u_0$, then
    \begin{equation*}
        (S_0)_{\mu,\nu}(\eta) = \sup_{\pi \in \Pi_\leq^{\log}} \int_{M^2} \log \circ \ell \, d\pi - \log(\eta).
    \end{equation*}
    Two further instances where the right-continuity is easily seen to hold are if $u \circ \ell$ is bounded below on $\mathrm{supp}(\mu \times \nu)$ (which is for example the case if $u$ itself is bounded below), or if there exists an optimal coupling $\pi$ for which $u_\lambda \circ \ell$ is bounded below on $\mathrm{supp}(\pi)$.
    \end{Remark}

    \begin{Remark}[Simple situation where optimal couplings exist]
    \label{Remark: simple situation for opt coupl}
    Note that if $\mathrm{supp}(\mu) \times \mathrm{supp}(\nu)$ is compact and contained in $\{\ell > 0\}$, then clearly $u_\lambda \circ \ell$ is bounded above and below on $\mathrm{supp}(\mu) \times \mathrm{supp}(\nu)$ for every $\lambda > 0$, and it is easy to see that $\ell_u(\mu,\nu) \in (0,\infty)$ in this case. By the previous remark, $S_{\mu,\nu}$ is right-continuous at $\ell_u(\mu,\nu)$.  Thus, for such $\mu$ and $\nu$, we always have an optimal coupling $\pi \in \Pi_\leq(\mu,\nu)$ due to Proposition \ref{prop: existence of optimal coupling}.
    \end{Remark}

The following gluing construction is standard, see e.g.\ Cavalletti--Mondino \cite[Lem.\ 2.4]{CM:20}.

\begin{Lemma}[Gluing of causal couplings]
Let $\mu_1,\mu_2,\mu_3 \in \mathcal{P}(M)$. If $\pi_{12} \in \Pi_{\leq}(\mu_1,\mu_2)$ and $\pi_{23} \in \Pi_{\leq}(\mu_2,\mu_3)$, then there exists $\pi_{123} \in \mathcal{P}(M^3)$ such that $(Pr_{12})_{\#} \pi_{123} = \pi_{12}$, $(Pr_{23})_{\#} \pi_{123} = \pi_{23}$, and the coupling $(Pr_{13})_{\#} \pi_{123} =:\pi_{13}$ is causal between $\mu_1$ and $\mu_3$, i.e.\ $\pi_{13} \in \Pi_{\leq}(\mu_1,\mu_3)$.
\label{lemma:gluing}
\end{Lemma}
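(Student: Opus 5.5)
We prove the gluing lemma with the classical disintegration (gluing) construction, and then check causality of the composite coupling using the transitivity of the causal relation $\leq$. Since $M$ is a Polish space (a smooth manifold with the complete Riemannian metric $\tilde g$), we can disintegrate $\pi_{12}$ with respect to its second marginal and $\pi_{23}$ with respect to its first marginal, both of which equal $\mu_2$. This yields Borel measurable families $(\pi_{12}^y)_{y\in M}$ and $(\pi_{23}^y)_{y \in M}$ of probability measures on $M$ such that
\begin{equation*}
    \pi_{12} = \int_M \pi_{12}^y \otimes \delta_y \, d\mu_2(y), \qquad \pi_{23} = \int_M \delta_y \otimes \pi_{23}^y \, d\mu_2(y).
\end{equation*}
We then define $\pi_{123}\in\mathcal P(M^3)$ by
\begin{equation*}
    \pi_{123}(A) := \int_M \big(\pi_{12}^y \otimes \delta_y \otimes \pi_{23}^y\big)(A) \, d\mu_2(y)
\end{equation*}
for Borel sets $A \subseteq M^3$. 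Measurability of the integrand in $y$ follows from the measurability of the disintegration kernels via a standard monotone class argument. Testing against product sets $A_1\times A_2\times M$ and $M\times A_2\times A_3$ gives $(Pr_{12})_\#\pi_{123}=\pi_{12}$ and $(Pr_{23})_\#\pi_{123}=\pi_{23}$. Consequently $\pi_{13}:=(Pr_{13})_\#\pi_{123}$ has marginals $\mu_1$ and $\mu_3$, so $\pi_{13} \in \Pi(\mu_1,\mu_3)$.

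Causality of $\pi_{13}$ is the only part that uses the Lorentzian structure. Set
\begin{equation*}
    B:=\{(x,y,z)\in M^3 : x\leq y,\ y \leq z\} = Pr_{12}^{-1}(M^2_\leq)\cap Pr_{23}^{-1}(M^2_\leq).
\end{equation*}
Since $\pi_{12}(M^2_\leq)=\pi_{23}(M^2_\leq)=1$, the two marginal identities give $\pi_{123}(B)=1$. By transitivity of $\leq$ we have $Pr_{13}(B)\subseteq M^2_\leq$, so $B \subseteq Pr_{13}^{-1}(M^2_\leq)$. Here $M^2_\leq=\{\ell\geq 0\}$ is closed because $M$ is globally hyperbolic (equivalently, by upper semicontinuity of $\ell$, Theorem \ref{Theorem: SmoothnessofLorentzdistance}(a)), so all these sets are Borel. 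It follows that $\pi_{13}(M^2_\leq)=\pi_{123}(Pr_{13}^{-1}(M^2_\leq))\geq \pi_{123}(B)=1$.

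The main technical point is the existence and measurability of the disintegrations, which relies on $M$ being Polish. Everything after that is bookkeeping. The one property of the spacetime that matters is closedness (or at least Borel measurability) of $M^2_\leq$, and global hyperbolicity guarantees it.
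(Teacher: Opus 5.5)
Your proof is correct. It is the standard disintegration-based gluing on the Polish space $M$, followed by transitivity of $\leq$ and closedness of $M^2_\leq=\{\ell\geq 0\}$. That is exactly the construction the paper relies on: it gives no proof of its own and simply cites the standard gluing lemma (Cavalletti--Mondino, Lem.\ 2.4).
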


We now establish the reverse triangle inequality for $\ell_u$, which first appeared for $u = u_p$ ($p \in (0,1)$) in Eckstein--Miller \cite{EM:17}.

\begin{Proposition}[Reverse triangle inequality]\label{prop: reverse triangle inequality}
Let $u$ be admissible. Then $\ell_u: \mathcal{P}(M)^2 \to [0,\infty] \cup \{-\infty\}$ satisfies the reverse triangle inequality:
\begin{equation}\label{eq: reverse triangle inequality}
    \ell_u(\mu_1,\mu_3) \geq \ell_u(\mu_1,\mu_2) + \ell_u(\mu_2,\mu_3)
\end{equation}
for all $\mu_1,\mu_2,\mu_3 \in \mathcal{P}(M)$, where the right hand side is understood to be $-\infty$ if either of the terms is $-\infty$.
\end{Proposition}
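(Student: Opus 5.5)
The plan is to reduce to the case where both terms on the right are positive and finite, and then glue near-optimal couplings via Lemma \ref{lemma:gluing}, using concavity of $u$ to pass to the glued coupling.

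\textbf{Trivial cases.} If either term on the right is $-\infty$ there is nothing to prove. Otherwise $\Pi_\leq(\mu_1,\mu_2)$ and $\Pi_\leq(\mu_2,\mu_3)$ are nonempty, so Lemma \ref{lemma:gluing} gives $\Pi_\leq(\mu_1,\mu_3)\neq\emptyset$. Hence $\ell_u(\mu_1,\mu_3)\geq 0$, which settles the case where both terms vanish. Now suppose, say, $\ell_u(\mu_1,\mu_2)>0$ and $\ell_u(\mu_2,\mu_3)=0$. Then the argument below with $\lambda_2$ replaced by an arbitrarily small $\epsilon$ still applies: glue an eligible coupling for $\lambda_1$ with any causal coupling from $\mu_2$ to $\mu_3$, and use $\ell(x,z)\ge\ell(x,y)$ when $y\le z$ together with monotonicity of $u$. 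The infinite cases follow by taking arbitrary finite eligible $\lambda_i$.

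\textbf{Main case.} Fix eligible $\lambda_1<\ell_u(\mu_1,\mu_2)$ and $\lambda_2<\ell_u(\mu_2,\mu_3)$; by the Remark on $\ell_u$, smaller values remain eligible. Choose $\pi_{12}\in\Pi^{u_{\lambda_1}}_\leq(\mu_1,\mu_2)$ and $\pi_{23}\in\Pi^{u_{\lambda_2}}_\leq(\mu_2,\mu_3)$ with $\int u_{\lambda_i}\circ\ell\,d\pi\geq u(1)-\delta$. Glue them to $\pi_{123}$ and set $\lambda=\lambda_1+\lambda_2$ and $t=\lambda_1/\lambda$. On $\mathrm{supp}\,\pi_{123}$ we have $x\le y\le z$, so the reverse triangle inequality for $\ell$, monotonicity of $u$, and concavity of $u$ give
\begin{equation*}
u\Big(\frac{\ell(x,z)}{\lambda}\Big)\geq u\Big(t\,\frac{\ell(x,y)}{\lambda_1}+(1-t)\frac{\ell(y,z)}{\lambda_2}\Big)\geq t\,u_{\lambda_1}(\ell(x,y))+(1-t)\,u_{\lambda_2}(\ell(y,z)).
\end{equation*}
The lower bound here has integrable negative part, since both terms on the right do. Therefore $\pi_{13}=(Pr_{13})_\#\pi_{123}\in\Pi^{u_\lambda}_\leq(\mu_1,\mu_3)$, and integrating yields $\int u_\lambda\circ\ell\,d\pi_{13}\geq u(1)-\delta$.

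\textbf{Main obstacle.} We obtain only $u(1)-\delta$ rather than $u(1)$, because the suprema need not be attained. I would handle this by slack in $\lambda$. Pick eligible $\lambda_i'>\lambda_i$ close to $\ell_u$, so that $\sup_\pi\int u_{\lambda_i'}\circ\ell\,d\pi\ge u(1)$, and choose couplings achieving $u(1)-\delta$ for $\lambda_i'$. Then $u_{\lambda_i}\ge u_{\lambda_i'}+c_i$ pointwise wherever $\ell>0$, with a positive gap coming from strict monotonicity. The difficulty is that this gap degenerates where $\ell$ is near $0$, or near $\infty$ when $u$ is bounded. I would address this by taking the convex combination above for the $\lambda'$-couplings, giving $\int u_{\lambda'}\circ\ell\,d\pi_{13}\ge u(1)-\delta$ for every $\delta$ with $\lambda'=\lambda_1'+\lambda_2'$. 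This makes $\sup_{\pi}\int u_{\lambda'}\circ\ell\,d\pi\geq u(1)$ (the sup taken over all these $\pi_{13}$), so $\lambda'$ itself is eligible, because eligibility only requires the supremum to be $\ge u(1)$. Taking the supremum over $\lambda_i'$ then gives \eqref{eq: reverse triangle inequality}.
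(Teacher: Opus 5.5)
Your proposal is correct and is essentially the paper's proof: glue near-optimal couplings with Lemma \ref{lemma:gluing}, combine the reverse triangle inequality for $\ell$ with monotonicity and concavity of $u$ to get $\pi_{13}\in\Pi^{u_{\lambda_1+\lambda_2}}_\leq(\mu_1,\mu_3)$ with $\int u_{\lambda_1+\lambda_2}\circ\ell\,d\pi_{13}\geq u(1)-\delta$, and treat a vanishing term by monotonicity alone. This last step is the paper's ``$\lambda=0$'' case; no small positive $\lambda_2$ is actually eligible there, but the argument you spell out does not need one.

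Your ``main obstacle'' is not an obstacle. Since $\lambda_1+\lambda_2$ does not depend on $\delta$, taking the supremum over $\pi_{13}$ and letting $\delta\to0$ already makes $\lambda_1+\lambda_2$ eligible. That is exactly what the paper does, and what your own last step does with $\lambda_i'$, so the detour through $\lambda_i'$ and the gaps $c_i$ is unnecessary.
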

\begin{proof}
    If either term on the right hand side of \eqref{eq: reverse triangle inequality} is $-\infty$, there is nothing to prove.

    Assume first that $\ell_u(\mu_1,\mu_2),\, \ell_u(\mu_2,\mu_3)>0$. Consider $\lambda, \eta > 0$ such that $\ell_u(\mu_1,\mu_2) > \lambda$ and $\ell_u(\mu_2,\mu_3) > \eta$, as well as
    \begin{align*}
        &\sup_{\pi \in \Pi_{\leq}^{u_{\lambda}}(\mu_1,\mu_2)} \int_{M^2}u_\lambda \circ \ell \, d\pi \geq u(1),\\
        &\sup_{\pi \in \Pi_{\leq}^{u_{\eta}}(\mu_2,\mu_3)} \int_{M^2}u_\eta \circ \ell \, d\pi \geq u(1).
    \end{align*}
    Set $\tau:=\lambda + \eta$. Fix $\varepsilon > 0$ and let $\pi_{12} \in \Pi_{\leq}^{u_\lambda}(\mu_1,\mu_2)$ and $\pi_{23} \in \Pi^{u_\eta}_{\leq}(\mu_2,\mu_3)$ such that
    \begin{align*}
        &\int_{M^2}u_\lambda \circ \ell \, d\pi_{12} \geq u(1) - \varepsilon,\\
        &\int_{M^2}u_\eta \circ \ell \, d\pi_{23} \geq u(1) - \varepsilon.
    \end{align*}
    Perform a causal gluing of $\pi_{12}$ and $\pi_{23}$ to obtain a probability measure $\pi_{123} \in \mathcal{P}(M^3)$ concentrated on triples $(x,y,z)$ such that $(x,y) \in \mathrm{supp}(\pi_{12})$, $(y,z) \in \mathrm{supp}(\pi_{23})$ and $x \leq y \leq z$. Then for such triples $(x,y,z)$, using monotonicity and concavity of $u$, as well as the reverse triangle inequality for $\ell$, we get
    \begin{align}\label{eq: proof reverse triangle ineq}
        u\left(\frac{\ell(x,z)}{\tau}\right) \geq u\left(\frac{\lambda}{\tau} \frac{\ell(x,y)}{\lambda} + \frac{\eta}{\tau} \frac{\ell(y,z)}{\eta}\right) \geq \frac{\lambda}{\tau}\, u\left(\frac{\ell(x,y)}{\lambda}\right) + \frac{\eta}{\tau}\,u\left(\frac{\ell(y,z)}{\eta}\right).
    \end{align}
    Integrating this inequality with respect to $\pi_{123}$ (which is possible since the negative part of the right hand side is in $L^1(\pi_{123})$, hence also the negative part of the left hand side), we see that $\pi_{13}:=(pr_1,pr_3)_{\#} \pi_{123} \in \Pi^{u_{\tau}}_{\leq}(\mu_1,\mu_3)$ and
    \begin{equation}\label{eq: reverse triang pi13}
        \int_{M^2} u_\tau \circ \ell \, d\pi_{13} \geq \frac{\lambda }{\tau}(u(1) - \varepsilon) + \frac{\eta}{\tau} (u(1) - \varepsilon) = u(1) - \varepsilon.
    \end{equation}
    Taking the supremum over couplings in $\Pi^{u_{\tau}}_{\leq}(\mu_1,\mu_3)$ and then $\varepsilon \to 0$, we see that $\ell_u(\mu_1,\mu_3) \geq \tau$, and since $\lambda$ and $\eta$ can be chosen arbitrarily close to $\ell_u(\mu_1,\mu_2)$ and $\ell_u(\mu_2,\mu_3)$, respectively, this yields the reverse triangle inequality.

    If, for example, $\ell_u(\mu_1,\mu_2)=0$, consider $\eta > 0$ such that $\ell_u(\mu_2,\mu_3) > \eta$, and take $\lambda=0$ and $\tau:=\eta$. Then, in \eqref{eq: proof reverse triangle ineq},
    \begin{align*}
        u\left(\frac{\ell(x,z)}{\tau}\right) \geq u\left(\frac{\ell(y,z)}{\eta}\right),
    \end{align*}
    where we have used the reverse triangle inequality for $\ell$ and the monotonicity of $u$. In the same way as above, integrating this inequality with respect to $\pi_{123}$, and taking the supremum over couplings in $\Pi^{u_{\tau}}_{\leq}(\mu_1,\mu_3)$ yields the result. The case where $\ell_u(\mu_2,\mu_3)=0$ is analogous.

    If both $\ell_u(\mu_1,\mu_2)=\ell_u(\mu_2,\mu_3)=0$, then there exist causal couplings $\pi_{12}\in\Pi_{\leq}(\mu_1,\mu_2)$ and $\pi_{23}\in\Pi_{\leq}(\mu_2,\mu_3)$. Performing a causal gluing of $\pi_{12}$ and $\pi_{23}$ yields $\pi_{13}\in\Pi_{\leq}(\mu_1,\mu_3)$. Hence, $\ell_u(\mu_1,\mu_3)\geq0=\ell_u(\mu_1,\mu_2)+\ell_u(\mu_2,\mu_3)$, which yields the reverse triangle inequality in this case.

    Finally, if $\ell_u(\mu_1,\mu_3)=0$, it means that for every $\tau>0$,
    \begin{equation*}
        \sup_{\pi \in \Pi^{u_\tau}_{\leq}(\mu_1,\mu_3)} \int_{M^2} u(\tau^{-1}\ell(x,y)) \, d\pi(x,y) < u(1).
    \end{equation*}
    
    Then, arguing as above,
    \begin{align*}
        \sup_{\pi_{12} \in \Pi^{u_\tau}_{\leq}(\mu_1,\mu_2)} \int_{M^2} u(\tau^{-1}\ell(x,y)) \, d\pi_{12}(x,y)  \leq\sup_{\pi \in \Pi^{u_\tau}_{\leq}(\mu_1,\mu_3)} \int_{M^2} u(\tau^{-1}\ell(x,y)) \, d\pi(x,y) < u(1),
    \end{align*}
    so $\ell_u(\mu_1,\mu_2)\leq0$. Analogously, $\ell_u(\mu_2,\mu_3)\leq0$, and the reverse triangle inequality follows immediately.
\end{proof}

We will require the following fine properties of the reverse triangle inequality, in analogy with McCann \cite[Prop.\ 2.9]{McCann:2020}. Recall the notation $S(\eta)$ from Proposition \ref{prop: existence of optimal coupling}.

\begin{Proposition}[Properties of the reverse triangle inequality]\label{prop: propoerties of the reverse ineq}
Let $\mu_1,\mu_2,\mu_3\in\mathcal{P}(M)$. 
\begin{enumerate}
      \item Assume that $\mu_1,\mu_2,\mu_3$ are compactly supported. Let $X_1,X_3\subset M$ be such that $\mu_1(X_1)=1=\mu_3(X_3)$ and further assume that $\ell_u(\mu_1,\mu_2)$ and $\ell_u(\mu_2,\mu_3)$ are positive and finite. If $\ell_u(\mu_1,\mu_3) = +\infty$, or else $\mu_2[Z(X_1\times X_3)]<1$ and $S(\eta) = S_{\mu_1,\mu_3}(\eta)$ is right-continuous at $\eta = \ell_u(\mu_1,\mu_3)$, then the reverse triangle inequality for $\ell_u$ holds strictly.

    \item Conversely, if \begin{enumerate}
                \item $\ell_u(\mu_1,\mu_3)\in(0,\infty)$,
                \item $\ell_u(\mu_1,\mu_2)+\ell_u(\mu_2,\mu_3)=\ell_u(\mu_1,\mu_3)$,
                \item there are optimal couplings realising $\ell_u(\mu_1,\mu_2)$ and $\ell_u(\mu_2,\mu_3)$ (in particular, $\ell_u(\mu_1,\mu_2)$ and $\ell_u(\mu_2,\mu_3) \in (0,\infty)$),
            \end{enumerate}
        then there exists $\omega\in\mathcal{P}(M^3)$ for which $\pi_{ij}:=(P_{ij})_{\#}\omega\in\Pi(\mu_i,\mu_j)$ is $\ell_u$-optimal for each $i<j$ with $i,j\in\{1,2,3\}$, and each $(x,y,z)\in\operatorname{supp}\omega$ satisfies
            \begin{equation}\label{eq: reverse triangle inequality converse}
                \ell(x,y)=s\,\ell(x,z)\quad \text{and}\quad \ell(y,z)=(1-s)\,\ell(x,z)
            \end{equation}
        with $s:=\frac{\ell_u(\mu_1,\mu_2)}{\ell_u(\mu_1,\mu_3)}$. 
    \item Assume the hypotheses of (ii) hold and, in addition, assume $\pi_{13}(S)=1$ for some $S\subset M\times M$. Then $\mu_2$ vanishes outside $Z_s(S)$, where $s\in[0,1]$ is given by (ii). In particular, if $Z_s(x,y)=\{z_s(x,y)\}$ holds $\pi_{13}$ a.e. $(x,y)$, then $\omega=(z_0\times z_s\times z_1)_{\#}\pi_{13}$ and $\mu_2=(z_s)_{\#}\pi_{13}$.    
\end{enumerate}
\end{Proposition}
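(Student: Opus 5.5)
The proof follows McCann \cite[Prop.\ 2.9]{McCann:2020}. The key input is the chain of inequalities \eqref{eq: proof reverse triangle ineq} from the proof of Proposition \ref{prop: reverse triangle inequality}, combined with an analysis of when each inequality in that chain is an equality. Throughout, write $\lambda:=\ell_u(\mu_1,\mu_2)$, $\eta:=\ell_u(\mu_2,\mu_3)$ and $\tau:=\lambda+\eta$.

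\textbf{Part (ii).} Take optimal couplings $\pi_{12}$ and $\pi_{23}$, which exist by (c), and glue them causally via Lemma \ref{lemma:gluing} to obtain $\omega\in\mathcal P(M^3)$. By (b), $\tau=\ell_u(\mu_1,\mu_3)\in(0,\infty)$. Integrating \eqref{eq: proof reverse triangle ineq} against $\omega$ with $\varepsilon=0$ gives
\[
\int u_\tau\circ\ell\,d\pi_{13}\;\geq\;\tfrac{\lambda}{\tau}\,u(1)+\tfrac{\eta}{\tau}\,u(1)\;=\;u(1),
\]
where $\pi_{13}:=(P_{13})_\#\omega$. Since $\tau=\ell_u(\mu_1,\mu_3)$, this means $\pi_{13}$ is optimal.

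To obtain \eqref{eq: reverse triangle inequality converse}, I need all three inequalities in the chain to be equalities $\omega$-a.e. I would argue by contradiction. If the pointwise inequality were strict on a set of positive $\omega$-measure, then $\int u_\tau\circ\ell\,d\pi_{13}>u(1)$. The strict increase of $u$ then yields some $\tau'>\tau$ with $\int u_{\tau'}\circ\ell\,d\pi_{13}\geq u(1)$, by monotone/dominated convergence in $\tau'$ using the integrability of the negative parts. This would give $\ell_u(\mu_1,\mu_3)>\tau$, a contradiction.

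Pointwise equality then has three consequences. Strict concavity of $u$ forces $\ell(x,y)/\lambda=\ell(y,z)/\eta$. Strict monotonicity of $u$ forces $\ell(x,z)=\ell(x,y)+\ell(y,z)$. Together these give $\ell(x,y)=s\,\ell(x,z)$ with $s=\lambda/\tau$. Passing from $\omega$-a.e.\ to every point of $\mathrm{supp}\,\omega$ uses continuity of $\ell$ on $\{\ell\ge 0\}$ (Theorem \ref{thm: smoothness of l}(b)) together with closedness of $M^2_\leq$; for the latter I would restrict to compact supports, or use upper semicontinuity and closedness of the relevant equality sets.

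\textbf{Part (iii).} This follows directly from (ii). The measure $\omega$ is concentrated on triples $(x,y,z)$ with $(x,z)\in S$ and $y\in Z_s(x,z)$. Hence $\mu_2=(P_2)_\#\omega$ is concentrated on $Z_s(S)$, with measurability handled via Lemma \ref{lemma: midpoints inherit compactness} or inner regularity. If $Z_s$ is single-valued $\pi_{13}$-a.e., then the disintegration of $\omega$ along $P_{13}$ is $\delta_{z_s(x,z)}$, which gives $\omega=(z_0\times z_s\times z_1)_\#\pi_{13}$.

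\textbf{Part (i).} If $\ell_u(\mu_1,\mu_3)=+\infty$, strictness is immediate since $\lambda+\eta<\infty$. Otherwise suppose equality holds. Compact supports make $u_\cdot\circ\ell$ bounded above, and with $\lambda,\eta\in(0,\infty)$ Proposition \ref{prop: existence of optimal coupling} should provide optimal couplings for $(\mu_1,\mu_2)$ and $(\mu_2,\mu_3)$, so that (ii) applies. The right-continuity of $S_{\mu_1,\mu_3}$ at $\tau$ is exactly what the contradiction argument in (ii) needs in place of assuming an optimal coupling for $(\mu_1,\mu_3)$. Applying (ii) shows that $\mu_2$ is concentrated on $Z(\mathrm{supp}\,\pi_{13})$. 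Because $\pi_{13}$ is concentrated on $X_1\times X_3$ up to a null set, this leads to $\mu_2[Z(X_1\times X_3)]=1$, contradicting the hypothesis.

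\textbf{Main obstacle.} The delicate step is the existence of optimal couplings for $(\mu_1,\mu_2)$ and $(\mu_2,\mu_3)$ in (i). Proposition \ref{prop: existence of optimal coupling} requires right-continuity of $S_{\mu_1,\mu_2}$ and $S_{\mu_2,\mu_3}$, which is not assumed. I would try to avoid it by running the $\varepsilon$-approximate version of the argument: take near-optimal couplings, glue them, pass to a narrow limit of the glued measures $\omega_k$, and use upper semicontinuity of $u_\cdot\circ\ell$ to obtain a limiting $\omega$ satisfying the equality case. The rigidity from strict concavity then persists in the limit.
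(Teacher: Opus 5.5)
\textbf{Gap in (ii).} Your plan for (ii) and (iii) is the paper's: glue optimal $\pi_{12},\pi_{23}$, integrate \eqref{eq: proof reverse triangle ineq} with $\varepsilon=0$, and read off $y\in Z_s(x,z)$ from strict monotonicity and strict concavity of $u$. However, the step that is supposed to force saturation does not work.

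You claim that $\int u_\tau\circ\ell\,d\pi_{13}>u(1)$ yields some $\tau'>\tau$ with $\int u_{\tau'}\circ\ell\,d\pi_{13}\geq u(1)$, by monotone convergence. As $\tau'\downarrow\tau$, the integrands $u_{\tau'}\circ\ell$ \emph{increase} to $u_\tau\circ\ell$. So monotone convergence needs $(u_{\tau'}\circ\ell)_-\in L^1(\pi_{13})$ for some $\tau'>\tau$. This does not follow from $(u_\tau\circ\ell)_-\in L^1(\pi_{13})$, because the negative parts grow with $\tau'$. For instance, $u(x)=-e^{1/x}$ is admissible. If the law of $\ell$ under $\pi_{13}$ has density comparable to $e^{-\tau/r}$ near $r=0$, then $\int u_\tau\circ\ell\,d\pi_{13}$ is finite while $\int u_{\tau'}\circ\ell\,d\pi_{13}=-\infty$ for every $\tau'>\tau$.

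The ingredient actually needed is $S_{\mu_1,\mu_3}(\tau)\le u(1)$. With it,
\[
u(1)\le \tfrac{\lambda}{\tau}\int u_\lambda\circ\ell\,d\pi_{12}+\tfrac{\eta}{\tau}\int u_\eta\circ\ell\,d\pi_{23}\le \int u_\tau\circ\ell\,d\pi_{13}\le S_{\mu_1,\mu_3}(\tau)\le u(1)
\]
forces equality throughout. This gives both the $\omega$-a.e.\ saturation and the optimality of $\pi_{13}$.

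The optimality of $\pi_{13}$ is a second instance of the same hole. Your sentence ``since $\tau=\ell_u(\mu_1,\mu_3)$, this means $\pi_{13}$ is optimal'' only shows $\int u_\tau\circ\ell\,d\pi_{13}\ge u(1)$. But \eqref{eq: optimal coupling inequality} requires $\pi_{13}$ to attain $S_{\mu_1,\mu_3}(\tau)$.

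The bound $S_{\mu_1,\mu_3}(\tau)\le u(1)$ follows from right-continuity of $S_{\mu_1,\mu_3}$ at $\tau$, exactly as in the proof of Proposition \ref{prop: existence of optimal coupling}. It is available when $u$ is bounded below, for $u_p$ and $u_0$ (Remark \ref{Remark: Right continuity}), and for $u$-separated $(\mu_1,\mu_3)$ via Theorem \ref{Theorem: Duality by u-separation}. It is not, however, among the hypotheses of (ii). The paper's proof simply asserts that every inequality in \eqref{eq: proof reverse triangle ineq} is saturated and that $\pi_{13}$ is optimal, so it skips the same point. You should either add this hypothesis to (ii) or find a genuinely different argument; the convergence argument you give does not supply it.

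\textbf{Part (i): a different route that works.} The paper argues directly. It picks a compact $K$ with $\mu_2(K)>0$ off $Z(X_1\times X_3)$, extracts a uniform gap $\delta>0$ on $X_1\times K\times X_3$, integrates against near-optimal glued couplings, and then invokes right-continuity. You instead assume equality and use the rigidity from (ii)--(iii). Since right-continuity of $S_{\mu_1,\mu_3}$ is assumed in (i), the bound $S_{\mu_1,\mu_3}(\tau)\le u(1)$ is available there, and your argument goes through. It needs only a.e.\ information, whereas the paper's uniform $\delta$ leans on compactness of $X_1,X_3$, which are arbitrary full-measure sets.

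The obstacle you flag is not real. With compact supports, $u_\lambda\circ\ell$ is upper semicontinuous and bounded above on $\mathrm{supp}(\mu_1\times\mu_2)$, and $\Pi_\le(\mu_1,\mu_2)$ is narrowly compact, so $S_{\mu_1,\mu_2}(\lambda)$ is attained. The narrow-limit argument of Proposition \ref{prop: existence of optimal coupling} uses right-continuity only to get equality with $u(1)$, and shows this attained value is $\ge u(1)$. So optimal couplings for $(\mu_1,\mu_2)$ and $(\mu_2,\mu_3)$ exist with no extra hypothesis, and your argument only needs $\int\ge u(1)$ anyway.

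Two small points:
\begin{enumerate}
\item Conclude via (iii) with $S:=X_1\times X_3$ rather than through $Z(\mathrm{supp}\,\pi_{13})$, since $\mathrm{supp}\,\pi_{13}$ need not lie in $X_1\times X_3$.
\item Passing from $\omega$-a.e.\ to all of $\mathrm{supp}\,\omega$ needs no compactness, because $M^2_\le$ is closed and $\ell$ is finite and continuous on it.
\end{enumerate}
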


\begin{proof}
    \begin{enumerate}
        \item We suppose $\ell_u(\mu_1,\mu_3) < +\infty$, otherwise the claim is trivial. For any pair of couplings $\pi_{12}\in\Pi_{\leq}(\mu_1,\mu_2)$ and $\pi_{23}\in\Pi_{\leq}(\mu_2,\mu_3)$, consider the gluing measure denoted by $\pi_{123}\in\mathcal{P}(M^3)$. Since $\mu_1(X_1)=1=\mu_3(X_3)$, $\pi_{123}$ vanishes outside $X_1\times M\times X_3$ by construction of the gluing plan and is concentrated on triples $x \leq y \leq z$, $x \in X_1$, $z \in X_3$.

        Consider a triple $(x,y,z)\in X_1\times M\times X_3$ with $\ell(x,z)\geq0$. For such a triple, the reverse triangle inequality of $\ell$ holds strictly unless $y\in Z(x,z)\subset Z(X_1\times X_3)$. Hence, if $\mu_2[Z(X_1\times X_3)]<1$, there exists a $\mu_2$-positive measure set in $M$ such that it is disjoint with $Z(X_1\times X_3)$, and on which the reverse triangle inequality for $\ell$ will hold strictly for any choice of $y$ in this set. This set $K$ can be chosen to be compact, and let $0 < \varepsilon:=\mu_2(K)$.

        Take $0<\lambda<\ell_u(\mu_1,\mu_2)$ and $0<\eta<\ell_u(\mu_2,\mu_3)$, and write $\tau:=\lambda+\eta$, and let $\pi_{12} \in \Pi^{u_\lambda}_\leq(\mu_1,\mu_2)$, $\pi_{23} \in \Pi^{u_\eta}_\leq(\mu_2,\mu_3)$, $\pi_{123}$ the gluing measure discussed above, and $\pi_{13} \in \Pi^{u_\tau}_\leq(\mu_1,\mu_3)$ the projection to the first and third components. Using strict monotonicity and strict concavity of $u$, as well as compactness of $X_1,K,X_3$, it follows that there exists $\delta > 0$ (uniform in $\lambda, \eta$ up to $\ell_u(\mu_1,\mu_2)$ and $\ell_u(\mu_2,\mu_3)$) such that for all $(x,y,z) \in X_1 \times K \times X_3$,
        \begin{align*}
            u_{\tau}(\ell(x,z)) \geq \frac{\lambda}{\tau}\, u_\lambda(\ell(x,y)) + \frac{\eta}{\tau}\, u_\eta(\ell(y,z)) + \delta/\varepsilon.
        \end{align*}
        
        Integrating this inequality with respect to $\pi_{123}$ as in the proof of Proposition \ref{prop: reverse triangle inequality}, it follows that
        \begin{equation*}
            \int_{M^2}u_{\tau}(\ell(x,z))\, d\pi_{13}(x,z)\geq\frac{\lambda}{\tau}\int_{M^2} u_{\lambda}(\ell(x,y))\, d\pi_{12}+\frac{\eta}{\tau}\int_{M^2}u_{\eta}(\ell(y,z))\, d\pi_{23}+\delta.
        \end{equation*}

        Taking the supremum over all couplings $\pi_{12}$ and $\pi_{23}$, it follows that
        \begin{align*}
            &\sup_{\pi\in\Pi_{\leq}^{u_{\tau}}(\mu_1,\mu_3)}\int_{M^2}u_{\tau}(\ell(x,z))\, d\pi(x,z)  \\ &\geq\frac{\lambda}{\tau}\sup_{\pi_{12}\in\Pi_{\leq}^{u_{\lambda}}(\mu_1,\mu_2)}\int_{M^2} u_{\lambda}(\ell(x,y))\, d\pi_{12}+\frac{\eta}{\tau}\sup_{\pi_{23}\in\Pi_{\leq}^{u_{\eta}}(\mu_2,\mu_3)}\int_{M^2}u_{\eta}(\ell(y,z))\, d\pi_{23}+\delta\\&\geq u(1)+\delta.
        \end{align*}
        This shows that $\ell_u(\mu_1,\mu_3) \geq \tau$. By right-continuity of $S(\sigma)$ at $\sigma = \ell_u(\mu_1,\mu_3)$, there exists $\varepsilon > 0$ such that $\ell_u(\mu_1,\mu_3) \geq \tau + \varepsilon$, and this $\varepsilon > 0$ is uniform in $\lambda, \eta$. Thus, taking the supremum over all such $\lambda, \eta$, we conclude that $\ell_u(\mu_1,\mu_3) > \ell_u(\mu_1,\mu_2) + \ell_u(\mu_2,\mu_3)$.

        \item Since, by assumption, the suprema defining $\ell_u(\mu_1,\mu_2)$ and $\ell_u(\mu_2,\mu_3)$ are attained, in the proof of Proposition \ref{prop: reverse triangle inequality}, we can choose $\lambda=\ell_u(\mu_1,\mu_2)$, $\eta=\ell_u(\mu_2,\mu_3)$, and optimal $\pi_{12}$ and $\pi_{23}$. In particular, in this proof we can take $\varepsilon=0$. Denote by $\omega:=\pi_{123}\in\mathcal{P}(M^3)$ the gluing measure of $\pi_{12}$ and $\pi_{23}$. With this, since we are also assuming that the reverse triangle inequality for $\ell_u$ is saturated, $\tau=\lambda+\eta=\ell_u(\mu_1,\mu_3)$, and every inequality in \eqref{eq: proof reverse triangle ineq} is saturated as well. Analysing what these saturations give, we will get the claim:

        Saturation of the first inequality in \eqref{eq: proof reverse triangle ineq} gives $\ell(x,z)=\ell(x,y)+\ell(y,z)$ holds $\pi_{123}$-a.e., where we are using the fact that $u$ is monotonically increasing and $\ell_u(\mu_1,\mu_3)<\infty$. This condition implies the existence of $s\in[0,1]$ such that $(1-s)\,\ell(x,y)=s\,\ell(y,z)$. These two conclusions combined give \eqref{eq: reverse triangle inequality converse} $\pi_{123}$-a.e.\ On the other hand, saturation of the second inequality in \eqref{eq: proof reverse triangle ineq} gives $\frac{\ell(x,y)}{\lambda}=\frac{\ell(y,z)}{\eta}$. Using this, we can determine the value of $s$ to be $\frac{\lambda}{\tau}$, as desired. Finally, it remains to check that $\pi_{13}$ is $\ell_u$-optimal. This follows immediately by noticing that, under these assumptions, \eqref{eq: reverse triang pi13} gives the definition of $\ell_u$-optimal coupling.

        \item Since $\pi_{13}(S)=1$, $\omega(\tilde{S})=1$, where $\tilde{S}:=\{(x,y,z)\in M^3:\ (x,z)\in S\}$. Indeed, this follows since $(P_{13})_{\#}\omega=\pi_{13}$. Consider $s\in[0,1]$ as obtained in (ii), and let $A\subset M$ be any Borel set disjoint from $Z_s(S)$. It follows that $M\times A\times M$ is disjoint from $\tilde S$. Indeed, this is a consequence of the conclusions of (ii), asserting that for $\omega$-a.e. $(x,y,z)$, $x\leq y\leq z$. Therefore, $\mu_2(A)=\omega(M\times A\times M)=0$, as desired. In particular, suppose $Z_s(x,y)=\{z_s(x,y)\}$ holds for $\pi_{13}$-a.e. $(x,y)$. Then, the previous paragraph gives that $\omega$ vanishes outside the graph of $z_0\times z_s\times z_1:\operatorname{supp}\pi_{13}\to M^3$. Applying {\cite[Lem.\ 3.1]{ahmad2011optimal}}, $\omega=(z_0\times z_s\times z_1)_{\#}\pi_{13}$, and $\mu_2=(z_s)_{\#}\pi_{13}$, as desired.
    \end{enumerate}
\end{proof}

\subsection{Duality for $u$-separated measures}

\begin{Definition}[$u \circ \ell$ - cyclical monotonicity]
Let $u:(0,\infty)\to \R$ be an admissible function. A subset $\Gamma \subseteq M^2_{\leq}$ is called $u \circ \ell$-\emph{cyclically monotone} if for any finite sequence of points $(x_1,y_1), \dots, (x_N, y_N) \in \Gamma$, the following inequality holds:
\begin{align*}
    \sum_{i=1}^N u(\ell(x_i,y_i)) \geq \sum_{i=1}^N u(\ell(x_{i+1},y_i)),
\end{align*}
with the convention $x_{N+1} = x_1$, $u(-\infty) = -\infty$ and $\infty - \infty =:-\infty$ on both sides. A coupling is $u\circ\ell$-\emph{cyclically monotone} if it is concentrated on a $u\circ\ell$-cyclically monotone set. Similarly, the notion of $u \circ \ell_+$-cyclical monotonicity is defined.
\end{Definition}

The following general relationship between cyclical monotonicity and optimality persists in our setting and is an analogue of \cite[Prop.\ 2.8]{CM:20}.

\begin{Proposition}\label{Prop: cyclmonotoneoptimal}
    Let $u:(0,\infty)\to\R$ be an admissible function, and let $\mu,\nu\in\mathcal{P}(M)$. Assume that $\lambda:=\ell_u(\mu,\nu)\in(0,\infty)$, and that there exist measurable functions $a,b:M\to\R$, with $a\oplus b\in L^1(\mu\times\nu)$ and such that $u_{\lambda}\circ\ell\leq a\oplus b$, $\mu\times\nu$-a.e. Then the following holds:
    \begin{enumerate}
        \item If $\pi$ is $\ell_u$-optimal and $\int u_\lambda \circ \ell \, d\pi = u(1)$, then $\pi$ is $u_{\lambda}\circ\ell$-cyclically monotone.
        \item If $\pi\left(M^2_{\ll}\right)=1$ and $\pi$ is $u_{\lambda}\circ\ell$-cyclically monotone and $\int u_\lambda \circ \ell \, d\pi = u(1)$, then $\pi$ is $\ell_u$-optimal.
    \end{enumerate}
    \label{cycmon}
\end{Proposition}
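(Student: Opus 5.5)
The plan is to reduce both statements to the classical equivalence between optimality and cyclical monotonicity for a \emph{fixed} cost, in the spirit of Cavalletti--Mondino \cite[Prop.\ 2.8]{CM:20}. Fix $\lambda=\ell_u(\mu,\nu)\in(0,\infty)$ and set $c:=u_\lambda\circ\ell:M^2\to[-\infty,\infty)$, with $c=-\infty$ off $M^2_\leq$. By Corollary \ref{cor: twist and non degeneracy}(i), $c$ is upper semicontinuous, so in particular Borel. By hypothesis $c\leq a\oplus b$ with $a\oplus b\in L^1(\mu\times\nu)$.

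The first observation is that every $\pi'\in\Pi(\mu,\nu)\setminus\Pi^{u_\lambda}_\leq(\mu,\nu)$ satisfies $\int c\,d\pi'=-\infty$. Indeed, the positive part of $c$ is dominated by $(a\oplus b)_+$, whose $\pi'$-integral depends only on the marginals, so it is finite. Hence
\begin{equation*}
S(\lambda)=\sup_{\pi'\in\Pi(\mu,\nu)}\int c\,d\pi'.
\end{equation*}
Thus $\ell_u$-optimality of $\pi$ is exactly optimality in the fixed-cost Kantorovich maximisation problem for $c$ over all of $\Pi(\mu,\nu)$. The hypothesis $\int c\,d\pi=u(1)$ guarantees that this optimal value is finite.

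\textbf{Proof of (i).} Let $\pi$ be optimal with finite value $u(1)$. I would invoke the standard result that an optimal plan with finite value for a Borel cost $c\leq a\oplus b$ is $c$-cyclically monotone. Examples are Villani's Theorem~5.10(ii), applied to the lower semicontinuous cost $-c\geq -(a\oplus b)$, or the Beiglböck--Goldstern--Maresch--Schachermayer form of that statement.

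If a self-contained argument is preferred, I would use the Ambrosio--Pratelli perturbation. Suppose that for some $N$ the Borel set of $N$-tuples $((x_i,y_i))_{i\leq N}$ violating the cyclic inequality has positive $\pi^{\otimes N}$-measure. Then shifting mass along the cyclic permutation produces a competitor $\tilde\pi\in\Pi(\mu,\nu)$ with $\int c\,d\tilde\pi>\int c\,d\pi$, which contradicts optimality. Finiteness of $\int c\,d\pi$ and the bound $c\leq a\oplus b$ are exactly what is needed to make the integrals in this comparison meaningful. The conventions $u(-\infty)=-\infty$ and $\infty-\infty=-\infty$ cause no trouble, because $\pi$ is concentrated on $\{c>-\infty\}$, since $\int c\,d\pi$ is finite.

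\textbf{Proof of (ii).} Now $\pi(M^2_\ll)=1$, so $c$ is real-valued $\pi$-a.e. Hence $\pi$ is concentrated on a $c$-cyclically monotone set $\Gamma$ on which $c$ is finite. Together with $c\leq a\oplus b\in L^1(\mu\times\nu)$, this is the setting of the Beiglböck et al.\ theorem: $c$-cyclical monotonicity on a set where $c$ is finite implies strong $c$-monotonicity, and therefore optimality among all plans in $\Pi(\mu,\nu)$. Concretely, one constructs $c$-concave potentials $\varphi,\psi$ via the Rockafellar construction with $\varphi\oplus\psi\geq c$ and equality on $\Gamma$, and then shows integrability using the $a,b$ bounds. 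This yields
\begin{equation*}
\int c\,d\pi'\leq\int c\,d\pi=u(1)\quad\text{for all }\pi'\in\Pi(\mu,\nu).
\end{equation*}
Thus $S(\lambda)=\int c\,d\pi=u(1)\geq u(1)$. Since $\pi\in\Pi^{u_\lambda}_\leq(\mu,\nu)$ by finiteness of the integral, $\pi$ is optimal in the sense of \eqref{eq: optimal coupling inequality}.

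\textbf{Main obstacle.} The delicate step is the handling of the value $-\infty$ and of the non-compactness in (ii), namely the integrability of the Rockafellar potentials. This is precisely why the hypothesis $\pi(M^2_\ll)=1$ is imposed, and why I would rely on the Borel-cost result of Beiglböck et al.\ rather than reproving it.
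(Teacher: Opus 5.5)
Your reduction to the fixed-cost problem for $c:=u_\lambda\circ\ell$ is the same as the paper's. Part (i) is fine if you rest it on the Beiglb\"ock--Goldstern--Maresch--Schachermayer result that finite-cost optimal plans for Borel costs with values in $[0,\infty]$ (here $a\oplus b-c$) are cyclically monotone. Two smaller points: Villani's Thm.~5.10(ii) is stated for real-valued costs, which $-c$ is not. And your perturbation sketch only shows that violating $N$-tuples are $\pi^{\otimes N}$-null. That does not by itself produce a single cyclically monotone set of full $\pi$-measure; this step needs arbitrary couplings of $N$ copies of $\pi$ together with Kellerer's lemma.

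The genuine gap is in (ii). The principle you invoke is that cyclical monotonicity on a set where $c$ is finite, together with $c\le a\oplus b$, gives strong $c$-monotonicity and hence optimality. This is false when $c=-\infty$ on a set of positive $\mu\otimes\nu$-measure, which is the typical situation here. Take the Ambrosio--Pratelli example: on the circle with Lebesgue marginals, set $c=-1$ on the diagonal, $c=0$ on the graph of an irrational rotation, and $c=-\infty$ elsewhere. The identity plan is $c$-cyclically monotone, $c$ is finite on its support and $c\le 0$, yet the rotation plan does strictly better. The Beiglb\"ock et al.\ theorem requires $\{c=-\infty\}$ to be the union of a closed set and a $\mu\otimes\nu$-null set. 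If $u(0)=-\infty$ this holds, since $\{c=-\infty\}=\{\ell\le 0\}$ is closed, so there your route goes through. If $u(0)>-\infty$ (e.g.\ $u=u_p$ with $p\in(0,1)$), then $\{c=-\infty\}=\{\ell<0\}$ is open and in general not of this form. Correspondingly, your Rockafellar potentials built from a base point degenerate at points of $\Gamma$ not linked to it by chains along which $c>-\infty$, so equality on all of $\Gamma$ is not available.

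A symptom of the problem: since $\int c\,d\pi=u(1)$ is finite, $c>-\infty$ already holds $\pi$-a.e., so your argument never really uses $\pi(M^2_\ll)=1$. What that hypothesis actually provides is that $\pi$ lives on the \emph{open} set $\{\ell>0\}$, on which $c$ is finite and continuous. Hence nearby points of $\Gamma$ are linked by finite-cost $2$-cycles. This is the structural input the paper exploits by passing, after subtracting $a\oplus b$, to the Bianchini--Caravenna framework for Borel costs with infinite values, following \cite[Prop.~2.8]{CM:20}. Your proof of (ii) needs this openness argument, or that citation, in place of the Beiglb\"ock et al.\ step.
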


\begin{proof}

    Let $\pi\in\Pi_{\leq}^{u_{\lambda}}(\mu,\nu)$ be optimal. By assumption, $(u_\lambda \circ \ell)_+ \in L^1(\tilde \pi)$ for every $\tilde \pi \in \Pi(\mu,\nu)$, so that the supremum in the definition of $\ell_u(\mu,\nu)$ can be formulated with respect to $\Pi_\leq(\mu,\nu)$. It follows from the same arguments as the one given in the proof of Proposition \ref{prop: existence of optimal coupling} that an optimal $\pi \in \Pi_\leq(\mu,\nu)$ is necessarily in $\Pi^{u_\lambda}_\leq(\mu,\nu)$ and must satisfy (the first equation is by assumption)
    \begin{align*}
        u(1)&=\int_{M^2} u_{\lambda}(\ell(x,y))\,d\pi(x,y)\\
        &=\sup_{\pi\in\Pi_{\leq}(\mu,\nu)}\int_{M^2} u_{\lambda}(\ell(x,y))\,d\pi(x,y).
    \end{align*}
    Hence, $\pi$ is also an optimal coupling for the standard optimal transport problem associated to the cost $u_{\lambda}\circ\ell$. Furthermore, in this case the maximising optimal couplings for the cost $u_{\lambda}\circ\ell$ are the same as for the cost $u_{\lambda}\circ\ell - a \oplus b$, which is non-positive $\mu\times\nu$-a.e. We therefore enter the framework of \cite{BianchiniCaravenna}, which deals with optimal transport problems for arbitrary Borel (nonnegative) cost functions, and the proof uses the same arguments as \cite[Prop.~2.8]{CM:20}.
\end{proof}

\begin{Definition}[$u\circ\ell$-convex functions, $u\circ\ell$-transform and $u\circ\ell$-subdifferential]
     Let $U,V\subset M$. A measurable function $\varphi:U\to\R$ is $u\circ\ell$-\emph{convex relatively to $(U,V)$} if there exists a function $\psi:V\to\R$ such that 
    \begin{equation*}
        \varphi(x)=\sup_{y\in V}u(\ell(x,y))-\psi(y),\quad \forall x\in U.
    \end{equation*}
The function
\begin{equation*}
    \varphi^{(u\circ\ell)}:V\to\R\cup\{\pm \infty\},\quad
    \varphi^{(u\circ\ell)}(y):=\sup_{x\in U}u(\ell(x,y))-\varphi(x)
    \label{eq:ul transform}
\end{equation*}
is called $u\circ\ell$-\emph{transform of $\varphi$}. The $u\circ\ell$-\emph{subdifferential} $\partial_{(u\circ\ell)}\varphi\subset(U\times V)\cap M^2_{\leq}$ is defined by
\begin{equation*}
    \partial_{(u\circ\ell)}\varphi:=\big\{(x,y)\in(U\times V)\cap M^2_{\leq}\ :\ \varphi^{(u\circ\ell)}(y)+\varphi(x)=u\circ\ell(x,y) \big\}.
\end{equation*}
We will often require these notions for $u_\lambda$, where $u$ is admissible and $\lambda > 0$, and will speak of \emph{$u_{\lambda}\circ\ell$-convex functions}, the \emph{$u_{\lambda}\circ\ell$-transform} and the \emph{$u_{\lambda}\circ\ell$-subdifferential}.
\label{concavetransformsubdiff}
\end{Definition}

\begin{Lemma}
\label{Lemma: Concavity along geodesics}
            Let $u:(0,\infty)\to\R$ be an admissible function, and fix a timelike proper-time maximising segment $s\in[0,1]\mapsto x_s\in M$. For each $s\in(0,1)$ and $x\in M$, we have
            \begin{equation}
                u(\ell(x,x_1))\geq s\,u_s\left(\ell(x,x_s)\right)+(1-s)\,u\left(\ell(x_0,x_1)\right),
                \label{eq: variational characterization}
            \end{equation}
            with equality if and only if $x=x_0$.
            \label{lemma: variational characterization}
        \end{Lemma}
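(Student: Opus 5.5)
The plan is to combine the reverse triangle inequality for $\ell$ with the monotonicity and strict concavity of $u$, in exactly the way used in the proof of Proposition \ref{prop: reverse triangle inequality}. Throughout, $u_s(t)=u(t/s)$.

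\emph{Degenerate case.} First dispose of the case $\ell(x,x_s)=-\infty$, i.e.\ $x\not\leq x_s$. With the convention $u(-\infty)=-\infty$, the right-hand side of \eqref{eq: variational characterization} is $-\infty$. The left-hand side is either $-\infty$ or finite, so the inequality holds trivially under the stated conventions. Moreover $x\neq x_0$ in this case, because $x_0\leq x_s$. If $u(0)=-\infty$ and $\ell(x,x_s)=0$, the right-hand side is again $-\infty$ and equality cannot hold, since $x\neq x_0$ as $\ell(x_0,x_s)>0$. Otherwise we have $\ell(x,x_s)\geq 0$ and every term is finite.

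\emph{Main case.} Set $a:=\ell(x,x_s)/s$ and $b:=\ell(x_0,x_1)$. Because $s\mapsto x_s$ is an affinely parametrised maximising segment, $\ell(x_s,x_1)=(1-s)\,b$. The reverse triangle inequality for $\ell$ then gives
\begin{equation*}
\ell(x,x_1)\;\geq\;\ell(x,x_s)+\ell(x_s,x_1)\;=\;s\,a+(1-s)\,b .
\end{equation*}
Applying the monotone function $u$ and then concavity,
\begin{equation*}
u(\ell(x,x_1))\;\geq\; u\bigl(s a+(1-s)b\bigr)\;\geq\; s\,u(a)+(1-s)\,u(b) \;=\; s\,u_s(\ell(x,x_s))+(1-s)\,u(\ell(x_0,x_1)).
\end{equation*}
This is \eqref{eq: variational characterization}. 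For $x=x_0$ we have $a=b$ and $\ell(x_0,x_1)=\ell(x_0,x_s)+\ell(x_s,x_1)$, so both inequalities are equalities.

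\emph{Equality implies $x=x_0$.} Suppose equality holds in \eqref{eq: variational characterization}.
\begin{enumerate}
\item Since $u''<0$ and $s\in(0,1)$, equality in the concavity step forces $a=b$, i.e.\ $\ell(x,x_s)=s\,\ell(x_0,x_1)>0$.
\item Since $u'>0$, equality in the monotonicity step forces $\ell(x,x_1)=\ell(x,x_s)+\ell(x_s,x_1)$.
\item Because $\ell(x,x_s)>0$, global hyperbolicity provides a maximising timelike geodesic $\gamma$ from $x$ to $x_s$. Concatenating $\gamma$ with $x_{[s,1]}$ gives a causal curve from $x$ to $x_1$ whose length is $\ell(x,x_1)$, so the concatenation is maximising.
\item In a smooth spacetime a maximising causal curve with a timelike piece is an unbroken timelike geodesic. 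A corner would allow a strict length increase by the usual corner-cutting argument, i.e.\ the strict reverse triangle inequality near a break point.
\item Hence $\gamma$ arrives at $x_s$ with the same unit tangent direction as $r\mapsto x_r$. So $\gamma$ is the backward continuation of the same geodesic, and $x$ sits at proper-time distance $s\,\ell(x_0,x_1)$ before $x_s$ on it. By uniqueness of geodesics with given initial data at $x_s$, this point is $x_0$.
\end{enumerate}

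I expect the non-branching step, items (iii)--(v), to be the only delicate point. It requires care to use that the first piece is timelike, which is guaranteed by (i), so that the corner argument applies and the tangents match up to positive scaling. The rest is the same bookkeeping as in the proof of Proposition \ref{prop: reverse triangle inequality}.
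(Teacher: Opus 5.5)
Your argument is correct and is essentially the paper's proof. It runs the reverse triangle inequality for $\ell$, then monotonicity and strict concavity of $u$, with equality forcing $\ell(x,x_s)/s=\ell(x_s,x_1)/(1-s)$ and placing $x$ on the backward continuation of the geodesic through $x_s$; you reach that last point by concatenation and corner-cutting, whereas the paper invokes uniqueness of the maximising segment from $x_s$ to $x_1$ (because $(x_s,x_1)\notin\operatorname{sing}(\ell)$). One caveat, shared with the paper: if $x\not\leq x_1$, both sides equal $-\infty$ even though $x\neq x_0$, so the ``only if'' direction can only hold for finite values; and in your subcase $u(0)=-\infty$, $\ell(x,x_s)=0$, equality fails because the left-hand side is finite ($\ell(x,x_1)\geq\ell(x_s,x_1)>0$), not because $x\neq x_0$, which would be circular.
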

        \begin{proof}
            Given $x,x_s,x_1\in M$, we use the reverse triangle inequality of $\ell$ to obtain:
            \begin{equation*}
                \ell(x,x_1)\geq s\,\frac{\ell(x,x_s)}{s}+(1-s)\,\frac{\ell(x_s,x_1)}{1-s},
            \end{equation*}
            with equality only if $x_s$ lies on the maximising segment joining $x$ to $x_1$. Note that the segment from $x_s$ to $x_1$ is indeed, since $x_s$ being internal to the maximising segment joining $x_0$ and $x_1$ forces $(x_s,x_1)\notin\operatorname{sing}(\ell)$. Therefore, by Lemma \ref{lemma: midpoint continuity away from cut locus}, the geodesic joining $x_s$ and $x_1$ is unique. Now, monotonicity and strict concavity of $u$ yield \eqref{eq: variational characterization} (using that $\ell(x_s,x_1) = (1-s) \,\ell(x_0,x_1)$), with equality forcing $\frac{\ell(x,x_s)}{s}=\frac{\ell(x_s,x_1)}{1-s}$. The equation is uniquely solved on the geodesic in question by $x=x_0$. 
        \end{proof}

\begin{Lemma}[Star-shapedness of $u\circ\ell$-convex functions]\label{lemma: starshape of concave}
    Let $U,V\subset M$ be compact, and let $\varphi:U\to\R$ be a $u\circ\ell$-convex function relatively to $(U,V)$. Then, for $t\in(0,1]$, $t^{-1}\varphi$ is a $u_t\circ\ell$-convex function relatively to $(U,Z_t(U,V))$.
\end{Lemma}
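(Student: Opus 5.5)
We construct the dual potential on $Z_t(U,V)$ explicitly, in the spirit of McCann's argument for $u=u_p$. For $z\in Z_t(U,V)$ we set
\begin{equation*}
    \psi_t(z):=\inf\Big\{ t^{-1}\big(\psi(y)-(1-t)\,u(\ell(x_0,y))\big) \ :\ (x_0,y)\in U\times V,\ z\in Z_t(x_0,y)\Big\}.
\end{equation*}
The goal is to show $t^{-1}\varphi(x)=\sup_{z\in Z_t(U,V)} u_t(\ell(x,z))-\psi_t(z)$ for every $x\in U$. The case $t=1$ is trivial, since $Z_1(x_0,y)=\{y\}$ and then $\psi_1=\psi$ on $V$. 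So we fix $t\in(0,1)$.

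\textbf{Step 1: a concavity inequality along midpoints.} Let $x\in M$, let $(x_0,y)\in U\times V$ with $\ell(x_0,y)\geq 0$, and let $z\in Z_t(x_0,y)$. We claim
\begin{equation*}
    u(\ell(x,y))\geq t\,u_t(\ell(x,z))+(1-t)\,u(\ell(x_0,y)).
\end{equation*}
This generalises Lemma \ref{lemma: variational characterization}, which assumes a timelike maximising segment. We prove it directly without that assumption.
\begin{itemize}
    \item If $\ell(x,z)=-\infty$, the right-hand side is $-\infty$ and there is nothing to show.
    \item Otherwise, the reverse triangle inequality gives $\ell(x,y)\geq \ell(x,z)+\ell(z,y)$, and by definition $\ell(z,y)=(1-t)\,\ell(x_0,y)$. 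We write $\ell(x,z)+\ell(z,y)=t\cdot\frac{\ell(x,z)}{t}+(1-t)\,\ell(x_0,y)$. Monotonicity and concavity of $u$, extended to $[0,\infty)$ with values in $[-\infty,\infty)$, then give the claim.
    \item If $u(0)=-\infty$ and $\ell(x_0,y)=0$, the right-hand side is $-\infty$ under our conventions, so the claim holds trivially.
\end{itemize}
This bookkeeping of degenerate (null or non-causal) configurations is where I expect the only genuine care to be needed. In each such case either the right-hand side is $-\infty$ or the concavity inequality still applies at the endpoint $0$.

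\textbf{Step 2: the inequality $\geq$.} Fix $x\in U$, and let $(x_0,y)$ and $z$ be as in the definition of $\psi_t(z)$. By Step 1 and the definition of $\varphi$,
\begin{equation*}
    \varphi(x)\geq u(\ell(x,y))-\psi(y)\geq t\,u_t(\ell(x,z))+(1-t)\,u(\ell(x_0,y))-\psi(y).
\end{equation*}
Dividing by $t$ and taking the infimum over all admissible $(x_0,y)$ gives $t^{-1}\varphi(x)\geq u_t(\ell(x,z))-\psi_t(z)$ for every $z\in Z_t(U,V)$.

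The same estimate shows that $\psi_t(z)>-\infty$. Indeed, it gives $\psi_t(z)\geq u_t(\ell(x_0,z))-t^{-1}\varphi(x_0)$ for a representative $x_0\in U$ with $z\in Z_t(x_0,y)$. Here $\ell(x_0,z)=t\,\ell(x_0,y)\geq 0$ and $\varphi(x_0)\in\R$, so the right-hand side is not $-\infty$ unless $u(0)=-\infty$ and $\ell(x_0,y)=0$. Such null representatives only contribute the value $+\infty$ to the infimum and can be discarded. Also $\psi_t(z)<+\infty$, since $z$ has at least one representative with finite value. Hence $\psi_t$ is real-valued, as Definition \ref{concavetransformsubdiff} requires.

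\textbf{Step 3: the inequality $\leq$.} Given $\epsilon>0$, pick $y\in V$ with $u(\ell(x,y))-\psi(y)\geq\varphi(x)-\epsilon$. Then $\ell(x,y)\geq 0$, because $\varphi(x)$ is finite. Choose $z\in Z_t(x,y)$, which is nonempty: for $\ell(x,y)>0$ by Lemma \ref{lemma: selecting midpoints away from cut locus}, and in the null case by taking a point on a null geodesic, or simply $z=x$ when $x=y$. Using the representative $(x_0,y)=(x,y)$ and the identity $u_t(\ell(x,z))=u(t\,\ell(x,y)/t)=u(\ell(x,y))$, we get
\begin{equation*}
    u_t(\ell(x,z))-\psi_t(z)\geq u(\ell(x,y))-t^{-1}\big(\psi(y)-(1-t)u(\ell(x,y))\big)=t^{-1}\big(u(\ell(x,y))-\psi(y)\big)\geq t^{-1}(\varphi(x)-\epsilon).
\end{equation*}
Letting $\epsilon\to 0$ gives the reverse inequality. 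Hence $t^{-1}\varphi$ is $u_t\circ\ell$-convex relative to $(U,Z_t(U,V))$.

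Compactness of $U$ and $V$ enters only to ensure that $Z_t(U,V)$ is compact, by Lemma \ref{lemma: midpoints inherit compactness}, and that the setting matches the definition. Measurability of $t^{-1}\varphi$ is inherited from $\varphi$.
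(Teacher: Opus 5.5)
Your proposal is correct in substance and follows the paper's route. Your $\psi_t$ is exactly the potential obtained by combining the paper's single-$y$ representation \eqref{eq: claim starshap} over $y\in V$. You use the same concavity inequality for ``$\geq$'' and the same representative $(x,y)$ for saturation. Your Step~1 uses only the reverse triangle inequality and concavity of $u$, so it also covers null pairs, which the paper's appeal to Lemma~\ref{lemma: variational characterization} does not.

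One assertion is false, however: that every $z$ has a representative with finite value.
\begin{itemize}
\item If $u(0)=-\infty$, a point $z\in Z_t(U,V)$ may arise only from null pairs. For example, take $U=\{x\}$ and $V=\{y_1,y_2\}$ with $y_1\in I^+(x)$, $y_2$ on the future null cone of $x$, and $z$ on a null segment from $x$ to $y_2$. Then every representative contributes $+\infty$, so $\psi_t(z)=+\infty$. The paper's potential has the same defect.
\item The repair is easy. Such $z$ contribute $-\infty$ to the supremum, and they never occur in your Step~3, where $\ell(x,y)>0$. So you may redefine $\psi_t(z):=c$ for any real $c\geq\sup_{U\times Z_t(U,V)}u_t\circ\ell-t^{-1}\inf_U\varphi$. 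The right-hand side is $<+\infty$ for two reasons. First, $\ell$ is upper semicontinuous on the compact set $U\times Z_t(U,V)$. Second, $\varphi$ is bounded below on $U$: each $x\in U$ has some $y\in V$ with $\ell(x,y)>0$, and this persists near $x$.
\item Likewise, $Z_1(x_0,y)=\{y\}$ holds only for timelike pairs; for null pairs it is $J(x_0,y)$. So for $t=1$, keep $\psi$ on $V\cap Z_1(U,V)$. Give every other $z\in Z_1(U,V)$ the value $\psi(y)$ of one representative $(x_0,y)$. This works because $\ell(x,y)\geq\ell(x,z)$ for such $z$.
\end{itemize}
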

\begin{proof}
    For $t=1$, there is nothing to prove. For the rest, we follow the strategy of {\cite[Lem.\ 5.1]{CorderoMcCannSchmuckenschlager01}} and {\cite[Lem.\ A.9]{kell2017interpolation}}.

    Fix $t\in(0,1)$ and $y\in V$. For $x \mapsto u(\ell(x,y))$, we claim that the following representation holds for $m \in U$:
    \begin{equation}\label{eq: claim starshap}
        t^{-1}\,u(\ell(m,y))=\sup_{z\in Z_t(U,y)}\bigg\{ u_t\left(\ell(m,z)\right)+\sup_{\{x\in U\,:\,z\in Z_t(x,y)\}}\Big\{t^{-1}\,(1-t)\,u\left(\ell(x,y)\right)\Big\}\bigg\}.
    \end{equation}
    Indeed, from Lemma \ref{lemma: variational characterization}, ``$\geq$" holds for any $z\in Z_t(U,y)$. Moreover, if $\ell(m,y)\geq0$, choosing $x=m$ saturates \eqref{eq: variational characterization}. Indeed, note that Lemma \ref{lemma: variational characterization} assumes the segment to be timelike, so the case $\ell(m,y)>0$ is covered. If $\ell(m,y)=0$, we distinguish two further cases. If $u(0)\in\R$, then \eqref{eq: variational characterization} becomes $u(0)\geq s\,u_s(0)+(1-s)\,u(0)$, which is trivially an equality. If $u(0)=-\infty$, then both sides of \eqref{eq: variational characterization} are $-\infty$, which is again a trivial equality. Finally, if $\ell(m,y)=-\infty$, by convention $u(\ell(m,y))=-\infty$, and the set on the right hand side is empty, and therefore this is $-\infty$. Thus, the representation holds.

    Note that \eqref{eq: claim starshap} implies, in particular, that the function $m\in U\mapsto t^{-1}\,u(\ell(m,y))$ is the $u_t\circ\ell$-transform of the function 
    \begin{equation*}
        \psi(z)=-\sup_{\{x\in U\,:\,z\in Z_t(x,y)\}}\Big\{t^{-1}\,(1-t)\,u\left(\ell(x,y)\right)\Big\},
    \end{equation*}
    and therefore it is $u_t\circ\ell$-convex relative to $(U,Z_t(U,y))$, and we conclude that each $m\in U\mapsto t^{-1}\,u(\ell(m,y))$ is $u_t\circ\ell$-convex relative to $(U,Z_t(U,V))$. The latter is true because, if we choose $z\in Z_t(U,V)$ such that $z\notin Z_t(U,y)$, the second term on the right hand side of \eqref{eq: claim starshap} is $-\infty$, and therefore can be neglected when computing the supremum over all $z\in Z_t(U,V)$.

    Now, let $\varphi$ be an arbitrary $u\circ\ell$-convex function. It holds that $\varphi=\left(\varphi^{(u\circ\ell)}\right)^{(u\circ\ell)}$, hence
    \begin{equation*}
        t^{-1}\,\varphi(x)=\sup_{y\in V} t^{-1}\,u(\ell(x,y))-t^{-1}\,\varphi^{(u\circ\ell)}(y).
    \end{equation*}
    But every function of the form
    \begin{equation*}
        t^{-1}\,u(\ell(x,y))-t^{-1}\,\varphi^{(u\circ\ell)}(y)
    \end{equation*}
    is $u_t\circ\ell$-convex relative to $(U,Z_t(U,V))$. Indeed, this follows by rewriting $t^{-1}\,u(\ell(x,y))$ using the claim \eqref{eq: claim starshap}. Therefore, since $\varphi(U)\subset\R$, the supremum of these $u\circ\ell$-convex functions relative to $(U,Z_t(U,V))$ is also $u_t\circ\ell$-convex relative to $(U,Z_t(U,V))$, which yields the result. 
\end{proof}

Let us now introduce the optimisation problem which is dual to that of determining $\ell_u(\mu,\nu)$:
\begin{equation}
    \tilde C_u(\mu,\nu):=\inf\Bigg\{\eta>0\ :\ \inf_{\varphi\in L^1(\mu):\varphi^{(u_\eta \circ \ell)} \in L^1(\nu)}\Big\{\int\varphi\, d\mu+\int\varphi^{(u_{\eta}\circ\ell)}\, d\nu\Big\}\leq u(1)\Bigg\}.
\end{equation}
Here, $\varphi^{(u_\eta \circ \ell)}(y):=\sup_{x \in \mathrm{supp}(\mu)} u_\eta(\ell(x,y)) - \varphi(x)$, $y \in M$. For $\mu,\nu\in\mathcal{P}(M)$ such that $c:=\tilde C_u(\mu,\nu)\in(0,\infty)$, a function $\varphi\in L^1(\mu)$ is a \emph{Kantorovich potential} if $\varphi^{(u_c \circ \ell)} \in L^1(\nu)$ and
\begin{equation}
\label{eq: Kantorovich potential inequality}
    \int\varphi\, d\mu+\int\varphi^{(u_{c}\circ\ell)}\, d\nu\leq u(1).
\end{equation}

While the Orlicz-type dual problem of determining $\tilde C_u(\mu,\nu)$ (and its relation to $\ell_u(\mu,\nu)$) is difficult to handle in general, we identify the following condition which helps us do that. It is analogous to and generalises McCann's notion of $p$-separation in the case $u = u_p$, $p \in (0,1)$, cf.\ \cite[Def.\ 4.1]{McCann:2020}. Note that by convention, our Kantorovich potential(s) arise by $u_\lambda \circ \ell$-transform, where $\lambda = \ell_u(\mu,\nu)$, so they are related to the usual Kantorovich potentials (e.g.\ \cite{McCann:2020, MS:22, CM:20}, etc.) by a factor of $\lambda^p$ if $u = u_p$.

\begin{Definition}[$u$-separated]
\label{Definition: u-separeted new}
Let $u:(0,\infty) \to \R$ be admissible. We say that a pair $(\mu,\nu) \in \mathcal{P}_c(M)^2$ is \emph{$u$-separated} if there exists $\pi \in \Pi(\mu,\nu)$, $\varphi:\mathrm{supp}(\mu) \to \R \cup \{+\infty\}$ and $\psi:\mathrm{supp}(\nu) \to \R \cup \{+\infty\}$ lower semicontinuous, as well as $\lambda > 0$, such that
\begin{enumerate}
    \item $u_\lambda \circ \ell \leq \varphi \oplus \psi$ on $\mathrm{supp}(\mu \times \nu)$,
    \item $\mathrm{supp}(\pi) \subseteq S:=\{(x,y) \in \mathrm{supp}(\mu \times \nu) : \varphi(x) + \psi(y) = u_\lambda(\ell(x,y))\} \subseteq \{ \ell > 0\}$,
    \item $\int_M \varphi \, d\mu + \int_M \psi \, d\nu = u(1)$.
\end{enumerate}
\end{Definition}

\begin{Theorem}[Duality by $u$-separation]
\label{Theorem: Duality by u-separation}
Let $(\mu,\nu) \in \mathcal P_c(M)^2$ be $u$-separated for some admissible $u$. If $\lambda > 0$ is as in the definition of $u$-separation, then
\begin{enumerate}
    \item $\lambda = \ell_u(\mu,\nu ) \in (0,\infty)$, $\pi$ is optimal and satisfies $\int u_\lambda \circ \ell \, d\pi = u(1)$;
    \item $(\varphi,\psi) = (\varphi,\varphi^{(u_\lambda \circ \ell)})$ and $\varphi = \psi^{(u_\lambda \circ \ell)}$ on $X \times Y:=\mathrm{supp}(\mu) \times \mathrm{supp}(\nu)$;
    \item the set $S$ is compact and $u_\lambda \circ \ell$-cyclically monotone;
    \item $\tilde C_u(\mu,\nu) = \ell_u(\mu,\nu) = \lambda$, and $\varphi$ is a Kantorovich potential;
    \item the extensions $\varphi:=\psi^{(u_\lambda\circ\ell)}$ and $\psi:=\varphi^{(u_\lambda\circ\ell)}$ are semiconvex Lipschitz functions on neighbourhoods of $X$ and $Y$, respectively, with Lipschitz and semiconvexity constants given by those of $u_\lambda\circ\ell$ on $S$.
\end{enumerate}
\end{Theorem}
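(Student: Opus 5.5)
The plan is to follow McCann's proof of duality under $p$-separation \cite[Thm.\ 4.3 / Prop.\ 4.4]{McCann:2020}, replacing homogeneity arguments by monotonicity of $\eta \mapsto u_\eta$. The first step is the weak duality inequality: for any $\eta>0$, any $\tilde\pi \in \Pi^{u_\eta}_\leq(\mu,\nu)$ and any $\phi\in L^1(\mu)$ with $\phi^{(u_\eta\circ\ell)}\in L^1(\nu)$, we have $u_\eta\circ\ell \le \phi\oplus\phi^{(u_\eta\circ\ell)}$ on $\mathrm{supp}(\mu)\times M$. Integrating gives
\begin{equation*}
S_{\mu,\nu}(\eta)\le \int\phi\,d\mu+\int\phi^{(u_\eta\circ\ell)}\,d\nu .
\end{equation*}

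For (i), note that $\varphi,\psi$ are lower semicontinuous on compact sets and hence bounded below. By (iii) of Definition \ref{Definition: u-separeted new} they are integrable, and in particular finite a.e. Integrating the equality on $S\supseteq\mathrm{supp}(\pi)$ gives $\int u_\lambda\circ\ell\,d\pi=u(1)$. Since $S\subseteq\{\ell>0\}$, $\pi$ is causal and $\pi\in\Pi^{u_\lambda}_\leq$, so $\ell_u(\mu,\nu)\ge\lambda$. Conversely, for $\eta>\lambda$ we have $u_\eta\circ\ell<u_\lambda\circ\ell\le\varphi\oplus\psi$ wherever $\ell>0$, and $\le$ elsewhere. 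For any $\tilde\pi$ this gives $\int u_\eta\circ\ell\,d\tilde\pi\le u(1)$. I need strictness to exclude $\eta$; the plan is to show $S_{\mu,\nu}(\eta)<u(1)$ for $\eta>\lambda$.

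To get strictness, fix $\eta>\lambda$. By compactness of $\Pi_\le(\mu,\nu)$ and upper semicontinuity of $u_\eta\circ\ell-\varphi\oplus\psi$ (which is $\le 0$), the supremum of $\int (u_\eta\circ\ell-\varphi\oplus\psi)\,d\tilde\pi$ is attained. If that maximum were $0$, the maximiser would be concentrated on $\{u_\eta\circ\ell=\varphi\oplus\psi\}$. That set is contained in $S\cap\{\ell\le 0\}$ (because $u_\eta\circ\ell<u_\lambda\circ\ell$ when $\ell>0$), and this is empty since $S\subseteq\{\ell>0\}$. Hence $S_{\mu,\nu}(\eta)<u(1)$, $\ell_u=\lambda$, and $\pi$ is optimal. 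When $u(0)=-\infty$ the set $\{\ell=0\}$ needs a little care, but the same strict inequality persists.

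For (ii), I will show $\psi=\varphi^{(u_\lambda\circ\ell)}$ $\nu$-a.e. and then upgrade to everywhere on $Y$. By (i) of the definition, $\varphi^{(u_\lambda\circ\ell)}\le\psi$ on $Y$. Both pairs $(\varphi,\psi)$ and $(\varphi,\varphi^{(u_\lambda\circ\ell)})$ are admissible in weak duality, with value at least $\int u_\lambda\circ\ell\,d\pi=u(1)$, while $(\varphi,\psi)$ has value exactly $u(1)$. Hence the two integrals agree and $\psi=\varphi^{(u_\lambda\circ\ell)}$ $\nu$-a.e. The upgrade to every point of $Y$ is the delicate step. I would argue that $\varphi^{(u_\lambda\circ\ell)}$ is lower semicontinuous as a supremum of continuous functions, at least where $\ell\ge 0$ on $X\times Y$. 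Two lower semicontinuous functions agreeing a.e.\ need not agree everywhere, though, so I expect to redefine via the transforms, or to use that every $y\in Y$ is a limit of points of $S$ (since $\mathrm{supp}(\pi)$ projects onto $Y$) together with lower semicontinuity of $\psi$. This is the step I expect to be the main obstacle, and I would follow McCann's Lemma 4.2-style argument here. The identity $\varphi=\psi^{(u_\lambda\circ\ell)}$ is symmetric.

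For (iii), $S$ is closed because $\varphi\oplus\psi-u_\lambda\circ\ell\ge 0$ is lower semicontinuous, so its zero set is closed in the compact set $X\times Y$. Cyclical monotonicity follows by summing $u_\lambda(\ell(x_{i+1},y_i))\le\varphi(x_{i+1})+\psi(y_i)$ and comparing with the equalities on $S$. For (iv), weak duality gives $\tilde C_u\ge\lambda$: for $\eta<\lambda$ one has $S(\eta)>u(1)$ by strict monotonicity of $\eta \mapsto u_\eta\circ\ell$ on $\{\ell>0\}$ and the fact that $\pi$ lives there. The pair $\varphi$ at $\eta=\lambda$ gives $\tilde C_u\le\lambda$, so $\varphi$ is a Kantorovich potential. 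For (v), write $\psi^{(u_\lambda\circ\ell)}(x)=\sup_{y}u_\lambda\circ\ell(x,y)-\psi(y)$, with the supremum attained on $S$-sections. Near $X$, only $y$ with $(x,y)$ near $S\subset\{\ell>0\}$ are relevant by compactness. Corollary \ref{cor: twist and non degeneracy}(i) gives uniform local Lipschitz and semiconvexity bounds there, and suprema of such families inherit these bounds.
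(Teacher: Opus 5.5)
Your proposal is correct and follows essentially the paper's route: compactness of $\Pi_\le(\mu,\nu)$ plus strict monotonicity of $\eta\mapsto u_\eta$ on $\{\ell>0\}$ gives (i), where your emptiness-of-the-contact-set argument is a slightly cleaner variant of the paper's $\varepsilon$-gap argument; (iv) uses the same weak-duality comparison, and (v) the same localisation near $S\subseteq\{\ell>0\}$. The step you flag as the main obstacle in (ii) is in fact immediate and needs no a.e.\ detour (and redefining $\psi$ would not prove (ii) for the given $\psi$): $\operatorname{supp}\pi\subseteq S$ is compact and its projections are closed sets of full measure, hence equal to $X$ and $Y$, so every $y\in Y$ has some $x\in X$ with $(x,y)\in S$, which gives $\psi(y)=u_\lambda(\ell(x,y))-\varphi(x)\le\varphi^{(u_\lambda\circ\ell)}(y)$ pointwise (in particular $\psi<\infty$ on $Y$), and symmetrically for $\varphi$ — exactly the paper's argument.
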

\begin{proof}
    First, given that $\mu,\nu$ are compactly supported, we may consider all causal couplings in the definition of $\ell_u(\mu,\nu)$ without changing its value.
    \begin{enumerate}
        \item We first show that $\lambda \leq \ell_u(\mu,\nu)$: Indeed,
        \begin{equation*}
            \int u_\lambda \circ \ell \, d\pi = \int (\varphi \oplus \psi) d\pi = \int \varphi \, d\mu + \int \psi \, d\nu = u(1).
        \end{equation*}
        To see that $\ell_u(\mu,\nu) = \lambda$, suppose that $\lambda' > \lambda$, then for any $\pi' \in \Pi_\leq(\mu,\nu)$, using inverse monotonicity of $\lambda' \mapsto u_{\lambda'} \circ \ell$ we obtain
        \begin{equation*}
            \int u_{\lambda'}\circ \ell \, d\pi' \leq \int u_{\lambda } \circ \ell \, d\pi' \leq \int \varphi \, d\mu + \int \psi \, d\nu = u(1).
        \end{equation*}
        Now suppose that for some $\lambda' > \lambda$,
        \begin{equation*}
            \sup_{\pi' \in \Pi_\leq(\mu,\nu)} \int u_{\lambda'} \circ \ell \, d\pi' = u(1).
        \end{equation*}
        Just like in the proof of Proposition \ref{prop: existence of optimal coupling}, we can use the compactness of $\Pi_\leq(\mu,\nu)$ to find a causal coupling $\pi'$ of $(\mu,\nu)$ realising this supremum, i.e.,
        \begin{equation*}
            \int u_{\lambda'} \circ \ell \, d\pi' = u(1).
        \end{equation*}
        By monotonicity of $u$, it is easily seen that $\pi'$ cannot be concentrated on $\{\ell = 0\}$, thus there exists $\varepsilon > 0$ such that $m_\varepsilon:=\pi'(\{\ell \geq \varepsilon\}) > 0$. Using strict inverse monotonicity of $\lambda' \mapsto u_{\lambda'}(\ell(x,y))$ whenever $\ell(x,y) > 0$, as well as continuity, since the set $\{\ell \geq \varepsilon\} \cap \mathrm{supp}(\pi')$ is compact, there is some $C_\varepsilon > 0$ such that
        \begin{equation*}
            u_\lambda \circ \ell - u_{\lambda'}\circ \ell \geq C_\varepsilon > 0
        \end{equation*}
        on $\{\ell \geq \varepsilon\} \cap \mathrm{supp}(\pi')$. But then
        \begin{align*}
            u(1) &= \int u_{\lambda'} \circ \ell \, d\pi' = \int_{\{\ell \geq \varepsilon\}} u_{\lambda'} \circ \ell \, d\pi' + \int_{\{\ell < \varepsilon\}} u_{\lambda'} \circ \ell \, d\pi'\\
            &\leq -C_\varepsilon m_\varepsilon + \int_{\{\ell \geq \varepsilon\}} u_\lambda \circ \ell \, d\pi' + \int_{\{\ell < \varepsilon\}} u_\lambda \circ \ell \, d\pi' \\
            &= -C_\varepsilon m_\varepsilon + \int u_\lambda \circ \ell \, d \pi'\\
            &\leq - C_\varepsilon m_\varepsilon + \int \varphi \, d\mu + \int \psi \, d\nu < u(1),
        \end{align*}
        a contradiction. We conclude that $\lambda = \ell_u(\mu,\nu) \in (0,\infty)$ and that $\pi$ is optimal.
        \item Consider the $u_\lambda\circ\ell$-conjugate of $\psi$ in $M$,
                \begin{equation}
                    \psi^{(u_\lambda\circ\ell)}(x)=\sup_{y\in Y} u_\lambda(\ell(x,y))-\psi(y).
                    \label{eq: duality by u separation}
                \end{equation}
                The result will follow if we establish that $\psi^{(u_\lambda\circ\ell)}=\varphi$ in $X$. To this aim, notice that, since $\nu$ is compactly supported and both $-\psi$ and $u\circ\ell$ are upper semicontinuous, the supremum in the previous equation is attained.
                Fix $\bar{x}\in X$. The inequality $\varphi(\bar{x})\geq\psi^{(u_\lambda\circ\ell)}(\bar{x})$ follows from Definition \ref{Definition: u-separeted new}(i). On the other hand, using Definition \ref{Definition: u-separeted new}(ii) and compactness of $X$ and $Y$, and since $\pi\in\Pi(\mu,\nu)$, there exists $\bar{y}\in Y$ such that $(\bar{x},\bar{y})\in S$. Hence, this $\bar{y}$ must maximise \eqref{eq: duality by u separation}, and $\varphi$ and $\psi^{(u_\lambda\circ\ell)}$ agree on $X$. Since $\varphi$ is only defined on $X$, it can be extended to $M$ by imposing that it agrees with $\psi^{(u_\lambda\circ\ell)}$ outside $X$, which yields the first part of the proof.
                The second part of the proof requires to show $\varphi^{(u_\lambda\circ\ell)}=\psi$ in $Y$, which can be done analogously.
                \item Compactness of $S$ is clear, as it is a closed set contained in the compact set $X \times Y$. To see $u_\lambda\circ\ell$-cyclical monotonicity of $S$, choose $k\in\N$ and a permutation $\sigma$ of $\{1,\dots,k\}$. For $(x_1,y_1),\dots,(x_k,y_k)\in S$, we have
                \begin{equation*}
                    \sum_{i=1}^k u_\lambda(\ell(x_i,y_i))=\sum_{i=1}^k \varphi(x_i)+\psi(y_i)=\sum_{i=1}^k \varphi(x_i)+\psi(y_{\sigma(i)})\geq \sum_{i=1}^k u_\lambda(\ell(x_i,y_{\sigma(i)})),
                \end{equation*}
                where the last inequality is a consequence of the $u$-separation. This gives the result.
                \item First, since $\psi = \varphi^{(u_\lambda \circ \ell)}$ on $Y = \mathrm{supp}(\nu)$ and
                \begin{equation*}
                    \int \varphi \, d\mu + \int \psi \, d\nu = u(1),
                \end{equation*}
                it follows that $\tilde C_u(\mu,\nu) \leq \lambda$. Now suppose that $\eta < \lambda$, and take any $\tilde \varphi \in L^1(\mu)$ such that $\tilde \varphi^{(u_\eta \circ \ell)} \in L^1(\nu)$. Since $\tilde \varphi(x) + \tilde \varphi^{(u_\eta \circ \ell)}(y) \geq u_\eta(\ell(x,y))$, for any $\pi' \in \Pi_\leq(\mu,\nu)$ we have that 
                \begin{align*}
                    \int \tilde \varphi \, d\mu + \int \tilde \varphi^{(u_\eta \circ \ell)} \, d\nu \geq \int u_\eta \circ \ell \, d\pi'.
                \end{align*}
                Given that $\eta < \lambda$, taking a supremum over $\pi'$ shows that
                \begin{align*}
                    \int \tilde \varphi \, d\mu + \int \tilde \varphi^{(u_\eta \circ \ell)} \, d\nu \geq u(1).
                \end{align*}
                Take now $\pi' = \pi$, the optimal plan for $\ell_u(\mu,\nu) = \lambda$. Again using strict inverse monotonicity of $\eta \mapsto u_\eta \circ \ell$, given that $\pi$ has support compactly contained in $\{\ell > 0\}$, there is some $C > 0$ such that $u_\eta \circ \ell \geq u_\lambda \circ \ell + C$ on $\mathrm{supp}(\pi)$. Thus, for any $\tilde \varphi \in L^1(\mu)$,
                \begin{align*}
                    \int \tilde \varphi \, d\mu + \int \tilde \varphi^{(u_\eta \circ \ell)} \, d\nu &= \int \tilde \varphi \oplus \tilde \varphi^{(u_\eta \circ \ell)} \, d\pi \geq \int u_\eta \circ \ell \, d\pi\\
                    &\geq C + \int u_\lambda \circ \ell \, d\pi = C + u(1).
                \end{align*}
                Thus $\tilde C_u(\mu,\nu) \geq \lambda$, so altogether $\tilde C_u(\mu,\nu) = \lambda$ and we conclude that $\varphi$ is a Kantorovich potential.
                \item By (iii), $S$ is a compact set. Therefore, its Riemannian distance (using the auxiliary metric $\tilde{g}$) from $\{\ell\leq0\}$ is positive. Let this distance be denoted by $3R:=d_{\tilde{g}\oplus\tilde{g}}(\{\ell\leq0\},S)>0$. Therefore, $S_{2R}\subset\{\ell>0\}$, where $S_{2R}$ denotes the closed tubular neighbourhood of $S$ with radius $2R$. In particular, since by Corollary \ref{cor: twist and non degeneracy}(i), $u_\lambda\circ\ell$ is locally Lipschitz and semiconvex in $\{\ell>0\}$, the same properties hold in $S_{2R}$. We will use these to obtain sufficiently small (tubular) neighbourhoods of $X$ and $Y$ where the claim holds.
                Let $\bar{x}\in X$. By definition of $S$, there exists $\bar{y}\in Y$ such that $(\bar{x},\bar{y})\in S$. Then, for any $x\in X$,
                \begin{equation}\label{eq: lipschitz kantorovich potentials}
                    \begin{aligned}
                        \psi^{(u_\lambda\circ\ell)}(x)&\geq u_\lambda(\ell(x,\bar{y}))-\psi(\bar{y})\\
                    &\geq u_\lambda(\ell(\bar{x},\bar{y}))-\vert\vert u_\lambda\circ\ell\vert\vert_{C^{0,1}(B_R(\bar{x}))}d_{\tilde{g}}(x,\bar{x})-\psi(\bar{y}) \\
                    &\geq \psi^{(u_\lambda\circ\ell)}(\bar{x})-\vert\vert u_\lambda\circ\ell\vert\vert_{C^{0,1}(S_{2R})}d_{\tilde{g}}(x,\bar{x}),
                    \end{aligned}
                \end{equation}
                where in the second line we are using the local Lipschitzness of $u_\lambda\circ\ell$. Exchanging above the roles of $x$ and $\bar{x}$, it follows that $\psi^{(u_\lambda\circ\ell)}$ is Lipschitz continuous in $X$, and the Lipschitz constant is bounded by $\vert\vert u_\lambda\circ\ell\vert\vert_{C^{0,1}(S_{2R})}$. Analogously, one would obtain the same bound for $\varphi^{(u_\lambda\circ\ell)}$ on $Y$.
                Note how the argument derived in \eqref{eq: lipschitz kantorovich potentials} cannot be used if $x\notin X$, since it is no longer possible to exchange the role of $x$ and $\bar{x}$, as there is no duality \textit{a priori} for $x$. However, one can deduce lower semicontinuity of $\psi^{(u_\lambda\circ\ell)}$, since the bound in \eqref{eq: lipschitz kantorovich potentials} still holds, and taking the limit $x\to\bar{x}$,
                \begin{equation}\label{eq: lsc kantorovich}
        \lim_{x\to\bar{x}}\psi^{(u_\lambda\circ\ell)}(x)\geq\psi^{(u_\lambda\circ\ell)}(x).
                \end{equation}
                In order to conclude Lipschitzness of $\psi^{(u_\lambda\circ\ell)}$ and $\varphi^{(u_\lambda\circ\ell)}$ in neighbourhoods of $X$ and $Y$, respectively, let $r>0$ be sufficiently small so that $X_r$ inherits compactness from $X$, and $Y_r$ inherits compactness from $Y$ (this can be done since our manifold is Hausdorff and second countable). Moreover, $r>0$ can be chosen so that $S_{(r,0)}:=\{(x,y)\in X_r\times Y:\ \psi^{(u_\lambda\circ\ell)}(x)+\psi(y)=u_\lambda(\ell(x,y))\}$ is contained in $S_{R}$ (respectively $S_{(0,r)}$). Indeed, assume for contradiction that there exists a sequence $(x_k,y_k)\in S_{(\frac{1}{k},0)}\setminus S_R$, $k\in\N$. Since this is in particular contained in $S_{(\frac{1}{k},0)}$, for $k\in\N$ sufficiently large, this is contained in the compact set $X_{\frac{1}{k}}\times Y$, so there exists a converging subsequence (without loss of generality, let us not rename it) to $(x_{\infty},y_{\infty})$. Note that for every $k\in\N$, by construction it holds that
                \begin{equation*}
                    \psi^{(u_\lambda\circ\ell)}(x_k)+\psi(y_k)=u_\lambda(\ell(x_k,y_k)).
                \end{equation*}
                Now, by assumption, $\psi$ is lower semicontinuous. By \eqref{eq: lsc kantorovich}, $\psi^{(u_\lambda\circ\ell)}$ is also lower semicontinuous. Finally, by Corollary \ref{cor: twist and non degeneracy}(i), $u_\lambda\circ\ell$ is upper semicontinuous. Bringing these together, and taking the limit above, we obtain
                \begin{align*}
u_\lambda(\ell(x_\infty,y_\infty))\leq\psi^{(u_\lambda\circ\ell)}(x_\infty)+\psi(y_\infty)&\leq\lim_{k\to\infty}\psi^{(u_\lambda\circ\ell)}(x_k)+\psi(y_k)\\ &=\lim_{k\to\infty}u_\lambda(\ell(x_k,y_k))\leq u_\lambda(\ell(x_\infty,y_\infty)),
                \end{align*}
                which forces everything above to be an equality. This implies that $(x_{\infty},y_{\infty})\in S$, but in particular, this would give that for sufficiently big $k\in\N$, $(x_k,y_k)\in S_{R}$, which is a contradiction.
                To finally conclude Lipschitzness, let $\bar{x}\in X_r$ and $x\in X_r\cap B_R(\bar{x})$. As above, there exists $\bar{y}\in Y$ such that $(\bar{x},\bar{y})\in S_{(r,0)}\subset S_{R}$, hence
                \begin{align*}
                    \psi^{(u_\lambda\circ\ell)}(x)&\geq u_\lambda(\ell(x,\bar{y}))-\psi(\bar{y})\\
                    &\geq u_\lambda(\ell(\bar{x},\bar{y}))-\vert\vert u_\lambda\circ\ell\vert\vert_{C^{0,1}(B_R(\bar{x}))}d_{\tilde{g}}(x,\bar{x})-\psi(\bar{y}) \\
                    &\geq \psi^{(u_\lambda\circ\ell)}(\bar{x})-\vert\vert u_\lambda \circ\ell\vert\vert_{C^{0,1}(S_{2R})}d_{\tilde{g}}(x,\bar{x}),
                \end{align*}
            and in this case one can interchange the roles of $x$ and $\bar{x}$, yielding a Lipschitz bound for $\psi^{(u_\lambda\circ\ell)}$ on $X_r$. Analogously, one derives the same bound for $\varphi^{(u_\lambda \circ\ell)}$ on $Y_r$.
            Finally, let us show the claimed semiconvexity. Denote by $C_{2R}$ the semiconvexity constant of $u\circ\ell$ in $S_{2R}$ given by Corollary \ref{cor: twist and non degeneracy}(i). Choose $(\bar{x},\bar{y})\in S_{(r,0)}\subset S_R$ and $w\in T_{\bar{x}}M$ such that $\vert w\vert_{\tilde{g}}<R$. Then
            \begin{align*}
                \psi^{(u_\lambda\circ\ell)}(\exp_{\bar{x}}^{\tilde{g}}w)&+\psi^{(u_\lambda\circ\ell)}(\exp_{\bar{x}}^{\tilde{g}}-w)-2\,\psi^{(u_\lambda\circ\ell)}(\bar{x})\\
                &\geq u_\lambda(\ell(\exp_{\bar{x}}^{\tilde{g}}w,\bar{y}))+u_\lambda(\ell(\exp_{\bar{x}}^{\tilde{g}}-w,\bar{y}))-2\, u_\lambda(\ell(\bar{x},\bar{y}))\\
                &\geq C_{2R}\,\vert w\vert^2_{\tilde{g}}.
            \end{align*}
            This yields semiconvexity of $\psi^{(u_\lambda\circ\ell)}$ in $X_r$. Analogously, $\varphi^{(u_\lambda\circ\ell)}$ is semiconvex in $Y_r$, which concludes the proof.
    \end{enumerate}
\end{proof}

 \begin{Remark}[Simple situation where $u$-separation holds]
 \label{Remark: simple u separation}
 Let $(\mu,\nu) \in \mathcal P_c(M)^2$ such that $\mathrm{supp}(\mu) \times \mathrm{supp}(\nu) \subseteq \{\ell > 0\}$. By Proposition \ref{prop: existence of optimal coupling} and Remark \ref{Remark: simple situation for opt coupl}, $\lambda:=\ell_u(\mu,\nu) \in (0,\infty)$, and there exists an optimal coupling $\pi \in \Pi^{u_\lambda}_\leq(\mu,\nu) = \Pi_\leq(\mu,\nu)$ such that
 \begin{equation*}
     u(1) = \int u_\lambda\circ \ell \, d\pi = \sup_{\tilde \pi \in \Pi_{\leq}(\mu,\nu)} \int u_\lambda \circ \ell \, d\tilde \pi.
 \end{equation*}
 Given that $u \circ \ell$ is continuous and finite-valued on the compact set $\mathrm{supp}(\mu) \times \mathrm{supp}(\nu)$, standard Kantorovich duality results (cf.\ Villani \cite{Villani:2009}) produce a pair $(\varphi, \varphi^{(u_\lambda \circ \ell)})$, where $\varphi$ is $u_\lambda \circ \ell$-convex, such that
 \begin{align*}
     u(1) &= \int u_\lambda \circ \ell \, d\pi = \sup_{\tilde \pi \in \Pi_{\leq}(\mu,\nu)} \int u_\lambda \circ \ell \, d\tilde \pi\\ 
     &= \inf_{\tilde \varphi \in L^1(\mu)} \int \tilde \varphi \, d\mu + \int \tilde \varphi^{(u_\lambda \circ \ell)} \, d\nu \\
     &= \int \varphi \, d\mu + \int  \varphi^{(u_\lambda \circ \ell)}.
 \end{align*}
 Thus, the data $(\lambda,\pi,\varphi,\varphi^{(u_\lambda \circ \ell)})$ is easily seen to verify $u$-separation for $(\mu,\nu)$.
\end{Remark}

\subsection{Geodesics of probability measures}

\begin{Definition}[$u$-geodesic]
    Let $u:(0,\infty)\to\R$ be an admissible function. We say that a map $s\in[0,1]\mapsto\mu_s\in\mathcal{P}(M)$ is a \emph{$u$-geodesic} if
    \begin{equation*}
        \ell_u(\mu_s,\mu_t)=(t-s)\,\ell_u(\mu_0,\mu_1)\in(0,\infty)
    \end{equation*}
    for each $0\leq s<t\leq1$. It will be denoted throughout as $(\mu_s)_{s\in[0,1]}$. If $u=u_p$ for $p\in(-\infty,1)$, we will call such a map simply \emph{$p$-geodesic}.
\end{Definition}

\begin{Proposition}[Interpolants inherit compact support]\label{prop: interpolants inherit compactness}
            Let $u:(0,\infty)\to\R$ be an admissible function and let $(\mu_s)_{s\in[0,1]}\subset\mathcal{P}(M)$ be a $u$-geodesic. If $\mu_0$ and $\mu_1$ are compactly supported and $S_{\mu_0,\mu_1}(\eta)$ is right-continuous at $\eta =\ell_u(\mu_0,\mu_1)$, then $\operatorname{supp}\mu_s\subset Z_s\left(\operatorname{supp}(\mu_0\times\mu_1)\right)$, and the latter are compact for every $s\in[0,1]$.
\end{Proposition}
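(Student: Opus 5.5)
The plan is to follow McCann's argument for \cite[Cor.\ 2.10]{McCann:2020}, using Proposition \ref{prop: propoerties of the reverse ineq}(i) and (iii) together with Lemma \ref{lemma: midpoints inherit compactness}. Fix $s\in(0,1)$; the cases $s\in\{0,1\}$ are trivial because $Z_0(x,y)\ni x$ and $Z_1(x,y)\ni y$ whenever $\ell(x,y)\ge 0$. Set $X_1:=\mathrm{supp}(\mu_0)$ and $X_3:=\mathrm{supp}(\mu_1)$, which are compact by assumption. Then $S:=X_1\times X_3=\mathrm{supp}(\mu_0\times\mu_1)$ is compact, so Lemma \ref{lemma: midpoints inherit compactness} shows that $Z(S)$ and $Z_s(S)$ are compact. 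This settles the compactness part of the statement.

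For the support inclusion, I first show that $\mu_s[Z(X_1\times X_3)]=1$, arguing by contradiction. By the $u$-geodesic property, $\ell_u(\mu_0,\mu_s)=s\,\ell_u(\mu_0,\mu_1)$ and $\ell_u(\mu_s,\mu_1)=(1-s)\,\ell_u(\mu_0,\mu_1)$. Both are positive and finite, and they sum to $\ell_u(\mu_0,\mu_1)$, so the reverse triangle inequality is saturated. Suppose $\mu_s[Z(X_1\times X_3)]<1$. Since $S_{\mu_0,\mu_1}$ is right-continuous at $\ell_u(\mu_0,\mu_1)$, Proposition \ref{prop: propoerties of the reverse ineq}(i) gives strict inequality, which is a contradiction.

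\textbf{Main obstacle.} Proposition \ref{prop: propoerties of the reverse ineq}(i) as stated requires $\mu_s$ to be compactly supported, and this is essentially what we are trying to prove. In its proof, compactness of the middle measure is only used to find a compact $K$ with $\mu_s(K)>0$ disjoint from the closed set $Z(X_1\times X_3)$; inner regularity of Borel measures on $M$ supplies such a $K$ anyway. The remaining uniform $\delta>0$ comes from compactness of $X_1\times K\times X_3$, continuity of $\ell$ on $\{\ell\ge0\}$, and strict concavity and monotonicity of $u$. So I would re-run that argument verbatim, noting that $\mu_2$ need not be compactly supported. I also have to check that gluing is available and that the relevant couplings lie in $\Pi^{u_\lambda}_\le$. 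Here $u_\lambda\circ\ell$ is bounded above on the compact sets involved, so negative parts are handled exactly as in Proposition \ref{prop: reverse triangle inequality}.

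Having $\mu_s[Z(X_1\times X_3)]=1$ with $Z(X_1\times X_3)$ compact, $\mu_s$ is compactly supported. To pin down $Z_s$ rather than $Z$, I would apply Proposition \ref{prop: propoerties of the reverse ineq}(ii) and (iii) with $S=X_1\times X_3$. This requires optimal couplings for $(\mu_0,\mu_s)$ and $(\mu_s,\mu_1)$. All three measures are now compactly supported, so these come from the compactness-of-$\Pi_\le$ argument of Proposition \ref{prop: existence of optimal coupling}: take near-maximisers $\pi_k$ for $\lambda_k\uparrow\lambda$, where upper semicontinuity and boundedness above of $u_\lambda\circ\ell$ on compacta replace $a\oplus b$. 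That argument yields a coupling satisfying \eqref{eq: optimal coupling inequality}.

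Then (iii) gives $\mu_s(A)=0$ for every Borel $A$ disjoint from $Z_s(X_1\times X_3)$. Since this set is compact, hence closed, we conclude $\mathrm{supp}\,\mu_s\subset Z_s(\mathrm{supp}(\mu_0\times\mu_1))$.
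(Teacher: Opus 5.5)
Your proposal is correct and follows the paper's proof step for step. Both use Lemma \ref{lemma: midpoints inherit compactness} for compactness of $Z(X_0\times X_1)$ and $Z_s(X_0\times X_1)$, then Proposition \ref{prop: propoerties of the reverse ineq}(i) to get $\mu_s[Z(X_0\times X_1)]=1$, then optimal couplings for $(\mu_0,\mu_s)$ and $(\mu_s,\mu_1)$ together with Proposition \ref{prop: propoerties of the reverse ineq}(iii) applied with $S=X_0\times X_1$. Your inner-regularity patch (and your separate handling of $s\in\{0,1\}$) is a genuine refinement: the paper applies part (i), which assumes the middle measure is compactly supported, to $\mu_s$ before that compactness is known.
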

\begin{proof}
            For the $u$-geodesic $(\mu_s)_{s\in[0,1]}$, write $X_s:=\operatorname{supp}\mu_s$. By assumption, $X_0$ and $X_1$ are compact. Hence, by Lemma \ref{lemma: midpoints inherit compactness}, $Z(X_0\times X_1)$ is compact, and so is $Z_s(X_0\times X_1)$ for any $s\in[0,1]$. Fix $s\in[0,1]$, from the definition of $u$-geodesic we get $\ell_u(\mu_0,\mu_s)+ \ell_u(\mu_s,\mu_1)=\ell_u(\mu_0,\mu_1)$. Proposition \ref{prop: propoerties of the reverse ineq}(i) asserts that in this case $\mu_s[Z(X_0\times X_1)]=1$. Together with the compactness of $Z(X_0\times X_1)$, this gives that $\mu_s$ is compactly supported. Finally, $\sup_{Z(X_0\times X_1)}u\circ\ell<\infty$, given the compactness of $Z(X_0\times X_1)$ and the upper semicontinuity of $u\circ\ell$ established in Corollary \ref{cor: twist and non degeneracy}(i)(a). Thus, by Proposition \ref{prop: existence of optimal coupling}, we can find optimal couplings for both $\ell_u(\mu_0,\mu_s)$ and $\ell_u(\mu_s,\mu_1)$. Then, we can apply Proposition \ref{prop: propoerties of the reverse ineq}(iii) (taking $S=X_0\times X_1$), get $\mu_s(Z_s(X_0\times X_1))=1$, and hence $\operatorname{supp}\mu_s\subset Z_s\left(\operatorname{supp}(\mu_0\times\mu_1)\right)$, as desired.
\end{proof}

\begin{Theorem}[Existence of $u$-geodesics]\label{thm: existence of u geod}
Let $(\mu,\nu) \in \mathcal{P}(M)^2$ such that $\lambda:=\ell_u(\mu,\nu) \in (0,\infty)$, and suppose there exists an optimal $\pi \in \Pi^{u_\lambda}_\leq(\mu,\nu)$ which is concentrated on $\{\ell > 0\}$. Then $\mu_s:= (\bar z_s)_{\#} \pi$ is a $u$-geodesic from $\mu$ to $\nu$. Moreover, for each $s < t$ such that $S_{\mu_s,\mu_t}(\eta)$ is right continuous at $(t-s)\lambda$, $(\bar z_s \times \bar z_t)_{\#} \pi$ is optimal for $(\mu_s,\mu_t)$. This optimal plan saturates the inequality in \eqref{eq: optimal coupling inequality} if and only if $\pi$ does. Moreover, if $\mu,\nu$ are compactly supported, the optimal coupling $\pi$ is unique and satisfies $\pi(\mathrm{sing}(\ell)) = 0$, then the $u$-geodesic $(\mu_s)_{s\in[0,1]}$ is unique.
\end{Theorem}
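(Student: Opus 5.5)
The plan is to show that the pushforwards $\mu_s:=(\bar z_s)_{\#}\pi$ realise the equality $\ell_u(\mu_s,\mu_t)=(t-s)\lambda$ for all $0\le s<t\le 1$. The proof splits into a lower bound, an upper bound, and then the statements about optimality and uniqueness.

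\textbf{Lower bound.} Consider the coupling $\pi_{st}:=(\bar z_s\times\bar z_t)_{\#}\pi\in\Pi(\mu_s,\mu_t)$. By Lemma \ref{lemma: selecting midpoints away from cut locus}, for $\pi$-a.e.\ $(x,y)$ the curve $r\mapsto\bar z_r(x,y)$ is a maximising segment. Hence $\ell(\bar z_s,\bar z_t)=(t-s)\ell(x,y)\ge 0$, so $\pi_{st}$ is causal, and
\begin{equation*}
u_{(t-s)\lambda}(\ell(\bar z_s(x,y),\bar z_t(x,y)))=u_\lambda(\ell(x,y))
\end{equation*}
pointwise. Therefore $\pi_{st}\in\Pi^{u_{(t-s)\lambda}}_\leq(\mu_s,\mu_t)$ and
\begin{equation*}
\int u_{(t-s)\lambda}\circ\ell\,d\pi_{st}=\int u_\lambda\circ\ell\,d\pi\ge u(1).
\end{equation*}
This gives $\ell_u(\mu_s,\mu_t)\ge (t-s)\lambda$. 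Note also that $\mu_0=\mu$ and $\mu_1=\nu$, since $\bar z_0=pr_1$ and $\bar z_1=pr_2$ on $\{\ell>0\}$.

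\textbf{Upper bound.} Apply the reverse triangle inequality (Proposition \ref{prop: reverse triangle inequality}) twice:
\begin{equation*}
\lambda=\ell_u(\mu_0,\mu_1)\ge\ell_u(\mu_0,\mu_s)+\ell_u(\mu_s,\mu_t)+\ell_u(\mu_t,\mu_1)\ge s\lambda+\ell_u(\mu_s,\mu_t)+(1-t)\lambda,
\end{equation*}
where the last step uses the lower bound on the outer two terms. Since $\lambda<\infty$, this forces $\ell_u(\mu_s,\mu_t)=(t-s)\lambda\in(0,\infty)$. So $(\mu_s)$ is a $u$-geodesic.

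\textbf{Optimality of $\pi_{st}$.} Set $\lambda_{st}:=(t-s)\lambda$. From the lower bound, $\int u_{\lambda_{st}}\circ\ell\,d\pi_{st}\ge u(1)$. It remains to show that $\pi_{st}$ attains $S_{\mu_s,\mu_t}(\lambda_{st})$. Right-continuity of $S_{\mu_s,\mu_t}$ at $\lambda_{st}$ gives $S_{\mu_s,\mu_t}(\lambda_{st})\le u(1)$, by exactly the argument in the proof of Proposition \ref{prop: existence of optimal coupling}: otherwise some $\lambda_{st}+\delta$ would be eligible, contradicting $\ell_u(\mu_s,\mu_t)=\lambda_{st}$. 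Therefore
\begin{equation*}
S_{\mu_s,\mu_t}(\lambda_{st})=u(1)=\int u_{\lambda_{st}}\circ\ell\,d\pi_{st},
\end{equation*}
so $\pi_{st}$ is optimal and saturating. The same right-continuity argument applied to $S_{\mu,\nu}$ is what yields saturation for $\pi$. Independently of that, the pointwise identity above shows that $\int u_{\lambda_{st}}\circ\ell\,d\pi_{st}=u(1)$ if and only if $\int u_\lambda\circ\ell\,d\pi=u(1)$, which is the claimed equivalence.

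\textbf{Uniqueness.} This is the main obstacle. Let $(\nu_s)$ be any $u$-geodesic from $\mu$ to $\nu$ and fix $s$.
\begin{enumerate}
\item By compact support and Proposition \ref{prop: interpolants inherit compactness}, $\nu_s$ is compactly supported and $\ell_u(\mu,\nu_s)=s\lambda$, $\ell_u(\nu_s,\nu)=(1-s)\lambda$.
\item Optimal couplings for $(\mu,\nu_s)$ and $(\nu_s,\nu)$ exist. Here I expect to invoke Proposition \ref{prop: existence of optimal coupling} together with Lemma \ref{Lem: simplifications for Pi}(ii); the point needing care is verifying the right-continuity hypothesis, which I would handle through Remark \ref{Remark: Right continuity} or by compactness.
\item Proposition \ref{prop: propoerties of the reverse ineq}(ii) then yields $\omega$ whose $(1,3)$-marginal $\pi_{13}$ is optimal for $(\mu,\nu)$. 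By uniqueness of the optimal coupling, $\pi_{13}=\pi$.
\item Since $\pi(\operatorname{sing}(\ell))=0$, Lemma \ref{lemma: midpoint continuity away from cut locus} gives $Z_s(x,y)=\{z_s(x,y)\}$ for $\pi$-a.e.\ $(x,y)$, with $z_s=\bar z_s$ there.
\item Proposition \ref{prop: propoerties of the reverse ineq}(iii) then yields $\nu_s=(z_s)_{\#}\pi=\mu_s$.
\end{enumerate}
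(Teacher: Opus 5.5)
Your lower bound, the two-fold use of Proposition \ref{prop: reverse triangle inequality}, and the right-continuity argument for optimality and saturation of $\pi_{st}$ are correct and coincide with the paper's proof. They are written out more carefully than in the paper. Your uniqueness part also follows the paper's route through Proposition \ref{prop: propoerties of the reverse ineq}(ii),(iii).

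Steps 1--2 are fine in substance, though not for the reason you cite. Proposition \ref{prop: interpolants inherit compactness} assumes right-continuity of $S_{\mu,\nu}$, which you do not have. You can avoid it: $\mathrm{supp}\,\nu_s\subseteq J^+(\mathrm{supp}\,\mu)\cap J^-(\mathrm{supp}\,\nu)$ is compact by global hyperbolicity. Optimal couplings exist by narrow compactness of $\Pi_\le$ and upper semicontinuity of $\gamma\mapsto\int u_\eta\circ\ell\,d\gamma$.

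\textbf{The genuine gap is step 3.} Gluing optimal $\pi_{12},\pi_{23}$ only yields
\[
\int u_\lambda\circ\ell\,d\pi_{13}\;\ge\; s\,S_{\mu,\nu_s}(s\lambda)+(1-s)\,S_{\nu_s,\nu}((1-s)\lambda)\;\ge\; u(1).
\]
Additivity of $\ell_u$ forces the pointwise inequalities \eqref{eq: proof reverse triangle ineq} to be equalities $\omega$-a.e.\ only if also $\int u_\lambda\circ\ell\,d\pi_{13}\le u(1)$, i.e.\ only if $S_{\mu,\nu}(\lambda)=u(1)$. The proof of Proposition \ref{prop: propoerties of the reverse ineq}(ii) asserts this saturation without justification. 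Neither saturation nor right-continuity of $S_{\mu,\nu}$ at $\lambda$ is among the hypotheses of the uniqueness claim, and without it the claim is false.

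\textbf{Counterexample.} On $\R^{1,1}$ take the admissible $u(r)=-e^{1/r}$ and $\mu=\delta_o$. Let $\nu$ be the law of $(T,0)$, where $1/T$ has density proportional to $e^{-\lambda r}r^{-2}$ on $[a,\infty)$, with $\lambda a$ small.
\begin{itemize}
\item Then $\Pi(\mu,\nu)=\{\pi\}$, and $S_{\mu,\nu}(\eta)=-\int e^{\eta/\ell(o,y)}\,d\nu(y)=-\infty$ for $\eta>\lambda$.
\item On the other hand, $S_{\mu,\nu}(\lambda)>-e=u(1)$.
\item Hence $\ell_u(\mu,\nu)=\lambda$, $\pi$ is the unique optimal coupling, the supports are compact, and $\pi(\mathrm{sing}(\ell))=0$. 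All hypotheses of the uniqueness claim hold.
\end{itemize}
Now let $\nu_s$ be the law of $(\sigma(s,T)\,T,0)$, with $\sigma(s,T)=s+\varepsilon\chi(T)s(1-s)$. Here $\chi\in[0,1]$ is a smooth cutoff vanishing on $\{T<T_0\}$. Consider the coupling $\big((\sigma(s,T)T,0),(\sigma(t,T)T,0)\big)$ of $(\nu_s,\nu_t)$. It moves the tail $\{T<T_0\}$ exactly as $(\bar z_s)_{\#}\pi$ does. On the bounded remainder it rescales $\ell$ by a factor in $[1-\varepsilon,1+\varepsilon]$. Hence $\int u_{(t-s)\lambda}\circ\ell\ge S_{\mu,\nu}(\lambda)-o_\varepsilon(1)\ge u(1)$. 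By your own lower/upper-bound argument, $(\nu_s)$ is a $u$-geodesic from $\mu$ to $\nu$, and it differs from $(\bar z_s)_{\#}\pi$.

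\textbf{Repair.} Add the saturation hypothesis $\int u_\lambda\circ\ell\,d\pi=u(1)$. It holds automatically under right-continuity of $S_{\mu,\nu}$ at $\lambda$, as in Proposition \ref{prop: existence of optimal coupling}. Then $u(1)=S_{\mu,\nu}(\lambda)$ caps the chain above, which forces:
\begin{itemize}
\item $\ell(x,y)=s\,\ell(x,z)$ and $\ell(y,z)=(1-s)\,\ell(x,z)$ for $\omega$-a.e.\ $(x,y,z)$;
\item $\pi_{13}$ attains $S_{\mu,\nu}(\lambda)$, so it is optimal and therefore $\pi_{13}=\pi$.
\end{itemize}
Your steps 4--5 then go through.
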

\begin{proof}
    Fix $s,t \in [0,1]$, $s < t$. Since, by assumption, for $\pi$-a.e.\ $(x,y)$, $s \mapsto \bar{z}_s(x,y)$ is a timelike geodesic parametrised proportional to arclength (so that $\ell(\bar z_s(x,y), \bar z_t(x,y)) = (t-s)\,\ell(x,y)$ for $\pi$-a.e.\ $(x,y)$), it follows that $\pi_{s,t}:=(\bar z_s \times \bar z_t)_{\#} \pi \in \Pi^{u_{(t-s) \lambda}}_\leq(\mu_s,\mu_t)$, as well as
    \begin{align*}
        \int u_{(t-s)\lambda} \circ \ell \, d\pi_{s,t} = \int u_{(t-s)\lambda} \circ \ell(\bar z_s(x,y),\bar z_t(x,y)) \, d\pi =  \int u_\lambda \circ \ell (x,y) \, d\pi \geq u(1),
    \end{align*}
    and clearly this is an equality for $\pi_{s,t}$ if and only if it is for $\pi$. Thus, $\ell_u(\mu_s,\mu_t) \geq (t-s)\, \lambda = (t-s)\, \ell(\mu_0,\mu_1)$.
This implies that $(\mu_s)$ saturates the reverse triangle inequality for $\ell_u$ and is thus a $u$-geodesic. Moreover, the above calculation shows that $\pi_{s,t}$ is optimal for $(\mu_s,\mu_t)$. Under the right-continuity assumption on $S_{\mu_s,\mu_t}$, it would follow that $S_{\mu_s,\mu_t}((t-s)\lambda) \leq u(1)$, thus $\pi_{s,t}$ is optimal in this case. The additional claim follows as in \cite[Thm.\ 2.11]{McCann:2020}, using that, in this case, each $\mu_s$ has compact support by Proposition \ref{prop: interpolants inherit compactness}, as well as the fine properties of the reverse triangle inequality (cf.\ Proposition \ref{prop: propoerties of the reverse ineq}).
\end{proof}

\begin{Proposition}[Lagrangian trajectories do not cross]\label{prop: lagrangian trajectories dont cross}

Fix an admissible function $u:(0,\infty)\to\R$. Let $(x,y),(x',y')\in M^2$, and fix $s\in(0,1)$. If $Z_s(x,y)\cap Z_s(x',y')\neq\emptyset$, and
            \begin{equation}
                u(\ell(x,y'))+u(\ell(x',y))\leq u(\ell(x,y))+u(\ell(x',y'))
                \label{eq: lagrangian trajectories dont cross}
            \end{equation}
holds, then $(x,y)=(x',y')$.
\end{Proposition}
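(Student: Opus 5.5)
Following McCann's argument for \cite[Prop.~2.10]{McCann:2020}, we fix $z\in Z_s(x,y)\cap Z_s(x',y')$ and combine the reverse triangle inequality for $\ell$ with the monotonicity and strict concavity of $u$. A preliminary reduction handles degenerate cases. If the right-hand side of \eqref{eq: lagrangian trajectories dont cross} were $-\infty$, the inequality would carry no information, so we first note that $Z_s(x,y)\neq\emptyset$ forces $\ell(x,y)\geq 0$, and likewise $\ell(x',y')\ge0$. The main case is $\ell(x,y),\ell(x',y')>0$, with $u(\ell(x,y))$ and $u(\ell(x',y'))$ finite. The cases with a null pair need separate care: either $u(0)=-\infty$ makes the hypothesis vacuous unless both pairs are timelike, or $u(0)$ is finite and the same argument goes through with $\ell=0$.

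In the main case, write $a=\ell(x,y)$ and $b=\ell(x',y')$, so that $\ell(x,z)=sa$, $\ell(z,y)=(1-s)a$, $\ell(x',z)=sb$ and $\ell(z,y')=(1-s)b$. The reverse triangle inequality through $z$ gives
\begin{equation*}
\ell(x,y')\ge sa+(1-s)b,\qquad \ell(x',y)\ge sb+(1-s)a.
\end{equation*}
Since $u$ is strictly increasing and strictly concave,
\begin{equation*}
u(\ell(x,y'))+u(\ell(x',y))\ge u(sa+(1-s)b)+u(sb+(1-s)a)\ge u(a)+u(b).
\end{equation*}
The last step uses $u(sa+(1-s)b)\ge su(a)+(1-s)u(b)$ and its symmetric counterpart, and these two inequalities are strict unless $a=b$. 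Combined with \eqref{eq: lagrangian trajectories dont cross}, every inequality in the chain must be an equality. We conclude that $a=b$, that $\ell(x,y')=\ell(x,z)+\ell(z,y')$, and that $\ell(x',y)=\ell(x',z)+\ell(z,y)$.

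We expect the main obstacle to be converting these saturations into $(x,y)=(x',y')$. Because $\ell(x,y')=\ell(x,z)+\ell(z,y')$ with both pieces positive, concatenating a maximising segment from $x$ to $z$ with one from $z$ to $y'$ gives a maximising causal curve. In a globally hyperbolic spacetime such a curve must be an unbroken timelike geodesic, so it cannot have a corner at $z$. Hence the velocity at $z$ of the segment from $x$ to $z$ (taken from the geodesic through $x,z,y$) coincides, up to positive scaling, with that of the segment from $z$ to $y'$ (taken from the geodesic through $x',z,y'$). Since $a=b$ and the segments are affinely parametrised over $[0,1]$ with $z$ at parameter $s$, the scalings agree, so the two geodesics share position and velocity at $z$. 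By uniqueness of geodesics they coincide, and evaluating at parameters $0$ and $1$ gives $x=x'$ and $y=y'$. The tangent vectors here should be understood in the Lorentzian sense, or equivalently through $\exp_z$; Lemma \ref{lemma: midpoint continuity away from cut locus} shows that $z$, being interior to a maximising segment, lies off the relevant cut loci, which justifies this step.
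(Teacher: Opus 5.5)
Your argument in the case $\ell(x,y),\ell(x',y')>0$ is correct and is essentially the paper's proof. Both use the reverse triangle inequality through a common point $z$, then monotonicity and strict concavity of $u$, summed over the two cross pairs. Your no-corner argument for maximisers, together with uniqueness of geodesics with given position and velocity at $z$, supplies the details behind the paper's one-line conclusion that ``the affine parametrisation forces $x=x'$ and $y=y'$''.

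The problem is your handling of the degenerate cases, which is wrong, because the statement itself fails when both pairs are null.

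\textbf{Counterexample.} In two-dimensional Minkowski space take $x=x'=(0,0)$, $y=(2,2)$, $y'=(3,3)$ and $z=(1,1)$.
\begin{enumerate}
\item When $\ell(x,y)=0$ we have $Z_s(x,y)=\{z:\ell(x,z)=0=\ell(z,y)\}$, so $z\in Z_s(x,y)\cap Z_s(x',y')$ for every $s\in(0,1)$.
\item All four values $\ell(x,y),\ell(x',y'),\ell(x,y'),\ell(x',y)$ vanish, so the hypothesis reads $2u(0)\le 2u(0)$.
\item This holds whether $u(0)$ is finite or $-\infty$, yet $y\neq y'$.
\end{enumerate}

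\textbf{Where your two claims fail.} For $u(0)=-\infty$, the hypothesis is not vacuous in the doubly null case: both sides equal $-\infty$. Note also that it is a left-hand side equal to $-\infty$, not a right-hand side, that makes the inequality uninformative. For finite $u(0)$, your argument does not ``go through with $\ell=0$''. Your final step uses $a=b>0$ to match the affine parameters at $z$. Along a null geodesic, however, $\ell$ carries no information about the affine parametrisation, and the two null segments can be different pieces of the same null geodesic.

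\textbf{The mixed case is fine.} If one pair is null and the other timelike, the reverse triangle inequality through $z$ makes both cross values of $\ell$ positive.
\begin{enumerate}
\item For $u(0)=-\infty$, the left side is then finite while the right side is $-\infty$, so the hypothesis fails.
\item For finite $u(0)$, strict concavity of the continuous extension of $u$ to $[0,\infty)$ forces $a=b$, a contradiction.
\end{enumerate}

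\textbf{Repair.} Add the hypothesis $\ell(x,y),\ell(x',y')>0$. The paper's proof uses it tacitly (it speaks of a ``maximising timelike geodesic''). It also holds wherever the proposition is applied, e.g.\ for pairs in $S\subseteq\{\ell>0\}$.
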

\begin{proof}
        Since $u$ is admissible, it is strictly increasing and strictly concave. Together with the reverse triangle inequality satisfied by $\ell$, we have for $m \in Z_s(x,y) \cap Z_s(x',y')$
            \begin{equation*}
                \begin{aligned}
                    u\circ\ell(x,y')&\geq u\left(s\,\frac{\ell(x,m)}{s}+(1-s)\,\frac{\ell(m,y')}{1-s}\right)\\ &\geq s\,u\left(\frac{\ell(x,m)}{s}\right)+(1-s)\,u\left(\frac{\ell(m,y')}{1-s}\right)\\&=s\,u\circ\ell(x,y)+(1-s)\,u\circ\ell(x',y').
                \end{aligned}
            \end{equation*}
By strict monotonicity, the first inequality is strict unless $m$ lies on a maximising timelike geodesic from $x$ to $y'$, while the second is strict unless $\ell(x,m)/s = \ell(m,y')/(1-s)$ and thus $\ell(x,y) = \ell(x',y')$, by strict concavity of $u$. The analogous inequality for $u \circ \ell(x',y)$ is strict unless $m$ lies on a maximising timelike geodesic from $x'$ to $y$. Summing these two estimates, any strictness in the inequalities would contradict our assumption, thus equalities hold throughout. But the affine parametrisation, combined with the previous observations on $x,x',m,y,y'$, forces $x = x'$ and $y = y'$.
\end{proof}

The above result will often be used for the admissible function $u_\lambda$, with $u$ a fixed admissible function and $\lambda > 0$.

\begin{Proposition}[(Lipschitz) continuity of inverse maps]
\label{Prop: Lipschitz continuous inverse map}
Let $u$ be admissible, $(\mu_0,\mu_1) \in \mathcal{P}_c(M)^2$ be $u$-separated and $s \in (0,1)$. Then there exists a Lipschitz continuous map $W:Z_s(S) \to \mathrm{supp}(\mu_0) \times \mathrm{supp}(\mu_1)$ (where $S$ is the set given in Definition \ref{Definition: u-separeted new} of $u$-separation) with the property that, whenever $(\mu_t)_{t\in[0,1]}$ is a $u$-geodesic from $\mu_0$ to $\mu_1$, then $W_{\#}\mu_s$ is optimal for $(\mu_0,\mu_1)$ and the restriction of $\bar z_s \circ W$ to $\bar z_s(S)$ is the identity map.
\end{Proposition}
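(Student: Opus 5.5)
The plan follows McCann's argument for the analogous statement \cite[Prop.\ 5.3]{McCann:2020}, replacing $u_p$ by the general admissible $u_\lambda$, $\lambda:=\ell_u(\mu_0,\mu_1)$. By Theorem \ref{Theorem: Duality by u-separation}, $S$ is compact, contained in $\{\ell>0\}$, and $u_\lambda\circ\ell$-cyclically monotone; in particular
\begin{equation*}
u_\lambda(\ell(x,y'))+u_\lambda(\ell(x',y))\leq u_\lambda(\ell(x,y))+u_\lambda(\ell(x',y'))
\end{equation*}
for all $(x,y),(x',y')\in S$. Proposition \ref{prop: lagrangian trajectories dont cross}, applied to the admissible function $u_\lambda$, then shows that $Z_s(x,y)\cap Z_s(x',y')=\emptyset$ unless $(x,y)=(x',y')$. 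Hence the sets $Z_s(x,y)$, $(x,y)\in S$, are pairwise disjoint. This lets me define $W:Z_s(S)\to S\subseteq \mathrm{supp}(\mu_0)\times\mathrm{supp}(\mu_1)$ by $W(m):=(x,y)$ whenever $m\in Z_s(x,y)$. It follows immediately that $\bar z_s\circ W=\mathrm{id}$ on $\bar z_s(S)$, because $\bar z_s(x,y)\in Z_s(x,y)$.

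The main obstacle is proving that $W$ is Lipschitz rather than merely well defined. I will first obtain continuity by a compactness argument. Let $m_k\to m$ in $Z_s(S)$, with $W(m_k)=(x_k,y_k)\in S$. By compactness of $S$, a subsequence converges to some $(x,y)\in S$. Continuity of $\ell$ on $\{\ell\geq 0\}$ (Theorem \ref{Theorem: SmoothnessofLorentzdistance}) then gives $m\in Z_s(x,y)$, so $W(m)=(x,y)$ by disjointness, which proves continuity.

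For the Lipschitz bound, I will use the Kantorovich potentials from Theorem \ref{Theorem: Duality by u-separation}(v): $\varphi$ and $\psi$ are semiconvex and Lipschitz near $X$ and $Y$. I will quantify the strict inequalities in the proof of Proposition \ref{prop: lagrangian trajectories dont cross}. For $m\in Z_s(x,y)$ and $m'\in Z_s(x',y')$, combining the reverse triangle inequality through $m$ and $m'$ with strict concavity of $u_\lambda$ on the compact range $\ell(S)\subset(0,\infty)$, and with the local semiconcavity of $\ell$ near $\{\ell>0\}\setminus\mathrm{sing}(\ell)$, I expect an estimate of the form
\begin{equation*}
u_\lambda\ell(x,y)+u_\lambda\ell(x',y')-u_\lambda\ell(x,y')-u_\lambda\ell(x',y)\leq C\,d(m,m')\,d((x,y),(x',y'))-c\,d((x,y),(x',y'))^2 .
\end{equation*}
Since the left-hand side is $\geq 0$ by cyclical monotonicity, this gives $d((x,y),(x',y'))\leq (C/c)\,d(m,m')$. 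Two facts make this work. First, points $m\in Z_s(S)$ are interior to maximising segments, so $(x,m)$ and $(m,y)$ avoid $\mathrm{sing}(\ell)$ (cf.\ the proof of Lemma \ref{lemma: variational characterization}). Second, $Z_s(S)$ is compact, so all smoothness constants are uniform there by Lemma \ref{lemma: midpoint continuity away from cut locus}. If this direct estimate becomes messy, I would instead follow McCann: use that $W(m)$ is determined by the supergradient condition for $x\mapsto t^{-1}\varphi$ as a $u_s\circ\ell$-convex function (Lemma \ref{lemma: starshape of concave}). The map $m\mapsto(x,y)$ is then given by flowing backward and forward along the geodesic whose velocity is $DH$ of a (Lipschitz, by semiconvexity) gradient.

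Finally, let $(\mu_t)$ be any $u$-geodesic from $\mu_0$ to $\mu_1$. Since $\mu_0,\mu_1$ have compact support and $\pi$ is concentrated on a compact subset of $\{\ell>0\}$, the right-continuity hypotheses hold, so Proposition \ref{prop: existence of optimal coupling} and Proposition \ref{prop: interpolants inherit compactness} apply. Proposition \ref{prop: propoerties of the reverse ineq}(ii) then yields $\omega$ with $(P_{13})_\#\omega$ optimal and $\mu_s$ concentrated on $Z_s$ of its support. The remaining point is to check that this optimal $\pi_{13}$ is supported in $S$. It saturates $\int u_\lambda\circ\ell\,d\pi_{13}=u(1)=\int\varphi\,d\mu_0+\int\psi\,d\mu_1$ together with $u_\lambda\circ\ell\leq\varphi\oplus\psi$, so it is concentrated on $S$. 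Thus $\omega$ lives on triples $(x,m,y)$ with $(x,y)\in S$ and $m\in Z_s(x,y)$, that is, with $W(m)=(x,y)$. Therefore $W_\#\mu_s=(P_{13})_\#\omega$, which is optimal.
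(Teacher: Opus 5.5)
Your proposal is correct and follows essentially the paper's route, which is McCann's \cite[Cor.\ 5.2]{McCann:2020} adapted to $u_\lambda$: cyclical monotonicity of the compact set $S\subseteq\{\ell>0\}$ together with non-crossing of Lagrangian trajectories makes $W$ well defined and continuous, the Lipschitz bound is the Monge--Mather shortening estimate (the paper imports it from \cite[Thm.\ A.1]{McCann:2020}, and you sketch it rather than prove it), and optimality of $W_{\#}\mu_s=(P_{13})_{\#}\omega$ comes from Proposition~\ref{prop: propoerties of the reverse ineq}(ii) plus the bound $u_\lambda\circ\ell\le\varphi\oplus\psi$ forcing concentration on $S$. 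One small slip is that right-continuity of $S_{\mu_0,\mu_s}$ at $s\lambda$ does not follow from the properties of $\pi$ as you assert, but you do not need it: the compactness part of the proof of Proposition~\ref{prop: existence of optimal coupling} gives couplings of $(\mu_0,\mu_s)$ and $(\mu_s,\mu_1)$ whose $u_{s\lambda}\circ\ell$- resp.\ $u_{(1-s)\lambda}\circ\ell$-integrals are at least $u(1)$, these glue to a coupling of $(\mu_0,\mu_1)$ whose $u_\lambda\circ\ell$-integral is at least $u(1)$, and the bound $u_\lambda\circ\ell\le\varphi\oplus\psi$ then forces every inequality to be saturated.
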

\begin{proof}
    We sketch the argument, which is analogous to McCann \cite[Cor.\ 5.2]{McCann:2020} and refer there for more details.

    The duality obtained from the $u$-separation of $(\mu_0,\mu_1)$ (cf.\ Theorem \ref{Theorem: Duality by u-separation}) provides semiconvex optimisers $(\varphi,\varphi^{(u_\lambda \circ \ell)})$, where $\lambda := \ell_u(\mu_0,\mu_1) \in (0,\infty)$. Given that $\int \varphi \, d\mu + \int \varphi^{(u_\lambda \circ \ell)} \, d\nu = u(1)$, and since $(\mu_0,\mu_1) \in \mathcal{P}_c(M)^2$, any optimal coupling $\pi$ for $(\mu_0,\mu_1)$ is concentrated on $S:=\{(x,y) \in X_0 \times X_1 : \varphi(x) + \varphi^{(u_\lambda \circ \ell)}(y) = u_\lambda(\ell(x,y))\}$ (here $X_i:=\mathrm{supp}(\mu_i)$), the latter is compactly contained in $\{\ell > 0\}$ and is $u_\lambda \circ \ell$-cyclically monotone. Combining this with the noncrossing of trajectories established in Proposition \ref{prop: lagrangian trajectories dont cross} implies that setting $W(m):=(x,y)$ whenever $(x,y) \in S$ and $m \in Z_s(x,y)$ yields a continuous map which is right-inverse to $\bar z_s$ on $\bar z_s(S)$.

    Now, given $\mu_s$ on a $u$-geodesic from $\mu_0$ to $\mu_1$, since $\ell_u(\mu_0,\mu_1) > 0$, also $\ell_u(\mu_0,\mu_s),\ell_u(\mu_s,\mu_1) > 0$, and we are in the setting of Proposition \ref{prop: propoerties of the reverse ineq}(ii), which gives $\omega \in \mathcal{P}(M^3)$ whose projection to any pair of coordinates is optimal, as well as $z \in Z_s(x,y)$ for $\omega$-a.e.\ $(x,z,y) \in M^3$. Since $\pi:=(P_{13})_{\#}\omega$ is optimal for $(\mu_0,\mu_1)$, it is supported on the compact set $S$, hence $\mu_s$ must vanish outside of $Z_s(S)$. This is sufficient to conclude that $\pi = W_{\#} \mu_s$. 

    The Lipschitz continuity of $W$ is a consequence of McCann's adaptation of the Monge--Mather shortening estimate and goes through for general $u$ exactly as for $u = u_p$, $p \in (0,1)$. For details on the latter see \cite[Thm.\ A.1]{McCann:2020}, and note that our assumption $u \in C^2$ gives enough regularity for all of the computations in that reference.
\end{proof} 

\begin{Proposition}[Star-shapedness of $u$-separation]
\label{Prop: star shaped u separation}
Let $u$ be admissible, and $(\mu_s)_{s\in[0,1]}$ a $u$-geodesic with $u$-separated endpoints. Then also $(\mu_s,\mu_t)$ is $u$-separated for every $0 \leq s < t \leq 1$.
\end{Proposition}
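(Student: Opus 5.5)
Following McCann's proof of \cite[Lem.\ 4.4]{McCann:2020} (star-shapedness of $p$-separation), we will build the separating data for $(\mu_s,\mu_t)$ from the data $(\pi,\varphi,\psi,\lambda)$ for $(\mu_0,\mu_1)$. We set $\lambda:=\ell_u(\mu_0,\mu_1)$; by Theorem \ref{Theorem: Duality by u-separation}, $\pi$ is optimal, $\psi=\varphi^{(u_\lambda\circ\ell)}$, and $S$ is compact, contained in $\{\ell>0\}$ and $u_\lambda\circ\ell$-cyclically monotone.

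\textbf{Identifying the geodesic.} By Proposition \ref{Prop: Lipschitz continuous inverse map} applied at time $s$ (and at $t$), we have $\mu_s=(\bar z_s)_\#\tilde\pi$ with $\tilde\pi:=W_\#\mu_s$ optimal and concentrated on $S$. For $s\in\{0,1\}$ we use $\pi$ directly. The midpoints $z_s(x,y)$ are then unique for $(x,y)\in S$: we reduce to the case $S\cap\operatorname{sing}(\ell)=\emptyset$ on the relevant set, using Proposition \ref{prop: propoerties of the reverse ineq}(iii) together with noncrossing (Proposition \ref{prop: lagrangian trajectories dont cross}). The natural candidate plan is $\pi_{s,t}:=(\bar z_s\times\bar z_t)_\#\tilde\pi$, with scale $(t-s)\lambda=\ell_u(\mu_s,\mu_t)$.

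\textbf{Potentials via star-shapedness.} Writing $X:=\operatorname{supp}\mu_0$ and $Y:=\operatorname{supp}\mu_1$, we construct potentials in two stages.
\begin{enumerate}
\item \emph{Pair $(\mu_0,\mu_t)$.} By Lemma \ref{lemma: starshape of concave} applied to $u_\lambda$, the function $t^{-1}\varphi$ is $(u_\lambda)_t\circ\ell=u_{t\lambda}\circ\ell$-convex relative to $(X,Z_t(X,Y))$. We take $\varphi_{0t}:=t^{-1}\varphi$ and $\psi_{0t}:=(t^{-1}\varphi)^{(u_{t\lambda}\circ\ell)}$ on $\operatorname{supp}\mu_t$. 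The representation \eqref{eq: claim starshap} gives the pointwise inequality $u_{t\lambda}\circ\ell\le\varphi_{0t}\oplus\psi_{0t}$. It also gives equality at pairs $(x,z_t(x,y))$ with $(x,y)\in S$, since Lemma \ref{lemma: variational characterization} is saturated there.
\item \emph{Pair $(\mu_s,\mu_t)$.} We apply the time-reversed version of Lemma \ref{lemma: starshape of concave} (interpolating from the $y$-side) to $\psi_{0t}$. This yields a potential on $Z_{s/t}$ of $(X,\operatorname{supp}\mu_t)$, i.e.\ on $\operatorname{supp}\mu_s$.
\end{enumerate}
These potentials are semicontinuous after taking closures, since $u\circ\ell$ is continuous on compact subsets of $\{\ell>0\}$ (Theorem \ref{Theorem: Duality by u-separation}(v)).

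\textbf{Verifying the conditions of Definition \ref{Definition: u-separeted new}.}
\begin{enumerate}
\item Condition (i) is the pointwise inequality constructed above.
\item Condition (iii) follows by integrating the equality case against $\pi_{s,t}$. Since the equality set contains $\operatorname{supp}\pi_{s,t}$, the integral of the potential sum equals $\int u_{(t-s)\lambda}\circ\ell\,d\pi_{s,t}=\int u_\lambda\circ\ell\,d\tilde\pi=u(1)$.
\item Condition (ii) is the main obstacle. We must show the equality set $S_{s,t}$ is contained in $\{\ell>0\}$, and not merely that it contains $\operatorname{supp}\pi_{s,t}$. A pair $(m,m')$ realising equality should, by the saturation analysis in Lemma \ref{lemma: variational characterization}, come from some $(x,y)\in S$ with $m=z_s(x,y)$ and $m'=z_t(x,y)$. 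Then $\ell(m,m')=(t-s)\ell(x,y)>0$.
\end{enumerate}

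\textbf{Expected difficulty.} To make the saturation argument in condition (ii) work, we would track the maximisers in the suprema defining the transforms. Upper semicontinuity and compactness guarantee these suprema are attained. Equality then forces equality in the concavity and reverse triangle steps, hence collinearity on a maximising segment. If this direct route fails, the fallback is to restrict the potentials to $\operatorname{supp}\mu_s\times\operatorname{supp}\mu_t$ and use $u(0)$-behaviour. The alternative is to redefine the potentials as $+\infty$ off the projections of $\operatorname{supp}\pi_{s,t}$, which is allowed since they may take the value $+\infty$, keeping them lower semicontinuous because those projections are compact.
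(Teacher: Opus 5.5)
Your potentials are the right ones, and your two-stage scheme is the paper's route. The paper proves $u$-separation of $(\mu_0,\mu_s)$ with $\tilde\varphi=s^{-1}\varphi$, $\tilde\lambda=s\lambda$, and obtains $(\mu_s,\mu_t)$ by time reversal. Your saturation argument for condition (ii), via the equality case of Lemma \ref{lemma: variational characterization}, also goes through.

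\textbf{The gap is the coupling.} The plan $\pi_{s,t}=(\bar z_s\times\bar z_t)_{\#}\tilde\pi$ lies in $\Pi(\mu_s,\mu_t)$ only if $(\bar z_s)_{\#}\tilde\pi=\mu_s$ and $(\bar z_t)_{\#}\tilde\pi=\mu_t$, and neither is justified.

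Your claim that midpoints are unique on $S$ is false in general. The statement assumes no absolute continuity, so $\pi$ may charge pairs in $S\cap\operatorname{sing}(\ell)$ that are joined by several maximising segments. Proposition \ref{prop: lagrangian trajectories dont cross} only separates \emph{distinct} pairs of $S$. Proposition \ref{prop: propoerties of the reverse ineq}(iii) takes uniqueness of midpoints as a hypothesis rather than producing it.

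Here is a counterexample. In $\R\times S^1$ with $g=dt^2-d\theta^2$, take $x=(0,0)$ and $y=(T,\pi)$ with $T>\pi$; these are joined by two maximising segments $\gamma_1,\gamma_2$. Then $(\delta_x,\delta_y)$ is $u$-separated by Remark \ref{Remark: simple u separation}, and $\mu_r=\frac12(\delta_{\gamma_1(r)}+\delta_{\gamma_2(r)})$ is a $u$-geodesic. But $W_{\#}\mu_s=\delta_{(x,y)}$, and $(\bar z_s\times\bar z_t)_{\#}\delta_{(x,y)}$ is a single Dirac mass, not a coupling of $(\mu_s,\mu_t)$.

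A second issue arises even where midpoints are unique. You build $\tilde\pi$ from the inverse map at time $s$ but then need $(\bar z_t)_{\#}\tilde\pi=\mu_t$. The time-$s$ and time-$t$ plans are both optimal for $(\mu_0,\mu_1)$. Without absolute continuity, optimal couplings need not be unique, so identifying the two plans requires an argument you do not give.

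\textbf{The fix} is to build couplings stagewise from the inverse map rather than from a midpoint selection, as the paper does. Take $W=(U_t,V_t):Z_t(S)\to S$ from Proposition \ref{Prop: Lipschitz continuous inverse map} and use $(U_t\times\mathrm{id})_{\#}\mu_t$.
\begin{itemize}
\item Its first marginal is $(P_1)_{\#}W_{\#}\mu_t=\mu_0$.
\item It needs no choice of midpoints.
\item It is concentrated on the compact graph $\{(U_t(z),z)\}\subseteq\{\ell>0\}$, where your stage-1 equality holds.
\item Condition (iii) then follows from $\int u_\lambda\circ\ell\,d(W_{\#}\mu_t)=u(1)$.
\end{itemize}
Stage 2 is the time-reversed version of this, applied to the now $u$-separated pair $(\mu_0,\mu_t)$ along the subgeodesic $(\mu_r)_{r\in[0,t]}$.

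\textbf{Lower semicontinuity.} As stated, your claim does not follow from continuity of $u\circ\ell$ on compacts of $\{\ell>0\}$, since a supremum of upper semicontinuous functions need not be lower semicontinuous. The same device repairs it: the maximiser in the transform defining $\psi_{0t}(z)$ is $U_t(z)$. Hence $\psi_{0t}=t^{-1}\psi\circ V_t-t^{-1}(1-t)\,u_\lambda\circ\ell\circ W$ on $\operatorname{supp}\mu_t$, which is continuous because $W$ is continuous and $\psi$ is Lipschitz.
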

\begin{proof}
    It suffices to prove $u$-separation for $(\mu_0,\mu_s)$. By Proposition \ref{prop: interpolants inherit compactness} (cf.\ also Remark \ref{Remark: Right continuity}), $\mu_s$ has compact support. Set $X_s:=\mathrm{supp}(\mu_s)$ and $\lambda:=\ell_u(\mu_0,\mu_1)$. By duality for $u$-separated measures (cf.\ Theorem \ref{Theorem: Duality by u-separation}), we find a pair $(\varphi,\varphi^{(u_\lambda \circ \ell)})$ which are semiconvex Lipschitz functions on neighborhoods of $X_0$ and $X_1$, respectively. Moreover, the set $S:=\{(x,y) \in X_0 \times X_1 : \varphi(x) + \varphi^{(u_\lambda \circ \ell)}(y) = u_\lambda(\ell(x,y))\}$ is compact, contained in $\{\ell > 0\}$, and points in $S$ realise the suprema in the conjugacy of $\varphi, \psi:=\varphi^{(u_\lambda \circ \ell)}$ in the sense that
    \begin{equation*}
        \varphi(m) = \max_{(x,y) \in S} u_\lambda (m,y) - \psi(y).
    \end{equation*}
    The maximum above is attained at some $(x,y) \in S$ with $x = m$.
    Moreover, the first and second projections of $S$ cover all of $X_0$ and $X_1$. Let $W:Z_s(S) \to S$ be the (Lipschitz) continuous inverse map from Proposition \ref{Prop: Lipschitz continuous inverse map} satisfying $W(z) = (x,y) =:(U_s(z),V_s(z))$, $z \in Z_s(x,y)$. By (the proof of) Lemma \ref{lemma: starshape of concave},
    \begin{equation*}
        s^{-1}\, \varphi(m) = \max_{z \in Z_s(S)}  u_{s\lambda}(\ell(x,z)) + s^{-1}(1-s) \,u_\lambda(\ell(U_s(z),V_s(z))) - s^{-1}\,\psi(V_s(z)).
    \end{equation*}
    The maximum is realised at some $z \in Z_s(S)$ such that $U_s(z) = m$. Pick now $\tilde \varphi:=s^{-1} \,\varphi$, $\tilde \lambda:=s\lambda$ and $\tilde \pi:=(U_s \times \mathrm{id})_{\#} \mu_s$. Then the above calculation shows that $\tilde \psi :=\tilde \varphi^{(u_{\tilde \lambda} \circ \ell)} = s^{-1}\,\psi \circ V_s - s^{-1}(1-s)\,u_\lambda \circ \ell \circ W$ on $\mathrm{supp}(\mu_s)$. It is now elementary to check that these data verify the $u$-separation for $(\mu_0,\mu_s)$, where now the compact set in question is $\tilde S:=\{(x,z) \in \mathrm{supp}(\mu_0 \times \mu_s) : \tilde \varphi(x) + \tilde \psi(z) = u_{\tilde \lambda}(\ell(x,z))\}$, see the analogous arguments in the proof of \cite[Prop.\ 5.5]{McCann:2020}. The only point which needs to be additionally verified when compared to the proof for $u = u_p$ is the integrability condition $\int \tilde \varphi \,d\mu_0 + \int \tilde \psi \, d\mu_s = u(1)$, but this follows from the optimality of $W_{\#} \mu_s$ for $(\mu_0,\mu_1)$ stated in Proposition \ref{Prop: Lipschitz continuous inverse map}, so that $\int u_\lambda \circ \ell \, d(W_{\#}\mu_s) = u(1)$, cf.\ Proposition \ref{prop: existence of optimal coupling}. With this in mind, it follows that
    \begin{align*}
        \int \tilde \varphi \, d\mu_0 + \int \tilde \psi \, d\mu_s &= s^{-1} \left(\int \varphi \, d\mu_0 + \int [\psi \circ P_2 \circ W - (1-s)\, u_\lambda \circ \ell \circ W]\, d\mu_s \right) \\
        &= s^{-1} \left(\int \varphi \, d\mu_0 + \int \psi \, d\mu_1 - (1-s) \int u_\lambda \circ \ell \, d(W_{\#}\mu_s)\right) \\
        &= s^{-1}\,(u(1) - (1-s) \,u(1)) = u(1).
    \end{align*}
    This concludes the proof.
\end{proof}

We remark that we get a slightly different representation of the intermediate potentials since our Lemma \ref{Lemma: Concavity along geodesics} differs from McCann's version \cite[Prop.\ 5.1]{McCann:2020} in the representation of the last term. Also, as remarked in \cite[Rem.\ 5.6]{McCann:2020}, the intermediate potentials are related to the Hopf--Lax semigroup of the Hamilton--Jacobi equation, where the Hamiltonian is $H(\cdot,\cdot,u_{s\lambda}^*)$ for suitable $s$.

The next lemma adapts {\cite[Lem.\ 5.7]{McCann:2020}} and {\cite[Lem.\ A.11]{kell2017interpolation}}.

\begin{Lemma}[Maps and their Jacobian derivatives]\label{lem: maps and their jacobian der}
    Let $u:(0,\infty)\to\R$ be an admissible function, and fix $X,Y\subset M$ compact. Let $\varphi:X\to\R$ be a semiconvex and Lipschitz function such that $\varphi\geq \left(\varphi^{(u\circ\ell)}\right)^{(u\circ\ell)}$ holds in a neighbourhood of $X$.
    \begin{enumerate}
        \item If $\varphi\oplus \varphi^{(u\circ\ell)}-u\circ\ell\geq0$ vanishes at $(\bar{x},\bar{y})\in X\times Y$, i.e.\ if $(\bar{x},\bar{y})\in\partial_{(u\circ\ell)}\varphi$, then $\bar{x}\in \operatorname{Dom}D\varphi$ implies $\bar{y}=F_1(\bar{x})$, where $F_s(x):=\exp_xs\,v(D\varphi(x),x;u)$, with
        \begin{equation}\label{eq: transport vector}
            v(D\varphi(x),x;u)=-\frac{(u')^{-1}(\vert\nabla\varphi\vert(x))}{\vert\nabla\varphi\vert(x)}\,  \nabla\varphi(x) = DH(D\varphi(x),x;u^*).
        \end{equation}  
        Moreover, $\bar{x}\in\operatorname{Dom}\tilde{D}^2\varphi$ implies $(\bar{x},\bar{y})\notin\operatorname{sing}(\ell)$. Analogously, $\bar y \in \mathrm{Dom} \, D\varphi^{(u \circ \ell)}$ gives $\bar x = \exp_{\bar y} -DH(-D\varphi^{(u \circ \ell)}(\bar{y}),\bar{y}; u^*)$.

        \item For $\operatorname{vol}_g$-a.e. $x\in X$, the approximate derivative $\tilde{D}F_s(x):T_xM\to T_{F_s(M)}M$ exists, depends smoothly on $s$, and $\tilde{D}F_s(x)\,w$ gives a Jacobi field along the geodesic $s\in[0,1]\mapsto F_s(x)$ for each $w\in T_xM$.

        \item Furthermore,
            \begin{equation}
                \frac{\partial}{\partial s}\bigg|_{s=0}\tilde{D}F_s=\tilde{D}\frac{\partial F_s}{\partial s}\bigg|_{s=0}=(Dv\circ D\varphi)\,\tilde{D}^2\varphi = (D^2 H \circ D \varphi) \,\tilde D^2 \varphi
            \end{equation}
            holds true $\operatorname{vol}_g$-a.e. on $X$. Here, the derivatives are computed with respect to the Lorentzian connection.
    \end{enumerate} 
\end{Lemma}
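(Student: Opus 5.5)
The plan is to follow McCann's proof of \cite[Lem.\ 5.7]{McCann:2020}, with $u_p$ replaced by $u$ and $DH(\cdot,x;u^*)$ in place of the explicit $q$-Hamiltonian. Throughout I use the hypothesis $\varphi \geq (\varphi^{(u\circ\ell)})^{(u\circ\ell)}$ near $X$. It guarantees that at a point $(\bar x,\bar y)$ where $\varphi\oplus\varphi^{(u\circ\ell)}-u\circ\ell$ vanishes, the function $x\mapsto \varphi(x)-u(\ell(x,\bar y))$ attains a local minimum at $\bar x$. Indeed, near $\bar x$ we have
\begin{equation*}
\varphi(x)\geq u(\ell(x,\bar y))-\varphi^{(u\circ\ell)}(\bar y),
\end{equation*}
with equality at $\bar x$.

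For (i), suppose $\bar x\in\mathrm{Dom}\,D\varphi$. The local-minimum property makes $D\varphi(\bar x)$ a supergradient of $u\circ\ell(\cdot,\bar y)$ at $\bar x$. We also need $\ell(\bar x,\bar y)>0$; this holds in the setting where the lemma is applied, since $S\subseteq\{\ell>0\}$. If $\ell(\bar x,\bar y)=0$ instead, Theorem \ref{Theorem: SmoothnessofLorentzdistance}(e) together with the chain rule already excludes finite supergradients unless $\bar x=\bar y$.
\begin{itemize}
\item Corollary \ref{cor: twist and non degeneracy}(ii) then gives $\bar y=\exp_{\bar x}DH(D\varphi(\bar x),\bar x;u^*)$.
\item The explicit formula \eqref{eq: transport vector} follows from $DH(p,x;u^*)=-(u^*)'(|p|)p_*/|p|$ and $(u^*)'=(u')^{-1}$ (Lemma \ref{Lemma: Concave conjugate}).
\item If moreover $\bar x\in\mathrm{Dom}\,\tilde D^2\varphi$, then $u\circ\ell(\cdot,\bar y)$ is touched from above by a function with a second-order expansion at $\bar x$, so it has a finite semiconcavity constant there. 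This contradicts Corollary \ref{cor: twist and non degeneracy}(iii) unless $(\bar x,\bar y)\notin\mathrm{sing}(\ell)$.
\item The statement for $\bar y$ is obtained symmetrically. Use the time-reversed Lagrangian on $\varphi^{(u\circ\ell)}$, which is a supremum of $u\circ\ell(x,\cdot)$ shifted by constants. The sign flips because the relevant covectors are now future-directed.
\end{itemize}

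For (ii), semiconvexity and Aleksandrov's theorem give that $\tilde D^2\varphi$ exists $\mathrm{vol}_g$-a.e.; this is also where $\varphi$ is differentiable. The map $x\mapsto v(D\varphi(x),x;u)=DH(D\varphi(x),x;u^*)$ is then approximately differentiable a.e., since $DH$ is $C^1$ on the past timelike cone (Proposition \ref{lem: lagrange-hamilton duality}). The exponential map is smooth, so $F_s=\exp\circ(s\,v)$ is approximately differentiable at the same points by the chain rule for approximate derivatives, smoothly in $s$. For each $w$, the vector $\tilde DF_s(x)w$ is the variation field of the family of geodesics $s\mapsto\exp_{x(\tau)}s\,v(x(\tau))$, hence a Jacobi field. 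The step needing care is to make this rigorous with only approximate derivatives. I would handle it as McCann does: replace $\varphi$ near an Aleksandrov point by its second-order Taylor polynomial, which agrees with $\varphi$ on a set of density one. This reduces the claim to the smooth case.

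For (iii), the initial data of this Jacobi field are $J(0)=w$ and $J'(0)=\nabla_w v$. The latter equals $\tilde D\big(DH(D\varphi,\cdot;u^*)\big)w=(D^2H\circ D\varphi)\,\tilde D^2\varphi\, w$ when covariant derivatives are taken with respect to the Levi-Civita connection. This is because $DH$ commutes with parallel transport: $H$ depends only on $|p|_g$ and the causal character. Commuting $\partial_s$ with $\tilde D$ at $s=0$ is justified by smoothness in $s$ from (ii).

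I expect the main obstacle to be part (i)'s non-singularity claim and the identification in (ii). Both hinge on transferring second-order information from $\varphi$ to $u\circ\ell$ through the touching argument. This is exactly where $C^2$-regularity of $u$ and $u''<0$ (via Corollary \ref{cor: twist and non degeneracy}(iii)) enter.
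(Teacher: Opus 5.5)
Your part (i) is the paper's argument. The touching inequality from $\varphi\ge(\varphi^{(u\circ\ell)})^{(u\circ\ell)}$ makes $D\varphi(\bar x)$ a supergradient of $u\circ\ell(\cdot,\bar y)$. Corollary~\ref{cor: twist and non degeneracy}(ii) then identifies $\bar y$, and the second-order expansion at Aleksandrov points contradicts Corollary~\ref{cor: twist and non degeneracy}(iii) on $\mathrm{sing}(\ell)$. The one difference is how you get \eqref{eq: transport vector}: you read it off the explicit $DH$ of Proposition~\ref{lem: lagrange-hamilton duality} together with $(u^*)'=(u')^{-1}$. The paper instead uses the first variation formula and $|v|_g=\ell(\bar x,\bar y)$. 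Your route is the more direct one. Your caveat about $\ell(\bar x,\bar y)>0$, which the paper passes over, can actually be closed. For $x\ll\bar x$ you have $\ell(x,\bar y)\ge\ell(x,\bar x)$, and $u'(0^+)=+\infty$. Together these rule out any finite supergradient of $u\circ\ell(\cdot,\bar y)$ where $\ell(\bar x,\bar y)=0$, even when $\bar x=\bar y$.

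For (ii)--(iii) you depart from the paper, and one step as written is false. The second-order Taylor polynomial $P$ of $\varphi$ at an Aleksandrov point $x_0$ does not agree with $\varphi$ on a set of density one, and $DP$ does not agree with $D\varphi$ there either. For example, $\varphi(x)=|x|^3$ at $x_0=0$ has $P=0$.

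This is also not what McCann or the paper do. They use that $D\varphi$ coincides with a $C^1$ covector field $V_\varepsilon$ off a set of volume $<\varepsilon$. At density points of the coincidence set, $\tilde DF_s$ is then the classical derivative of $F^\varepsilon_s(x)=\exp_x s\,DH(V_\varepsilon(x),x;u^*)$. The Jacobi property and the interchange of $\partial_s$ with $\tilde D$ become classical facts about the $C^1$ geodesic variation $r\mapsto F^\varepsilon_s(x(r))$, all at once on a set of almost full volume.

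Your pointwise reduction can be repaired with the chain rule for approximate derivatives that you already invoke. Write $F_s(x)=\Phi_s(x,D\varphi(x))$ with $\Phi_s(x,p):=\exp_x s\,DH(p,x;u^*)$. This $\Phi_s$ is $C^1$ near $(x_0,D\varphi(x_0))$ when $D\varphi(x_0)$ is past timelike. At $x_0$, $D\varphi$ is approximately differentiable with approximate derivative $\tilde D^2\varphi(x_0)=D^2P(x_0)$. Hence $\tilde DF_s(x_0)$ depends only on the $2$-jet of $\varphi$ at $x_0$, and it equals $DF^P_s(x_0)$ for $F^P_s(x):=\Phi_s(x,DP(x))$. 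The Jacobi property and (iii) then follow from the classical variation built from $P$. Your parallel-transport remark correctly justifies $\nabla_w v=(D^2H\circ D\varphi)\,\tilde D^2\varphi\,w$, which the paper uses without comment.

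So the right statement is that ``$P$ has the same $2$-jet as $\varphi$ at $x_0$'', not that ``$P=\varphi$ on a set of density one''. With that correction your local argument is complete.
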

\begin{proof}
    \begin{enumerate}
        \item  Observe that, by definition of $\left(\varphi^{(u\circ\ell)}\right)^{(u\circ\ell)}$, the inequality $\left(\varphi^{(u\circ\ell)}\right)^{(u\circ\ell)}\oplus \varphi^{(u\circ\ell)}-u\circ\ell\geq0$ holds on $M\times Y$. Thus $\varphi\oplus \varphi^{(u\circ\ell)}-u\circ\ell\geq0$ on $U\times Y$, where $U$ is the hypothesised neighbourhood of $X$ on which $\varphi$ is Lipschitz and semiconvex.

        If the latter inequality is saturated at $(\bar{x},\bar{y})\in X\times Y$, and $\bar{x}\in\operatorname{Dom}D\varphi$, it follows that $u\circ\ell(\cdot,\bar{y})$ is superdifferentiable at $\bar{x}$ with supergradient $D\varphi(\bar{x})$. Indeed, for $v\in T_{\bar{x}}M$ in a small neighbourhood around 0:
        \begin{equation}\label{eq:maps and jacobian derivatives 1}
            \begin{aligned}
                u\left(\ell(\exp_{\bar{x}}v,\bar{y})\right)&\leq \varphi\oplus \varphi^{(u\circ\ell)}(\exp_{\bar{x}}v,\bar{y})\\&=\varphi\oplus \varphi^{(u\circ\ell)}(\bar{x},\bar{y})+D\varphi(\bar{x})v+O(|v|_{\tilde{g}})\\&=u\left(\ell(\bar{x},\bar{y})\right)+D\varphi(\bar{x})v+O(|v|_{\tilde{g}}),
            \end{aligned}
        \end{equation}
        since $\left(\varphi\oplus \varphi^{(u\circ\ell)}-u\circ\ell\right)(\bar{x},\bar{y})=0$. Therefore, use now Corollary \ref{cor: twist and non degeneracy}(ii) to conclude the first part of the statement.

        Moreover, if $\bar{x}\in\operatorname{Dom}\tilde{D}^2\varphi$, the Taylor expansion of $\varphi$ around $\bar{x}$ gives a quadratic bound for $u\circ\ell(x,\bar{y})-\varphi^{(u\circ\ell)}(\bar{y})$ at $\bar{x}$. According to Corollary \ref{cor: twist and non degeneracy}(iii), this implies $(\bar{x},\bar{y})\notin\operatorname{sing}(\ell)$.

        On the other hand, since $(\bar{x},\bar{y})\notin\operatorname{sing}(\ell)$, in particular, $\bar{y}$ is not in the timelike cut locus of $\bar{x}$. Therefore, we can apply the first variation formula to the following:
        \begin{align*}
            u\left(\ell(\exp_{\bar{x}}v,\bar{y})\right)-u\left(\ell(\bar{x},\bar{y})\right)&=-\frac{u'\left(\ell(\bar{x},\bar{y})\right)}{\ell(\bar{x},\bar{y})}\, g_{\bar{x}}(v,v(D\varphi(x),x;u))+O(|v|_{\tilde{g}}).
        \end{align*}
        Combining this with \eqref{eq:maps and jacobian derivatives 1},
        \begin{equation*}
            -\frac{u'\left(\ell(\bar{x},\bar{y})\right)}{\ell(\bar{x},\bar{y})}\, g_{\bar{x}}(v,v(D\varphi(\bar{x}),\bar{x};u))\leq D\varphi(\bar{x})v,
        \end{equation*}
        and this readily implies that $\nabla\varphi=-\frac{u'\left(\ell(\bar{x},\bar{y})\right)}{\ell(\bar{x},\bar{y})}\,v(D\varphi(x),x;u)$. In particular,
        \begin{equation*}
          v(D\varphi(x),x;u)=-\frac{\ell(\bar{x},\bar{y})}{u'\left(\ell(\bar{x},\bar{y})\right)}\,  \nabla\varphi.
        \end{equation*}
        Finally, noting that $| v(D\varphi(x),x;u)|_g=\ell(\bar{x},\bar{y})$, the result follows.

         \item Fix $\varepsilon>0$. Semiconvexity of $\varphi$ implies that outside of a set of volume $\varepsilon$ in $U\supset X$, $D\varphi$ agrees with a continuously differentiable covector field $V_{\varepsilon}$ on $M$. Moreover, its approximate second derivative agrees with $DV_{\varepsilon}$ outside of this small set.

         Consider, for each $x\in X$, the vector $v(V_{\varepsilon}(x),x;u) = DH(V_\varepsilon(x),x; u^*)$. If we define the maps $F_s^{\varepsilon}(x):=\exp_xs\,v(V_{\varepsilon}(x),x;u)$, these are $C^1$ in $x\in M$ and smooth in $s\in[0,1]$, and its mixed partial derivatives are continuous and equal: $\frac{\partial}{\partial s}DF_s^{\varepsilon}(x)=D\frac{\partial}{\partial s}F_s^{\varepsilon}(x)$, where $D$ is the derivative with respect to $x$.

            The idea is now to check that this approximate derivative will define Jacobi fields whenever it acts on a vector of the tangent bundle. The way of doing it will be by defining a geodesic variation, and applying {\cite[Lem.\ 8.3]{O'Neill:1983}}. Fix $(x,w)\in TM$, and let $r\in[-1,1]\mapsto x(r)$ be a $C^1$ curve such that $x(0)=x$ and $\dot{x}(0)=w$. Then $r\in[-1,1]\mapsto F_s^{\varepsilon}(x(r))$ is a $C^1$-geodesic variation, since $s\in[0,1]\mapsto F_s^{\varepsilon}(x(r))$ is a geodesic segment for each $r\in[-1,1]$. Thus $\frac{\partial}{\partial r}\Big|_{r=0}F_s^{\varepsilon}(x(r))=DF_s^{\varepsilon}(x(0))\,w$ is a Jacobi field.

            Since the approximate derivative $\tilde{D}F_s(x)$ agrees with $DF_s^{\varepsilon}(x)$ outside of a set of volume $\varepsilon$, and this was arbitrary, we have that $\tilde{D}F_s(x(0))\,w$ depends smoothly on $s$ and is a Jacobi field for $x(0)\in U$ in a subset of full volume.

            \item Fix $\varepsilon>0$ and consider, as in the last point, the vector field $V_{\varepsilon}$ and the associated map $F_s^{\varepsilon}$. Differentiating the vector field $\frac{\partial F_s^{\varepsilon}(x)}{\partial s}\Big|_{s=0}=v(V_{\varepsilon}(x),x;u)$ using the Lorentzian connection gives
            \begin{equation*}
                D_k\frac{\partial}{\partial s}\bigg|_{s=0}F_s^{\varepsilon}(x)^i=(Dv\circ V_{\varepsilon})\,DV_{\varepsilon}.
            \end{equation*}

            We may again interchange the order of the $x$ and $s$ derivatives. Since these derivatives of $F^{\varepsilon}$ and $\varphi^{\varepsilon}$ coincide with those of $F$ and $\varphi$ outside of a set of volume $\varepsilon$, the result follows.
    \end{enumerate}
\end{proof}

We will often want to use the previous result for $u = u_\lambda$, where $u$ is admissible and $\lambda > 0$.

\begin{Theorem}[Characterising optimal maps]\label{thm: characterising opt maps}
            Let $u$ be admissible and let $(\mu,\nu)\in\mathcal{P}_c(M)^2$ be $u$-separated with $\lambda:=\ell_u(\mu,\nu) \in (0,\infty)$, a Kantorovich potential $\varphi$ and an optimal coupling $\pi$ as in the definition of $u$-separation. Set $X:=\operatorname{supp}\mu$, $Y:=\operatorname{supp}\nu$, and assume that $\mu\in\mathcal{P}^{ac}_c(M)$. Then
            \begin{enumerate}
                \item there is a unique map $F(x):=\exp_x DH(D\bar{\varphi}(x),x;u_\lambda^*)$, where $\lambda := \ell_u(\mu,\nu) \in (0,\infty)$, with $\nu=F_{\#}\mu$, and $\bar{\varphi}$ is semiconvex and Lipschitz and satisfies
                \begin{equation}
                    \bar{\varphi}(x)=\max_{y\in Y} u_{\lambda}\left(\ell(x,y)\right)-\bar{\varphi}^{(u_{\lambda}\circ\ell)}(y)
                    \label{eq: characterisation of optimal maps}
                \end{equation}
            in a neighborhood of $X$; in this case, $\pi=(\operatorname{id}\times F)_{\#}\mu$ is the unique optimal coupling between $\mu$ and $\nu$, $\varphi$ is semiconvex and Lipschitz in a neighborhood of $X$, and both $D\varphi=D\bar{\varphi}$ and $(x,F(x))\notin\operatorname{sing}(\ell)$ hold $\mu$-a.e.; 
                \item if, in addition, $\nu\in\mathcal{P}^{ac}_c(M)$, then $F\circ G(y)=y$ holds $\nu$-a.e. and $G(F(x))=X$ holds $\mu$-a.e., where $G(x):=\exp_y\left(-DH\left(-D\varphi^{(u_\lambda\circ\ell)}(y),y;u_\lambda^*\right)\right)$.
            \end{enumerate}
        \end{Theorem}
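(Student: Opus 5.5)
The plan follows McCann \cite[Thm.\ 4.3/Cor.\ 4.4]{McCann:2020}, with Lemma \ref{lem: maps and their jacobian der} and Theorem \ref{Theorem: Duality by u-separation} replacing the $u_p$-specific ingredients.

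\textbf{Step 1: regularity of the potential.} By Theorem \ref{Theorem: Duality by u-separation}, $u$-separation gives $\lambda=\ell_u(\mu,\nu)$. It also gives $\varphi=\psi^{(u_\lambda\circ\ell)}$ and $\psi=\varphi^{(u_\lambda\circ\ell)}$ on $X\times Y$. The extension $\bar\varphi:=\psi^{(u_\lambda\circ\ell)}$ is semiconvex and Lipschitz on a neighbourhood $X_r$ of $X$. Since $Y$ is compact and $u_\lambda\circ\ell-\psi$ is upper semicontinuous, the supremum defining $\bar\varphi$ is a maximum, which yields \eqref{eq: characterisation of optimal maps}. From $\bar\varphi=\psi^{(u_\lambda\circ\ell)}$ we get $\bar\varphi^{(u_\lambda\circ\ell)}\le\psi$, hence $(\bar\varphi^{(u_\lambda\circ\ell)})^{(u_\lambda\circ\ell)}\ge\bar\varphi$. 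The reverse inequality is automatic, so the hypothesis of Lemma \ref{lem: maps and their jacobian der} holds for $u_\lambda$. Since $\varphi=\bar\varphi$ on $X$, we have $\varphi$ semiconvex Lipschitz near $X$ as well.

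\textbf{Step 2: the coupling is a graph.} By Alexandrov's theorem for semiconvex functions, $\bar\varphi$ is differentiable and twice approximately differentiable $\mathrm{vol}_g$-a.e. Because $\mu\ll\mathrm{vol}_g$, this holds $\mu$-a.e. The set of density points of $X$ has full $\mu$-measure, and at such points $D\varphi=D\bar\varphi$. Now take $\pi$ from $u$-separation, with $\mathrm{supp}\,\pi\subseteq S$. For every $(x,y)\in S$ with $x\in\mathrm{Dom}\,\tilde D^2\bar\varphi$, Lemma \ref{lem: maps and their jacobian der}(i) applied to $u_\lambda$ gives $y=F(x)$ with $F(x)=\exp_x DH(D\bar\varphi(x),x;u_\lambda^*)$, and also $(x,y)\notin\mathrm{sing}(\ell)$. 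Hence $\pi$ is concentrated on the graph of $F$, so $\pi=(\mathrm{id}\times F)_\#\mu$ and $\nu=F_\#\mu$ (cf.\ \cite[Lem.\ 3.1]{ahmad2011optimal}). Measurability of $F$ follows because it is built from the Borel map $D\bar\varphi$.

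\textbf{Step 3: uniqueness.} Let $\pi'$ be any optimal coupling. Duality (Theorem \ref{Theorem: Duality by u-separation}(i),(iv)) and Proposition \ref{prop: existence of optimal coupling} give $\int u_\lambda\circ\ell\,d\pi'=u(1)=\int\varphi\,d\mu+\int\psi\,d\nu$. Since $\varphi\oplus\psi-u_\lambda\circ\ell\ge0$, the integrand vanishes $\pi'$-a.e., so $\pi'$ is concentrated on $S$ (closedness of $S$ gives the support statement). Step 2 then applies verbatim and $\pi'=(\mathrm{id}\times F)_\#\mu$. Uniqueness of the map follows: any $F'$ of the stated form pushing $\mu$ to $\nu$ with a potential satisfying \eqref{eq: characterisation of optimal maps} induces an optimal coupling, which must equal $\pi$, and so $F'=F$ $\mu$-a.e.

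The main obstacle is making this rigorous in our setting. We need optimality of $(\mathrm{id}\times F')_\#\mu$, and we need to know that every optimal coupling saturates $\int u_\lambda\circ\ell=u(1)$. I would handle both via Theorem \ref{Theorem: Duality by u-separation} and the strict monotonicity in $\lambda$ used in its proof. I would also use the compactness of $\Pi_\le(\mu,\nu)$ rather than any $p$-homogeneity.

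\textbf{Step 4: part (ii).} Apply the symmetric statement to $\psi=\varphi^{(u_\lambda\circ\ell)}$, which is semiconvex Lipschitz near $Y$ with $\nu\ll\mathrm{vol}_g$. The last clause of Lemma \ref{lem: maps and their jacobian der}(i) shows that $\pi$ is concentrated on the graph of $G$ over $\nu$, so $\pi=(G\times\mathrm{id})_\#\nu$. Since $\pi$ is concentrated on both graphs, for $\pi$-a.e.\ $(x,y)$ we have $y=F(x)$ and $x=G(y)$. This gives $F\circ G=\mathrm{id}$ $\nu$-a.e.\ and $G\circ F=\mathrm{id}$ $\mu$-a.e.
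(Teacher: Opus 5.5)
Your proposal follows the paper's proof essentially step for step. It uses duality by $u$-separation for the semiconvex Lipschitz potentials, and Alexandrov's theorem with Lemma \ref{lem: maps and their jacobian der}(i) to see that $S$ is $\mu$-a.e.\ a graph. It then uses saturation of the dual bound to force every optimal coupling onto $S$, and the symmetric argument for (ii).

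The step you flag as the main obstacle needs neither strict monotonicity in $\lambda$ nor compactness of $\Pi_\leq(\mu,\nu)$. Take a competing potential $\chi$ satisfying \eqref{eq: characterisation of optimal maps}, with map $\tilde F$. As in the paper, Lemma \ref{lem: maps and their jacobian der}(i) shows that $\tilde F(x)$ is the maximiser in \eqref{eq: characterisation of optimal maps} for $\mu$-a.e.\ $x$. Integrating $u_\lambda(\ell(x,\tilde F(x)))=\chi(x)+\chi^{(u_\lambda\circ\ell)}(\tilde F(x))$ against $\mu$ and using $\tilde F_\#\mu=\nu$ gives $\int u_\lambda\circ\ell\,d(\mathrm{id}\times\tilde F)_\#\mu=\int\chi\oplus\chi^{(u_\lambda\circ\ell)}\,d\pi\ge\int u_\lambda\circ\ell\,d\pi=u(1)$. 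This is optimality, since $u(1)$ already bounds the value of every causal coupling.
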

        \begin{proof}
            \begin{enumerate}
                \item By Theorem \ref{Theorem: Duality by u-separation}, the semicontinuous functions $(\varphi,\psi)$ given in the definition of $u$-separation satisfy  $(\varphi,\psi) = (\psi^{(u_\lambda \circ \ell)}, \varphi^{(u_\lambda \circ \ell)})$ on $X \times Y$, and moreover their (nonrenamed) extensions $\varphi:= \psi^{(u_\lambda \circ \ell)}$ and $\psi:=\varphi^{(u_\lambda \circ \ell)}$ are semiconvex and Lipschitz on neighborhoods of $X$ and $Y$. Let $S:=\{(x,y)\in X\times Y:\ \varphi(x)+\psi(y)=u_{\lambda}(\ell(x,y))\}$, which by $u$-separation is compactly contained in $\{\ell > 0\}$ and $\pi$ is concentrated on $S$.

                Since $\varphi$ is a semiconvex Lipschitz function, by Alexandrov's Theorem, $\varphi$ has (approximate) second derivative $\operatorname{vol}_g$-a.e. in $M$, and hence $\mu$-a.e. (recall $\mu\ll\operatorname{vol}_g$ by assumption). In particular, for $(x,y)\in S$, $x\in\operatorname{Dom}\tilde{D}^2\varphi$ $\operatorname{vol}_g$-a.e. $x$. For these pairs of points, Lemma \ref{lem: maps and their jacobian der} shows that $y=F_1(x)$, where $F_s(x):=\exp_x s \,DH(D\varphi(x),x;(u_\lambda)^*)$, and $(x,F_1(x))\notin\operatorname{sing}(\ell)$. Therefore, we obtain that $S$ is $\mu$-a.e. the graph of a map, and {\cite[Lem.\ 3.1]{ahmad2011optimal}} gives $\pi=(\operatorname{id}\times F_1)_{\#}\mu$, where $\pi$ is the optimal coupling whose existence is guaranteed by $u$-separation.

                If $\pi'\in\Pi_{\leq}(\mu,\nu)$ is another optimal coupling, then, since $\int u_\lambda \circ \ell \, d\pi' = u(1)$, it is easily seen that $\pi'$ must vanish outside $S$. In the same way as before, one concludes $\pi'=(\operatorname{id}\times F_1)_{\#}\mu$, which proves the uniqueness of the optimal coupling.

                What is left to check is that $F_1$ given by Lemma \ref{lem: maps and their jacobian der} agrees $\mu$-a.e. with the map $F$ in the statement of the Theorem. This will be done by using the uniqueness of the optimal coupling: if we prove that $\bar{\pi}:=(\operatorname{id}\times F)_{\#}\mu$ is also a maximiser, then we are done.
                
                Consider a Lipschitz map $\bar{\varphi}$ satisfying \eqref{eq: characterisation of optimal maps}. Alexandrov's Theorem gives again that $\bar{\varphi}$ has second derivative $\operatorname{vol}_g$-a.e. (and hence $\mu$-a.e.), so, in particular, it has first derivative $\mu$-a.e.\ Consider $x\in X\cap\operatorname{Dom}\tilde{D}\bar{\varphi}$, and the corresponding point $y\in Y$ which maximises \eqref{eq: characterisation of optimal maps}. Lemma \ref{lem: maps and their jacobian der} asserts that $y=F(x)$. Thus, 
                \begin{equation*}
                    u_{\lambda}(\ell(x,F(x)))=\bar{\varphi}(x)+\bar{\varphi}^{(u_{\lambda}\circ\ell)}(F(x))
                \end{equation*}
                holds $\mu$-a.e.\ Integrating this against $\mu$ yields
                \begin{equation*}\label{eq: duality optimal coupling}
                    \int_{M^2}u_{\lambda}(\ell(x,y))\, d\bar{\pi}(x,y)=\int_M \bar{\varphi}(x)\, d\mu +\int_M \bar{\varphi}^{(u_{\lambda}\circ\ell)}(y)\, d\nu,
                \end{equation*}
                where we have used that $F_{\#}\mu=\nu$. 
                
                Combining the right hand side to an integral with respect to the optimal $\pi$, we see that $\int u_\lambda \circ \ell \, d\bar\pi = u(1)$, thus also $\bar \pi$ is optimal.

                \item Given $\nu\in\mathcal{P}^{ac}_c(M)$, a symmetric argument as in (i) shows that there exists a unique map $G(x):=\exp_y\left(-DH(-D\varphi^{(u_\lambda\circ\ell)}(y).y;(u_\lambda)^*\right)$ with $\mu=G_{\#}\nu$. Therefore, it also holds that $\pi=(G\times\operatorname{id})_{\#}\nu$. In particular, using this result together with (i), the set $(X\cap\operatorname{Dom}D\varphi)\times(Y\cap\operatorname{Dom}D\varphi^{(u\circ\ell)})$ has full $\pi$-measure, and for each point $(x,y)$ in this set, it holds that $y=F(x)$ and $x=G(y)$. Therefore, $G$ acts $\mu$-a.e. as an inverse of $F$, who in turn acts $\nu$-a.e. as an inverse to $G$.
            \end{enumerate}
        \end{proof}

\begin{Remark}[On the transport maps for $u=u_p$]
    Note that Theorem \ref{thm: characterising opt maps} works with the Kantorovich potentials. Hence, denoting, for $\mu,\nu\in\mathcal{P}_c(M)$, $\lambda:=\ell_u(\mu,\nu)$, these are $(u_\lambda\circ\ell)$-convex functions. Denote the Kantorovich potential by $\varphi_\lambda$. As a result of Theorem \ref{Theorem: Duality by u-separation},
    \begin{equation*}
        \varphi_{\lambda}(x)+\varphi_{\lambda}^{(u_{\lambda}\circ\ell)}(y)=u_{\lambda}(\ell(x,y))\quad \pi\text{-a.e. }(x,y),
    \end{equation*}
    where $\pi$ is the optimal coupling. Now, taking $u=u_p$, $p \in (0,1)$ (also $p < 1$ works), this reads as
    \begin{equation*}
        \varphi_{\lambda}(x)+\varphi_{\lambda}^{(u_{\lambda}\circ\ell)}(y)=\frac{\ell^p(x,y)}{\lambda^p\, p}\quad \pi\text{-a.e. }(x,y).
    \end{equation*}
    Taking $\varphi:=\lambda^p\,\varphi_{\lambda}$, we see that $\varphi$ is a $u\circ\ell$-convex function, and whenever $(x,y)\in\partial_{(u_{\lambda}\circ\ell)}\varphi_{\lambda}$, also $(x,y)\in\partial_{(u\circ\ell)}\varphi$. Now, using Lemma \ref{lem: maps and their jacobian der},
    \begin{equation*}
        y=\exp_x\left(-\frac{(u')^{-1}(\vert\nabla\varphi\vert(x))}{\vert\nabla\varphi\vert(x)}\,  \nabla\varphi(x)\right),\quad \text{and}\quad y=\exp_x\left(-\frac{(u_{\lambda}')^{-1}(\vert\nabla\varphi_{\lambda}\vert(x))}{\vert\nabla\varphi_{\lambda}\vert(x)}\,  \nabla\varphi_{\lambda}(x)\right).
    \end{equation*}
    Note that these two points are indeed the same. Using that the exponential map is a bijection, we just need to check that both vectors are the same:
    \begin{align*}
        \frac{(u_{\lambda}')^{-1}(\vert\nabla\varphi_{\lambda}\vert(x))}{\vert\nabla\varphi_{\lambda}\vert(x)}\,  \nabla\varphi_{\lambda}(x) &= 
        \frac{\lambda^{-p/(1-p)} \vert\nabla\varphi_{\lambda}\vert^{-1/(1-p)}(x)}{\vert\nabla\varphi_{\lambda}\vert(x)}\,  \nabla\varphi_{\lambda}(x) \\
        &=\lambda^{q} \vert\nabla\varphi_{\lambda}\vert^{q-2}(x)\,  \nabla\varphi_{\lambda}(x) \\
        &=\lambda^{q} \lambda^{-p\,(q-2)}\vert\nabla\varphi\vert^{q-2}(x)\, \lambda^{-p}\, \nabla\varphi(x)\\
        &=\vert\nabla\varphi\vert^{q-2}(x)\, \nabla\varphi(x),
    \end{align*}
    where $p^{-1}+q^{-1}=1$. This is the transport described in \cite{McCann:2020}.
\end{Remark}

\begin{Corollary}[Uniqueness of Kantorovich potentials]\label{cor: uniqueness of kantorovich potentials}
    Let $(\mu,\nu)\in\mathcal{P}_c(M)^2$ be $u$-separated. Set $X:=\operatorname{supp}\mu$ and $Y:=\operatorname{supp}\nu$, and assume $\mu\in\mathcal{P}^{ac}_c(M)$. Then there exists a unique (up to an additive constant) Kantorovich potential $\varphi:X\to\R$ for $(\mu,\nu)$ which saturates the inequality \eqref{eq: Kantorovich potential inequality}.
\end{Corollary}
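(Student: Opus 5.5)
Existence is already available: by Theorem \ref{Theorem: Duality by u-separation}(iv), the function $\varphi$ from the definition of $u$-separation is a Kantorovich potential with $\tilde C_u(\mu,\nu)=\lambda:=\ell_u(\mu,\nu)$. By Theorem \ref{Theorem: Duality by u-separation}(ii) it satisfies $\varphi^{(u_\lambda\circ\ell)}=\psi$, so it saturates \eqref{eq: Kantorovich potential inequality}. The substance of the corollary is therefore uniqueness up to additive constants. I will reduce this to uniqueness of the optimal coupling (Theorem \ref{thm: characterising opt maps}(i)) together with the formula for the transport map in terms of $D\varphi$ (Lemma \ref{lem: maps and their jacobian der}(i)).

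Let $\tilde\varphi\in L^1(\mu)$ be another Kantorovich potential saturating \eqref{eq: Kantorovich potential inequality}, i.e.\ $\int\tilde\varphi\,d\mu+\int\tilde\varphi^{(u_\lambda\circ\ell)}\,d\nu=u(1)$. Replacing $\tilde\varphi$ by $(\tilde\varphi^{(u_\lambda\circ\ell)})^{(u_\lambda\circ\ell)}\le\tilde\varphi$ does not increase the left-hand side, and the left-hand side is always $\ge \int u_\lambda\circ\ell\,d\pi=u(1)$. Hence I may assume $\tilde\varphi$ is $u_\lambda\circ\ell$-convex; this changes $\tilde\varphi$ only on a $\mu$-null set.

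Integrating $\tilde\varphi\oplus\tilde\varphi^{(u_\lambda\circ\ell)}-u_\lambda\circ\ell\ge0$ against the unique optimal $\pi=(\mathrm{id}\times F)_\#\mu$ gives $0$. Therefore $\tilde\varphi(x)+\tilde\varphi^{(u_\lambda\circ\ell)}(F(x))=u_\lambda(\ell(x,F(x)))$ for $\mu$-a.e.\ $x$. By the same argument the analogous identity holds for $\varphi$.

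Next I want $\tilde\varphi$ to be Lipschitz and semiconvex near $X$. I intend to show that $(\mu,\nu)$ is $u$-separated with the data $(\tilde\varphi,\tilde\varphi^{(u_\lambda\circ\ell)},\pi,\lambda)$, after passing to lower semicontinuous envelopes. Condition (ii) of Definition \ref{Definition: u-separeted new} holds because $\mathrm{supp}\,\pi\subseteq S\subseteq\{\ell>0\}$. Then Theorem \ref{Theorem: Duality by u-separation}(v) yields the required regularity. This is the step I expect to be the main obstacle: $\tilde\varphi$ is a priori only in $L^1(\mu)$, so lower semicontinuity and the support condition have to be arranged by hand. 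The fallback is to argue directly as in the proof of Theorem \ref{Theorem: Duality by u-separation}(v). At points of the contact set near $\mathrm{supp}\,\pi$, $\tilde\varphi$ is touched from below by $u_\lambda\circ\ell(\cdot,y)$ with $\ell>0$, and this gives local Lipschitz and semiconvexity bounds on $X$.

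With the regularity in hand, Lemma \ref{lem: maps and their jacobian der}(i) applies to both potentials at $\mu$-a.e.\ $x$. Since $\mu\ll\mathrm{vol}_g$, both potentials are differentiable $\mu$-a.e., and we get $F(x)=\exp_x DH(D\varphi(x),x;u_\lambda^*)=\exp_x DH(D\tilde\varphi(x),x;u_\lambda^*)$ with $(x,F(x))\notin\mathrm{sing}(\ell)$. Injectivity of $\exp_x$ on the relevant timelike vector, plus the fact that $DH=(DL)^{-1}$ is a diffeomorphism (Proposition \ref{lem: lagrange-hamilton duality}), then give $D\varphi=D\tilde\varphi$ $\mu$-a.e.\ on $X$.

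The remaining step is to pass from equality of gradients to $\varphi-\tilde\varphi$ being constant. For this I would follow McCann's treatment of uniqueness in \cite{McCann:2020}, which works on $\mathrm{supp}\,\mu$ using Lipschitz continuity. I expect this step to need a connectedness or interior-regularity assumption on $X$, so it is a second point of possible difficulty. If the gradients agree on the interior of a connected support, then $\varphi-\tilde\varphi$ is Lipschitz with vanishing a.e.\ derivative, hence constant there. On the rest of the support, the values are determined by continuity and by the identity $\varphi=\psi^{(u_\lambda\circ\ell)}$.
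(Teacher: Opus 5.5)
Your route is the paper's: existence from Theorem \ref{Theorem: Duality by u-separation}(ii),(iv), and uniqueness from Theorem \ref{thm: characterising opt maps}(i). The real content of that theorem is that $D\bar\varphi=D\varphi$ holds $\mu$-a.e.\ for every competitor $\bar\varphi$ that is semiconvex, Lipschitz and satisfies \eqref{eq: characterisation of optimal maps} near $X$; the paper's proof is just this citation. Your reduction to $u_\lambda\circ\ell$-convex representatives and the contact identity along the graph of $F$ are fine.

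\textbf{The final step cannot be closed.} The step you flagged at the end is a genuine gap, and no argument can close it under the stated hypotheses, because the corollary is false for disconnected $X$. In Minkowski space, let $B_1,B_2$ be small balls centred at $(0,0)$ and $(0,Le_1)$, let $B_i'$ be their translates by one unit of time, and let $\mu,\nu$ be normalised Lebesgue measure on $B_1\cup B_2$ and $B_1'\cup B_2'$. For $L$ large, $B_1\times B_2'$ and $B_2\times B_1'$ lie in $\{\ell=-\infty\}$, while $B_i\times B_i'$ is compactly contained in $\{\ell>0\}$. Gluing the data of Remark \ref{Remark: simple u separation} for the two translation-equivalent halves shows that $(\mu,\nu)$ is $u$-separated with some potential $\varphi$. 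For every $c\in\R$, the function $\varphi+c\,1_{B_1}$ has transform $\varphi^{(u_\lambda\circ\ell)}-c\,1_{B_1'}$, so it is again a saturating Kantorovich potential, and it is even semiconvex, Lipschitz and $u_\lambda\circ\ell$-convex. Similarly, raising $\varphi$ on a $\mu$-null set can only lower the transform, so uniqueness can at best hold $\mu$-a.e. What your argument and the paper's actually yield is that $D\varphi$ is unique $\mu$-a.e. To get uniqueness up to an additive constant you must add a hypothesis, e.g.\ that $\mu$ is concentrated on a connected open set on which its density is $\mathrm{vol}_g$-a.e.\ positive.

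\textbf{The regularity step.} Your primary argument here fails as written. Condition (ii) of Definition \ref{Definition: u-separeted new} for the new data concerns the contact set $\tilde S$ of $(\tilde\varphi,\tilde\varphi^{(u_\lambda\circ\ell)})$, not the set $S$ belonging to $(\varphi,\psi)$. Nothing prevents $\tilde S$ from containing pairs on or near $\{\ell\le 0\}$ off the graph of $F$. Moreover, taking lower semicontinuous envelopes can destroy condition (i), since $u_\lambda\circ\ell$ is only upper semicontinuous. The paper never meets this issue, because Theorem \ref{thm: characterising opt maps} implicitly compares $\varphi$ only with competitors already assumed semiconvex and Lipschitz.

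Your fallback does work, but only on a full-measure set. Since $\pi=(\mathrm{id}\times F)_{\#}\mu$ is concentrated on $S\cap\tilde S$, set $G:=\{x\in X:(x,F(x))\in S\cap\tilde S\}$. The exchange argument from the proof of Theorem \ref{Theorem: Duality by u-separation}(v), with $R,L$ as there, then gives $|\tilde\varphi(x)-\tilde\varphi(x')|\le L\,d_{\tilde g}(x,x')$ for $x,x'\in G$ with $d_{\tilde g}(x,x')<R$. Take a Lipschitz extension $\hat\varphi$ of $\tilde\varphi|_G$. At density points $x\in G$ where $\hat\varphi$ and $\varphi$ are differentiable and $(x,F(x))\notin\operatorname{sing}(\ell)$ (this is a.e.\ $x\in G$), both potentials are touched from below by $u_\lambda\circ\ell(\cdot,F(x))$ plus a constant. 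This forces $D\hat\varphi(x)=D_x(u_\lambda\circ\ell)(x,F(x))=D\varphi(x)$ directly, without using $DH$ or injectivity of $\exp_x$. Together with the connectedness hypothesis above, this completes the proof.
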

\begin{proof}
    We already obtained existence in Theorem \ref{Theorem: Duality by u-separation}, while uniqueness follows from Theorem \ref{thm: characterising opt maps}.
\end{proof}

\begin{Corollary}[Lagrangian characterisation of $u$-geodesics]\label{cor: lag charact og geodesics}
            Let $(\mu_0,\mu_1)\in\mathcal{P}_c(M)^2$ be $u$-separated for some admissible function $u:(0,\infty)\to\R$, and let $\lambda:=\ell_u(\mu,\nu)\in(0,\infty)$. Let $\varphi:\operatorname{supp}\mu_0\to\R$ denote the unique (up to an additive constant) Kantorovich potential. If $\mu_0\in\mathcal{P}_c^{ac}(M)$, then $F_s(x):=\exp_x s\,DH(D\varphi(x),x;(u_\lambda)^*)$ defines the unique $u$-geodesic $s\in[0,1]\mapsto\mu_s:=F_{s\#}\mu_0$ in $\mathcal{P}(M)$ from $\mu_0$ to $\mu_1$. Moreover, $\mu_s\in\mathcal{P}^{ac}_c(M)$ for all $s\in[0,1)$. 
        \end{Corollary}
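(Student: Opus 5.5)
The plan follows McCann's proof of \cite[Cor.\ 5.8]{McCann:2020}, in four steps: existence, uniqueness, compact support, and absolute continuity.

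\textbf{Existence.} By Theorem \ref{Theorem: Duality by u-separation} we have $\lambda=\ell_u(\mu_0,\mu_1)\in(0,\infty)$ and a Kantorovich potential $\varphi$, semiconvex and Lipschitz near $X_0:=\mathrm{supp}(\mu_0)$. By Corollary \ref{cor: uniqueness of kantorovich potentials}, $\varphi$ is unique up to an additive constant, so $D\varphi$ (and hence $F_s$) is well defined $\mu_0$-a.e. Theorem \ref{thm: characterising opt maps} shows that $\pi=(\mathrm{id}\times F_1)_{\#}\mu_0$ is the unique optimal coupling. It is concentrated on $S\subseteq\{\ell>0\}$ with $(x,F_1(x))\notin\mathrm{sing}(\ell)$ for $\mu_0$-a.e.\ $x$. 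For such $x$, the curve $s\mapsto F_s(x)$ is the unique maximising geodesic from $x$ to $F_1(x)$, so $F_s(x)=z_s(x,F_1(x))=\bar z_s(x,F_1(x))$. Hence $(F_s)_{\#}\mu_0=(\bar z_s)_{\#}\pi$, and Theorem \ref{thm: existence of u geod} shows this is a $u$-geodesic.

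\textbf{Uniqueness.} Let $(\nu_s)$ be any $u$-geodesic from $\mu_0$ to $\mu_1$. By Proposition \ref{Prop: Lipschitz continuous inverse map}, $W_{\#}\nu_s$ is optimal for $(\mu_0,\mu_1)$, so $W_{\#}\nu_s=\pi$ by uniqueness of $\pi$. Moreover, $\nu_s$ vanishes outside $Z_s(S)$. By Proposition \ref{prop: lagrangian trajectories dont cross}, applied with $u_\lambda$ and using the cyclical monotonicity of $S$, $Z_s(x,y)$ is the singleton $\{z_s(x,y)\}$ for $\pi$-a.e.\ $(x,y)$. Proposition \ref{prop: propoerties of the reverse ineq}(iii) then gives $\nu_s=(z_s)_{\#}\pi=\mu_s$. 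Alternatively, one can cite the uniqueness clause of Theorem \ref{thm: existence of u geod} directly, since $\pi$ is unique with $\pi(\mathrm{sing}(\ell))=0$.

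\textbf{Compact support.} By Proposition \ref{prop: interpolants inherit compactness}, $\mathrm{supp}(\mu_s)\subseteq Z_s(X_0\times X_1)$, which is compact. The right-continuity hypothesis there holds by Remark \ref{Remark: Right continuity}, since $u_\lambda\circ\ell$ is bounded below on the compact set $S\subseteq\{\ell>0\}$ carrying the optimal plan.

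\textbf{Absolute continuity for $s<1$.} This is the main step; $s=0$ is trivial. For $s\in(0,1)$, we use Proposition \ref{Prop: star shaped u separation}: the pair $(\mu_s,\mu_1)$ is $u$-separated. We then argue by reversing time, applying Proposition \ref{Prop: Lipschitz continuous inverse map} to the pair $(\mu_0,\mu_1)$. The map $W$ is Lipschitz on $Z_s(S)$, and its first component $U_s=pr_1\circ W$ inverts $F_s$ $\mu_0$-a.e. Thus $F_s$ is injective on a set of full $\mu_0$-measure, with Lipschitz left inverse $U_s$. If $A$ is $\mathrm{vol}_g$-null, then
\[
\mu_s(A)=\mu_0(F_s^{-1}(A))\le\mu_0\bigl(U_s(A\cap Z_s(S))\bigr)=0,
\]
because Lipschitz maps send null sets to null sets and $\mu_0\ll\mathrm{vol}_g$. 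The point needing care is the inclusion $F_s^{-1}(A)\subseteq U_s(A\cap Z_s(S))$ up to $\mu_0$-null sets. This uses $F_s(x)\in Z_s(S)$ and $U_s(F_s(x))=x$ for $\mu_0$-a.e.\ $x$, which follows from $\bar z_s\circ W=\mathrm{id}$ on $\bar z_s(S)$ together with noncrossing. The Lipschitz estimate itself is imported from the Monge--Mather shortening argument cited in Proposition \ref{Prop: Lipschitz continuous inverse map}.
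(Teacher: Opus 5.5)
Your proposal is correct and follows the paper's proof: Theorem \ref{thm: characterising opt maps}, the identity $F_s(x)=z_s(x,F_1(x))$ and Theorem \ref{thm: existence of u geod} give existence and uniqueness, and the Lipschitz left inverse $pr_1\circ W$ from Proposition \ref{Prop: Lipschitz continuous inverse map} gives absolute continuity (your detour through Proposition \ref{Prop: star shaped u separation} and time reversal is not needed). One correction to your first uniqueness route: the fact that $Z_s(x,y)=\{z_s(x,y)\}$ for $\pi$-a.e.\ $(x,y)$ follows from $\pi(\mathrm{sing}(\ell))=0$ together with Lemma \ref{lemma: midpoint continuity away from cut locus}, not from the noncrossing result Proposition \ref{prop: lagrangian trajectories dont cross}, which only distinguishes different pairs $(x,y)\neq(x',y')$.
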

        \begin{proof}
            Under these hypotheses, Theorem \ref{thm: characterising opt maps} gives that the unique optimal coupling between $\mu_0$ and $\mu_1$ is given by $\pi:=(\operatorname{id}\times F_1)_{\#}\mu_0$, where $F_1$ is defined as in Lemma \ref{lem: maps and their jacobian der}. Furthermore, it asserts that $(x,F_1(x))\notin\operatorname{sing}(\ell)$ $\mu_0$-a.e.\ In particular, this last property implies $\pi(\operatorname{sing}(\ell))=0$.  Next, Theorem \ref{thm: existence of u geod} gives that there is a unique $u$-geodesic from $\mu_0$ to $\mu_1$ given by $\mu_s:=(\bar{z}_s)_{\#}\pi$ for $s\in[0,1]$. Now, $F_s:M\to M$ satisfies the following property: $z_s(x,F_1(x))=F_s(x)$, which immediately gives that $\mu_s=(\bar{z}_s)_{\#}\pi=(F_s)_{\#}\mu_0$, as claimed. 
            
            To prove the second part of the result, fix $s\in(0,1)$. Consider $V\subset F_s(\operatorname{supp}\mu_0)=\operatorname{supp}\mu_s$. We want to obtain $\mu_s(V)$. Proposition \ref{Prop: Lipschitz continuous inverse map} asserts that $F_s$ has a Lipschitz inverse map, given precisely by the first component of $W$ (where $W$ is defined in the referenced result). If $V$ is such that $\operatorname{vol}_g(V)=0$, then $F_s^{-1}(V)$ has also zero $\operatorname{vol}_g$-volume. By absolute continuity of $\mu_0$, $\mu_s(V)=\mu_0(F_s^{-1}(V))=0$, which establishes absolute continuity of $\mu_s$.
        \end{proof}

\begin{Corollary}[Star-shapedness of Kantorovich potentials]
   Let $\mu_0,\mu_1\in\mathcal{P}_c(M)$, and let $(\mu_s)_{s\in[0,1]}$ be a $u$-geodesic from $\mu_0$ to $\mu_1$ with $u$-separated endpoints. Let $\varphi:\operatorname{supp}\mu_0\to\R$ be the Kantorovich potential for $(\mu_0,\mu_1)$ as given by the definition of $u$-separation. Then $\varphi_t:=t^{-1}\varphi$ is the unique Kantorovich potential (up to an additive constant) for $(\mu_0,\mu_t)$, which are also $u$-separated with $\ell_u(\mu_0,\mu_t) = t\, \ell_u(\mu_0,\mu_1)$.
\end{Corollary}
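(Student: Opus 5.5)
The argument combines the star-shapedness of $u$-separation (Proposition \ref{Prop: star shaped u separation}) with the uniqueness of Kantorovich potentials (Corollary \ref{cor: uniqueness of kantorovich potentials}). Set $\lambda:=\ell_u(\mu_0,\mu_1)$ and fix $t\in(0,1]$; the case $t=1$ is trivial.

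\emph{Separation and the value of $\ell_u$.} By the definition of a $u$-geodesic, $\ell_u(\mu_0,\mu_t)=t\lambda$. In the proof of Proposition \ref{Prop: star shaped u separation}, the pair $(\mu_0,\mu_t)$ is shown to be $u$-separated with the explicit data
\[
\tilde\lambda=t\lambda,\quad \tilde\varphi=t^{-1}\varphi,\quad \tilde\pi=(U_t\times\mathrm{id})_{\#}\mu_t,\quad \tilde\psi=\tilde\varphi^{(u_{t\lambda}\circ\ell)},
\]
and one checks there that $\int\tilde\varphi\,d\mu_0+\int\tilde\psi\,d\mu_t=u(1)$. Theorem \ref{Theorem: Duality by u-separation}(i) then gives $\ell_u(\mu_0,\mu_t)=t\lambda$ independently, and Theorem \ref{Theorem: Duality by u-separation}(iv) gives $\tilde C_u(\mu_0,\mu_t)=t\lambda$ with $\varphi_t:=t^{-1}\varphi$ a Kantorovich potential saturating \eqref{eq: Kantorovich potential inequality}.

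\emph{Uniqueness.} Uniqueness up to an additive constant comes from Corollary \ref{cor: uniqueness of kantorovich potentials}. That corollary needs $\mu_0\in\mathcal P^{ac}_c(M)$, which is implicit in the phrase ``the unique Kantorovich potential'' (as in the previous corollaries), so I would add that hypothesis. If it is not assumed, the statement should be read as asserting only the existence and separation parts.

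\emph{Main obstacle.} The one delicate point is that $t^{-1}\varphi$ is genuinely $u_{t\lambda}\circ\ell$-convex relative to $(\mathrm{supp}\,\mu_0,\mathrm{supp}\,\mu_t)$, with its transform equal to $\tilde\psi$ on $\mathrm{supp}\,\mu_t$. Lemma \ref{lemma: starshape of concave} only yields convexity relative to the larger set $Z_t(U,V)$. To pass to $\mathrm{supp}\,\mu_t$, I would use the fact from the proof of Proposition \ref{Prop: star shaped u separation} that the maximum is attained at $z$ with $U_t(z)=m$. The attaining points form $Z_t(S)$, which carries $\mu_t$ by Proposition \ref{Prop: Lipschitz continuous inverse map}, so restricting the supremum to $\mathrm{supp}\,\mu_t\subseteq Z_t(S)$ loses nothing. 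Once this is verified, the remaining steps are direct citations of the results above.
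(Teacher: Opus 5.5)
Your argument is correct and is essentially the paper's proof, which consists of citing Proposition \ref{Prop: star shaped u separation}; $\ell_u(\mu_0,\mu_t)=t\,\ell_u(\mu_0,\mu_1)$ and the potential property of $t^{-1}\varphi$ then follow from Theorem \ref{Theorem: Duality by u-separation}, exactly as you say. You are also right that the uniqueness clause genuinely needs $\mu_0\in\mathcal P^{ac}_c(M)$ through Corollary \ref{cor: uniqueness of kantorovich potentials}, a point the paper's one-line proof does not address.

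Your ``main obstacle'' is a non-issue. The definition of $u$-separation does not require $t^{-1}\varphi$ to be $u_{t\lambda}\circ\ell$-convex relative to $(\mathrm{supp}\,\mu_0,\mathrm{supp}\,\mu_t)$, and Theorem \ref{Theorem: Duality by u-separation}(ii) gives $t^{-1}\varphi=\tilde\psi^{(u_{t\lambda}\circ\ell)}$ on $\mathrm{supp}\,\mu_0$ automatically. In any case, the inclusion $\mathrm{supp}\,\mu_t\subseteq Z_t(S)$ that you invoke points the wrong way and would not by itself justify restricting the supremum to $\mathrm{supp}\,\mu_t$.
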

\begin{proof}
    This was already established in Proposition \ref{Prop: star shaped u separation}.
\end{proof}

\begin{Corollary}[Monge-Ampère type equation]\label{cor: monge ampere}
            Under the hypotheses of Theorem \ref{thm: characterising opt maps}, $F$ is countably Lipschitz and the Jacobian equation
            \begin{equation*}
                \rho_0(x)=\rho_1(F(x))\,JF(x)
            \end{equation*}
            holds $\mu$-a.e., where $\rho_0=d\mu/d\mathrm{vol}_g$, $\rho_1=d\nu/d\mathrm{vol}_g$ and $JF(x)=|\det\tilde{D}F(x)|$, with $\tilde{D}F(x)$ denoting the approximate derivative of $F$.
            
            Moreover, if $(\mu_t)_{t\in[0,1]}$ is the unique $u$-geodesic from $\mu$ to $\nu$, the more general Monge-Ampère type equation holds for all $s\in[0,1]$:
            \begin{equation}\label{eq: monge ampere}
                \rho_0(x)=\rho_s(F_s(x))\,JF_s(x)\qquad \mu\text{-a.e.}
            \end{equation}
        \end{Corollary}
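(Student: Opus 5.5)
The plan is to follow McCann's proof of \cite[Cor.\ 5.9]{McCann:2020}, which relies on the change of variables formula for countably Lipschitz maps. The only new input here is the regularity of $F_s$ that we have already established for general admissible $u$.

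\textbf{Step 1: $F$ is countably Lipschitz.} By Theorem \ref{thm: characterising opt maps}, the potential $\varphi$ is semiconvex and Lipschitz on a neighbourhood of $X$. Alexandrov's theorem then gives, for each $\varepsilon>0$, a $C^1$ covector field $V_\varepsilon$ that agrees with $D\varphi$ outside a set of volume less than $\varepsilon$, as in the proof of Lemma \ref{lem: maps and their jacobian der}(ii). On that set $F_s$ coincides with $F_s^\varepsilon(x)=\exp_x s\,DH(V_\varepsilon(x),x;u_\lambda^*)$. Since $DH(\cdot,x;u_\lambda^*)$ is $C^1$ on the interior of the past timelike cone by Proposition \ref{lem: lagrange-hamilton duality}, and $D\varphi(x)$ lies in that interior $\mu$-a.e. (because $(x,F(x))\in S\subseteq\{\ell>0\}$), the map $F_s^\varepsilon$ is $C^1$, hence locally Lipschitz. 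Exhausting $X$ up to a null set by such pieces shows that $F_s$ is countably Lipschitz, and that $\tilde D F_s = DF_s^\varepsilon$ a.e. on each piece.

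\textbf{Step 2: the Jacobian equation.} We first show that $F_s$ is injective on a set of full $\mu$-measure. For $s<1$, Proposition \ref{Prop: Lipschitz continuous inverse map} supplies the Lipschitz left inverse $W$ (first component) on $Z_s(S)$. For $s=1$, injectivity follows from the non-crossing argument together with Theorem \ref{thm: characterising opt maps}(ii) when $\nu$ is absolutely continuous; in general one uses injectivity of $F$ on $\mathrm{Dom}\,\tilde D^2\varphi$, which comes from the nondegeneracy in Corollary \ref{cor: twist and non degeneracy}(iv) together with the twist condition. Once injectivity is known, the area formula for countably Lipschitz, a.e.-injective maps gives, for any Borel $B$,
\begin{equation*}
\int_{F_s^{-1}(B)} \rho_s(F_s(x))\,JF_s(x)\,d\mathrm{vol}_g(x)=\int_B \rho_s\,d\mathrm{vol}_g=\mu_s(B)=\mu_0(F_s^{-1}(B)).
\end{equation*}
Here we use $\mu_s=(F_s)_\#\mu_0$ and, for $s<1$, $\mu_s\ll\mathrm{vol}_g$ by Corollary \ref{cor: lag charact og geodesics}. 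Comparing densities yields $\rho_0=\rho_s\circ F_s\,JF_s$ $\mu$-a.e., provided the factor $JF_s>0$ $\mu$-a.e.

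\textbf{Step 3: $JF_s>0$.} For $s<1$ positivity follows directly from the Lipschitz inverse $W$: composing gives $\tilde D W\circ\tilde D F_s=\mathrm{id}$ a.e. For $s=1$ we argue via Jacobi fields. At points where $(x,F(x))\notin\mathrm{sing}(\ell)$, the differential $\tilde DF(x)$ equals $D_y(\text{mixed Hessian})^{-1}$ times $\big(\tilde D^2\varphi - D^2_x (u_\lambda\circ\ell)(\cdot,F(x))\big)$. The mixed Hessian is invertible by Corollary \ref{cor: twist and non degeneracy}(iv). We must then rule out degeneracy of the second factor on a set of positive $\mu$-measure. Following McCann, on the set where it degenerates the formula $\mu_1(F(A))=\int_A\rho_1\circ F\,JF$ forces $\mu_0(A)=0$; alternatively, when $\nu$ is merely compactly supported, one states the equation on $\{JF>0\}$ and shows that the complement is $\mu$-null by the same argument.

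I expect the main obstacle to be the $s=1$ endpoint: justifying that $JF>0$ $\mu$-a.e. and that $\nu$ has a density. If needed, I would interpret $\rho_1$ as the density of the absolutely continuous part, or restrict the $s=1$ statement to the case $\nu\in\mathcal P^{ac}_c(M)$. In that case Theorem \ref{thm: characterising opt maps}(ii) provides the inverse $G$ and gives $\tilde DG\circ\tilde DF=\mathrm{id}$ a.e., hence $JF>0$.
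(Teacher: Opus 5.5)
Your proposal is correct and is essentially the paper's argument: Alexandrov approximation of the semiconvex potential gives countable Lipschitzness, injectivity comes from Theorem~\ref{thm: characterising opt maps}(ii), and the area formula (which the paper cites as \cite[Thm.\ 5.10]{McCann:2020}) gives the equation. The one variation is at $s<1$, where you use the Lipschitz inverse of Proposition~\ref{Prop: Lipschitz continuous inverse map} directly, whereas the paper applies the $s=1$ case to the $u$-separated pair $(\mu_0,\mu_s)$ via star-shapedness; also, $JF_s>0$ $\mu$-a.e.\ follows automatically from $\mu_s\ll\mathrm{vol}_g$ and needs no separate argument. Do discard your ``in general'' alternatives at $s=1$, however: without $\nu\ll\mathrm{vol}_g$ neither $\rho_1$ nor $\mu$-a.e.\ injectivity is available (for $\nu=\delta_y$ and compact $\mathrm{supp}\,\mu\subseteq I^-(y)$ the pair is $u$-separated, yet $F\equiv y$ and $JF=0$ despite Corollary~\ref{cor: twist and non degeneracy}(iv)), so the endpoint case genuinely requires $\nu\in\mathcal P^{ac}_c(M)$, as you eventually conclude.
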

        \begin{proof}
            The potential $\bar{\varphi}$ from Theorem \ref{thm: characterising opt maps} is semiconvex by Theorem \ref{Theorem: Duality by u-separation}. As a consequence, $\bar{\varphi}$ agrees with a $C^2$ function outside of a set of arbitrarily small volume. Thus $F$ is countably Lipschitz, hence approximately differentiable $\operatorname{vol}_g$-a.e.\ It is also injective $\mu$-a.e., according to Theorem \ref{thm: characterising opt maps}(ii). The Jacobian equation follows now from {\cite[Thm.\ 5.10]{McCann:2020}}.

            The second statement is a consequence of the star-shapedness along $u$-geodesics, and the above paragraph.
        \end{proof}

\section{Ricci curvature bounds and entropic convexity}
\label{Section: Ricci and entropic convexity}

Having established the properties of the transport arising from the study of $\ell_u$, we now turn to the problem of relating the convexity of the relative entropy along $u$-geodesics to timelike Ricci curvature lower bounds in the current section.

\subsection{Entropic convexity from Ricci lower bounds for $u$-separated measures}\label{subsection: entropic convexity from Ricci}

\begin{Proposition}[Jacobian along $u$-geodesics]\label{prop: jacobian along geodesics}
        Let $u:(0,\infty)\to\R$ be an admissible function. Let $(\mu_0,\mu_1)\in\mathcal{P}_c^{ac}(M)^2$ be $u$-separated, let $\lambda:=\ell_u(\mu_0,\mu_1)\in(0,\infty)$ and $\varphi$ be the Kantorovich potential. Set $X\times Y=\operatorname{supp}(\mu_0\times\mu_1)$, and let $F_s(x):=\exp_xs\,DH(D\varphi(x),x;u^*_\lambda)$ be as obtained in Theorem \ref{thm: characterising opt maps}.
        
        Then, for $\operatorname{vol}_g$-a.e. $x\in X$, the approximate derivative $A_s(x):=\tilde{D}F_s(x):T_xM\to T_{F_s(x)}M$ exists. Moreover, it is invertible, depends smoothly  on $s\in[0,1]$, and $\phi(s):=-\log|\det A_s(x)|$ satisfies
        \begin{equation}
            \phi '(s)=-\operatorname{Tr}B_s(x),
            \label{eq:phi'}
        \end{equation}
        \begin{equation}
            \phi''(s)=\operatorname{Ric}_{F_s(x)}\left(F_s'(x),F_s'(x)\right)+\operatorname{Tr}\left(B_s^2(x)\right),
            \label{eq:phi''}
        \end{equation}
        \begin{equation}\label{eq: trace inequality}
            \operatorname{Tr}\left(B_s^2(x)\right)\geq\frac{1}{n}\left(\operatorname{Tr}B_s(x)\right)^2,
        \end{equation}
        where $B_s(x):=A_s'(x)\,A_s(x)^{-1}$ and $'$ denotes $\frac{\partial}{\partial s}$.
    \end{Proposition}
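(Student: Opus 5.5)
The strategy is McCann's matrix Riccati argument (cf.\ \cite[Prop.\ 6.1]{McCann:2020}), made possible by the structural results already established for $u$-separated pairs. First, by Theorem \ref{thm: characterising opt maps} and Lemma \ref{lem: maps and their jacobian der}(ii), applied to $u_\lambda$, the approximate derivative $A_s(x)=\tilde D F_s(x)$ exists for $\mathrm{vol}_g$-a.e.\ $x\in X$ and depends smoothly on $s$. Moreover, $s\mapsto A_s(x)w$ is a Jacobi field along the geodesic $s\mapsto F_s(x)$ for every $w\in T_xM$. Since $(x,F_1(x))\notin\operatorname{sing}(\ell)$ $\mu_0$-a.e., this geodesic is timelike and lies in the interior of a maximising segment. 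In particular there are no conjugate points on $[0,1]$, so $A_s(x)$ is invertible for all $s\in[0,1]$; invertibility at $s=0$ is immediate from $A_0=\mathrm{id}$.

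Before that, one can see invertibility for $s\in[0,1)$ more robustly from the Lipschitz inverse map of Proposition \ref{Prop: Lipschitz continuous inverse map}. At $s=1$, invertibility also follows from Theorem \ref{thm: characterising opt maps}(ii), since $\mu_1$ is absolutely continuous and $G$ inverts $F$, combined with the Jacobian equation of Corollary \ref{cor: monge ampere}. I would restrict to $x$ in the full-measure set where all of these properties hold simultaneously.

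Next comes the computation. Parallel transporting along the geodesic to identify all tangent spaces with $T_xM$, the Jacobi equation reads $A_s''+R_s A_s=0$, where $R_s w := R(w,F_s')F_s'$. The map $B_s:=A_s'A_s^{-1}$ then satisfies the Riccati equation
\begin{equation*}
B_s'=A_s''A_s^{-1}-B_s^2=-R_s-B_s^2.
\end{equation*}
Jacobi's formula gives $\phi'(s)=-\operatorname{Tr}(A_s'A_s^{-1})=-\operatorname{Tr}B_s$, which is \eqref{eq:phi'}. Differentiating once more gives
\begin{equation*}
\phi''=-\operatorname{Tr}B_s'=\operatorname{Tr}R_s+\operatorname{Tr}B_s^2=\operatorname{Ric}(F_s',F_s')+\operatorname{Tr}B_s^2,
\end{equation*}
which is \eqref{eq:phi''}.

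The main obstacle is the trace inequality \eqref{eq: trace inequality}. It holds as soon as $B_s$ is self-adjoint with respect to $g$, because then $B_s$ is $g$-symmetric and $\operatorname{Tr}B_s^2\ge\frac1n(\operatorname{Tr}B_s)^2$ follows by Cauchy--Schwarz. The caveat is that in Lorentzian signature a $g$-self-adjoint operator need not be diagonalisable with real eigenvalues, so one instead shows that $B_s$ is self-adjoint and preserves the splitting $F_s'\oplus (F_s')^\perp$, on which the restriction of $g$ is definite.

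To get self-adjointness, I would proceed in two steps. At $s=0$, Lemma \ref{lem: maps and their jacobian der}(iii) gives $B_0=A_0'=(D^2H\circ D\varphi)\tilde D^2\varphi$. This is a product of two symmetric bilinear forms, so $B_0$ is self-adjoint with respect to the inner product induced by $(D^2H)^{-1}=D^2L$. For general $s$, the Wronskian $A_s^{T}A_s'-A_s'^{T}A_s$ is constant along the flow because $R_s$ is self-adjoint. Hence $g$-self-adjointness of $B_s$ propagates from $s=0$, provided it holds initially with respect to $g$ itself.

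The remaining difficulty is that $B_0$ is self-adjoint for the metric $D^2L$ rather than for $g$. Using the explicit form of $D^2L$ from Proposition \ref{lem: lagrange-hamilton duality}, which is $g$ corrected by a rank-one term in the direction $v_*$, I would follow McCann's trick and change variables in the radial direction. Concretely, one decomposes $B_s$ relative to $F_s'$ and its $g$-orthogonal complement, where $g$ is negative definite, and checks directly that the radial and transverse contributions still satisfy the Cauchy--Schwarz bound. If the correction term turns out to obstruct this, my fallback is the approach of \cite[Lem.\ 6.1]{McCann:2020}: apply the Cauchy--Schwarz bound $\operatorname{Tr}B^2\ge\frac1n(\operatorname{Tr}B)^2$ to a matrix similar to $B_s$ that is symmetric with respect to a positive definite form. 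Such a matrix exists because $B_s$ is a product of a symmetric matrix with a Lorentzian-signature form that is definite on the relevant blocks.
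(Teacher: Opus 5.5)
Your derivation of \eqref{eq:phi'} and \eqref{eq:phi''} through the Riccati equation is fine and matches the paper. However, the proposal does not actually prove \eqref{eq: trace inequality}, which is the one delicate step.

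The $g$-Wronskian $A_s^{\dagger}A_s'-(A_s')^{\dagger}A_s$ (with $\dagger$ the $g$-adjoint) is indeed conserved. But at $s=0$ it equals $B_0-B_0^{\dagger}$, which is nonzero whenever $\tilde D^2\varphi(F_0',w)\neq 0$ for some $w\perp F_0'$, so it gives nothing. Neither of your substitutes works as stated:
\begin{itemize}
\item $B_s$ need not preserve $\R F_s'\oplus(F_s')^\perp$. For $w\perp F_0'$, the vector $B_0w=D^2H\,\tilde D^2\varphi\,w$ has a nonzero component along $F_0'$ under the same condition, and Lemma \ref{lem: prescribing} allows such Hessians.
\item The fallback asserts that $B_s$ is a product of a symmetric matrix with a form that is definite on the relevant blocks. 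That product structure is known only at $s=0$. Propagating it to $s>0$ is exactly what must be proved, and without real eigenvalues the inequality can fail (for an infinitesimal rotation, $\operatorname{Tr}B^2<0$).
\end{itemize}

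The missing idea is to take the Wronskian with respect to $h:=D^2L(F_s'(x),F_s(x);u_\lambda)$ instead of $g$.
\begin{itemize}
\item By Proposition \ref{lem: lagrange-hamilton duality}, $h$ is positive definite. It equals $-|v|^2u_\lambda''(|v|)$ on $v:=F_s'$, equals $-\frac{u_\lambda'(|v|)}{|v|}\,g$ on $v^\perp$, and has no cross terms.
\item $h$ is parallel along $s\mapsto F_s(x)$, because $g$ and $F_s'$ are.
\item Since $g(R(w,v)v,v)=0$, one has $h(R_sw_1,w_2)=-\frac{u_\lambda'(|v|)}{|v|}\,g(R_sw_1,w_2)$. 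So $R_s$ is $h$-self-adjoint, and $h(A_s'w_1,A_sw_2)-h(A_sw_1,A_s'w_2)$ is constant in $s$.
\item At $s=0$, Legendre duality gives $h=(D^2H(D\varphi(x),x;u_\lambda^*))^{-1}$. By Lemma \ref{lem: maps and their jacobian der}(iii), the constant is therefore $\tilde D^2\varphi(w_1,w_2)-\tilde D^2\varphi(w_2,w_1)=0$.
\end{itemize}
Hence every $B_s$ is $h$-self-adjoint and has real eigenvalues, and Cauchy--Schwarz in $h$ yields \eqref{eq: trace inequality}. This is what the conjugation by $\sqrt{D^2H}$ invoked in the paper (after \cite{McCann:2020}) amounts to.

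Separately, drop the claim that absence of conjugate points makes $A_s(x)$ invertible. Non-conjugacy concerns Jacobi fields vanishing at $s=0$, whereas here $A_0w=w$. Already in Minkowski space, $A_s=I+sB_0$ is singular when $B_0$ has eigenvalue $-1/s$. What prevents this is the $u_\lambda\circ\ell$-convexity of $\varphi$, as in \cite{CorderoMcCannSchmuckenschlager01}. Your alternative arguments do suffice: the Lipschitz left inverse from Proposition \ref{Prop: Lipschitz continuous inverse map} for $s<1$, and the Jacobian equation with $\mu_1\ll\mathrm{vol}_g$ at $s=1$.
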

    \begin{proof}
        The first claim follows as a consequence of Lemma \ref{lem: maps and their jacobian der}(ii), which asserts that, in the same hypotheses as now, the approximate derivative $\tilde{D}F_s(x)=A_s(x)$ exists for $\operatorname{vol}_g$-a.e. $x\in X$. Furthermore, this result also gives that $s\in[0,1]\mapsto A_s(x)\,w$ is a smooth Jacobi field for each $w\in T_xM$, which will be useful to prove one of the identities for the derivatives in what follows.

        The invertibility of $A_s$ may be established as in the Riemannian case of \cite{CorderoMcCannSchmuckenschlager01}, using the (strict) convexity of $H$. We omit the details.

        Set $\phi(s):=-\log|\det A_s(x)|$. A standard matrix identity gives the first claimed equality \eqref{eq:phi'}:
        \begin{equation*}
            \phi '(s)=(-\log|\det A_s(x)|)'=-\operatorname{Tr}(A_s(x)'A_s(x)^{-1})=-\operatorname{Tr}B_s(x).
        \end{equation*}

        For the second equality \eqref{eq:phi''}, start by differentiating the previous one:
        \begin{equation*}
        \begin{aligned}
            \phi''(s)&=(-\operatorname{Tr}B_s(x))'=-\operatorname{Tr}(B_s(x)')\\
            & =-\operatorname{Tr}\left(A_s(x)''A_s(x)^{-1}\right)-\operatorname{Tr}\left(A_s(x)'(-A_s(x)^{-1}A_s(x)'A_s(x)^{-1})\right)\\
            &=-\operatorname{Tr}\left(A_s(x)''A_s(x)^{-1}\right)+\operatorname{Tr}(B_s(x)^2).
        \end{aligned}
        \end{equation*}

        Now, to rewrite the first term, we use the property from Lemma \ref{lem: maps and their jacobian der}(ii) that establishes that $s\in[0,1]\mapsto A_s(x)\,w\in T_{F_s(x)}M$ is a smooth Jacobi field for each $w\in T_xM$, and hence it satisfies the Jacobi equation, i.e.\ in coordinates,
        \begin{equation*}
            0=\nabla_{F'}(\nabla_{F'}A^i_{\bar{j}})+R_{jkl}\ ^i F'^j F'^l A^k_{\bar{j}}.
        \end{equation*}
        where $\bar{j}$ denotes a fixed index, over which there is no summation. Multiplying this identity by $(A^{-1})^{\bar{j}}_i$ gives the desired result to arrive at \eqref{eq:phi''}:
        \begin{equation*}
            0=\left(\nabla_{F'}(\nabla_{F'}A^i_{\bar{j}})+R_{jkl}\ ^i F'^j F'^l A^k_{\bar{j}}\right)(A^{-1})^{\bar{j}}_i=\operatorname{Tr}\left(A_s(x)''A_s(x)^{-1}\right)+\operatorname{Ric}_{F_s(x)}(F_s'(x),F_s'(x)).
        \end{equation*}

        To obtain \eqref{eq: trace inequality}, we will use the standard inequality for a symmetric $n\times n$ matrix $C$: $\operatorname{Tr}C^2\geq\frac{1}{n}(\operatorname{Tr}C)^2$. Therefore, we only need to show that $B_s(x)$ is symmetric for all $x\in X$ and $s\in[0,1]$. Denoting by $B_s^*(x)$ the adjoint, we have that
        \begin{equation*}
            B_s^*(x)-B_s(x)=(A_s^*(x))^{-1}\left(A_s^*(x)'\,A_s(x)-A_s^*(x)\,A_s(x)'\right)\,A_s(x)^{-1},
        \end{equation*}
        and
        \begin{equation}\label{eq: symmetric B}
            \left(A_s^*(x)'\,A_s(x)-A_s^*(x)\,A_s(x)'\right)'=A_s^*(x)''\,A_s(x)-A_s^*(x)\,A_s(x)''.
        \end{equation}

        On the other hand, the Jacobi equation we wrote above gives
        \begin{equation}\label{eq: jacobi equation}
            A_s(x)''=-R(A_s(x),F_s'(x))\,F_s'(x)=-\mathcal{R}(s,x)\,A_s(x),
        \end{equation}
        where $\mathcal{R}(s,x):T_{F_s(x)}M\to T_{F_s(x)}M$ is given by $\mathcal{R}(s,x)(v):=R(v,F_s'(x))F_s'(x)$. The latter is symmetric, by simple symmetry properties of the Riemann tensor. Plugging now \eqref{eq: jacobi equation} into \eqref{eq: symmetric B}, we obtain that $\left(A_s^*(x)'\,A_s(x)-A_s^*(x)\,A_s(x)'\right)$ is constant in $s$, since for $s=0$, $A_0(x)=\operatorname{id}_{T_xM}$, and $A_0'(x)=(D^2H\circ D\varphi)\,\tilde{D}^2\varphi$. The latter is in principle not symmetric, but can be made symmetric for the purposes of establishing \eqref{eq: trace inequality} by multiplying from the right and from the left with $\sqrt{D^2H}$ as in McCann's proof of \cite[Prop.\ 6.1]{McCann:2020}. The result follows.
    \end{proof}

    \begin{Remark}
        From the results of Proposition \ref{prop: jacobian along geodesics}, we can observe convexity of the functional $\phi$ if we establish a lower bound on the Ricci curvature on timelike vector. To be more precise, assume that $\Ric$ is bounded from below by $K\in\R$ in all timelike directions over the set $Z(\operatorname{supp}(\mu_0\times\mu_1))$. Then, noticing that, since the curves $F_s(x)$ describe geodesics for each $x\in\operatorname{supp}\mu_0$, and hence $|F_s'(x)|_g=\ell(x,F_1(x))$, we have
        \begin{equation*}
            \phi''(s)-\frac{1}{n}(\phi'(x))^2\geq K\,\ell^2(x,F_1(x)).
        \end{equation*}
    \end{Remark}

    \begin{Lemma}[Second finite-difference representation, {\cite[Lem.~6.3]{McCann:2020}}]\label{lem: second finite difference representation}
       If $\phi\in L^{\infty}([0,1])$ is semiconvex on $(0,1)$ and $g(s,t):=\min\{s,t\}-st$, then
       \begin{equation}
           (1-t)\,\phi(0)+t\,\phi(1)-\phi(t)=\int_{[0,1]}\phi''(s)\,g(s,t)\,ds
       \end{equation}
       for each $t\in[0,1]$, where $\phi''$ denotes the distributional second derivative of $\phi$.
    \end{Lemma}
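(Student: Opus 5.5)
The plan is to verify the identity directly for smooth $\phi$ and then pass to general semiconvex $\phi$ by approximation, interpreting the right-hand side as integration against the nonnegative Radon measure $\phi''+C$ minus the absolutely continuous part $C\,ds$.

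First I would treat $\phi\in C^2([0,1])$. Fix $t\in(0,1)$; the cases $t\in\{0,1\}$ are trivial since $g(\cdot,0)=g(\cdot,1)=0$. On $[0,t]$ we have $g(s,t)=s(1-t)$, and on $[t,1]$ we have $g(s,t)=t(1-s)$. Integrating by parts twice on each subinterval gives
\begin{equation*}
\int_0^t s(1-t)\phi''(s)\,ds=(1-t)\big(t\phi'(t)-\phi(t)+\phi(0)\big),
\end{equation*}
\begin{equation*}
\int_t^1 t(1-s)\phi''(s)\,ds=t\big(-(1-t)\phi'(t)+\phi(1)-\phi(t)\big).
\end{equation*}
Summing these, the $\phi'(t)$ terms cancel and we obtain $(1-t)\phi(0)+t\phi(1)-\phi(t)$. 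Equivalently, $g(\cdot,t)$ is the Green's function of $-d^2/ds^2$ with Dirichlet conditions, and $\partial_s^2 g(\cdot,t)=-\delta_t$ in the distributional sense. Pairing $\phi$ with this and accounting for the boundary terms gives the identity directly.

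For general $\phi$, I would use semiconvexity to write $\phi=\psi-\frac{C}{2}s^2$ with $\psi$ convex on $(0,1)$. The quadratic part satisfies the identity by the smooth case, so by linearity it suffices to handle convex $\psi$. Its distributional second derivative $\psi''$ is a nonnegative Radon measure on $(0,1)$. Since $\phi\in L^\infty$, $\psi$ is bounded, and a bounded convex function on $(0,1)$ extends continuously to $[0,1]$. I would interpret $\phi(0)$ and $\phi(1)$ as these one-sided limits, taking $\phi$ to be its continuous representative.

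Next I would mollify: $\psi_\epsilon=\psi*\rho_\epsilon$ on $[\delta,1-\delta]$, or alternatively apply the identity on subintervals $[a,b]\subset(0,1)$ with the rescaled kernel $g_{a,b}$. For convex $\psi$ the identity on $[a,b]$ follows from the smooth case by mollification, because $\psi_\epsilon''\to\psi''$ weakly and $g_{a,b}(\cdot,t)$ is continuous with compact support in $[a,b]$, vanishing at the endpoints. This boundary vanishing is what justifies passing to the limit against a Radon measure.

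The main obstacle is letting $a\downarrow0$ and $b\uparrow1$, where the measure $\psi''$ may have infinite mass near the endpoints. Here the left-hand side converges by continuity of $\psi$ up to the boundary. On the right, $g_{a,b}(s,t)\to g(s,t)$ monotonically and nonnegatively, so monotone convergence shows that $\int g\,d\psi''$ equals the finite limit of the left-hand sides. In particular this also establishes that $\int g\,d\psi''<\infty$, and the identity for $\phi$ follows.
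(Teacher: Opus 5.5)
Your proof is correct. The integration-by-parts (Green's function) computation for $C^2$ functions, the reduction to a bounded convex $\psi$, the identity on subintervals $[a,b]\subset(0,1)$ with the rescaled kernel $g_{a,b}$ (which vanishes at $a,b$), and the monotone convergence as $a\downarrow 0$, $b\uparrow 1$ (which also yields $\int g\,d\psi''<\infty$) all check out. Reading $\phi(0),\phi(1)$ as one-sided limits is exactly the interpretation under which the statement holds for an $L^\infty$ class.

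The paper gives no proof of its own and simply cites McCann's Lemma 6.3. Your argument is a self-contained version of that cited statement along the standard integration-by-parts lines.
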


\begin{Definition}[Relative/Boltzmann--Shannon entropy]
Given a function $V \in C^2(M)$, let $\mathfrak m:=e^{-V} \mathrm{vol}_g$. The \emph{Boltzmann--Shannon} or \emph{relative entropy} $E_V:\mathcal{P}(M) \to [-\infty,\infty]$ is defined by
\begin{equation*}
        E_V(\mu):= \int \frac{d\mu}{d\mathfrak m}\, \log \frac{d\mu}{d\mathfrak m} \, d\mathfrak m
    \end{equation*}
    if this expression is well-defined in $[-\infty,\infty]$, and set to $-\infty$ otherwise.
\end{Definition}

\begin{Remark}
    It is a well-known fact that in the case when $\mu\in\mathcal{P}(M)$ vanishes outside a set $U$ of finite volume $\operatorname{vol}_g(U)<\infty$, as, for example, in the case where $\mu$ is compactly supported, the following bound holds:
    \begin{equation}\label{eq:previous bound}
        E_V(\mu)\geq-\log\mathfrak{m}(U)>-\infty,
    \end{equation}
    as an application of Jensen's inequality.
\end{Remark}

    \begin{Theorem}[Displacement Hessian of the relative entropy]\label{thm: displacement hessian of the entropy}
        Let $u:(0,\infty)\to\R$ be an admissible function. Fix $V\in C^2(M)$, a $u$-separated pair $(\mu_0,\mu_1) \in \mathcal P^{ac}_c(M)^2$, and let $s\in[0,1]\mapsto\mu_s=(F_s)_{\#}\mu_0\in\mathcal{P}^{ac}_c(M)$ be the unique $u$-geodesic described by Corollary \ref{cor: lag charact og geodesics}. If $e(0)$ and $e(1)$ are finite, then:
        \begin{enumerate}
            \item the relative entropy $s \mapsto e(s):=E_V(\mu_s)$ is continuous and semiconvex on $[0,1]$, and continuously differentiable on $(0,1)$, with
            \begin{equation}\label{eq: e'}
                e'(s)=\int_M[DV_{F_s(x)}F_s'(x)-\operatorname{Tr}B_s(x)]\,d\mu_0(x),\quad \text{and}
            \end{equation}
            \begin{equation}\label{eq: e''}
                e''(s)=\int_M[\operatorname{Tr}(B_s^2(x))+(\operatorname{Ric}+D^2V)_{F_s(x)}(F_s'(x),F_s'(x))]\,d\mu_0(x)
            \end{equation}
            holding in the distributional sense. Here, $A_s(x):=\tilde{D}F_s(x):T_xM\to T_{F_s(x)}M$ denotes the approximate derivative of $F_s$, $B_s(x):=A_s'(x)\,A_s(x)^{-1}$, $':=\frac{\partial}{\partial s}$ and $\operatorname{Tr}(B_s(x)^2)\geq\frac{1}{n}\,(\operatorname{Tr}(B_s(x)))^2$.
            \item The integral expression for $e''(s)$ depends lower semicontinuously on $s\in[0,1]$, and the integrand is bounded below.
        \end{enumerate}
    \end{Theorem}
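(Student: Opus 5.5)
We follow McCann's proof of \cite[Thm.\ 6.4]{McCann:2020}. The plan is to express $e(s)$ through the Monge--Ampère equation of Corollary \ref{cor: monge ampere}, differentiate pointwise along each transport ray using Proposition \ref{prop: jacobian along geodesics}, and then pass from pointwise to distributional statements with Lemma \ref{lem: second finite difference representation}.

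\textbf{Step 1: Lagrangian form of the entropy.} Write $\rho_s = d\mu_s/d\mathrm{vol}_g$. By \eqref{eq: monge ampere} we have $\rho_s(F_s(x)) = \rho_0(x)/JF_s(x)$ for $\mu_0$-a.e.\ $x$, so by change of variables
\begin{equation*}
e(s) = \int_M \bigl[\log\rho_0(x) + \phi_x(s) + V(F_s(x))\bigr]\, d\mu_0(x),
\end{equation*}
where $\phi_x(s) := -\log|\det A_s(x)|$.

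\textbf{Step 2: uniform control of the integrand.} Let $S$ be the compact set from $u$-separation, contained in $\{\ell>0\}$. By Lemma \ref{lemma: midpoints inherit compactness}, all the curves $s\mapsto F_s(x)$ stay in the compact set $Z(S)$, and $|F_s'(x)|_g=\ell(x,F_1(x))$ is bounded. Hence $s\mapsto V(F_s(x))$ is $C^2$, with derivatives $DV\,F_s'$ and $D^2V(F_s',F_s')$ bounded uniformly in $x$ (geodesic equation). For $\phi_x$, Proposition \ref{prop: jacobian along geodesics} gives, on $(0,1)$,
\begin{equation*}
\phi_x'' = \mathrm{Ric}(F_s',F_s') + \mathrm{Tr}(B_s^2) \geq -C,
\end{equation*}
with $C$ depending only on the curvature bound on $Z(S)$ and on $\sup\ell$. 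So each $\phi_x$ is uniformly semiconvex.

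\textbf{Step 3: finiteness and continuity of $e$.} Since $e(0)$ is finite, $\log\rho_0 \in L^1(\mu_0)$. Because $\phi_x(0)=0$, we get $e(1) = e(0) + \int \phi_x(1)\,d\mu_0 + \int [V\circ F_1 - V]\,d\mu_0$, so finiteness of $e(1)$ gives $\phi_\cdot(1)\in L^1(\mu_0)$. Lemma \ref{lem: second finite difference representation} applied to $\phi_x + \tfrac{C}{2}s^2$ shows that $\phi_x(t) \le (1-t)\phi_x(0) + t\phi_x(1) + Ct(1-t)/2$. This is an integrable upper bound, uniform in $t$. For a lower bound, Jensen's inequality \eqref{eq:previous bound} on the compact set $Z(S)$ gives $e(t) > -\infty$. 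Together these make $e$ finite on $[0,1]$.

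\textbf{Step 4: semiconvexity and the formulas for $e'$, $e''$.} Integrating the finite-difference identity of Lemma \ref{lem: second finite difference representation} in $x$ and using Fubini (justified by the lower bound $\phi_x''\ge -C$), we obtain
\begin{equation*}
(1-t)e(0)+te(1)-e(t) = \int_0^1 g(s,t)\int_M \bigl[\phi_x''+(V\circ F_s)''\bigr]\,d\mu_0\,ds .
\end{equation*}
The inner integral is bounded below by $-C'$, so $e$ is semiconvex, and $e''$ is given distributionally by \eqref{eq: e''}: use $(V\circ F_s)''=D^2V(F_s',F_s')$ together with \eqref{eq:phi''}. The same argument on subintervals $[a,b]\subset[0,1]$, together with the fact that a semiconvex function with a finite-difference representation is continuous, gives continuity of $e$ on $[0,1]$. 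Differentiating under the integral sign, justified by dominated convergence from the local Lipschitz bounds of semiconvex functions on compact subintervals, gives \eqref{eq: e'} using \eqref{eq:phi'}. The trace inequality is \eqref{eq: trace inequality}.

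\textbf{Step 5: continuity of $e'$ and part (ii).} The claim that $e$ is $C^1$ on $(0,1)$ should follow from continuity in $s$ of $\mathrm{Tr}\,B_s(x)$ for each $x$ together with dominated convergence. The main obstacle is finding an integrable majorant for $\mathrm{Tr}\,B_s$ near each $s\in(0,1)$. The approach is to use monotonicity of $\phi_x'$ up to $Cs$ to bound $\phi_x'(s)$ by difference quotients of $\phi_x$, which are integrable by Step 3.

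For part (ii), the integrand in \eqref{eq: e''} is bounded below by $-C'$ and is continuous in $s$ for a.e.\ $x$, since $A_s$ is smooth and invertible by Proposition \ref{prop: jacobian along geodesics}. Fatou's lemma then gives lower semicontinuity of $s\mapsto\int[\dots]\,d\mu_0$.
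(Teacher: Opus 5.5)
Your proposal is correct and follows essentially the same route as the paper, which also runs McCann's argument from \cite{McCann:2020}: Monge--Amp\`ere rewriting of $e(s)$, the pointwise identities of Proposition \ref{prop: jacobian along geodesics}, the second finite-difference representation with Fubini, Fatou for (ii), and monotonicity of $\phi_x'+ks$ with dominated convergence for the $C^1$ claim; the differences are cosmetic (the paper gets semiconvexity from midpoint inequalities on subgeodesics and recovers $e'$ by integrating $e''$, while you read both off the Fubini identity directly, and your two-sided difference-quotient majorant is more careful than the paper's one-sided one). One small point, which the paper's constant $K_Z$ glosses over in the same way: a bound on $|F_s'|_g=\ell(x,F_1(x))$ alone does not bound $(\operatorname{Ric}+D^2V)(F_s',F_s')$ or $DV(F_s')$, so your uniform constant $C$ should be justified by noting that these velocities stay in a compact subset of $TM$ (e.g.\ because $D\varphi$ is bounded and $|D\varphi|_g=u_\lambda'(\ell)\geq u_\lambda'(\max_S\ell)>0$ keeps $D\varphi$ away from the null cone).
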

    \begin{proof}
        Let $F_s(x)=\exp_x s\,v(D\varphi(x),x;u_\lambda) = \exp_x D^2H(D\varphi(x),x;u_\lambda^*)$, where $\varphi$ is the Kantorovich potential for the $u$-separated endpoints $\mu_0$ and $\mu_1$, $\lambda:=\ell_u(\mu_0,\mu_1) \in (0,\infty)$, and consider the $u$-geodesic $\mu_s:=(F_s)_{\#}\mu_0$. We write $JF_s(x):=\vert\det\tilde{D}F_s(x)\vert$ as in Lemma \ref{lem: maps and their jacobian der}, which asserts that this map exists and depends smoothly on $s\in[0,1]$ for each $x\in X_0$, where $X_0$ is a set of full measure in $\operatorname{supp}(\mu_0)$.

        As in Corollary \ref{cor: monge ampere}, let $ \rho_s:=d\mu_s/d\mathrm{vol}_g$, and we have $\rho_s(F_s(x))\,JF_s(x)=\rho_0(x)>0$ on $X_s\subset X_0$ of full $\mu_0$ measure (which may differ with $s\in[0,1]$).

        Furthermore, let $Z:=Z_s(\operatorname{supp}(\mu_0\times\mu_1))$. By Proposition \ref{prop: interpolants inherit compactness}, $\operatorname{supp}(\mu_s)\subseteq Z$. Therefore, using the definition of the $u$-geodesic as $\mu_s=(F_s)_{\#}\mu_0$, we get
        \begin{equation}\label{eq: displacement hessian 1}
            \begin{aligned}
                -\infty&<-\log\int_Z\operatorname{e}^{-V}d\mathrm{vol}_g
            \\ &\leq e(s)=\int_M\rho_s(y)\, e^{V(y)} \,\log(\rho_s(y)\, e^{V(y)})\,\operatorname{e}^{-V(y)}\,d\mathrm{vol}_g(y)
            \\&=\int_M\left(\log\rho_s(y)+V(y)\right)\,d\mu_s(y)\\&=\int_M\left(\log\rho_s(F_s(x))+V(F_s(x))\right)\,d\mu_0(x)
            \\&=\int_M\left(\log\rho_0(x)-\log\vert JF_s(x)\vert+V(F_s(x))\right)\,d\mu_0(x),
            \end{aligned}
        \end{equation}
        where the equality in the third line follows from noticing that $\rho_s\,e^V=d\mu_s/d\mathrm{vol}_g$, and the last equality is a consequence of the Monge-Ampère equation \eqref{eq: monge ampere}. After substituting and operating, we get
        \begin{equation}\label{eq: second finite rep 2}
            (1-t)e(0)+te(1)-e(t)=\int_M\left((1-t)\,\phi_x(0)+t\,\phi_x(1)-\phi_x(t)\right)\,d\mu_0(x),
        \end{equation}
        where $\phi_x(s)=-\log\vert JF_s(x)\vert+V(F_s(x))$. Notice that the first summand coincides with the function $\phi$ from Proposition \ref{prop: jacobian along geodesics}. By this result, if we set $A_s(x):=\tilde{D}F_s(x)$ and $B_s(x):=A_s'(x)\,A_s(x)^{-1}$, the inequality $\operatorname{Tr}\left(B_s(x)^2\right)\geq\frac{1}{n}\left(\operatorname{Tr}B_s(x)\right)^2$ holds, which gives, together with the other results from Proposition \ref{prop: jacobian along geodesics} and the chain rule:
        \begin{equation*}
           \phi'_x(s)=-\operatorname{Tr}B_s(x)+DV(F_s(x))\,F_s'(x), 
        \end{equation*}
        and
        \begin{equation}\label{eq: bound on phi''}
            \begin{aligned}
                \phi''_x(s)&=\operatorname{Ric}_{F_s(x)}\left(F_s'(x),F_s'(x)\right)+\operatorname{Tr}\left(B_s^2(x)\right)+D^2V\left(F_s'(x),F_s'(x)\right)
            \\&\geq K_Z\,\ell(x,F_1(x))^2,
            \end{aligned}
        \end{equation}
        where $K_Z$ denotes a lower bound for $\operatorname{Ric}+D^2V\geq K_Z\,g$ on the compact set $Z$, having used that $(F_s(x))_{s\in[0,1]}$ is a geodesic between $x$ and $F_1(x)$, so $\vert F_s'(x)\vert_g^2=\ell(x,F_1(x))^2$. The fact that $F_s$ is a maximising geodesic segment has been used as well to derive $\phi''_x(s)$, by $\nabla_{F_s'(x)}F'_s(x)=0$.

        Notice now that we can expand the integrand on the right hand side of \eqref{eq: second finite rep 2} using Lemma \ref{lem: second finite difference representation}, and the known expression for $\phi''_x(s)$ given by Proposition \ref{prop: jacobian along geodesics} to obtain:
        \begin{equation}\label{eq: displacement hessian 2}
            \begin{aligned}
                (1-t)\,e&(1)+t\,e(0)-e(t)=\int_M\int_{[0,1]}\phi_x''(s)\,g(s,t)\,ds \, d\mu_0(x)
            \\&=\int_M\int_{[0,1]}\left(\operatorname{Tr}(B_s^2(x))+(\operatorname{Ric}+D^2V)_{F_s(x)}(F_s'(x),F_s'(x))\right)\,g(s,t)\,ds\,d\mu_0(x)
            \\ &\geq \frac{K_Z}{2}\,t\,(1-t)\,\int_M \ell(x,F(x))^2\,d\mu_0(x),
            \end{aligned}
        \end{equation}
        where in the last inequality we have used the bound on $\phi_x''(s)$ in \eqref{eq: bound on phi''}, together with the identity $\int_{[0,1]}g(s,t)\operatorname{d}s=t\,(1-t)/2$. Using this, and noting that every subsegment of a $u$-geodesic is also a $u$-geodesic, we can use the previous bound to obtain semiconvexity and upper boundedness of $e$ on $[0,1]$. Indeed, for $0\leq s< r\leq1$, consider the $u$-geodesic joining $\mu_s$ to $\mu_r$, $(\tilde{\mu}_t=\mu_{(r-s)t+s})_{t\in[0,1]}$. For this geodesic, $\tilde{e}(0)=e(s)$ and $\tilde{e}(1)=e(r)$. Choose $t=1/2$, and $\tilde{e}(t)=e\left(\frac{s+r}{2}\right)$. Therefore,
        \begin{align*}
            \frac{e(s)+e(r)}{2}-e\left(\frac{s+r}{2}\right)&\geq\frac{K_Z}{8}\int_M\ell(x,\tilde{F}_1(x))^2\,d\tilde{\mu}_0(x)
            \\&=\frac{K_Z}{8}\,(r-s)^2\int_M\ell(x,F_1(x))^2\,d\mu_0(x)
            \\&\geq\frac{1}{8}\min\{K_Z,0\}\sup_{x,y\in Z}\ell(x,y)^2
            \\&>-\infty.
        \end{align*}
        This proves semiconvexity and upper boundedness of $e$ on $[0,1]$. Furthermore, it gives continuity of $e$ on $(0,1)$, since $e(s)$ is bounded below by \eqref{eq: displacement hessian 1}, and we have finiteness of $e(0)$ and $e(1)$ by assumption.

        We can use again Lemma \ref{lem: second finite difference representation} to obtain, by comparison with \eqref{eq: displacement hessian 2}, a distributional expression for $e''$, which is precisely \eqref{eq: e''}:
        \begin{align*}
            e''(s)&=\int_M \left( \operatorname{Tr}(B_s^2(x))+(\operatorname{Ric}+D^2V)_{F_s(x)}(F_s'(x),F_s'(x))\right)d\mu_0(x)
            \\ &\geq K_Z\int_M\ell(x,F_1(x))^2d\mu_0(x).
        \end{align*}
        The above lower bound on $e''(s)$ implies continuity of $e$ at the endpoints $[0,1]$, since otherwise $e''$ would contain a derivative of a Dirac delta measure.

        Using the previous expressions and Fubini's Theorem, we can integrate $e''$ to obtain the corresponding versions of $e'$ and $e$:
        \begin{align*}
            e'(s)&=c_1+\int_M\left(DV_{F_s(x)}F_s'(x)-\operatorname{Tr}B_s(x)\right)\,d\mu_0(x),\\
            e(s)&=c_0+c_1s+\int_M\left(V(F_s(x))-\log\vert JF_s(x)\vert\right)\,d\mu_0(x).
        \end{align*}
        Since for $s=0$ the map $F_0$ is constant, and using the expression of $e(s)$ derived in \eqref{eq: displacement hessian 1}, it is easily obtained that $c_0=E_0(\mu_0)$ and $c_1=0$, which establishes \eqref{eq: e'}.

        Finally, to prove the claimed lower semicontinuity of $e''$, notice that on a set of full measure $X_0$, $\phi_x''(s)$ depends smoothly on $s\in[0,1]$, as a consequence of Proposition \ref{prop: jacobian along geodesics} and \eqref{eq: bound on phi''}, and can be  bounded below by independently of $x\in Z$ by \eqref{eq: bound on phi''}. With this, lower semicontinuity of $e''(s)$ follows immediately from Fatou's Lemma. Indeed, take a sequence $f_n(x):=\phi_x''(s_n)$ for $s_n\to s$. Therefore, by the smoothness of $\phi_x''$, $(f_n)_{n\in\N}$ is a sequence of measurable functions, and it holds that $f_n\to\phi''_\cdot(s)$. A direct application of Fatou's Lemma shows that $\int_M\phi''_xd\mu_0(x)\leq\liminf_{n\to\infty}\int_Mf_n(x)d\mu_0(x)$, which is precisely the statement of $e''(s)$ being lower semicontinuous.

        To conclude that $e(s)$ is continuously differentiable, note that there exists a constant $k:=\vert K_Z\vert\,\ell(x,F_1(x))^2$ such that the sum $\phi_x'(s)+ks$ increases continuously with $s\in[0,1]$. Hence, continuity of $e'(s)$ on $(0,1)$ follows from the representation \eqref{eq: e'} by Lebesgue's dominated convergence theorem. Indeed, note that for any sequence $(s_n)_{n\in\N}$ converging to $s\in(0,1)$, without loss of generality we can assume the sequence to be bounded away from 0 and 1, and we can consider the associated $f_n(x):=\phi'_x(s_n)$. This is a sequence of integrable functions converging pointwise to $f(x):=\phi'_x(s)$, and it is dominated by $\phi'_x(\sup_{n\in\N}s_n)+k$, which is integrable. Then, the dominated convergence theorem gives $\int_{M} f_n\, d\mu_0\to \int_Mf\, d\mu_0$ as $s_n\to s$, as desired.
    \end{proof}

     \begin{Definition}[$(K,N,u)$-convexity]\label{def: (k,n,u)-convexity}
        Fix $K\in\R$ and $N \in (0,\infty]$. 
        \begin{enumerate}
            \item  A function $e:[0,1]\to[-\infty,\infty]$ is said to be \emph{$(K,N)$-convex} if $e$ is upper semicontinuous, $\operatorname{Dom}e:=\{s\in[0,1]:\ e(s)<\infty\}$ is connected, and either $e^{-1}(-\infty)$ contains the interior $I$ of $\operatorname{Dom}e$ or is empty, and in the latter case: $e$ is semiconvex throughout $I$ and satisfies
        \begin{equation*}
            e''(s)-\frac{1}{N}(e'(s))^2\geq K
        \end{equation*}
        in the distributional sense on $I$, where $(1/\infty) (e')^2:= 0$.
        
        \item Given a globally hyperbolic spacetime $(M^n,g)$ and an admissible function $u$, a functional $E:\mathcal{P}(M)\to\R\cup\{\pm\infty\}$ is said to be \emph{weakly $(K,N,u)$-convex for $Q\subset\mathcal{P}(M)^2$} if for each $(\mu_0,\mu_1)\in Q$ there is a $u$-geodesic in $\mathcal{P}(M)$ joining $\mu_0$ to $\mu_1$ (in particular, $\ell_u(\mu_0,\mu_1) \in (0,\infty)$) on which $E(\mu_s)$ is $(K\,\ell_u(\mu_0,\mu_1)^2,N)$-convex. $E$ is said to be \emph{$(K,N,u)$-convex for $Q$} if, in addition, $E(\mu_s)$ is $(K\,\ell_u(\mu_0,\mu_1)^2,N)$-convex for all $u$-geodesics $s\in[0,1]\mapsto\mu_s\in\mathcal{P}(M)$ with endpoints in $Q$.
        \end{enumerate}
    \end{Definition}

\begin{Lemma}\label{lem: simple lemma}
        For $a,b\in\R$ and $\varepsilon>0$, the following inequality holds: $(a-b)^2\leq(1+\varepsilon^{-1})\,a^2+(1+\varepsilon)\,b^2$.
    \end{Lemma}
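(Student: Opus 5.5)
The inequality is elementary, so I will prove it by a weighted Young inequality applied to the cross term. First expand the left-hand side as $(a-b)^2 = a^2 - 2ab + b^2$. Subtracting this from the right-hand side shows that the claim is equivalent to
\begin{equation*}
    0 \leq \varepsilon^{-1} a^2 + 2ab + \varepsilon\, b^2 .
\end{equation*}

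Next I will identify the right-hand side as a perfect square. Since $\varepsilon>0$, we may write $\varepsilon^{-1}a^2 = (\varepsilon^{-1/2}a)^2$ and $\varepsilon b^2 = (\varepsilon^{1/2}b)^2$. The mixed term is $2(\varepsilon^{-1/2}a)(\varepsilon^{1/2}b) = 2ab$, so
\begin{equation*}
    \varepsilon^{-1} a^2 + 2ab + \varepsilon\, b^2 = \bigl(\varepsilon^{-1/2}a + \varepsilon^{1/2} b\bigr)^2 \geq 0 .
\end{equation*}
This gives the claim.

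Equivalently, the argument is the bound $-2ab \leq \varepsilon^{-1}a^2 + \varepsilon b^2$, which is Young's inequality applied to $|2ab|$ with weight $\varepsilon$, added to $a^2+b^2$. I expect no real obstacle: the only care needed is the choice of the weights $\varepsilon^{\mp 1/2}$, and these are dictated by matching the coefficients $1+\varepsilon^{-1}$ and $1+\varepsilon$ in the statement.
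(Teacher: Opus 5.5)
Your proof is correct, and it takes a slightly different route from the paper after the common first step. Both reduce the claim to $\varepsilon^{-1}a^2+2ab+\varepsilon b^2\geq 0$. The paper then treats $\varepsilon$ as a variable: it disposes of the cases $a=0$ or $b=0$ separately and minimises $f(x)=x^{-1}a^2+xb^2+2ab$ over $x\in(0,\infty)$ by calculus. The minimum sits at $x=|a/b|$ with value $2|a||b|+2ab\geq 0$.

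You instead complete the square, $\varepsilon^{-1}a^2+2ab+\varepsilon b^2=(\varepsilon^{-1/2}a+\varepsilon^{1/2}b)^2$. This is shorter and needs neither a case distinction nor differentiation. It also gives the equality case $a=-\varepsilon b$, which is exactly the paper's minimiser $\varepsilon=|a/b|$ when $ab<0$, so nothing from the calculus argument is lost. For the lemma's later use (choosing $\varepsilon=(N-n)/n$ in the proof of entropic convexity), only the inequality itself matters, and either argument suffices.
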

    \begin{proof}
        The inequality claimed can be expanded to obtain that it is equivalent to $\varepsilon^{-1}\,a^2+\varepsilon\, b^2+2ab\geq0$. If $a$ or $b$ are equal to 0, then the inequality is trivial, so assume from now on that they are both not 0.
    
        Consider the function $f:(0,\infty)\to\R$ defined by $f(x)=x^{-1}a^2+x b^2+2ab$. The function $f$ is clearly smooth, hence we can write its derivative $f'(x)=b^2-a^2x^{-2}$. We obtain that $f'(x)=0$ if and only if $x=\vert\frac{a}{b}\vert$. Furthermore, $f''(\vert\frac{a}{b}\vert)>0$, which makes the point $\vert\frac{a}{b}\vert$ a minimum. Hence, since $f(\vert\frac{a}{b}\vert)=2\vert a\vert\vert b\vert+2ab\geq0$, the result follows.
    \end{proof}

    We are now in position to prove our main result of this subsection, namely the fact that timelike lower Ricci curvature bounds imply convexity of the relative entropy $E_V$ along $u$-geodesics between $u$-separated measures.

    \begin{Theorem}[Entropic convexity from timelike lower Ricci curvature bounds]\label{cor: entropic convexity from ricci bounds}
        Let $(M^n,g)$ be a globally hyperbolic spacetime. Fix $V\in C^2(M)$ and $N \in [n,\infty]$ (where $N = n$ by convention means $V = 0$ and $\Ric^{(n,0)}:=\Ric$). For any admissible function $u:(0,\infty)\to\R$, denote by $Q_u\subset\mathcal{P}_c^{ac}(M)^2$ the set of pairs of absolutely continuous probability measures which are $u$-separated. If $K \geq 0$ and $\operatorname{Ric}^{(N,V)}(v,v)\geq K|v|^2_g\geq0$ holds in every timelike direction $(x,v)\in TM$, then for any $(\mu_0,\mu_1)\in Q_u$, the relative entropy $s\mapsto E_V(\mu_s)$ along the unique $u$-geodesic from $\mu_0$ to $\mu_1$ is $(K,N,u)$-convex. For a general $K \in \R$, $e(s):=E_V(\mu_s)$ satisfies the distributional inequality
        \begin{equation*}
            e'' - \frac{1}{N} (e')^2 \geq K\,\|\ell\|_{L^2(\pi)}^2,
        \end{equation*}
        where $\pi$ is the unique optimal coupling of $(\mu_0,\mu_1)$.
    \end{Theorem}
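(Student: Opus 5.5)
The plan is to start from Theorem \ref{thm: displacement hessian of the entropy}, which already provides, along the unique $u$-geodesic $\mu_s=(F_s)_{\#}\mu_0$ of Corollary \ref{cor: lag charact og geodesics}, the distributional formulas \eqref{eq: e'} and \eqref{eq: e''} and the semiconvexity of $e$.

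Endpoint finiteness causes no trouble. By \eqref{eq:previous bound} we have $e(0),e(1)>-\infty$. If one of the endpoint values is $+\infty$, we either reduce to subsegments, using Proposition \ref{Prop: star shaped u separation} so that subsegments of $u$-geodesics remain $u$-separated, or note that $(K,N)$-convexity only concerns the interior of $\operatorname{Dom}e$. For $e(0),e(1)$ finite, Theorem \ref{thm: displacement hessian of the entropy} applies directly.

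We then bound the integrand of $e''$ pointwise for $\mu_0$-a.e.\ $x$. Write $a:=\operatorname{Tr}B_s(x)$ and $b:=DV_{F_s(x)}F_s'(x)$, so that $\phi_x'(s)=b-a$. From \eqref{eq: trace inequality} we get $\operatorname{Tr}B_s^2\geq a^2/n$. Apply Lemma \ref{lem: simple lemma} with $\varepsilon=(N-n)/n$; then $1+\varepsilon^{-1}=N/(N-n)$ and $1+\varepsilon=N/n$, which gives
\begin{equation*}
\frac{1}{N}(a-b)^2\leq \frac{a^2}{n}+\frac{b^2}{N-n}.
\end{equation*}
Hence
\begin{equation*}
\operatorname{Tr}B_s^2+(\operatorname{Ric}+D^2V)(F_s',F_s')\geq \frac1N(\phi_x')^2+\operatorname{Ric}^{(N,V)}(F_s',F_s')\geq \frac1N(\phi_x')^2+K\,\ell(x,F_1(x))^2,
\end{equation*}
using that $F_s'$ is timelike with $|F_s'|_g=\ell(x,F_1(x))$. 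The degenerate cases are handled separately: for $N=n$ we have $V=0$ and use \eqref{eq: trace inequality} directly, and for $N=\infty$ we simply drop the square term.

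Next we integrate against $\mu_0$. Jensen's (Cauchy--Schwarz) inequality gives $\int(\phi_x')^2\,d\mu_0\geq(\int\phi_x'\,d\mu_0)^2=(e')^2$ by \eqref{eq: e'}, and therefore
\begin{equation*}
e''-\frac1N(e')^2\geq K\int\ell(x,F_1(x))^2\,d\mu_0=K\|\ell\|_{L^2(\pi)}^2,
\end{equation*}
since $\pi=(\mathrm{id}\times F_1)_{\#}\mu_0$. The main technical obstacle is to make this rigorous in the distributional sense. Here $e''$ is a measure obtained through Lemma \ref{lem: second finite difference representation}, while $e'$ is continuous on $(0,1)$. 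I would test against nonnegative $\psi\in C_c^\infty(0,1)$, use Fubini together with the lower bound on the integrand (Theorem \ref{thm: displacement hessian of the entropy}(ii)), and apply the pointwise inequality for each $s$, so that the nonlinear term $(e')^2$ is compared pointwise rather than distributionally.

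Finally, for $K\geq0$ we compare $\|\ell\|_{L^2(\pi)}^2$ with $\ell_u(\mu_0,\mu_1)^2=\lambda^2$. Since $\int u(\ell/\lambda)\,d\pi=u(1)$ by Theorem \ref{Theorem: Duality by u-separation}(i), Jensen's inequality for the concave $u$ gives $u(\int\ell/\lambda\,d\pi)\geq u(1)$. As $u$ is increasing, $\|\ell\|_{L^1(\pi)}\geq\lambda$, and then $\|\ell\|_{L^2(\pi)}^2\geq\|\ell\|_{L^1(\pi)}^2\geq\lambda^2$. Multiplying by $K\geq0$ yields the $(K\,\ell_u(\mu_0,\mu_1)^2,N)$-convexity. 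Upper semicontinuity, connectedness of the domain and semiconvexity come from Theorem \ref{thm: displacement hessian of the entropy}(i), and uniqueness of the geodesic comes from Corollary \ref{cor: lag charact og geodesics}. Together these give $(K,N,u)$-convexity on $Q_u$.
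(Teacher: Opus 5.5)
Your proposal is correct and follows essentially the same route as the paper: Theorem \ref{thm: displacement hessian of the entropy}, then Jensen together with Lemma \ref{lem: simple lemma} (with $\varepsilon=(N-n)/n$) and the trace inequality to absorb $\frac1N(e')^2$, the curvature bound with $|F_s'|_g=\ell(x,F_1(x))$, Jensen for the concave $u$ to get $\|\ell\|_{L^2(\pi)}\geq\|\ell\|_{L^1(\pi)}\geq\ell_u(\mu_0,\mu_1)$ when $K\geq0$, and subintervals when an endpoint entropy is infinite. The differences are cosmetic: you apply the pointwise inequality before Jensen rather than after, and you treat $N=n$ and $N=\infty$ directly instead of by limits; also, with your labelling $a=\operatorname{Tr}B_s$, $b=DV\,F_s'$, the lemma has to be applied to $(b-a)^2$ so that the coefficient $N/(N-n)$ lands on $b^2$, which is harmless since $(a-b)^2=(b-a)^2$.
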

    \begin{proof}
       We assume $n < N < \infty$, the edge cases can be argued via limits. Fix an admissible function $u:(0,\infty)\to\R$, and let $(\mu_0,\mu_1)\in Q_u$. Corollary \ref{cor: lag charact og geodesics} gives the unique $u$-geodesic from $\mu_0$ to $\mu_1$, given by $(\mu_s:=(F_s)_{\#}\mu_0)_{s\in[0,1]}$, which satisfies that $\mu_s\in\mathcal{P}^{ac}_c(M)$ for all $s\in[0,1)$.

       Consider the entropy along the $u$-geodesic, and denote it by $e(s):=E_V(\mu_s)$. By \eqref{eq:previous bound}, $e(s)>-\infty$. 

       Assume first that $e(0)$ and $e(1)$ are both finite. Then, we can use Theorem \ref{thm: displacement hessian of the entropy} to establish bounds on $\frac{1}{N}e'(s)^2$:
        \begin{align*}
            \frac{1}{N}e'(s)^2&=\frac{1}{N}\left(\int_M[DV_{F_s(x)}F_s'(x)-\operatorname{Tr}B_s(x)]\, d\mu_0(x)\right)^2
            \\&\leq \frac{1}{N}\int_M [DV_{F_s(x)}F_s'(x)-\operatorname{Tr}B_s(x)]^2d\mu_0(x) \\&\leq\int_M\left(\frac{1+\varepsilon^{-1}}{N}\vert DV_{F_s(x)} F_s'(x)\vert^2+(1+\varepsilon)\frac{n}{N}\operatorname{Tr}(B_s^2(x))\right)d\mu_0(x)
            \\&=\int_M\left(\frac{1}{N-n}\vert DV_{F_s(x)} F_s'(x)\vert^2+\operatorname{Tr}(B_s^2(x)\right)d\mu_0(x),
        \end{align*}
         where in the first inequality we have used Jensen's inequality, for the second inequality we have used \eqref{eq: trace inequality} and Lemma \ref{lem: simple lemma}, and for the last equality we have chosen $\varepsilon=\frac{N-n}{n}>0$. Using this bound on $\frac{1}{N}e'(s)^2$ together with the expression of $e''(s)$ given by \eqref{eq: e''}, we obtain
        \begin{align*}
            e''(s)-\frac{1}{N}e'(s)^2&\geq\int_M\operatorname{Ric}^{(N,V)}(F_s'(x),F_s'(x))d\mu_0(x)
            \\ &\geq K\int_M\ell(x,F_1(x))^2d\mu_0(x) = K \|\ell\|_{L^2(\pi)}^2 \\
            (\text{only if } K \geq 0) \quad &\geq K\,\left(\int_M\ell(x,y)d\pi(x,y)\right)^2\\
             &\geq K\,u_{\lambda}^{-1}\left(\int_M u_{\lambda}(\ell(x,y))d\pi(x,y)\right)^2=K\,u_{\lambda}^{-1}(u(1))^2
            \\&= K\,\ell_u(\mu_0,\mu_1)^2,
        \end{align*}
        where in the inequality in the second line we have used the hypothesised bound on the Bakry-Émery Ricci tensor, for the inequality in the third line we have used Jensen's inequality and the form of the optimal coupling $\pi=(\operatorname{id}\times F_1)_{\#}\mu_0$ given by Theorem \ref{thm: characterising opt maps}, and for the inequality in the fourth line we have used again Jensen's inequality together with Proposition \ref{prop: existence of optimal coupling}.
        In the case that either $e(0)$ or $e(1)$ are not finite, we can apply the argument above on any subinterval of $[0,1]$ with finite entropy at its endpoints, to reach the conclusion.
    \end{proof}

    \subsection{Relaxation of $u$-separation}
    \label{Subsection: relaxation}

    In this subsection, we show that entropic convexity extends to $u$-geodesics whose endpoints are not necessarily $u$-separated. We follow McCann's approach \cite[Sec.\ 7]{McCann:2020}, but have to deal with the additional challenge that, a priori, restrictions of optimal couplings may no longer be optimal due to the nonlinearity of the optimal transport problem defining $\ell_u$.

    \begin{Theorem}[Maps characterising interpolants without duality]\label{thm: relaxing separation}
        Let $(M,g)$ be a globally hyperbolic spacetime. Fix $V\in C^2(M)$ and an admissible function $u:(0,\infty)\to\R$. Let $\mu\in\mathcal{P}^{ac}(M)$ and $\nu\in\mathcal{P}(M)$ be such that $\lambda:=\ell_u(\mu,\nu) \in (0,\infty)$, and suppose there are lower semicontinuous functions $a,b: M \to \R$ such that $a \in L^1(\mu), \, b \in L^1(\nu)$, and we have $u_{\lambda}(\ell(x,y)) \leq a\oplus b(x,y):=a(x) + b(y)$ on $\mathrm{supp}(\mu) \times \mathrm{supp}(\nu)$. Then the following hold: 
        \begin{enumerate}
            \item There is at most one $\ell_u$-optimal coupling $\pi\in\Pi_{\leq}^{u_{\lambda}}(\mu,\nu)$ such that $\ell>0$ holds $\pi$-a.e.\ and $\int u_\lambda \circ \ell \, d \pi = u(1)$.
            \item If such a coupling exists, then $\pi=(\operatorname{id}\times F)_{\#}\mu$ for some map $F:\operatorname{supp}\mu\to\operatorname{supp}\nu$, and $\pi(\operatorname{sing}(\ell))=0$.
            \item In the hypotheses of (ii), $\pi$ decomposes into countably many nonnegative, mutually singular measures, i.e.\ $\pi=\sum_{i\in\N}\pi^i$, such that $\cup_{i\in\N}\operatorname{supp}\pi^i$ is $u_{\ell_u(\mu,\nu)}\circ\ell$-cyclically monotone and the marginals $(\hat{\mu}^i,\hat{\nu}^i)$ of $\hat{\pi}^i:=\pi^i/\pi^i(M^2)$ satisfy that $\operatorname{supp}(\hat{\mu}^i\times\hat{\nu}^i)$ is compact and disjoint from $\{\ell\leq0\}$. For each $i\in\N$, the map $F$ agrees $\mu^i$-a.e. with the unique $\ell_u$-optimal map $F^i$ pushing $\hat{\mu}^i$ forward to $\hat{\nu}^i$ from Theorem \ref{thm: characterising opt maps}. Moreover, $\operatorname{graph}(F^i)\subset\operatorname{supp}\pi^i$.
            \item The $u$-geodesic $(\mu_s)_{s\in[0,1]}$ defined by $\mu_s:=(\bar z_s)_{\#}\pi$ satisfies $\mu_s\in\mathcal{P}^{ac}(M)$ for $s\in[0,1)$.
            \item The measures $\mu^i_s:=(z_s)_{\#}\pi^i$ decompose $\mu_s$ into mutually singular pieces for $s\in[0,1)$.
            \item The sum $\pi=\sum_i\pi^i$ is finite if and only if $\operatorname{supp}\pi$ is compact and disjoint from $\{\ell\leq0\}$.
        \end{enumerate}
    \end{Theorem}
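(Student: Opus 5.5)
The plan follows McCann's relaxation argument \cite[Sec.\ 7]{McCann:2020}, with one new point: a restriction of an optimal coupling is not automatically $\ell_u$-optimal. This is handled through cyclical monotonicity and the criterion of Proposition \ref{Prop: cyclmonotoneoptimal}, together with a renormalisation of the restricted pieces.

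\emph{Step 1: decomposition.} Let $\pi$ be an optimal coupling with $\ell>0$ $\pi$-a.e.\ and $\int u_\lambda\circ\ell\,d\pi=u(1)$. By Proposition \ref{Prop: cyclmonotoneoptimal}(i), which applies because of the bound by $a\oplus b$, $\pi$ is concentrated on a $u_\lambda\circ\ell$-cyclically monotone set $\Gamma\subseteq\{\ell>0\}$. Cover $\{\ell>0\}$ by countably many products $U_i\times V_i$ of open precompact sets with $\overline{U_i}\times\overline{V_i}\subseteq\{\ell>0\}$. Disjointify in the usual way: set $\pi^i:=\pi|_{\Gamma\cap (U_i\times V_i)\setminus\bigcup_{j<i}(U_j\times V_j)}$. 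Each normalised $\hat\pi^i$ has marginals $(\hat\mu^i,\hat\nu^i)$ with $\mathrm{supp}(\hat\mu^i\times\hat\nu^i)$ compact and contained in $\{\ell>0\}$. This gives the first part of (iii), and also (vi): if $\mathrm{supp}\,\pi$ is compact and disjoint from $\{\ell\le0\}$, finitely many products suffice; conversely, a finite sum has compact support away from $\{\ell\le 0\}$.

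\emph{Step 2: the pieces are $u$-separated and optimal.} This is the main obstacle. For $(\hat\mu^i,\hat\nu^i)$, Remark \ref{Remark: simple u separation} gives $u$-separation with some $\lambda_i=\ell_u(\hat\mu^i,\hat\nu^i)$, but $\lambda_i\ne\lambda$ in general. The approach is as follows.
\begin{itemize}
\item $\hat\pi^i$ is concentrated on $\Gamma$, so it is $u_\lambda\circ\ell$-cyclically monotone.
\item On compact subsets of $\{\ell>0\}$, Kantorovich duality for the continuous cost $u_\lambda\circ\ell$ shows that $\hat\pi^i$ maximises $\int u_\lambda\circ\ell$ over $\Pi_\le(\hat\mu^i,\hat\nu^i)$.
\item Let $c_i:=\int u_\lambda\circ\ell\,d\hat\pi^i$ and take a potential $\varphi_i$ for the cost $u_\lambda\circ\ell$ with $\int\varphi_i\,d\hat\mu^i+\int\varphi_i^{(u_\lambda\circ\ell)}d\hat\nu^i=c_i$.
\end{itemize}
The data $(\hat\pi^i,\varphi_i,\varphi_i^{(u_\lambda\circ\ell)},\lambda)$ do not directly meet condition (iii) of Definition \ref{Definition: u-separeted new}, because the normalisation is $c_i$ rather than $u(1)$. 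What the arguments of Theorem \ref{thm: characterising opt maps} actually use is only that $\hat\pi^i$ is concentrated on the contact set of a pair of semiconvex conjugate potentials for the cost $u_\lambda\circ\ell$. That part of the proof, via Lemma \ref{lem: maps and their jacobian der} and Theorem \ref{Theorem: Duality by u-separation}(v), never uses the value $u(1)$. I would therefore run the argument with cost $u_\lambda\circ\ell$ directly: $\hat\mu^i$ is absolutely continuous, so $\hat\pi^i=(\mathrm{id}\times F^i)_\#\hat\mu^i$ with $F^i(x)=\exp_x DH(D\varphi_i(x),x;u_\lambda^*)$ and $(x,F^i(x))\notin\mathrm{sing}(\ell)$ $\hat\mu^i$-a.e. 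Identifying $F^i$ with the map of Theorem \ref{thm: characterising opt maps} for $(\hat\mu^i,\hat\nu^i)$ also uses that optimality for the cost $u_\lambda\circ\ell$ is unique. If the statement genuinely needs the map associated with $\lambda_i$, I would check whether the contact sets coincide. This rests on the fact that both couplings are cyclically monotone for strictly concave increasing costs on the same supports, and I expect it to be the delicate point.

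\emph{Step 3: global map, uniqueness, interpolants.} The $\mu$-marginals $\mu^i$ of the $\pi^i$ may overlap. Still, on overlaps the maps $F^i$ and $F^j$ come from pieces of the same cyclically monotone $\Gamma$. Applying the argument of Step 2 to $\pi^i+\pi^j$ restricted to a common compact product shows they agree $\mu$-a.e. Hence $F$ is well defined, $\pi=(\mathrm{id}\times F)_\#\mu$ and $\pi(\mathrm{sing}(\ell))=0$, which is (ii). For uniqueness (i): if $\pi,\pi'$ are two such couplings, then $\tfrac12(\pi+\pi')$ is again optimal. Linearity gives $\int u_\lambda\circ\ell=u(1)$ and $\ell>0$ a.e., so by (ii) it is induced by a map, which forces $\pi=\pi'$. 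For (iv) and (v), set $\mu^i_s:=(z_s)_\#\pi^i$. By Corollary \ref{cor: lag charact og geodesics} each is absolutely continuous for $s<1$, hence so is $\mu_s=\sum_i\mu^i_s$. Mutual singularity follows from Proposition \ref{prop: lagrangian trajectories dont cross}: all pieces lie in the cyclically monotone $\Gamma$, so intermediate points of different pairs never coincide. This noncrossing transfers disjointness of the supports of the $\pi^i$ to the $\mu^i_s$ up to null sets, using injectivity of $z_s$ on $\Gamma$, as given by the Lipschitz inverse of Proposition \ref{Prop: Lipschitz continuous inverse map} applied piecewise.
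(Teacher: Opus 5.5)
\textbf{Comparison with the paper.} Your decomposition (Step 1), the uniqueness argument for (i), and (vi) match the paper. The real difference is Step 2. The paper declares $(\hat\mu^i,\hat\nu^i)$ $u$-separated via Remark \ref{Remark: simple u separation} and takes from Theorem \ref{thm: characterising opt maps} the map $F^i$ built with $\lambda_i=\ell_u(\hat\mu^i,\hat\nu^i)$. It then tries to prove $F=F^i$ $\mu^i$-a.e.\ by a volume argument plus Proposition \ref{prop: lagrangian trajectories dont cross}. You instead keep the fixed linear cost $u_\lambda\circ\ell$, for which $\hat\pi^i$ is optimal, and read the map off the semiconvex $u_\lambda\circ\ell$-conjugate potentials; nothing in that uses the normalisation $u(1)$. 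Building $F$ by gluing these local maps also avoids a circularity in the paper, which uses the map $F$ of (ii) to prove (iii) and then (iii) to prove (ii). Your route is the sound one.

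\textbf{The delicate point is false in general.} Cyclical monotonicity for $u_\lambda\circ\ell$ need not transfer to $u_{\lambda_i}\circ\ell$ unless the two costs are affinely related. They are for $u_p$ and $\bar u_0$, which is why McCann's version holds. Here is an explicit counterexample.
\begin{itemize}
\item Take $u(x)=\log x+\sqrt{x}$, which is admissible with $u(1)=1$.
\item In $1+1$ Minkowski space put $x_2=(0,0)$, $y_1=(\tfrac72,0)$, $x_1=(\tfrac{13}{8},\sigma)$, $y_2=(\tfrac{15}{8},\sigma)$ with $\sigma=\sqrt{(15/8)^2-1}$. Then $\ell(x_1,y_1)=\ell(x_2,y_2)=1$, $\ell(x_1,y_2)=\tfrac14$ and $\ell(x_2,y_1)=\tfrac72$.
\item For the cost $u_\eta\circ\ell$, the pairing $x_k\mapsto y_k$ beats the swap iff $\log\tfrac87>\eta^{-1/2}(\sqrt{7/2}-\tfrac32)$, i.e.\ iff $\eta>\eta^*\approx 7.7$.
\item For $\hat\mu_A=\tfrac12(\delta_{x_1}+\delta_{x_2})$ and $\hat\nu_A=\tfrac12(\delta_{y_1}+\delta_{y_2})$ one finds $\ell_u(\hat\mu_A,\hat\nu_A)\in(1,\tfrac{11}{10})$, so the swap is $\ell_u$-optimal.
\item Set $\mu=\tfrac12\hat\mu_A+\tfrac12\delta_z$ and $\nu=\tfrac12\hat\nu_A+\tfrac12\delta_w$, where $z\ll w$, $\ell(z,w)=100$, and $z,w$ are spacelike to the four points. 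Then $\lambda=\ell_u(\mu,\nu)\approx 15>\eta^*$, so the global optimal plan pairs $x_k\mapsto y_k$.
\item Smearing the six points into small balls keeps these strict preferences, makes $\mu$ absolutely continuous, and preserves every hypothesis of the theorem.
\end{itemize}
If the first rectangle of your cover contains the four $A$-balls, the first piece is the whole $A$-part. On it, $F$ and the $\lambda_1$-map of Theorem \ref{thm: characterising opt maps} differ everywhere. This also refutes the paper's remark that restrictions of $\ell_u$-optimal couplings stay $\ell_u$-optimal for their normalised marginals.

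The paper's argument for this step does not close it either. It ``extends'' $F^i$ so that $\operatorname{graph}(F^i)\subset\operatorname{supp}\pi^i$, then applies noncrossing inside the $u_\lambda\circ\ell$-cyclically monotone set $\operatorname{supp}\pi$. But $F^i$ is only determined $\mu^i$-a.e., and its graph lies in the support of the $u_{\lambda_i}\circ\ell$-optimal plan, so that inclusion is exactly what is at stake. The correct content of (iii) is your version, with $\lambda$ in place of $\lambda_i$.

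\textbf{Two smaller repairs.}

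For (iv), Corollary \ref{cor: lag charact og geodesics} applied to $(\hat\mu^i,\hat\nu^i)$ describes the $\lambda_i$-geodesic, not $(\bar z_s)_{\#}\hat\pi^i$. To cite it, and Theorem \ref{thm: characterising opt maps}, verbatim for your $F^i$, pick $\kappa_i>0$ with $u(\kappa_i)=c_i$. This is possible because $c_i$ is an average of values of $u$ over a compact subset of $(0,\infty)$. Then $\tilde u_i(x):=u(\kappa_i x)$ is admissible, with $(\tilde u_i)_{\kappa_i\lambda}=u_\lambda$ and $\tilde u_i(1)=c_i$. Hence $(\hat\pi^i,\varphi_i,\varphi_i^{(u_\lambda\circ\ell)},\kappa_i\lambda)$ verify $\tilde u_i$-separation. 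Those results then produce exactly your map and the curve $(\bar z_s)_{\#}\hat\pi^i$, which is absolutely continuous for $s<1$.

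In Step 3, the marginals of $\pi^i+\pi^j$ need not have product support inside $\{\ell>0\}$, so you must localise instead. Suppose $\{F^i\neq F^j\}$ has positive measure, and pick a density point $x_0$ of it where both maps are approximately continuous. Restrict $\pi$ to $\bar B_r(x_0)\times(\bar B_r(F^i(x_0))\cup\bar B_r(F^j(x_0)))\subseteq\{\ell>0\}$. Step 2 applies to this piece, so it is induced by a single map. Yet its disintegration has two atoms over a set of positive measure, which is a contradiction.
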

    \begin{proof}
       The idea of the proof is to find a suitable covering of the support of $\mu\times\nu$. In particular, we will need that the closure of each element of the covering is compact. By restricting the product measure to each of these suitable elements of the covering, we will be able to construct a decomposition of the optimal coupling $\pi$, provided it exists, and apply the results developed in Section \ref{section: OT} regarding the existence and uniqueness of optimal couplings for compactly supported measures. These will be glued together to obtain the final desired decomposition for $\pi$ and the corresponding $u$-geodesic.

       Assume first that (i) and (ii) hold, and suppose that there exists a unique $\ell_u$-optimal coupling $\pi\in\Pi_{\leq}^{u_{\lambda}}(\mu,\nu)$ which satisfies $\int u_\lambda \circ \ell \, d \pi = u(1)$ and for which $\ell>0$ holds $\pi$-a.e.\ By Theorem \ref{thm: smoothness of l}, $\{\ell>0\}$ is open, and since $M$ is a manifold, in particular it is locally compact and Hausdorff, therefore $\{\ell>0\}\cap\operatorname{supp}\pi$ can be covered by open rectangles $U\times W$ whose compact closures are contained in $\{\ell>0\}$. Furthermore, since $M$ is second countable, only countably many of these rectangles are necessary. Write the covering collection as $\{U_i\times W_i\}_{i\in\N}$.
       With this, the decomposition of $\pi$ is built inductively as follows: for $i=0$, fix $\pi^0=0$, and for $i\in\N$, $\pi^i$ is defined as the restriction of $\pi - \sum_{j < i}\pi^j$ to $U_i\times W_i$. Therefore, $\pi$ can be written as $\pi=\sum_{i\in\N}\pi^i$, and it is clear by construction that the summands are mutually singular, and each $\pi^i$ is nonnegative and vanishes outside the $i$-th rectangle, which makes them compactly supported. Moreover, by construction, $\pi$ is supported on $\cup_{i\in\N}\operatorname{supp}\pi^i$, and since $\pi$ is assumed to be $\ell_u$-optimal and to satisfy $\int u_\lambda \circ \ell \, d\pi = u(1)$, this set is $u_{\lambda}\circ\ell$-cyclically monotone, with $\lambda:=\ell_u(\mu,\nu)$, by Proposition \ref{Prop: cyclmonotoneoptimal}.

       It is immediate from the construction that the case in which $\operatorname{supp}\pi$ is compact and contained in $\{\ell>0\}$ gives a finite decomposition, proving the ``if" implication of (vi). Now, consider the finite sum $\sum_{k=1}^i\pi^k$. The support of this measure is compact, as it is the finite union of compact sets, and since every one of these compact sets is disjoint from $\{\ell\leq0\}$, a closed set, then so is the union. This establishes the ``only if" implication of (vi).

       Normalise $\hat{\pi}^i=\frac{1}{\pi^i(M^2)}\pi^i$ whenever $\pi^i$ is non-vanishing.
       Consider now, for each $i\in\N$, the marginals of $\pi^i$ and $\hat{\pi}^i$, denoted by $(\mu^i,\nu^i)$ and $(\hat{\mu}^i,\hat{\nu}^i)$, respectively. Note that, since $\pi^i$ is by construction compactly supported, so are the two pairs of marginals, and the supports $\operatorname{supp}(\hat{\mu}^i\times\hat{\nu}^i)$ are by construction disjoint from $\{\ell\leq0\}$. Note that $\hat{\mu}^i$ is the normalisation of $\mu^i$, and same with $\hat{\nu}^i$ and $\nu^i$. This establishes the first claim in (iii).
       
       To prove the second claim in (iii), notice that since $\hat{\pi}^i$ is a probability measure, so are its compactly supported marginals $(\hat{\mu}^i,\hat{\nu}^i)$. By the disjointness of $\operatorname{supp}(\hat{\mu}^i\times\hat{\nu}^i)$ from $\{\ell\leq0\}$ obtained in the previous paragraph, this pair is $u$-separated (cf.\ Remark \ref{Remark: simple u separation}), and we can apply Theorem \ref{thm: characterising opt maps}. This result gives the existence of a unique $u$-optimal coupling $\tilde{\pi}^i$ (which does not necessarily agree \textit{a priori} with $\hat{\pi}^i$ because of the non-linearity in the definition of the Lorentz-Orlicz-Wasserstein time separation on the couplings) of the form $(\operatorname{id}\times \tilde{F}^i_1)_{\#}\hat{\mu}^i$, where $\lambda_i:=\ell_u(\hat{\mu}^i,\hat{\nu}^i)$, $\tilde{F}^i_s(x):=\exp_xs\,DH(D\tilde{\varphi}^i,x;u_{\lambda_i}^*)$ and $\tilde{\varphi}^i$ is the Kantorovich potential between $(\hat{\mu}^i,\hat{\nu}^i)$ given by $u$-separation. From this, one obtains a coupling $\bar{\pi}^i\in\Pi_{\leq}(\mu^i,\nu^i)$ given by $\bar{\pi}^i=\pi^i(M^2)\,\tilde{\pi}^i=(\operatorname{id}\times\bar{F}^i_1)_{\#}\mu^i$, where $\mu^i$ has absorbed the normalisation constant, and $\bar{F}^i_s:=\tilde{F}^i_s$. Another consequence of Theorem \ref{thm: characterising opt maps} is that $\bar{\pi}^i(\operatorname{sing}(\ell))=0$. 
       
       Note that the functions $\bar{F}^i_1$, $i\in\N$, satisfy that $\operatorname{Dom}\bar{F}^i_1\subset\operatorname{supp}\mu^i$, which is compact, since $\operatorname{supp}\pi^i$ is too. Therefore, each $\bar{F}_1^i$ can be extended to $\operatorname{supp}\mu^i$ in such a way that $\operatorname{graph}(\bar{F}^i_1)\subset\operatorname{supp}\pi^i$. This gives the last claim in (iii).
       
       To conclude the proof of (iii), we need to show that the map $F$ given by (ii) agrees $\mu^i$-a.e. with the map $\bar{F}_1^i\equiv F^i$, $i\in\N$, which we have just obtained above. We will do it in the following way: we define $F_s(x):=\bar z_s(x,F(x))$, and show that this must agree with $F^i_s$ $\mu^i$-a.e., for all $s\in[0,1]$. Note that $F_s$ is well defined and depends smoothly on $s$, by Lemma \ref{lemma: midpoint continuity away from cut locus}, and the assumption that $\pi(\operatorname{sing}(\ell))=0$.

       Assume for contradiction that for some $i\in\N$ there exists a set $S\subset M$ of positive $\mu^i$-measure on which $F^i\neq F$. Furthermore, since $\mu^i$ is absolutely continuous with respect to $\operatorname{vol}_g$, it is given by a density with respect to the volume measure. It is possible to make $S$ smaller so that it still has positive $\mu^i$-measure but it is compact, hence this density would be bounded in $S$. By Proposition \ref{prop: jacobian along geodesics}, the map $F_s^i$ has approximate derivative $\tilde{D}F^i_s(x)$ for $\mu^i$-a.e.\ $x$, which depends smoothly on $s\in[0,1]$. Furthermore, by compactness of $S$ and smoothness on the parameter $s$, this approximate derivative (when $x$ is fixed) is bounded above and below throughout $S$.
       
       Since $S$ is compact, we can choose $r>0$ such that if we consider the $r$-neighbourhood of $S$, denoted by $S_r$, the volume does not grow much, i.e.\ $\operatorname{vol}_g(S_r)<\frac{3}{2}\operatorname{vol}_g(S)$. On the other hand, the properties of $F^i_s$ and $\tilde{D}F^i_s$ give that the map $F_s^i$ admits a bi-Lipschitz restriction to $S$ for $s\in[0,\frac{1}{2}]$. On the other hand, recall that the family $F_s$ depends smoothly on $s\in[0,\frac{1}{2}]$. Furthermore, both $F_s^i$ and $F_i$ stay away from the cut locus, and coincide with the identity when $s=0$. Therefore, there exists $s>0$ sufficiently small such that $F^i_s(S)$ and $F_s(S)$ are both contained in $S_r$ and both have a sufficiently large volume, i.e.\ $\operatorname{vol}_g(F^i_s(S)),\operatorname{vol}_g(F_s(S))>\frac{3}{4}\operatorname{vol}_g(S)$. This forces the intersection of $F^i_s(S)$ and $F_s(S)$ to have positive volume, so, in particular, there exist $x,y\in S\subset M$ for which $F^i_s(x)=F_s(y)$. In particular, the Lagrangian trajectories starting from $x$ and $y$ would cross. Now, since the graphs of $F^i$ and $F$ lie on a $u_{\lambda}$-cyclically monotone set, namely $\operatorname{supp}(\pi^i+\pi^j)\subset\operatorname{supp\pi}$,\ cf. Proposition \ref{Prop: cyclmonotoneoptimal}, we are in position to apply Proposition \ref{prop: lagrangian trajectories dont cross}, which implies that $x=y$. In particular, by the same result, $F^i_1(x)=F_1(x)$. This argument can be made for almost every point in a positive measure subset of $S$, namely the preimage of $F^i_s(S)\cap F_s(S)$, therefore it concludes the argument, since it raises a contradiction with the definition of $S$, which concludes the proof of (iii).

       Note that the conclusion of (iii) has one particular consequence, that does not hold \textit{a priori} for the (Lorentz-)Orlicz-Wasserstein distance, namely that the restriction of an optimal coupling given by transport rays that do not cross is still an optimal coupling between its normalised marginals. Uniqueness will give that $\tilde{\pi}^i$ agrees with $\hat{\pi}^i$, and $\bar{\pi}^i$ agrees with $\pi^i$. 
       
       Next, we prove (i) and (ii). To prove (ii), assume that there is an $\ell_u$-optimal coupling $\pi\in\Pi_{\leq}^{u_{\lambda}}(\mu,\nu)$ such that $\ell>0$ holds $\pi$-a.e.\ and $\int u_\lambda \circ \ell \, d\pi' = u(1)$. Applying the decomposition just proved in (iii), the map given by $F(x):=F^i(x)$ if $x\in\operatorname{supp}\mu^i$ is well defined. We will see that this is precisely the map given in the statement of (ii). Indeed, since we have said in (iii) that for each $i\in\N$ $\pi^i$ is concentrated on the graph of $F^i$ (and therefore of $F$) and far from the singular points, by {\cite[Lem.\ 3.1]{ahmad2011optimal}} $\pi^i=(\operatorname{id}\times F^i)_{\#}\mu^i$ and $\pi=(\operatorname{id}\times F)_{\#}\mu$ by the decomposition. This gives (ii).
       
       To prove (i), we need to show uniqueness under the assumption that such a coupling exists. If there were another $\ell_u$-optimal coupling $\pi'\in\Pi_{\leq}^{u_{\lambda}}(\mu,\nu)$ with $\ell>0$ holding $\pi'$-a.e., we could apply the argument in the paragraph above to $\pi''=\frac{1}{2}(\pi+\pi')$ to deduce the existence of a map $F''$ such that $\pi''=(\operatorname{id}\times F'')_{\#}\mu$. Note that, indeed, $\pi''$ would also be an optimal coupling, by the observation made in the proof of Proposition \ref{Prop: cyclmonotoneoptimal}, and the fact that the standard optimal transport problem with cost $u_{\lambda}\circ\ell$ is linear on the couplings. But then again, by construction, $\pi$ and $\pi'$ vanish outside the graph of $F''$, so by the same result, $\pi=(\operatorname{id}\times F'')_{\#}\mu=\pi'$, which concludes the proof of (i).

       At this point, Corollary \ref{cor: lag charact og geodesics} asserts that the unique $u$-geodesic from $\hat{\mu}^i$ to $\hat{\nu}^i$ is given by $\hat{\mu}_s^i:=(\bar z_s)_{\#}\hat{\pi}^i$, and for every $s\in[0,1)$, $\hat{\mu}_s^i\in\mathcal{P}^{ac}_c(M)$. Hence, if we consider the curves given by $\mu^i_s:=(\bar z_s)_{\#}\pi^i$, these would also satisfy that for every $s\in[0,1)$ $\mu_s^i$ is an absolute continuous measure (but no longer a probability measure). On the other hand, by Theorem \ref{thm: existence of u geod}, the curve of measures defined by $\mu_s:=(\bar z_s)_{\#}\pi$ is a $u$-geodesic joining $\mu$ to $\nu$.
       To bring these two observations together, consider the previous decomposition of $\pi$, and write
       \begin{equation*}
          \mu_s:=(\bar z_s)_{\#}\pi=\sum_{i\in\N}(\bar z_s)_{\#}\pi^i=\sum_{i\in\N}\mu_s^i.
       \end{equation*}
       This is a decomposition into countably many nonnegative (not necessarily mutually singular yet) curves of absolutely continuous measures. This establishes (iv).

        Finally, we need to show (v). We have already written a decomposition of $\mu_s$, $s\in[0,1]$, however, we had not obtained yet the mutual singularity, which is our goal now. Fix $i,j\in\N$ such that $i\neq j$. It follows that $\mu^i$ and $\mu^j$ inherit mutual singularity from $\pi^i$ and $\pi^j$, because $(\operatorname{id}\times F)_{\#}(\mu^i\wedge\mu^j)$ must vanish, since it is common to $\pi^i$ and $\pi^j$. 
        
        Because $\mu$ is inner regular (it is absolutely continuous with respect to the volume measure), we obtain a collection of disjoint $\sigma$-compact sets $U^i\subset\operatorname{supp}\mu^i$ such that
        \begin{equation*}
        \mu^i(U^j):= \left\{ \begin{array}{lcc} \mu^i(M), & \text{if $i=j$}, \\ \\ 0, & \text{otherwise.} \end{array} \right.
    \end{equation*}
    We claim that $\{\mu^i_s\}_{i\in\N}$ are mutually singular for each $s\in(0,1)$. Indeed, by construction, $\mu^i_s$ vanishes outside the $\sigma$-compact set $F^i_s(U^i)$, which is disjoint from $F^j_s(U^j)$ unless $i=j$. This last claim is true because if $z\in F^i_s(U^i)\cap F^j_s(U^j)$, then $U^i\cap U^j\neq\emptyset$ by Proposition \ref{prop: lagrangian trajectories dont cross} and the $u_{\lambda}\circ\ell$-cyclical monotonicity of $\operatorname{supp}\pi$. But this forces $i=j$, which concludes the proof.
    \end{proof}

\begin{Lemma}[{\cite[Lem.\ 7.2]{McCann:2020}}]\label{lem: 7.2}
    Let $\mathfrak{m}$ and $\mu$ be Borel measures on a metric space $(X,d)$, with $\mu\ll\mathfrak{m}$, and $\mu(X)<\infty$. Set
    \begin{equation*}
        E_{\pm}(\mu\vert\mathfrak{m}):=\int_X\left(\frac{d\mu}{d\mathfrak{m}}\,\log\frac{d\mu}{d\mathfrak{m}}\right)_{\pm}\,d\mathfrak{m},
    \end{equation*}
    where $(a)_{\pm}:=\max\{\pm a,0\}$. The following hold.
    \begin{enumerate}
        \item If $0\leq\nu\leq\mu$, and $E_{+}(\mu\vert\mathfrak{m})$ (resp. $E_{-}(\mu\vert\mathfrak{m})$) is finite, then $E_+(\nu\vert\mathfrak{m})$ (resp. $E_-(\nu\vert\mathfrak{m})$) is finite. If neither is finite, then set $E(\nu\vert\mathfrak{m}):=-\infty$, otherwise, $E(\nu\vert\mathfrak{m}):=E_+(\nu\vert\mathfrak{m})-E_-(\nu\vert\mathfrak{m})$ satisfies
        \begin{equation*}
            -\mu(X)-E_-(\mu\vert\mathfrak{m})\leq E(\nu\vert\mathfrak{m})\leq E_+(\mu\vert\mathfrak{m}).
        \end{equation*}
        \item If $\mu=\sum_{i=1}^{\infty}\mu^i$, and $\mu^i$ and $\mu^j$ are mutually singular for all $i,j\in\N$, then either $E(\mu\vert\mathfrak{m})=-\infty$ or $E(\mu\vert\mathfrak{m})=\lim_{k\to\infty}E(\sum_{i=1}^k \mu^i\vert\mathfrak{m})$.
    \end{enumerate}
\end{Lemma}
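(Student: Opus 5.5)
The statement is Lemma [McCann Lem.\ 7.2], a purely measure-theoretic fact about the positive and negative parts of the entropy. The plan is to work throughout with densities. Write $f:=d\mu/d\mathfrak m$ and, for $0\le\nu\le\mu$, $h:=d\nu/d\mathfrak m$. Then $0\le h\le f$ holds $\mathfrak m$-a.e. The only analytic input is the shape of $\Phi(t):=t\log t$ on $[0,\infty)$. It is negative exactly on $(0,1)$, with minimum $-e^{-1}$, and nondecreasing on $[1,\infty)$.

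For (i), the positive part is handled by pointwise monotonicity. On $\{h\ge1\}$ we have $1\le h\le f$, so $\Phi(h)_+\le\Phi(f)_+$, and on $\{h<1\}$ the integrand $\Phi(h)_+$ vanishes. Hence $E_+(\nu|\mathfrak m)\le E_+(\mu|\mathfrak m)$, which gives both the finiteness transfer and the upper bound. Note that $E(\nu)\le E_+(\nu)$ in every case; if $E_-(\nu)=\infty$ the convention gives $-\infty$ anyway.

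For the negative part, $\Phi(h)_-=h\log(1/h)$ on $\{h<1\}$. Split this set into $\{f<1\}$ and $\{f\ge1\}$.
\begin{itemize}
\item On $\{h<1,\ f\ge1\}$, use the elementary bound $t\log(1/t)\le e^{-1}$ together with $h\log(1/h)\le h$ when $h\ge e^{-1}$. This is not immediately integrable against $\mathfrak m$, so the cleaner route is the following: for $0\le h\le f$ we have $h\log(1/h)\le f\log(1/f)+(f-h)$ on $\{f<1\}$. This follows from the convexity estimate $\Phi(f)\ge\Phi(h)+\Phi'(h)(f-h)$ with $\Phi'(h)=1+\log h$. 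Rearranging gives $-\Phi(h)\le-\Phi(f)+(f-h)(1+\log h)$, and since $\log h<0$ on $\{h<1\}$, this is $\le-\Phi(f)+(f-h)$.
\item On $\{f\ge1>h\}$, the same inequality gives $-\Phi(h)\le-\Phi(f)+(f-h)\le f-h$ (because $\Phi(f)\ge0$).
\end{itemize}
Combining the two cases, $\Phi(h)_-\le\Phi(f)_-+f$ pointwise. Integrating yields $E_-(\nu)\le E_-(\mu)+\mu(X)$, which is finite whenever $E_-(\mu)$ is finite. This delivers the lower bound $E(\nu)\ge-E_-(\nu)\ge-\mu(X)-E_-(\mu)$.

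For (ii), mutual singularity lets us choose disjoint Borel sets $A_i$ with $\mu^i$ concentrated on $A_i$. Then $d\mu/d\mathfrak m=\sum_i f_i\mathbf 1_{A_i}$ and $d(\sum_{i\le k}\mu^i)/d\mathfrak m=f\mathbf 1_{B_k}$, where $B_k:=\cup_{i\le k}A_i$. Consequently $E_\pm(\sum_{i\le k}\mu^i)=\int_{B_k}\Phi(f)_\pm\,d\mathfrak m$, which increases to $E_\pm(\mu)$ by monotone convergence. If at least one of $E_\pm(\mu)$ is finite, then both limits exist and their difference converges to $E(\mu)$. Here part (i) guarantees that each partial entropy is well defined, and when say $E_+(\mu)=\infty$, the limit is $+\infty$ because $E_-$ stays bounded. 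If both are infinite, $E(\mu)=-\infty$ by convention.

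The only step with any subtlety is the pointwise bound for the negative part in (i). The convexity (tangent-line) inequality for $t\log t$ is what I expect to carry it; everything else is monotone convergence.
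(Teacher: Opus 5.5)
There is a genuine error at exactly the step you single out as the crux. The supporting-line inequality for the convex function $\Phi$ at the point $h$, namely $\Phi(f)\ge\Phi(h)+(1+\log h)(f-h)$, rearranges to
\[
-\Phi(h)\ \ge\ -\Phi(f)+(1+\log h)(f-h),
\]
which bounds $-\Phi(h)$ from \emph{below}. Your ``rearranging'' flips the inequality. In fact the upper bound you display, $-\Phi(h)\le-\Phi(f)+(f-h)(1+\log h)$, fails whenever $0<h<f$, by strict convexity of $\Phi$.

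The error propagates. The inequality you then invoke on $\{f\ge1>h\}$, namely $-\Phi(h)\le-\Phi(f)+(f-h)$, is false there. For $f=e^2$ and $h=\tfrac12$, the left side is $\tfrac12\log 2>0$, while the right side equals $-e^2-\tfrac12$. On $\{f<1\}$ the bound $h\log(1/h)\le f\log(1/f)+(f-h)$ happens to be true, but your argument does not establish it. So, as written, the pointwise estimate $\Phi(h)_-\le\Phi(f)_-+f$ is unproved. Both the finiteness transfer for $E_-$ and the lower bound $-\mu(X)-E_-(\mu)\le E(\nu)$ rest on it.

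The repair is to take supporting lines at $f$ and at $1$ instead of at $h$.
\begin{itemize}
\item On $\{f\le1\}$: from $\Phi(h)\ge\Phi(f)+(1+\log f)(h-f)$ you get $-\Phi(h)\le-\Phi(f)+(1+\log f)(f-h)\le-\Phi(f)+(f-h)$, because $1+\log f\le1$ and $f-h\ge0$.
\item On $\{h<1\le f\}$: the supporting line at $1$ gives $\Phi(h)\ge h-1$, hence $\Phi(h)_-\le 1-h\le f-h$.
\end{itemize}
Alternatively, in one stroke: $h\log h=h\log(h/f)+h\log f\ge(h-f)-\Phi(f)_-$. The first term is handled by $\log x\ge1-1/x$, and the second by $h\log f\ge\min\{f\log f,0\}$, which holds because $0\le h\le f$.

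Either way, $\Phi(h)_-\le\Phi(f)_-+(f-h)\le\Phi(f)_-+f$ holds $\mathfrak m$-a.e. With this fix, the rest of your proposal goes through as you wrote it. That includes the monotonicity argument for $E_+$ and the two-sided bound in (i). It also includes part (ii), via disjoint carriers $A_i$, the identity $d(\sum_{i\le k}\mu^i)/d\mathfrak m=f\mathbf 1_{B_k}$, and monotone convergence of $\int_{B_k}\Phi(f)_\pm\,d\mathfrak m$.
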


 \begin{Lemma}[{\cite[Lem.\ 7.3]{McCann:2020}}]\label{lem: 7.3}
        Let $c\in\R$ and $f_k:[0,1]\to[-\infty,c]$, $k\in\N$, be a sequence of convex functions such that a subsequence $(f_{k(i)})_{i\in\N}$ converges pointwise $\mathcal{L}^1$-a.e. to a convex limit $f:[0,1]\to[-\infty,c]$ satisfying either
        \begin{equation*}
            \inf_{s\in[0,1]}f(s)>-\infty\quad\text{(proper),}\quad\text{ or }\sup_{s\in(0,1)}f(s)=-\infty\quad\text{(improper).}
        \end{equation*}
        In the proper case, the derivatives $f'=\lim_{i\to\infty}f'_{k(i)}$ converge pointwise $\mathcal{L}^1$-a.e., and the second derivatives $f''=\lim_{i\to\infty}f''_{k(i)}$ converge distributionally on $(0,1)$.
    \end{Lemma}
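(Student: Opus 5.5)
The plan is to first upgrade the almost-everywhere convergence $f_{k(i)}\to f$ to locally uniform convergence on $(0,1)$. Both claimed convergences then follow from standard convexity arguments: a monotone sandwich of difference quotients for $f'$, and integration by parts against test functions for $f''$. Throughout I work with the subsequence $(f_{k(i)})_{i\in\N}$ and assume we are in the proper case, so $f$ is finite and convex on $[0,1]$, in particular continuous on $(0,1)$. Let $D\subseteq(0,1)$ denote the full-measure (hence dense) set where $f_{k(i)}(s)\to f(s)$.

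\textbf{Step 1: uniform bounds on compact subintervals.} Fix $[a,b]\subset(0,1)$ and pick points $t_1<t_2<a$ and $b<t_3<t_4$ in $D$. The upper bound is given, since $f_{k(i)}\leq c$. For a lower bound at $s\in[a,b]$, write $t_2$ as a convex combination $t_2=\theta t_1+(1-\theta)s$. Convexity gives
\begin{equation*}
f_{k(i)}(s)\geq\frac{f_{k(i)}(t_2)-\theta c}{1-\theta}.
\end{equation*}
Here $\theta$ stays bounded away from $1$ uniformly in $s\in[a,b]$, and $f_{k(i)}(t_2)\to f(t_2)>-\infty$, so the right-hand side is bounded below uniformly in $i$ and $s$. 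The same bound shows that the one-sided difference quotients of $f_{k(i)}$ on $[a,b]$ are uniformly bounded: by the slopes through $(t_1,t_2)$ from below and through $(t_3,t_4)$ from above. Hence the family is equi-Lipschitz on $[a,b]$.

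\textbf{Step 2: locally uniform convergence.} The family is equi-Lipschitz on $[a,b]$ and converges pointwise on the dense set $D\cap[a,b]$. An Arzel\`a--Ascoli / $\varepsilon/3$ argument then gives uniform convergence on $[a,b]$ to the continuous limit, which coincides with $f$ on $D$, hence everywhere in $(0,1)$ by continuity of $f$. Consequently $f_{k(i)}\to f$ locally uniformly on $(0,1)$.

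\textbf{Step 3: derivatives.} Let $s\in(0,1)$ be a point where $f$ and all $f_{k(i)}$ are differentiable; this excludes only countably many null sets, so it holds for a.e.\ $s$. For small $h>0$, monotonicity of difference quotients of convex functions gives
\begin{equation*}
\frac{f_{k(i)}(s)-f_{k(i)}(s-h)}{h}\leq f_{k(i)}'(s)\leq\frac{f_{k(i)}(s+h)-f_{k(i)}(s)}{h}.
\end{equation*}
Letting $i\to\infty$ (Step 2), then $h\downarrow0$, squeezes $\liminf_i f'_{k(i)}(s)$ and $\limsup_i f'_{k(i)}(s)$ between the left and right derivatives of $f$ at $s$, which agree. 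So $f'_{k(i)}\to f'$ pointwise a.e.

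\textbf{Step 4: second derivatives.} For $\psi\in C^\infty_c((0,1))$ we have $\langle f''_{k(i)},\psi\rangle=\int f_{k(i)}\psi''\,ds$. On $\mathrm{supp}\,\psi$ the functions $f_{k(i)}$ converge uniformly and are uniformly bounded (Step 1), so dominated convergence gives $\int f_{k(i)}\psi''\,ds\to\int f\psi''\,ds=\langle f'',\psi\rangle$. This is the claimed distributional convergence.

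\textbf{Main obstacle.} The only delicate point is Step 1: the uniform lower bound, which uses properness of $f$ at a single point of $D$ together with the common upper bound $c$. Once that bound is in hand, the remaining steps are routine. In the improper case no claim about derivatives is made, so nothing further is needed there.
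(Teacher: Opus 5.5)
Your proof is correct. The paper gives no argument of its own and simply imports McCann's Lemma~7.3; your route is the standard convex-analysis proof of that statement: uniform local bounds from the cap $c$ and finiteness of $f$ at a point of $D$, then equi-Lipschitz bounds plus density giving locally uniform convergence, the difference-quotient sandwich for $f'$, and integration by parts against $\psi''$ for $f''$. One small correction: finitely many $f_{k(i)}$ may be $\equiv-\infty$ on $(0,1)$, so the bounds in Step~1 hold only for all sufficiently large $i$ rather than uniformly in $i$, which changes none of the limits.
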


    \begin{Theorem}[Displacement Hessian of the relative entropy II]\label{thm: displacement hessian of the entropy ii}
        Let $(M,g)$ be a globally hyperbolic spacetime. Fix $V\in C^2(M)$ and $N \in [n,\infty]$, and assume $V=0$ if $N=n$. Let $u:(0,\infty)\to\R$ be an admissible function. Fix $\mu,\nu\in\mathcal{P}^{ac}(M)$ such that
        \begin{enumerate}
            \item $\lambda:=\ell_u(\mu,\nu)\in(0,\infty)$, and there exist lower semicontinuous functions $a,b:M \to \R$ with $a \in L^1(\mu)$, $b \in L^1(\nu)$ such that  $u_{\lambda} \circ \ell \leq a \oplus b$ on $\mathrm{supp}(\mu \times \nu)$ and there exists an optimal coupling $\pi\in\Pi^{u_{\lambda}}_{\leq}(\mu,\nu)$ with $\ell>0$ holding $\pi$-a.e.\ which satisfies $\int u_{\lambda} \circ \ell \, d\pi = u(1)$, 
            \item the relative entropy $e(s):=E_V(\mu_s)$, with $\mu_s:=(\bar z_s)_{\#}\pi$, and the map $F_s(x):=\bar z_s(x,F(x))$ from Theorem \ref{thm: relaxing separation} satisfy $\max\{e(0),e(1)\}<\infty$ and $\sup_{0<s<1}e(s)>-\infty$,
            \item \begin{equation}\label{eq: constant c}
            C:=\Bigg\vert\Bigg\vert  \int_M \min\bigg\{\operatorname{Ric}^{(N,V)}_{F_s(x)}\left(\frac{\partial F}{\partial s},\frac{\partial F}{\partial s}\right),0\bigg\}\operatorname{d}\mu_s \Bigg\vert\Bigg\vert_{L^{\infty}([0,1])}<\infty.
        \end{equation}
        \end{enumerate} 
        Then the conclusions of Theorem \ref{thm: displacement hessian of the entropy}(i) remain true, except that $e(\cdot)$ may be upper semicontinuous rather than continuous at the endpoints of the interval $s\in[0,1]$.
    \end{Theorem}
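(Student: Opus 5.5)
Following McCann's proof of the analogous result \cite[Thm.\ 7.4]{McCann:2020}, the plan is to reduce to the $u$-separated case through the decomposition of Theorem \ref{thm: relaxing separation} and then pass to a limit using Lemmas \ref{lem: 7.2} and \ref{lem: 7.3}. Hypothesis (i) is exactly what Theorem \ref{thm: relaxing separation} requires. So $\pi=(\mathrm{id}\times F)_{\#}\mu$ is the unique such coupling, $\pi(\mathrm{sing}(\ell))=0$, and $\pi=\sum_i\pi^i$ splits into mutually singular pieces. The normalised marginals $(\hat\mu^i,\hat\nu^i)$ have $\mathrm{supp}(\hat\mu^i\times\hat\nu^i)$ compact in $\{\ell>0\}$, so each pair is $u$-separated (Remark \ref{Remark: simple u separation}). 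On $\mathrm{supp}\,\mu^i$ the map $F$ coincides with the optimal map $F^i$. The interpolants decompose as $\mu_s=\sum_i\mu^i_s$ into mutually singular absolutely continuous pieces for $s<1$.

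\textbf{Step 1 (pieces).} For each $i$, Theorem \ref{thm: displacement hessian of the entropy} applies to the $u$-geodesic $\hat\mu^i_s$ with Kantorovich potential $\tilde\varphi^i$, wherever the endpoint entropies are finite. Note that $F_s=F^i_s$ $\mu^i$-a.e., and this holds with the \emph{same} tangent vectors $\partial_sF_s$. The reason is that the geodesic is determined by its endpoints $(x,F(x))$, so the nonlinearity in $\lambda_i\neq\lambda$ only reparametrises the potential and not the curves. Since $E(c\,\hat\mu\vert\mathfrak m)=cE(\hat\mu\vert\mathfrak m)+c\log c$, the unnormalised quantity $e_k(s):=E_V(\sum_{i\le k}\mu_s^i)$ is, by mutual singularity (Theorem \ref{thm: relaxing separation}(v)), a finite sum of affine rescalings of these entropies. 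By Lemma \ref{lem: 7.2}(i) the finiteness of $e(0),e(1)$ transfers to the partial sums. We therefore get semiconvexity of $e_k$ together with the formulas \eqref{eq: e'} and \eqref{eq: e''}, with $\mu_0$ replaced by $\sum_{i\le k}\mu^i$. The integrand of $e_k''$ is $\mathrm{Tr}(B_s^2)+(\Ric+D^2V)(F_s',F_s')$. The Cauchy--Schwarz estimate from the proof of Theorem \ref{cor: entropic convexity from ricci bounds} (via Lemma \ref{lem: simple lemma}) shows that this integrand dominates $\min\{\Ric^{(N,V)}(F_s',F_s'),0\}$ plus nonnegative terms. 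By hypothesis (iii), $e_k''\ge -C$ uniformly in $k$.

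\textbf{Step 2 (limit).} Set $f_k(s):=e_k(s)+\tfrac{C}{2}s^2$. These functions are convex. They are bounded above uniformly, by Lemma \ref{lem: 7.2}(i) and the finite endpoint values. Pointwise, $e_k(s)\to e(s)$ by Lemma \ref{lem: 7.2}(ii), and hypothesis (ii) excludes the value $-\infty$ on $(0,1)$. Hence the limit $f$ is convex and proper, and Lemma \ref{lem: 7.3} gives $e_k'\to e'$ a.e.\ and $e_k''\to e''$ distributionally. This yields semiconvexity of $e$ on $(0,1)$ and continuous differentiability in the interior. For the formulas, pass to the limit in the integral representations by monotone convergence: the integrands of $e_k''$ plus the $C$-correction are nonnegative and increase with $k$. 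For $e'$, dominate the integrands using $\phi_x'(s)+ks$ monotone, as in Theorem \ref{thm: displacement hessian of the entropy}. At the endpoints only upper semicontinuity survives, since Lemma \ref{lem: 7.3} gives convergence only in the interior; this explains the weakened conclusion.

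\textbf{Main obstacle.} The delicate step is justifying that the integrands of $e_k''$, written via the normalised Kantorovich potentials $\tilde\varphi^i$ (with different $\lambda_i$), coincide with a single $k$-independent function of $x$. Without this, the monotone limit is not identified with \eqref{eq: e''} over $\mu$. I would settle it by the geodesic-reparametrisation observation in Step 1: $A_s=\tilde D F_s$ depends only on the map $x\mapsto F_s(x)$, because both $F$ and $F^i_s=\bar z_s(\cdot,F^i(\cdot))$ agree $\mu^i$-a.e. Then Proposition \ref{prop: jacobian along geodesics} gives the same $B_s$. The approximate derivatives agree a.e.\ on sets where the maps coincide, since approximate derivatives are local on density points.
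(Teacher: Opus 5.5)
You follow the paper's route: decompose via Theorem~\ref{thm: relaxing separation}, then pass to the limit with Lemmas~\ref{lem: 7.2} and~\ref{lem: 7.3}. However, Step~2 has a genuine gap at the sentence ``hypothesis (ii) excludes the value $-\infty$ on $(0,1)$''.

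Hypothesis (ii) only says $\sup_{0<s<1}e(s)>-\infty$, i.e.\ $e(s_0)>-\infty$ for \emph{some} $s_0\in(0,1)$. It does not exclude $e(s_1)=-\infty$ at another interior point. Lemma~\ref{lem: 7.2}(ii) identifies $e(s)$ with $\lim_k e_k(s)$ only where $e(s)\neq-\infty$. So you do not yet know that $e_k+\frac{C}{2}s^2$ converges on $(0,1)$ to a proper convex limit, and Lemma~\ref{lem: 7.3} cannot be applied.

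What is missing is the dichotomy: if $e(s_1)=-\infty$ for one $s_1\in(0,1)$, then $e\equiv-\infty$ on $(0,1)$, which contradicts (ii). Convexity of the partial sums does not give this for free. If $E_+(\mu_{s_1})<\infty$ it does work: then $e_k(s_1)\to-\infty$, and convexity with the uniform endpoint bounds forces $e_k(s_0)\to-\infty$. But $e(s_1)=-\infty$ can also come from the convention when $E_+(\mu_{s_1})=E_-(\mu_{s_1})=+\infty$. Each compactly supported piece has finite negative entropy, so the partial sums $e_k(s_1)$ can then stay bounded and no contradiction follows.

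The paper closes this with a separate argument, the ``Claim'' in its proof. It runs as follows.
\begin{enumerate}
\item Let $\rho_{s_1}:=d\mu_{s_1}/d\mathfrak m$. Take a $\sigma$-compact $U\subseteq\{\rho_{s_1}\le1\}$ of full $\mu_{s_1}$-measure in that set, put $S:=\bar z_{s_1}^{-1}(U)$, and restrict every $\pi^i$ to $S$.
\item Let $\bar f_k(t)$ be the entropy of $\sum_{i\le k}(\bar z_t)_\#(\pi^i|_S)$. At $t=s_1$ these measures only see the negative part of the entropy, so $\bar f_k(s_1)\to-\infty$.
\item The normalised restrictions are still optimal for their marginals. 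Hence Theorem~\ref{thm: displacement hessian of the entropy} makes each $\bar f_k+\frac{C}{2}t^2$ convex, with endpoint values bounded above via Lemma~\ref{lem: 7.2}(i). Convexity then gives $\bar f_k(t)\to-\infty$ for every $t\in(0,1)$.
\item Since $\bar z_t$ is countably bi-Lipschitz, Lebesgue density shows that $(\bar z_t)_\#(\pi|_S)$ has density $1_{\bar z_t(S)}\rho_t$. Hence $E_-(\mu_t)=\infty$ and $e(t)=-\infty$ for all $t\in(0,1)$.
\end{enumerate}
Only after this step may you assert $e_k\to e$ with a proper convex limit and proceed as you do.

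A smaller point: Theorem~\ref{thm: relaxing separation}(v) gives mutual singularity of the $\mu^i_s$ only for $s<1$. Relating $e(1)$ to the partial sums therefore needs an extra argument; the paper obtains it by running the construction backwards from $\nu\in\mathcal{P}^{ac}(M)$.
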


            \begin{proof}
            The idea of the proof is to use the decomposition provided by Theorem \ref{thm: relaxing separation} and apply Theorem \ref{thm: displacement hessian of the entropy} to each element.

            Let $F:\operatorname{supp}\mu\to\operatorname{supp}\nu$ be the transport map and $(\mu_s)_{s\in[0,1]}$ the $u$-geodesic given by Theorem \ref{thm: relaxing separation}. Consider the mutually singular decompositions given by the same result for $\pi=\sum_{i\in\N}\pi^i$ and $\mu_s=\sum_{i\in\N}\mu^i_s$, with $\mu^i_s=(z_s)_{\#}\pi^i$. Write $\mu^i:=\mu^i_0$ and $\nu^i:=\mu^i_1$.

            Normalise $\hat{\pi}^i:=\frac{1}{\pi^i(M)}\pi^i$, $\hat{\mu}^i:=\frac{1}{\mu^i(M)}\mu^i$, and $\hat{\nu}^i:=\frac{1}{\nu^i(M)}\nu^i$. Note that, by construction, $(\hat{\mu}^i,\hat{\nu}^i)$ are the marginals of $\hat{\pi}^i$. Since we are working with compactly supported probability measures, and by Theorem \ref{thm: relaxing separation} we have $\operatorname{supp}\pi^i\subset\{\ell>0\}$, Theorem \ref{thm: characterising opt maps} gives the unique optimal map from $\hat{\mu}^i$ to $\hat{\nu}^i$, denoted by $F^i$, which coincides $\hat{\mu}^i$-a.e.\ with $F$, by Theorem \ref{thm: relaxing separation}(iii). Moreover, as seen in the proof of Theorem \ref{thm: relaxing separation}(iii), $\hat{\pi}^i$ is $\ell_u$-optimal for this pair.

            Furthermore, $\mu^i$ and $\nu^i$ inherit an upper bound on their entropy from $\max\{e(0),e(1)\}<\infty$ by Lemma \ref{lem: 7.2}(i). Their normalised versions are also bounded, which follows from a simple calculation:
            \begin{equation}\label{eq: scaling law entropy}
                E_V(a\,\mu)=a\,E_V(\mu)+\mu(M)\,a\log a,\quad a\in(0,\infty).
            \end{equation}
            Therefore, setting $\hat{e}_i(s):=E_V(\hat{\mu}^i_s)$, Theorem \ref{thm: displacement hessian of the entropy} applies to give
            \begin{equation}\label{eq: ei'}
                \hat{e}_i '(s)=\int_M[DV_{F_s(x)}F_s'(x)-\operatorname{Tr}B_s(x)]d\hat{\mu}^i_0(x),\quad \text{and}
            \end{equation}
            \begin{equation}\label{eq: ei''}
                \hat{e}_i''(s)=\int_M[\operatorname{Tr}(B_s^2(x))+(\operatorname{Ric}+D^2V)_{F_s(x)}(F_s'(x),F_s'(x))]d\hat{\mu}^i_0(x),
            \end{equation}
            for $s\in(0,1)$. Taking into account \eqref{eq: scaling law entropy}, the same result for $e'_i(s)$ and $e''_i(s)$ holds after replacing $\hat{\mu}^i_0$ by $\mu^i_0$.

            Since by Theorem \ref{thm: relaxing separation} the measures $\{\mu_s^i\}_{i\in\N}$ are mutually singular for $s\in[0,1)$, provided that we can conclude the same for $s=1$, the final result for $e(s)$ can be obtained by summing over all elements in $\N$. First, since $\nu\in\mathcal{P}^{ac}(M)$, we could do the same reasoning in every argument up until now and obtain an optimal transport from $\nu$ to $\mu$ which is past-directed. By the uniqueness of geodesics, this transport would coincide (as curves) with that obtained from $\mu$ to $\nu$. In particular, a consequence would be the mutual singularity of the family of measures $\{\mu_1^i\}_{i\in\N}$, which is the only result necessary now.

            Therefore, we can obtain the results for $e'(s)$ and $e''(s)$ by summing $e'_i(s)$ and $e_i''(s)$ over $i\in\N$, respectively, provided these sums do not diverge. To be precise, define $f_k(s):=\sum_{i=1}^k e_i(s)$. The assumptions on $C$, combined with the fact that $\operatorname{Tr}(B_s^2(x))\geq0$, taken to the already obtained expression of $e''_i(s)$ show that the function $s\mapsto\frac{C}{2}s^2+f_k(s)$ is convex on $[0,1]$. Furthermore, since $\max\{e(0),e(1)\}<\infty$, Lemma \ref{lem: 7.2}(i) gives that $f_k(0)$ and $f_k(1)$ are bounded above in terms of $C$ and the endpoints $\mu$ and $\nu$ (therefore there is a uniform bound for $k\in\N$). On the other hand, Lemma \ref{lem: 7.2}(ii) asserts that for each $s\in[0,1]$ either $e(s):=E_V(\mu_s)=\lim_{k\to\infty}f_k(s)$ or $e(s)=-\infty$. The following step is to check that if the second case holds for some $s\in(0,1)$, it will be true for all $s\in(0,1)$, which would be a contradiction with hypothesis (ii). We state this as a Claim for the moment, and postpone the proof until we have obtained the proof of the Theorem.

            \textit{Claim}: If $e(s)=-\infty$ for some $s\in(0,1)$, then $\sup_{t\in(0,1)}e(t)=-\infty$.

            By the Claim, if there exists $s\in(0,1)$ such that $e(s)\neq-\infty$, then $e(t)\neq-\infty$ for all $t\in(0,1)$. This, combined with the continuity of the $e_i$ for $i\in\N$ given by Theorem \ref{thm: displacement hessian of the entropy}, gives pointwise convergence of the sequence of functions $s\mapsto\frac{C}{2}s^2+f_k(s)$ to $s\mapsto\frac{C}{2}s^2+e(s)$, where the limit is is convex and real-valued. Note now that we are in the hypotheses of the proper case in Lemma \ref{lem: 7.3}. This result gives that $e'(s)=\lim_{k\to\infty}f_k'(s)$ pointwise $\mathcal{L}^1$-a.e. and $e''(s)=\lim_{k\to\infty}f_k''(s)$ distributionally on $(0,1)$. Hence, in this case, \eqref{eq: e''} follows by summing $e''_i$ using Lebesgue's monotone convergence theorem, and the pointwise lower bound establishes above shows that the integrand is greater than $-C$. Integrating the expression for $e''$ yields \eqref{eq: e'} exactly as it did in Theorem \ref{thm: displacement hessian of the entropy}, which concludes the proof subject to proving the Claim.

            \textit{Proof of the Claim.} Assume that there is some $s\in(0,1)$ such that $e(s)=-\infty$. Denote by $\rho_s:=d\mu_s/d\mathfrak{m}$ the Radon-Nikodym derivative of $\mu_s$ with respect to the weighted Lorentzian volume $\mathfrak{m}:=e^{-V}\operatorname{vol}_g$. Set $N^i:=(\operatorname{supp}\pi^i)\setminus\operatorname{sing}(\ell)$. The sets $N^i$ describe the supports of $\pi^i$ without singular points, and $\pi^i$ describes a coupling between $\mu_0^i$ and $\mu_1^i$, which are part of the decompositions of $\mu_0$ and $\mu_1$, respectively. Consider $N^{\infty}=\cup_{i\in\N}N^i$. By construction, this set is disjoint from $\operatorname{sing}(\ell)$, and by Theorem \ref{thm: relaxing separation}(iii), $\ell>0$ holds $\pi$-a.e.\ Hence, the map $\bar z_s:N^{\infty}\to M$ is well-defined and smooth by Lemma \ref{lemma: selecting midpoints away from cut locus}, and it has an inverse map on $\bar z_s(N^{\infty})$ by Proposition \ref{Prop: Lipschitz continuous inverse map}.

            By Theorem \ref{thm: relaxing separation}, it holds that $\pi(\operatorname{sing}(\ell))=0$, therefore $N^{\infty}$ carries the full measure of $\pi$. This will be important to analyse $\mu_s$, since $\mu_s$ is given by $(\bar z_s)_{\#}\pi$. On the other hand, since we have that $\mu_s$ is absolutely continuous with respect to $\operatorname{vol}_g$ (and therefore $\mathfrak{m}$) by Theorem \ref{thm: relaxing separation}, it is also inner regular, and therefore there exists a $\sigma$-compact subset $U$ of $\{z\in \bar z_s(N^{\infty}):\ \rho_s(z)\leq 1\}$, which differs from the latter by a $\mu_s$-negligible set. Note that this is the case because $\mu_s$ is absolutely continuous and therefore its density is a measurable map, which makes the latter a measurable set. Take $S:=\bar z_s^{-1}(U)$, which is also $\sigma$-compact, since $\bar z_s^{-1}$ is smooth in $U$.

            The idea of the proof is, for a given $t\in(0,1)$, to try to bring the set $S$ into $\operatorname{supp}\mu_t$, and study its influence over the entropy functional at $\mu_t$. The way of doing it will be via the maps $\bar z_r$, $r\in[0,1]$, which, since $\mu_r=(\bar z_r)_{\#}\pi$, allow to map subsets of the supports of the measures to a different support. Ultimately, we will see that in $\bar z_t(S)$, the entropy is also $-\infty$, so ultimately this will lead to $e(t)=-\infty$, as desired.

            Let $\bar{\pi}^i$ denote the restriction of $\pi^i$ to $S$ with the convention that $\bar{\pi}^{\infty}:=\pi$. Let $\bar{\mu}^i_r:=(z_r)_{\#}\bar{\pi}^i$ and $\bar{\nu}^i_r:=\sum_{j=1}^i\bar{\mu}^j_r$ for each $i\in\N\cup\{\infty\}$ and $r\in[0,1]$. Denote their entropies by $\bar{e}_i(r):=E_V(\bar{\mu}^i_r)$ and $\bar{f}_i(r):=E_V(\bar{\nu}^i_r)$. It is clear that $0\leq\bar{\mu}^i_r\leq\mu^i_r$, and they inherit absolute continuity and mutual singularity from $\{\mu^i_r\}_{i\in\N}$, so $\bar{f}_k(r)=\sum_{i=1}^k \bar{e}_i(r)$.

            Set $\bar{\rho}^t_i:=d\bar{\mu}^i_t/d\mathfrak{m}$ and $\rho_t^{\infty}:=\rho_t$. We want to establish a relationship between the densities $\bar{\rho}^i_t$ and $\rho^i_t$ in terms of $S$, since the entropy functional ultimately relies on these.

            Since $\mu_s$ is absolutely continuous with respect to the volume measure, by Lebesgue's density theorem, $U=\bar z_s(S)$ has either full or zero Lebesgue density with respect to $\mu_s$, $\mu_s$-a.e. Furthermore, since by Proposition \ref{Prop: Lipschitz continuous inverse map}, $\bar z_t$ is countably bi-Lipschitz on $S$, and $\mu_s=(\bar z_s)_{\#}\pi$, it follows that $S$ has either zero or full $n$-dimensional density $\pi$-a.e. in $N^{\infty}$.  
            Similarly, it follows that $\bar z_t(S)\subseteq\operatorname{supp}\mu_t$ has either full or zero Lebesgue density $\mu_t:=(\bar z_t)_{\#}\pi$-a.e. for each $t\in(0,1)$. This establishes
            \begin{equation}\label{eq: bar rho}
                \bar{\rho}^i_t:=\frac{d\bar{\mu}^i_t}{d\mathfrak{m}}=1_{z_t(S)}\rho^i_t,\quad\forall t\in(0,1)\quad\text{and}\quad i\in\N\cup\{\infty\}.
            \end{equation}
            On the other hand, we can estimate
            \begin{align*}
                -\infty=e(s)&=\int_M \rho_s\log\rho_s\, d\mathfrak{m} \\
                &\geq\int_{\{\rho_s\leq1\}}\rho_s\log\rho_s\, d\mathfrak{m} \\
                &=\sum_{i\in\N}\int_{z_s(S)}\rho_s^i\log\rho^i_s\, d\mathfrak{m} \\
                &=\lim_{k\to\infty}\bar{f}_k(s),
            \end{align*}
            where the equality in the third line is a consequence of our choice of $U$ and the mutual singularity of $\bar{\mu}^i_t\leq\mu^i_t$, and the last equality is due to the absolute continuity of the measures $\mu^i_t$ and \eqref{eq: bar rho}. In particular, this yields the immediate claim: if $e(s)=-\infty$, then $\lim_{k\to\infty}\bar{f}_k(s)=-\infty$.

            The goal now is to extend this conclusion to the whole interval $(0,1)$. The idea is to again employ Lemma \ref{lem: 7.3} in a similar fashion to the main part of this proof. Let $\hat{\bar{\pi}}^i:=\frac{1}{\bar{\pi}^i(M)}\bar{\pi}^i$ and $\hat{\bar{\mu}}_t^i:=\frac{1}{\bar{\mu}_t^i(M)}\bar{\mu}_t^i$ be the normalisations of $\bar{\pi}^i$ and $\bar{\mu}_t^i$, respectively. Recall that by Theorem \ref{thm: relaxing separation}, each $\pi^i$ is $\ell_u$-optimal for $(\mu^i_0,\mu^i_1)$, $i\in\N$, and therefore, since $\bar{\pi}^i\leq\pi^i$, and using a similar argument as in the proof of Theorem \ref{thm: relaxing separation}(iii), $\hat{\bar{\pi}}^i$ inherits $\ell_u$-optimality from $\hat{\pi}^i$ for its marginals $(\hat{\bar{\mu}}_0^i,\hat{\bar{\mu}}_1^i)$. Build the $u$-geodesic joining these two marginals, denoted by $(\hat{\pi}^i_t)_{t\in[0,1]}$. By Theorem \ref{thm: displacement hessian of the entropy}, the functional $t\mapsto\frac{C}{2}t^2+\bar{f}_i(t)$ is convex in $[0,1]$, after using \eqref{eq: scaling law entropy}. By Lemma \ref{lem: 7.2}(i), $\bar{f}_i(0)$ and $\bar{f}_i(1)$ are bounded above in terms of $\mu$, $\nu$ and $C$. Establishing $\bar{f}(t):=\limsup_{i\to\infty}\bar{f}_i(t)$, since $e(s)=-\infty$, we have $f(s)=-\infty$, and Lemma \ref{lem: 7.3} yields $\sup_{t\in(0,1)}\bar{f}(t)=-\infty$, as desired.

            To conclude,
            \begin{align*}
                e(t)&=\int_M\rho_t\log\rho_t\, d\mathfrak{m}\\
                &=\int_{z_t(S)}\bar{\rho}_t\log\bar{\rho}_t\, d\mathfrak{m} +\int_{M\setminus z_t(S)}\rho_t\log\rho_t\, d\mathfrak{m}.
            \end{align*}
            The first summand coincides with $\bar{f}_{\infty}(t):=E_V(\bar{\mu}_t^{\infty})$, which diverges to $-\infty$ by the assertion in the last paragraph and Lemma \ref{lem: 7.2}(ii). Thus $e(t)=-\infty$ by our infinity convention, which concludes the proof.
        \end{proof}

         \begin{Theorem}[Weak convexity from timelike lower Ricci curvature bounds]
         \label{Theorem: Weak convexity from Ricci}
        Let $(M,g)$ be a globally hyperbolic spacetime. Fix $V\in C^2(M)$ bounded, $N \in [n,\infty]$, and $u:(0,\infty)\to\R$ admissible, by convention $V = 0$ if $N = n$. If $\operatorname{Ric}^{(N,V)}(v,v)\geq K|v|^2_g\geq0$ holds in every timelike direction $(x,v)\in TM$, then the relative entropy $E_V(\mu)$ is weakly $(K,N,u)$-convex for the set $Q\subseteq\mathcal{P}^{ac}(M)^2$ of measures $(\mu,\nu)$ such that $\lambda:=\ell_u(\mu,\nu)\in(0,\infty)$, and there exist lower semicontinuous functions $a,b:M \to \R$ with $a \in L^1(\mu)$, $b \in L^1(\nu)$ such that $u_{\lambda} \circ \ell \leq a \oplus b$ on $\mathrm{supp}(\mu \times \nu)$ and there exists an optimal coupling $\pi\in\Pi^{u_{\lambda}}_{\leq}(\mu,\nu)$ with $\ell>0$ holding $\pi$-a.e.\ which satisfies $\int u_{\lambda} \circ \ell \, d\pi = u(1)$.
    \end{Theorem}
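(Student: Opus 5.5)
Fix $(\mu,\nu)\in Q$, put $\lambda:=\ell_u(\mu,\nu)$, and let $\pi$ be the optimal coupling given in the definition of $Q$. The plan is to take as candidate geodesic $\mu_s:=(\bar z_s)_{\#}\pi$, which is a $u$-geodesic by Theorem \ref{thm: existence of u geod} since $\pi$ is concentrated on $\{\ell>0\}$. Theorem \ref{thm: relaxing separation} then shows that $\pi=(\mathrm{id}\times F)_{\#}\mu$ is the unique such coupling, that $\pi(\mathrm{sing}(\ell))=0$, and that $\mu_s\in\mathcal P^{ac}(M)$ for $s<1$. The task is to show that $e(s):=E_V(\mu_s)$ is $(K\lambda^2,N)$-convex in the sense of Definition \ref{def: (k,n,u)-convexity}. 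The main tool is Theorem \ref{thm: displacement hessian of the entropy ii}, and I would check its hypotheses (i)--(iii) in turn. Hypothesis (i) holds by the definition of $Q$. Hypothesis (iii) holds with $C=0$: since $\Ric^{(N,V)}\geq Kg\geq 0$ on timelike vectors and $\partial_s F_s$ is future timelike $\mu$-a.e., the integrand $\min\{\cdot,0\}$ vanishes.

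Hypothesis (ii) needs a case split. The definition of $(K,N)$-convexity only requires $e$ to be upper semicontinuous with connected domain, and allows $e\equiv-\infty$ on the interior. So if $\sup_{0<s<1}e(s)=-\infty$ the conclusion holds trivially, and I would otherwise assume $\sup_{0<s<1}e>-\infty$. If $e(0)$ or $e(1)$ equals $+\infty$, I would apply the argument to subintervals $[s_0,s_1]\subset\mathrm{Dom}\,e$. These are again $u$-geodesics with optimal coupling $(\bar z_{s_0}\times\bar z_{s_1})_{\#}\pi$, which saturates the optimality inequality by Theorem \ref{thm: existence of u geod}. They still satisfy the bound $a\oplus b$ after composing with $W$ from Proposition \ref{Prop: Lipschitz continuous inverse map}, and their endpoint entropies are finite. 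Connectedness of $\mathrm{Dom}\,e$ I would obtain from semiconvexity on each such subinterval, using the same Claim argument as in the proof of Theorem \ref{thm: displacement hessian of the entropy ii}. Boundedness of $V$ is used here: Lemma \ref{lem: 7.2} transfers entropy bounds between $\mathfrak m$ and $\mathrm{vol}_g$, and in particular lets the upper bounds on $e(0),e(1)$ pass to the pieces $\mu^i,\nu^i$.

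Once the hypotheses hold, Theorem \ref{thm: displacement hessian of the entropy ii} gives semiconvexity of $e$ and the distributional formulas \eqref{eq: e'} and \eqref{eq: e''} on $(0,1)$, with $e$ upper semicontinuous at the endpoints. I would then repeat the computation from the proof of Theorem \ref{cor: entropic convexity from ricci bounds} verbatim. Jensen's inequality, the trace inequality \eqref{eq: trace inequality} and Lemma \ref{lem: simple lemma} with $\varepsilon=(N-n)/n$ give
\[
e''-\tfrac1N(e')^2\geq\int\Ric^{(N,V)}(F_s',F_s')\,d\mu\geq K\int\ell(x,F(x))^2\,d\mu .
\]
Since $K\geq0$, two further applications of Jensen's inequality together with $\int u_\lambda\circ\ell\,d\pi=u(1)$ (by concavity of $u_\lambda$) bound the right-hand side below by $K\big(\int\ell\,d\pi\big)^2\geq K\,u_\lambda^{-1}(u(1))^2=K\lambda^2$. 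The edge cases $N=n$ and $N=\infty$ would follow by taking limits, as in that proof.

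The main obstacle I expect is the passage to subintervals when an endpoint entropy is infinite. One has to confirm that the restricted geodesic still lies in the class to which Theorem \ref{thm: displacement hessian of the entropy ii} applies, namely existence of the integrable bounds $a',b'$ and saturation $\int u_{(s_1-s_0)\lambda}\circ\ell\,d\pi_{s_0,s_1}=u(1)$. Saturation follows from Theorem \ref{thm: existence of u geod}. For the bounds, my first attempt would be to use the reverse triangle inequality and concavity of $u$ as in Lemma \ref{Lemma: Concavity along geodesics} to dominate $u_{(s_1-s_0)\lambda}\circ\ell$ on the support by a combination of $a\circ U$ and $b\circ V$. If that fails, I would instead derive the inequality on each compact piece $\pi^i$ of the decomposition in Theorem \ref{thm: relaxing separation}(iii) and sum, as in the proof of Theorem \ref{thm: displacement hessian of the entropy ii}.
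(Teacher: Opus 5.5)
Your proposal is essentially the paper's proof: apply Theorem~\ref{thm: displacement hessian of the entropy ii} with $C=0$, dispose of the case $\sup_{(0,1)}e=-\infty$ via Definition~\ref{def: (k,n,u)-convexity}, otherwise restrict to subintervals $[t_0,t_1]$ with finite endpoint entropies, and rerun the Jensen/trace computation from Theorem~\ref{cor: entropic convexity from ricci bounds}. The paper only asserts the subinterval step, and of your two justifications for it only the piecewise one works: the normalised pieces from Theorem~\ref{thm: relaxing separation}(iii) are $u$-separated, and so are their subsegments by Proposition~\ref{Prop: star shaped u separation}; the map $W$ of Proposition~\ref{Prop: Lipschitz continuous inverse map}, by contrast, is only constructed for $u$-separated endpoints, which a pair $(\mu,\nu)\in Q$ need not be.
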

    \begin{proof}
        The strategy to prove this result from Theorem \ref{thm: displacement hessian of the entropy ii} is analogous to the one used to prove Corollary \ref{cor: entropic convexity from ricci bounds} from Theorem \ref{thm: displacement hessian of the entropy}. Note that the required hypothesis \eqref{eq: constant c} of Theorem \ref{thm: displacement hessian of the entropy ii} holds, since our assumptions on the Ricci curvature guarantee that $C=0$ works.

        Note, however, that in this case we do not have compactly supported measures, so we do not have a priori \eqref{eq:previous bound}. In particular, for the $u$-geodesic $(\mu_s)_{s\in[0,1]}$ obtained in Theorem \ref{thm: displacement hessian of the entropy ii}, we cannot conclude $e(s):=E_V(\mu_s)>-\infty$, which was a key ingredient in the proof. Nonetheless, note that in the case where $\max\{e(0),e(1)\}<\infty$, Theorem \ref{thm: displacement hessian of the entropy ii} asserts that $e(\cdot)$ is semiconvex, and therefore, $e(s)$ is real-valued unless $\sup_{t\in(0,1)}e(t)=-\infty$. In the first case, the proof developed in Corollary \ref{cor: entropic convexity from ricci bounds} can be used in this context, whereas the second case is already contained in Definition \ref{def: (k,n,u)-convexity}. If $\max\{e(0),e(1)\}=\infty$, we can apply the preceding argument on any subinterval $[t_0,t_1]\subset[0,1]$ satisfying $\max\{e(t_0),e(t_1)\}<\infty$ to conclude that $e(s)$ is real-valued, convex, and satisfies the desired estimates on $[t_0,t_1]$ unless $(t_0,t_1)\subset e^{-1}(-\infty)$.
    \end{proof}

    \begin{Remark}[Comparison with Section $7$ of \cite{McCann:2020}]
    \label{Remark: comparing relaxation with McCann}
    In the results on relaxation of $p$-separation in \cite[Sec.\ 7]{McCann:2020}, McCann assumes finiteness and attainment in the Kantorovich dual problem for $\ell_p$, which produces the dual potentials $(a,b)$ such that $u_p \circ \ell \leq a\oplus b$ on $\mathrm{supp}(\mu \times \nu)$. By homogeneity, for any $\lambda > 0$, multiplying this equation with $\lambda^{-p}$ gives $(u_p)_{\lambda} \circ \ell \leq \lambda^{-p}a \oplus b =: a' \oplus b'$, so that McCann's assumptions in \cite[Thm.\ 7.4, Cor.\ 7.5]{McCann:2020} imply our assumptions in Theorems \ref{thm: displacement hessian of the entropy ii} and \ref{Theorem: Weak convexity from Ricci} in the case $u = u_p$, not just for $p \in (0,1)$ but also for $p < 0$ (by the same homogeneity argument given above).
    \end{Remark}

    \subsection{Ricci curvature lower bounds from entropic convexity}\label{Subsection: Ricci from entropic convexity}

    We conclude our efforts concerning the relationship between timelike Ricci bounds and entropic convexity in this subsection, showing that the convexity of the relative entropy along $u$-geodesics implies a bound below on the timelike Ricci curvature.

\begin{Lemma}\label{lem: prescribing}
    Let $u:(0,\infty)\to\R$ be an admissible function, and fix a compact set $X\times Y\subset M^2\setminus\operatorname{sing}(\ell)$ with $(\bar{x},\bar{y})$ in its interior. Let $\varphi:X\to\R$ satisfying the first- and second-order conditions
    \begin{align*}
        D\varphi(\bar{x})=D_x(u\circ\ell)(\bar{x},\bar{y}),\quad\text{and}\quad D^2\varphi(\bar{x})>D^2_{xx}(u\circ\ell)(\bar{x},\bar{y}).
    \end{align*}
    Then there exists a $u\circ\ell$-convex function $\tilde{\varphi}$ on $X$ which agrees with $\varphi$ in some neighbourhood of $\bar{x}$.
\end{Lemma}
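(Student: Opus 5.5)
Our plan follows McCann's proof of the analogous prescription lemma \cite[Lem.\ 8.1]{McCann:2020} (going back to Cordero-Erausquin--McCann--Schmuckenschläger in the Riemannian case), replacing $\ell^p/p$ by $u\circ\ell$ throughout. The idea is to build $\tilde\varphi$ as a supremum of translates $u\circ\ell(\cdot,y)-\psi(y)$ over $y$ near $\bar y$, choosing $\psi$ so that each translate touches $\varphi$ from below at exactly one point $x(y)$ near $\bar x$, and so that these contact points fill a neighbourhood of $\bar x$. Since $X\times Y$ avoids $\operatorname{sing}(\ell)$, Corollary \ref{cor: twist and non degeneracy}(i) gives that $u\circ\ell$ is $C^2$ on a neighbourhood of $X\times Y$, and Corollary \ref{cor: twist and non degeneracy}(iv) gives $\det D^2_{xy}(u\circ\ell)(\bar x,\bar y)\neq0$. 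All smoothness and nondegeneracy needed below come from these two facts.

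\textbf{Step 1: the map $x\mapsto y(x)$.} We solve $D_x(u\circ\ell)(x,y)=D\varphi(x)$ for $y$ near $\bar y$ as a function of $x$ near $\bar x$. At $(\bar x,\bar y)$ the equation holds by the first-order hypothesis. The mixed Hessian is invertible, so the implicit function theorem produces a unique $C^1$ solution $y(x)$ with $y(\bar x)=\bar y$. This requires $\varphi\in C^2$ near $\bar x$, which the hypotheses implicitly assume since $D^2\varphi(\bar x)$ is used. Equivalently, by Corollary \ref{cor: twist and non degeneracy}(ii), $y(x)=\exp_x DH(D\varphi(x),x;u^*)$.

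Moreover, $Dy(\bar x)=\big(D^2_{xy}(u\circ\ell)\big)^{-1}\big(D^2\varphi-D^2_{xx}(u\circ\ell)\big)(\bar x,\bar y)$. The second factor is positive definite by hypothesis, so $Dy(\bar x)$ is invertible. By the inverse function theorem, $x\mapsto y(x)$ is then a $C^1$ diffeomorphism from a neighbourhood $U$ of $\bar x$ onto a neighbourhood $V$ of $\bar y$, with inverse $x(y)$.

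\textbf{Step 2: local contact.} We set $\psi(y):=u\circ\ell(x(y),y)-\varphi(x(y))$ for $y\in V$ and define
\begin{equation*}
\tilde\varphi(x):=\sup_{y\in \bar V'} u\circ\ell(x,y)-\psi(y),
\end{equation*}
where $V'\Subset V$ is a smaller closed neighbourhood of $\bar y$. Then $\tilde\varphi$ is $u\circ\ell$-convex relative to $(X,\bar V')$ by definition, and $\tilde\varphi(x)\geq\varphi(x)$ whenever $y(x)\in\bar V'$, by taking $y=y(x)$.

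For the reverse inequality, fix $y\in\bar V'$ and consider $h_y(x):=\varphi(x)-u\circ\ell(x,y)+\psi(y)$. It vanishes at $x(y)$ and has zero gradient there by construction. Its Hessian is $D^2\varphi-D^2_{xx}(u\circ\ell)(\cdot,y)$, which stays positive definite, uniformly, on a small ball $B$ around $\bar x$ and for $y$ near $\bar y$, by continuity and the strict second-order hypothesis. Shrinking $V'$ so that $x(\bar V')\subset B/2$, we get $h_y\geq0$ on $B$ for every $y\in\bar V'$. Hence $\tilde\varphi\leq\varphi$ on $B$ if the supremum is restricted to $B$. This is where the second-order condition is used.

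\textbf{Step 3: global extension (the main obstacle).} The difficulty is that $\tilde\varphi$ must be defined on all of $X$ while agreeing with $\varphi$ only near $\bar x$, whereas $\varphi$ is given on $X$ but is only controlled near $\bar x$. Our first attempt is simply to take the supremum above over $x\in X$ and accept that away from $\bar x$ we only need $\tilde\varphi$ to be real-valued. This holds because $u\circ\ell$ is continuous and finite on the compact set $X\times\bar V'\subset\{\ell>0\}$ and $\psi$ is bounded on $\bar V'$.

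The remaining point is the agreement on a neighbourhood $W\subset B$ of $\bar x$: we need $\tilde\varphi\le\varphi$ there, which is exactly the statement that each function $u\circ\ell(\cdot,y)-\psi(y)$ with $y\in\bar V'$ lies below $\varphi$ on $W$, i.e.\ $h_y\ge 0$ on $W$. Step 2 already gives this, since $W\subset B$. Equality then holds on $W':=y^{-1}(\bar V')\cap W$, which contains a neighbourhood of $\bar x$. If Step 2's uniform convexity radius is too small, we would first shrink $V'$, which is harmless. We expect the only care needed to be the uniformity of the Hessian bound in Step 2, which follows from compactness of $X\times Y$ and continuity of the $C^2$ data there.
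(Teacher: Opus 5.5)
Your proposal is correct and is essentially the argument the paper delegates to McCann's Lemma~8.3. The paper's only addition is that $u\circ\ell$ is $C^2$ and non-degenerate off $\operatorname{sing}(\ell)$ (Corollary~\ref{cor: twist and non degeneracy}(i),(iv)), and these are exactly the inputs that drive your implicit-function construction of $y(x)$, the uniform local contact coming from the strict second-order condition, and the supremum over a small compact set of targets $\bar V'\subset Y$.
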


\begin{proof}
    The result follows using the same arguments as McCann in {\cite[Lem.\ 8.3]{McCann:2020}}, observing that $b:=u\circ\ell$ has the same properties as $u_p \circ \ell$, $p\in(0,1)$, by Corollary \ref{cor: twist and non degeneracy}(iv).
\end{proof}

 \begin{Theorem}[Entropic convexity implies a timelike Ricci bound]
        Let $(M^n,g)$ be a globally hyperbolic spacetime. Fix $V\in C^2(M)$ and $K\in\R$ and $N \in [n,\infty]$. Fix $V=0$ if $N=n$. If $\operatorname{Ric}^{(N,V)}(v,v)\geq K|v|^2_g$ fails at some timelike vector $(x,v)\in TM$, then the relative entropy $E_V(\mu)$ fails to be weakly $(K,N,u)$-convex for any admissible function $u:(0,\infty)\to\R$.

        Moreover, the $u$-geodesic $(\mu_s)_{s\in[0,1]}$ along which $e(s):=E_V(\mu_s)$ is not $(\ell_u(\mu_0,\mu_1)^2K,N)$-convex may be constructed so that $e\in C^2([0,1])$, and $\operatorname{supp}(\mu_0\times\mu_1)$ is disjoint from $\{\ell\leq0\}$, but contained in an arbitrarily small neighbourhood of $(x,x)$. 
    \end{Theorem}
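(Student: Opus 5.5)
We follow McCann's proof of \cite[Thm.\ 8.5]{McCann:2020}, with the $u_p$-specific ingredients replaced by the general $u$-machinery above. Suppose $\Ric^{(N,V)}(v,v) < K|v|^2_g$ at a timelike $(x,v)$. Rescaling $v$ leaves the strict inequality unchanged, and by continuity it persists for $(x',v')$ near $(x,v)$. Set $\bar x := x$, and for small $\tau>0$ put $\bar y := \exp_x(\tau v)$, so that $(\bar x,\bar y)\notin\operatorname{sing}(\ell)$ and $\ell(\bar x,\bar y)=\tau|v|_g$. By Corollary \ref{cor: twist and non degeneracy}(i) the cost $u\circ\ell$ is $C^2$ near $(\bar x,\bar y)$.

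\textbf{Construction of the potential.} First choose a smooth local function $\varphi$ with $D\varphi(\bar x)=D_x(u_\lambda\circ\ell)(\bar x,\bar y)$ and with Hessian at $\bar x$ prescribed. The Hessian is taken strictly larger than $D^2_{xx}(u_\lambda\circ\ell)(\bar x,\bar y)$, and chosen so that
\[
B_0(\bar x)=A_0'(\bar x)=D^2H(D\varphi(\bar x),\bar x;u_\lambda^*)\,D^2\varphi(\bar x)
\]
(Lemma \ref{lem: maps and their jacobian der}(iii)) is, after the symmetrisation by $\sqrt{D^2H}$ used in Proposition \ref{prop: jacobian along geodesics}, a multiple of the identity. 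The multiple is tuned so that equality holds in $\operatorname{Tr}B^2\ge\frac1n(\operatorname{Tr}B)^2$ and in the $\varepsilon$-Cauchy--Schwarz step of Theorem \ref{cor: entropic convexity from ricci bounds}, namely $\operatorname{Tr}B_0=-\frac{n}{N-n}DV(F_0')$ in the case $N>n$. These are open/affine conditions that can be met because $D^2H$ is invertible (Proposition \ref{lem: lagrange-hamilton duality}).

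A subtlety is that $\lambda=\ell_u(\mu_0,\mu_1)$ is only determined after the measures are chosen. To handle this, fix a target $\lambda$ in advance and then check consistency; homogeneity is unavailable, so this needs care. One way is to apply Lemma \ref{lem: prescribing} to $u_\lambda$ for the tentative $\lambda$, producing a $u_\lambda\circ\ell$-convex $\tilde\varphi$ that agrees with $\varphi$ near $\bar x$. Then take $\mu_0$ to be the normalised $\mathfrak m$-uniform measure on a small ball $B_r(\bar x)$, and set $\mu_1:=(F_1)_\#\mu_0$ with $F_s(x)=\exp_x s\,DH(D\tilde\varphi(x),x;u_\lambda^*)$. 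For small $r$, $\operatorname{supp}(\mu_0\times\mu_1)$ is compact inside $\{\ell>0\}\setminus\operatorname{sing}(\ell)$ and near $(x,x)$ once $\tau$ is small, so the pair is $u$-separated by Remark \ref{Remark: simple u separation}. The map $F_1$ is optimal for the cost $u_\lambda\circ\ell$ by cyclical monotonicity. However, the $u$-separation of Definition \ref{Definition: u-separeted new} also requires $\int u_\lambda\circ\ell\,d\pi=u(1)$, which need not hold for the tentative $\lambda$. I would repair this by adding a constant to $\tilde\varphi$ (harmless) and, more importantly, by replacing $\lambda$ with $\lambda':=\ell_u(\mu_0,\mu_1)$. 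Since $\pi$ is optimal for $u_{\lambda'}$ as well (by Theorem \ref{thm: characterising opt maps}, the coupling is the unique one), only the Kantorovich potential changes, to some $\varphi'$. The transport map $F_1$ is unchanged, and hence so are $F_s$ and the Jacobi fields $A_s$. Because of this, the conditions on $B_0$ can be stated directly in terms of $F_1$ and become independent of $\lambda$; only the conditions $\operatorname{Tr}B_0=\dots$ need to be imposed via $DF_1$. I expect this decoupling to be the main obstacle. The way out is to note that $A_s$ depends only on the map $F_s$, which is a geodesic flow determined by the initial velocity field $x\mapsto F_0'(x)$. One therefore prescribes the gradient field $F_0'$ to first order at $\bar x$ directly, and Lemma \ref{lem: prescribing} only serves to certify optimality.

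\textbf{Conclusion.} By Corollary \ref{cor: lag charact og geodesics}, $\mu_s=(F_s)_\#\mu_0$ is the unique $u$-geodesic. The maps $F_s$ are smooth, so Theorem \ref{thm: displacement hessian of the entropy} gives $e\in C^2$ with explicit $e',e''$. As $r\to0$, the integrals concentrate at $\bar x$. The Jensen step in Theorem \ref{cor: entropic convexity from ricci bounds} becomes asymptotically an equality, since $F_s'$ is nearly constant on the ball, and the trace and $\varepsilon$ inequalities are equalities at $\bar x$ by the choice of $B_0$. Hence, at $s=0$,
\[
e''-\tfrac1N(e')^2 \to \Ric^{(N,V)}(F_0'(\bar x),F_0'(\bar x)) < K\,\ell(\bar x,\bar y)^2 .
\]
Finally, $\ell_u(\mu_0,\mu_1)\to\ell(\bar x,\bar y)$ as $r\to0$, because the measures concentrate on the pair $(\bar x,\bar y)$ and $u_\lambda^{-1}$ applied to an average of nearly constant values recovers $\ell$. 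The strict inequality therefore survives in a neighbourhood of $s=0$. This contradicts $(K\ell_u^2,N)$-convexity along the unique geodesic, and by uniqueness it contradicts even the weak version, for every admissible $u$.
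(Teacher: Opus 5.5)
Your overall route is the paper's, which is McCann's Theorem~8.5 adapted: uniform measures on $B_r(\bar x)$, a potential from Lemma~\ref{lem: prescribing} with prescribed velocity $v_t$ at $\bar x$ and Hessian $-\frac{1}{N-n}(DV(\bar x)v_t)(Dv)^{-1}$ so that $B_0$ is a multiple of the identity, and the limits of $e'(0;r)$, $e''(0;r)$ as $r\to0$. You are also right that $\lambda$ has to be made consistent. The paper's proof does not address this: it builds $\varphi$ for the cost $u_t\circ\ell$, invokes Remark~\ref{Remark: simple u separation} and Theorem~\ref{thm: characterising opt maps}, and only uses $\ell_u(\mu_0^{(r)},\mu_1^{(r)})\to t$.

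\textbf{The gap.} Your repair rests on a false step: ``$\pi$ is optimal for $u_{\lambda'}$ as well (by Theorem~\ref{thm: characterising opt maps}, the coupling is the unique one)''. That theorem gives uniqueness of the maximiser of the \emph{linear} problem with cost $u_{\lambda'}\circ\ell$, where $\lambda'=\ell_u(\mu_0,\mu_1)$. Your $\pi=(\mathrm{id}\times F_1)_\#\mu_0$ is only known to maximise the problem with cost $u_\lambda\circ\ell$. Uniqueness says nothing about whether these two maximisers coincide. They do coincide when $u_{\lambda'}$ is a positive affine function of $u_\lambda$, as for $u_p$ and $u_0$; this is why McCann never meets the issue. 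They need not coincide in general.

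Set $h(x):=\ell(x,F_1(x))$ and $\omega:=D\tilde\varphi=D_x(u_\lambda\circ\ell)(x,F_1(x))$. If $F_1$ were optimal for $u_{\lambda'}\circ\ell$, the field
\[
D_x(u_{\lambda'}\circ\ell)(x,F_1(x))=f(h)\,\omega,\qquad f(h):=\frac{\lambda\,u'(h/\lambda')}{\lambda'\,u'(h/\lambda)},
\]
would have to be a gradient, since it would equal $D\varphi'$ a.e.\ for the $u_{\lambda'}\circ\ell$-potential $\varphi'$. But $d(f(h)\omega)=f'(h)\,dh\wedge\omega$. Here $f'(h)=0$ only if $s\,u''(s)/u'(s)$ takes the same value at $s=h/\lambda$ and at $s=h/\lambda'$. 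Also $dh\wedge\omega=0$ only if $|D\tilde\varphi|_g$ is constant along level sets of $\tilde\varphi$, since $h$ is a strictly monotone function of $|D\tilde\varphi|_g$. Neither condition holds for a general $u$ and a general $\tilde\varphi$ with prescribed $2$-jet at $\bar x$.

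Consequently $(F_s)_\#\mu_0$ need not be the $u$-geodesic, and your $e',e''$ may be computed along the wrong curve. Adding a constant to $\tilde\varphi$ leaves $\int u_\lambda\circ\ell\,d\pi$ unchanged, so it does not help. Your proposed ``way out'' is circular: prescribing $F_0'$ to first order at $\bar x$ does not certify $u_{\lambda'}\circ\ell$-optimality on the whole ball, while $\lambda'$ itself depends on the whole field.

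\textbf{A fix.} Keep $\lambda$ fixed and small, and enforce the normalisation exactly, so that $u$-separation holds with this very $\lambda$.
\begin{enumerate}
\item Take $|\hat v|_g=1$ and a compact interval $J$ containing $\lambda$ in its interior. For $\tau\in J$, let $\varphi_\tau$ be, e.g., the quadratic in normal coordinates at $\bar x$ with $DH(D\varphi_\tau(\bar x),\bar x;u_\lambda^*)=\tau\hat v$ and $D^2\varphi_\tau(\bar x)=\beta_\tau(D^2H)^{-1}$, where $\beta_\tau$ is fixed by your trace condition.
\item Apply Lemma~\ref{lem: prescribing} on a neighbourhood of $\bar x$; one checks this neighbourhood can be taken uniform in $\tau\in J$.
\item Set $G_r(\tau):=\int u_\lambda(\ell(x,F^\tau_1(x)))\,d\mu_0^{(r)}(x)$. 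It is continuous in $\tau$ and tends to $u(\tau/\lambda)$ uniformly on $J$ as $r\to0$. Strict monotonicity of $u$ and the intermediate value theorem give $\tau_r\to\lambda$ with $G_r(\tau_r)=u(1)$.
\item The data $\pi=(\mathrm{id}\times F^{\tau_r}_1)_\#\mu_0^{(r)}$, $\tilde\varphi_{\tau_r}$, its $u_\lambda\circ\ell$-transform and $\lambda$ verify Definition~\ref{Definition: u-separeted new} directly; condition (iii) is exactly $G_r(\tau_r)=u(1)$.
\item Theorem~\ref{Theorem: Duality by u-separation} then gives $\ell_u(\mu_0^{(r)},\mu_1^{(r)})=\lambda$ exactly. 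Corollary~\ref{cor: lag charact og geodesics} identifies $(F^{\tau_r}_s)_\#\mu_0^{(r)}$ as the unique $u$-geodesic.
\end{enumerate}
Your limit computation then yields $\operatorname{Ric}^{(N,V)}(\lambda\hat v,\lambda\hat v)<K\lambda^2=K\,\ell_u(\mu_0^{(r)},\mu_1^{(r)})^2$.

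\textbf{A smaller point.} Once $B_0=\beta I$ is imposed, $D^2\varphi(\bar x)=\beta(D^2H)^{-1}$ is fully determined. So the strict inequality $D^2\varphi(\bar x)>D^2_{xx}(u_\lambda\circ\ell)(\bar x,\bar y)$ required by Lemma~\ref{lem: prescribing} is not a free open condition. It holds because $D^2_{xx}(u_\lambda\circ\ell)(\bar x,\bar y)$ equals $-D^2L(\exp_{\bar x}^{-1}\bar y,\bar x;u_\lambda)$ up to curvature corrections that are small for short geodesics, while $\beta=O(\tau)$. It does not follow from invertibility of $D^2H$.
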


    \begin{proof}

    Assume there exist a point $\bar{x}\in M$ and a timelike tangent vector at $\bar{x}$, $\hat{v}\in T_{\bar{x}}M$ with $\vert\hat{v}\vert_g=1$, for which $\operatorname{Ric}^{(N,V)}(\hat{v},\hat{v})<K$. The argument follows similarly as in {\cite[Thm.\ 8.5]{McCann:2020}}, with suitable adaptations corresponding to our transport maps.

    The idea is to construct a $u$-geodesic which transports some conveniently chosen initial measure $\mu_0$ in the direction of $\hat{v}$. By building the geodesic appropriately, which will be possible by fixing first the Kantorovich potentials, we will be able to see that the associated entropy functional fails to be convex. Since we will also argue that this $u$-geodesic that we will build is unique, we can conclude that the relative entropy is not even weakly convex.

    For $r>0$, consider the Riemannian ball of radius $r$ centred around $\bar{x}$, 
    $B_r(\bar{x})$, and take $\mu_0^{(r)}$ to be the uniform distribution on this ball with respect to $\operatorname{vol}_g$. We have that $\mu_0^{(r)}\to\delta_{\bar{x}}$ against test functions as $r\to0$. Moreover, fix $t\in(0,1]$ sufficiently small so that $t$ does not exceed the injectivity radius of the exponential map. Define $y_t:=\exp_{\bar{x}}t\,\hat{v}$. By construction, $y_t$ lies in the future of $\bar{x}$ away from the cut locus, so there is a compact neighbourhood $X\times Y$ of $(\bar{x},y_t)$ disjoint from $\operatorname{sing}(\ell)$.

    Let us build a suitable optimal transport problem starting from $\mu_0^{(r)}$ making use of the timelike vector $v_t:=t\,\hat{v}$. This will be possible by prescribing a conveniently chosen Kantorovich potential $\varphi=\left(\varphi^{(u_t\circ\ell)}\right)^{(u_t\circ\ell)}$ defined on $B_r(\bar{x})$. Assuming that we have such a $\varphi$, consider $F_s(x):=\exp_{x} v\,(D\varphi(x),x;u_t)$ as described in Lemma \ref{lem: maps and their jacobian der}, where $v(D\varphi(x),x;u_t) \equiv DH(D\varphi, x;(u_t)^*)$. This map describes the unique $u$-geodesic from $\mu_0^{(r)}$ to $\mu_1^{(r)}:=F_{1\#}\mu_0^{(r)}$ as follows. Note that for $r>0$ sufficiently small, $B_r(\bar{x})\times F_1(B_r(\bar{x}))\subset X\times Y$, and therefore disjoint from $\operatorname{sing}(\ell)$.\ Using Remark \ref{Remark: simple u separation} and Theorem \ref{thm: characterising opt maps}, we obtain that $\mu_s^{(r)}:=F_{s\#}\mu_0^{(r)}\in\mathcal{P}^{ac}(M)$ defines the unique $u$-geodesic on $s\in[0,1]$ connecting its endpoints.
    
    Given that we expect to obtain the failure of the entropic convexity from the failure of the bound on the Ricci curvature, we need to study the first and second derivatives of the entropy functional. Since the contruction is focused around $\bar{x}$ and $\hat{v}$, it is reasonable to worry only about this convexity at $s=0$, as well as at the limit $r\to0$. Hence, by Theorem \ref{thm: displacement hessian of the entropy}, 
    \begin{equation*}     
        \begin{aligned}
             e'(0;r)&=\int_M [DV_{F_s(x)}F_s'(x)-\operatorname{Tr}B_s(x)]\Big\vert_{s=0}d\mu_0^{(r)}(x) \\ 
             & =\int_M [DV_{F_0(x)}F_0'(x)-\operatorname{Tr}B_0(x)]\, d\mu_0^{(r)}(x) \\
             &=\int_M [DV_x\, v(D\varphi(x),x;u_t)-\operatorname{Tr}(D_x\,v(D\varphi(x),x;u_t)\,\tilde{D}^2\varphi(x))]\, d\mu_0^{(r)}(x) \\
             & \qquad\to DV(\bar{x})\,v(D\varphi(\bar{x}),\bar{x};u_t)-\operatorname{Tr}\left(Dv(D\varphi(\bar{x}))\tilde{D}^2\varphi(\bar{x})\right) \quad \text{as}\ r\to0,
        \end{aligned}
    \end{equation*}
    and
    \begin{equation*}
        \begin{aligned}
            e''(0:r)&=\int_M [\operatorname{Tr}B_s^2(x)+(\operatorname{Ric}+D^2V)_{F_s(x)}(F_s'(x),F_s'(x))]\Big\vert_{s=0}d\mu_0^{(r)}(x) \\
            & =\int_M \Big[\operatorname{Tr}\left((Dv(D\varphi(x),x;u_t)\,\tilde{D}^2\varphi(x))^2 \right)\\
            &\qquad\qquad+(\operatorname{Ric}+D^2V)_x(v(D\varphi(x),x;u_t),v(D\varphi(x),x;u_t))\Big]\, d\mu_0^{(r)}(x) \\
            &\qquad\to \operatorname{Tr}\left((Dv(D\varphi(\bar{x}),\bar{x};u_t)\,\tilde{D}^2\varphi(\bar{x}))^2 \right)\\
            &\qquad\qquad+(\operatorname{Ric}+D^2V)_{\bar{x}}(v(D\varphi(\bar{x}),\bar{x};u_t),v(D\varphi(\bar{x}),\bar{x};u_t)) \qquad \text{as}\ r\to0.
        \end{aligned}
    \end{equation*}

        Using Lemma \ref{lem: prescribing}, prescribe
        \begin{equation*}
        \begin{split}
                  v\left(D\varphi(\bar{x}),\bar{x};u_t\right)=v_t, \qquad \text{and} \\
           \tilde{D}^2\varphi(\bar{x})=-\frac{1}{N-n}\left(DV(\bar{x})v_t\right)(Dv(D\varphi(\bar{x})))^{-1}.
        \end{split}
        \end{equation*}

        With these choices,
         \begin{equation*}
            \begin{aligned}
                e'(0;r)& \to DV(\bar{x})v_t -\operatorname{Tr}\left(-\frac{1}{N-n}DV(\bar{x})v_t\right)\\
                & =DV(\bar{x})v_t+n\frac{1}{N-n}DV(\bar{x})v_t,
            \end{aligned}
        \end{equation*}
        and
        \begin{equation*}
            \begin{aligned}
                e''(0;r)&\to \operatorname{Tr}\left(\left(-\frac{1}{N-n}DV(\bar{x})v_t\right)^2\right)+(\operatorname{Ric}+D^2V)_{\bar{x}}(v_t,v_t)\\
                &=\frac{n}{(N-n)^2}(DV(\bar{x})v_t)^2+\operatorname{Ric}(v_t,v_t)+D^2V(\bar{x})(v_t,v_t)\\
                &=\frac{n}{(N-n)^2}(DV(\bar{x})v_t)^2+\operatorname{Ric}^{(N,V)}(v_t,v_t)+\frac{1}{N-n}(DV(\bar{x})v_t)^2.
            \end{aligned}
        \end{equation*}
        Finally,
        \begin{equation*}
            \lim_{r\to0} e''(0;r)-\frac{1}{N}e'(0;r)^2=\operatorname{Ric}^{(N,V)}(v_t,v_t)<K\vert v_t\vert^2_{g}=K\lim_{r\to0}\ell_u\left(\mu_0^{(r)},\mu_1^{(r)}\right)^2,
        \end{equation*}
        which gives that there exists $r>0$ sufficiently small for which $e''(0;r)-\frac{1}{N}e'(0;r)^2<K\ell_u\left(\mu_0^{(r)},\mu_1^{(r)}\right)^2$, contradicting the $\left(K\ell_u\left(\mu_0^{(r)},\mu_1^{(r)}\right)^2,N\right)$-convexity of $e(s;r)$ on $[0,1)$, which yields the proof.
    \end{proof}

\section{Outlook}

Our work opens up the possibility of investigations in several different directions. First, one could study the connection between timelike Ricci curvature \emph{upper} bounds as considered by Mondino--Suhr \cite{MS:22} in the general Orlicz setting introduced here, as well as broader ranges of synthetic dimensions $N$ and the convexity of Rényi entropies in the Lorentz--Finsler context (cf.\ Braun--Ohta \cite{BO:23}). Moreover, natural questions of stability of concepts we studied in this work arise, such as optimal couplings, Kantorovich potentials, or the $u$-separation condition, under suitable perturbations of the function $u$.

Second, just like the results of McCann \cite{McCann:2020} and Mondino--Suhr \cite{MS:22}, our work motivates a study of nonsmooth Lorentzian spaces satisfying synthetic Ricci curvature bounds below, which we may term $\mathrm{TCD}_u$ (timelike curvature-dimension) or $\mathrm{ TMCP}_u$ (timelike measure contraction property) spaces. One may ask whether these conditions are stable, not just under suitable convergence of Lorentzian spaces, but also with respect to certain convergences $u_n \to u$ of admissible functions. Results which consider the stability under perturbations of $u$ could help generalise the characterisation of timelike Ricci lower bounds via entropic convexity, as presented in this work, to hold for a more general class of functions $u$, such as the interesting edge case $u(x) = u_1(x) = x$. Further topics worth looking into include a study of the degenerate elliptic $u$-d'Alembertian and relevant comparison results in analogy with \cite{Octet, braun2024dAlembertian}, as well as hyperbolic Orlicz spaces $L^u$ which should fit into Gigli's \cite{gigli2025hyperbolic} setting of hyperbolic Banach spaces.

\appendix 
\section{Global hyperbolicity and order-theoretic completeness}
\label{Appendix}

In this appendix, we clarify the relationship between certain notions of order-theoretic completeness recently studied for the chronological \cite{braun2026spacetime} and causal \cite{Octet, gigli2025hyperbolic} relations, and global hyperbolicity. Assuming closedness of the causal relation, these turn out to be equivalent notions for (finite-dimensional) smooth spacetimes, mirroring the well-known equivalence between completeness and properness for a Riemannian manifold as a consequence of the Hopf--Rinow theorem. Recall that $\tilde g$ is a background complete Riemannian metric on $M$.

\begin{Definition}[Causal forward/backward completeness]
A spacetime $(M,g)$ is called \emph{causally forward complete} if every sequence $(x_n) \subseteq M$ with $x_n \leq x_{n+1} \leq z$ for all $n \in \N$ and some $z \in M$, is convergent in $M$. Similarly, we call $(M,g)$ \emph{causally backward complete} if it is forward complete when considered with the opposite time orientation. Analogously, we say $(M,g)$ is \emph{chronologically forward complete} if every sequence $x_n \ll x_{n+1} \ll z$ converges, and \emph{chronologically backward complete} if its time-reversal is chronologically forward complete.
\end{Definition}

\begin{Lemma}[Chronological completeness = causal completeness]
\label{Lemma: chron comp = caus comp}
A spacetime $(M,g)$ is chronologically forward (resp.\ backward) complete if and only if it is causally forward (resp.\ backward) complete.
\end{Lemma}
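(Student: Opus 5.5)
By time-reversal symmetry it suffices to treat the forward case. We prove the two implications separately: one is easy, and the other needs an approximation argument.

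\emph{Causal forward completeness implies chronological forward completeness.} Let $x_n \ll x_{n+1} \ll z$. Since $\ll \subseteq \leq$, we also have $x_n \leq x_{n+1} \leq z$. Causal forward completeness therefore gives convergence directly.

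\emph{Chronological forward completeness implies causal forward completeness.} Let $x_n \leq x_{n+1} \leq z$. We perturb the sequence into the chronological future, along the following steps.
\begin{enumerate}
\item Pick any $z' \in I^+(z)$. By push-up ($p \leq q \ll r \Rightarrow p \ll r$), we get $x_n \ll z'$ for all $n$.
\item Build points $y_n$ inductively with $y_n \ll y_{n+1} \ll z'$ and $d_{\tilde g}(x_n,y_n)<2^{-n}$. Choose $y_1 \in I^+(x_1)\cap I^-(z')$ close to $x_1$; this set is nonempty, and since $x_1 \ll z'$ it accumulates at $x_1$. Given $y_n \gg x_n$, note that $x_n \leq x_{n+1}$ gives $y_n \ll\!\!\!\!\!\!\quad$ nothing directly. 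So instead require $y_n \in I^+(x_n)$ and choose $y_{n+1}\in I^+(y_n)\cap I^+(x_{n+1})\cap I^-(z')$ within $2^{-(n+1)}$ of $x_{n+1}$.
\item Check that the set in step 2 is nonempty near $x_{n+1}$. It has the form $I^+(y_n)\cap I^-(z')$ intersected with $I^+(x_{n+1})$, which is open. It must contain points arbitrarily close to $x_{n+1}$, but this requires $x_{n+1}\in \overline{I^+(y_n)}$, and $y_n$ lies slightly in the future of $x_n$, not of $x_{n+1}$. This is the main obstacle.
\end{enumerate}

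To get around it, we change the construction. Put $y_n := $ a point on a fixed-type future timelike curve from $x_n$ toward $z'$. Concretely, first choose a decreasing sequence $w_k \to z$ of points, which is not needed here. Alternatively, pick a timelike curve $\gamma$ ending at $z'$ and set $y_n \in I^+(x_n)\cap I^-(z')$ with $y_n \ll y_{n+1}$ achieved by pushing every $y_n$ through the relation $x_n \leq x_{n+1} \ll y_{n+1}$. So choose the $y_{n+1}$ first and $y_n$ after, working backwards. This is circular for an infinite sequence.

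The cleaner route is to pass to a subsequence and use the trick below.
\begin{enumerate}
\item Causal sequences bounded above by $z$ stay in $J^-(z)\cap J^+(x_1)$. If that set is compact (global hyperbolicity), we are done. But we are not assuming this, so we argue by contradiction instead.
\item Suppose $(x_n)$ does not converge. By a standard argument the sequence then leaves every compact set; indeed, if it had two distinct accumulation points $p,q$, closedness of $\leq$ would give $p\leq q\leq p$, contradicting causality. So either it converges or it has no accumulation point.
\item Take $y_n \in I^+(x_n)\cap I^-(z')$ with $d_{\tilde g}(x_n,y_n)<2^{-n}$, and extract $y_{n_k}$ with $y_{n_k}\ll y_{n_{k+1}}$. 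This is possible because $x_{n_k}\leq x_{n_{k+1}}$ and $y_{n_{k+1}}\gg x_{n_{k+1}}$ give $x_{n_k}\ll y_{n_{k+1}}$. By openness of $I^-(y_{n_{k+1}})$, the point $y_{n_k}$ can be chosen close enough to $x_{n_k}$ to also lie in $I^-(y_{n_{k+1}})$. Choose the points along a diagonal: fix $y_{n_{k+1}}$ first, then shrink the tolerance for $y_{n_k}$. This is done recursively in $k$ by choosing $y$'s forward and adjusting with $z''\ll z'$ as an intermediate cap.
\end{enumerate}

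Chronological completeness then forces $y_{n_k}$ to converge, so $x_{n_k}$ converges too, contradicting the absence of accumulation points. Hence $(x_n)$ has an accumulation point, and by step 2 of the list above it converges.

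I expect the recursive choice in step 3 of the last list to be the delicate point. If it fails, the fallback is to use the time function $\tau$ of a stably causal spacetime to choose $y_n := $ the point at $\tau$-height $\tau(x_n)+2^{-n}$ along future timelike curves from $x_n$, which are monotone in $n$.
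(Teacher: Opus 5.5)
The easy direction and the reduction to the forward case are fine. The converse has a genuine gap, and it sits where you suspected: step 3 of your last list. To choose $y_{n_k}$ close to $x_{n_k}$ inside $I^-(y_{n_{k+1}})$, you need $y_{n_{k+1}}$ to be fixed already. That is the same infinite backward regress you had already flagged as circular, and ``adjusting with $z''\ll z'$'' does not break it.

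No cleverer recursion can fix this, because approximating the $x_n$ from the \emph{future} is the wrong direction. Take Minkowski space and let the $x_n$ lie in increasing order on a null line $\{t=x^1\}$. The null coordinate $t-x^1$ vanishes on the $x_n$ and strictly increases along future timelike curves. So any $y_k\gg x_{n_k}$ with $y_k\ll y_{k+1}$ satisfies $(t-x^1)(y_k)\geq(t-x^1)(y_1)>0$, and $y_k$ cannot approach $x_{n_k}$. The same obstruction persists for a non-convergent chain, e.g.\ in $\R^{1,1}\setminus\{0\}$ with $x_n=(t_n,t_n)$, $t_n\uparrow 0$, $z=(1,0)$. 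Hence neither your contradiction hypothesis nor the time-function fallback (Minkowski space is stably causal) rescues the construction.

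Step 2 has separate problems. It uses closedness of $\leq$ and causality, and neither is a hypothesis of the lemma. Causality would only follow from causal completeness, i.e.\ from what you are trying to prove. Also, ``converges or has no accumulation point'' is not a valid dichotomy: a non-convergent sequence can have a unique accumulation point together with an escaping subsequence.

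The missing idea is to approximate from the \emph{past}, which turns the recursion into a genuine forward induction:
\begin{itemize}
\item Pick $x_1'\in I^-(x_1)$ with $d_{\tilde g}(x_1',x_1)<1$.
\item Given $x_n'\ll x_n$, push-up along $x_n'\ll x_n\leq x_{n+1}$ yields $x_n'\ll x_{n+1}$. So $I^+(x_n')$ is an open neighbourhood of $x_{n+1}$.
\item Since $x_{n+1}\in\overline{I^-(x_{n+1})}$, you can pick $x_{n+1}'\in I^+(x_n')\cap I^-(x_{n+1})$ with $d_{\tilde g}(x_{n+1}',x_{n+1})<1/(n+1)$.
\item Then $x_n'\ll x_{n+1}'\ll z'$ for any $z'\in I^+(z)$; in fact already $x_n'\ll z$ by push-up.
\end{itemize}
Chronological forward completeness now gives $x_n'\to x$ for some $x\in M$, hence $x_n\to x$. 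This is precisely the paper's argument. It needs no subsequences, no closedness of $\leq$, no causality and no time function.
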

\begin{proof}
    We only show the claim for the forward completeness notions. It is trivial that every causally forward complete spacetime is chronologically forward complete. For the converse direction, suppose $(M,g)$ is chronologically forward complete and let $p_n \leq p_{n+1} \leq z$ be a causally bounded ordered sequence. We claim that we can construct a sequence $(x_n)$ in such a way that $x_n \ll x_{n+1} \leq z$, $x_n \ll p_n$, $d_{\tilde g}(x_n,p_n) < n^{-1}$. Once we have done this, picking any $\tilde z \in I^+(z)$ shows the necessary chronological boundedness of $x_n$, from which convergence of $(x_n)$ and thus of $(p_n)$ follows, establishing causal forward completeness.

    To construct $x_n$, we proceed by induction. Pick $x_1 \in I^-(p_1)$ such that $d_{\tilde g} (x_1,p_1) < 1$. Suppose $x_1,\dots,x_n$ have been constructed, then by assumption $x_n \ll p_n \leq p_{n+1} \leq z$. By push-up, $x_n \ll p_{n+1}$, so we may pick $x_{n+1} \in I^+(x_n) \cap I^-(p_{n+1})$ in such a way that $d_{\tilde g}(x_{n+1},p_{n+1}) < (n+1)^{-1}$. Thus we have constructed the desired sequence $(x_n)$.
\end{proof}

\begin{Lemma}[Complete $\Rightarrow$ causal]
\label{Lemma: Ordered complete -> causal}
    Let $(M,g)$ be either causally forward or causally backward complete. Then it is causal, i.e.\ there are no closed causal curves.
\end{Lemma}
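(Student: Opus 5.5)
We argue by contradiction: assume $(M,g)$ is causally forward complete (the backward case follows by reversing the time orientation) and suppose there is a closed causal curve $\gamma:[0,1]\to M$, $\gamma(0)=\gamma(1)=p$. The aim is to build a sequence that is monotone and causally bounded but fails to converge.

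The first step is to reduce to the case of a closed \emph{timelike} curve, or at least to a point $p$ with $p \ll p$. If $\gamma$ is not a null pregeodesic (up to reparametrisation), the standard deformation argument (cf.\ O'Neill, Prop.\ 10.46) produces a timelike curve from $p$ to $p$. If instead $\gamma$ is a closed null geodesic, we can still work directly with causal relations, since we only need $p\le q\le p$ for points $q$ on $\gamma$.

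The second step is the construction. Fix two distinct points $q_0:=p$ and $q_1:=\gamma(1/2)$ on $\gamma$. If $q_1=p$, choose a different parameter; such a parameter exists because a nonconstant causal curve is not constant. Then $p\le q_1\le p$, and define the alternating sequence $x_{2k}:=p$, $x_{2k+1}:=q_1$. It satisfies $x_n\le x_{n+1}$ for all $n$, since $p\le q_1$ along $\gamma|_{[0,1/2]}$ and $q_1\le p$ along $\gamma|_{[1/2,1]}$. It is also bounded above by $z:=p$, because every $x_n$ lies causally below $p$. Causal forward completeness therefore forces $(x_n)$ to converge. But it oscillates between two distinct points, and limits in the Hausdorff space $M$ are unique, so we have a contradiction. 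Hence no closed causal curve exists.

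The only delicate point is making sure $\gamma$ passes through a point different from $p$, and this is immediate for any nonconstant curve. So the argument does not actually need the first reduction; it is there only in case the definition of a causal curve permits degenerate pieces. I expect no real obstacle beyond noting that $\leq$ here is the causal relation, which contains these points via subarcs of $\gamma$.
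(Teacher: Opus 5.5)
Your proof is correct and matches the paper's argument: take a point $\gamma(t_0)\neq\gamma(a)$ on the closed causal curve and alternate it with $\gamma(a)$, which gives a sequence with $x_n\le x_{n+1}\le\gamma(a)$ that cannot converge. As you note yourself, the preliminary reduction to a closed timelike curve is unnecessary, and the paper omits it.
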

\begin{proof}
    We argue via causal forward completeness (the argument using causal backward completeness is analogous). Arguing by contradiction, suppose $\gamma:[a,b] \to M$ is a nontrivial closed causal curve, so that $\gamma(a) = \gamma(b)$. So there exists $t_0 \in (a,b)$ such that $\gamma(t_0) \neq \gamma(a)$. Define a sequence $(x_n)$ via
    \begin{equation*}
        x_{2n}:=\gamma(a), \quad x_{2n+1}:=\gamma(t_0).
    \end{equation*}
    Then $x_n \leq x_{n+1} \leq \gamma(b)$, and since $\gamma(a) \neq \gamma(t_0)$, $(x_n)$ is not convergent, a contradiction.
\end{proof}

\begin{Theorem}[Equivalence of order-theoretic completeness and global hyperbolicity]
\label{Theorem: equivalence of order completeness and glob hyp}
Let $(M,g)$ be a smooth spacetime such that the causal relation is closed in $M^2$. Then the following are equivalent:
\begin{enumerate}
    \item $(M,g)$ is globally hyperbolic.
    \item $(M,g)$ is chronologically forward complete.
    \item $(M,g)$ is chronologically backward complete.
    \item $(M,g)$ is causally forward complete.
    \item $(M,g)$ is causally backward complete.
    
\end{enumerate}
\end{Theorem}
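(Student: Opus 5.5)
By Lemma \ref{Lemma: chron comp = caus comp}, (ii)$\Leftrightarrow$(iv) and (iii)$\Leftrightarrow$(v). By time-reversal symmetry (global hyperbolicity and closedness of $\leq$ are invariant under reversing the time orientation), it therefore suffices to prove (i)$\Leftrightarrow$(iv).

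For (i)$\Rightarrow$(iv), let $x_n \leq x_{n+1} \leq z$. All $x_n$ lie in the causal diamond $J^+(x_1)\cap J^-(z)$, which is compact by global hyperbolicity. Hence $(x_n)$ has a convergent subsequence. Suppose two subsequences converge to limits $p$ and $q$. Fix $m$; for all $n\geq m$ we have $x_m \leq x_n$. Closedness of $\leq$ then gives $x_m \leq p$ and $x_m\leq q$. Passing to the limit along the subsequence converging to $q$ and using closedness again, we get $q \leq p$, and symmetrically $p\leq q$. Global hyperbolicity implies causality, so $p = q$. Since every subsequence has a further subsequence converging to this common limit, the whole sequence converges.

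For (iv)$\Rightarrow$(i), we use the characterisation of global hyperbolicity as causality together with compact causal diamonds (Bernal--Sánchez). Causality follows from Lemma \ref{Lemma: Ordered complete -> causal}. Closedness of $J^\pm(p)$ is given, so diamonds are closed. The main obstacle is compactness of $J^+(p)\cap J^-(q)$. The plan is to argue by contradiction with a sequence $y_k$ in the diamond leaving every compact set, i.e.\ $d_{\tilde g}(p,y_k)\to\infty$ with respect to the complete metric $\tilde g$. Each $y_k$ lies on a future causal curve $\gamma_k$ from $p$ to $q$ that passes through $y_k$. A limit curve argument (the curves start at $p$) produces a future inextendible causal limit curve $\gamma$ from $p$, or else the $\gamma_k$ stay in a compact set, which contradicts the escape of $y_k$. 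Along $\gamma$, every point is a limit of points on the $\gamma_k$ that precede $q$, so closedness of $\leq$ gives $\gamma \subseteq J^-(q)$. Now take an increasing sequence of parameters along $\gamma$ going to its inextendible end. This yields a causally ordered sequence $x_n\leq x_{n+1}\leq q$, which converges by (iv) to some point $x$. Thus the inextendible causal curve $\gamma$ would have an endpoint $x$, contradicting inextendibility, since a causal curve converging along a sequence of parameters to a point is extendible. The delicate points are to ensure the limit curve is genuinely inextendible, via $\tilde g$-arclength parametrisation and completeness of $\tilde g$, and that convergence along a sequence of parameters forces the curve to converge. For the latter we plan to use strong causality near $x$; if only causality is available, we would instead show directly that $\gamma(t)$ converges, by noting that any two accumulation points $a,b$ satisfy $a\leq b\leq a$ via closedness and then applying causality.
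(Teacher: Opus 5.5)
Your proposal is correct and takes essentially the same route as the paper. Both use the subsequence/closedness argument for (i)$\Rightarrow$(iv), and for (iv)$\Rightarrow$(i) they combine causality from Lemma~\ref{Lemma: Ordered complete -> causal}, closedness of diamonds, and a limit-curve argument: this yields an inextendible causal curve $\gamma$ contained in $J^-(q)$, whose ordered points $\gamma(n)$ contradict (iv).

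You treat one step more carefully than the paper, which simply asserts that $(\gamma(n))$ does not converge: why convergence along a parameter sequence contradicts inextendibility. Your primary route works, because strong causality is available here (causality plus closedness of $\leq$ makes the spacetime causally simple). Alternatively, applying (iv) to interleaved increasing parameter sequences shows directly that $\gamma(t)$ converges. This also fills the existence-of-accumulation-points gap in your fallback argument.
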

\begin{proof}
We have shown $(ii) \Leftrightarrow (iv)$ and $(iii) \Leftrightarrow (v)$ in Lemma \ref{Lemma: chron comp = caus comp}. We show now that $(i) \Leftrightarrow (iv)$, the equivalence $(i) \Leftrightarrow (v)$ is analogous.

$(i) \Rightarrow (iv):$ Suppose $(M,g)$ is globally hyperbolic, and let $p_n \leq p_{n+1} \leq y$. Letting $x:=p_1$, we have that $p_n \in J(x,y)$ for every $n$. By compactness of the latter, every subsequence of $(p_n)$ has a further subsequence that converges to some point in $J(x,y)$. All of these sublimits have to agree: Indeed, if $p_{n_k}$ and $p_{m_l}$ are two subsequences of $(p_n)$ with limits $p,q$, then for fixed $k$, for large enough $l$ we have that $p_{n_k} \leq p_{m_l}$, so taking $l \to \infty$ gives $p_{n_k} \leq q$, hence taking $k \to \infty$ we get $p \leq q$, which yields $p = q$ by symmetry. From this, it is easy to conclude that in fact the sequence $(p_n)$ itself has to converge, showing $(iv)$.

$(iv) \Rightarrow (i):$ By Lemma \ref{Lemma: Ordered complete -> causal}, $(M,g)$ is causal, and by assumption the causal relation is closed, so it suffices to show that causal diamonds $J(x,y)$ are bounded for every $x,y \in M$. Indeed, suppose not and take $p_n \in J(x,y)$ such that $d_{\tilde g}(x,p_n) > n$. Connect $x$ to $p_n$ by $\tilde g$-arclength parametrised causal curves $\gamma_n:[0,b_n] \to M$. We may apply the limit curve theorem (see e.g.\ Minguzzi \cite[Thm.\ 3.1]{MinguzziLimitCurve}) to conclude the subsequential existence of an inextendible causal curve $\gamma:[0,\infty) \to M$ starting at $x$. Since $\gamma$ is the locally $\tilde g$-uniform limit of the $\gamma_n$, all of which lie in $J(x,y)$, by closedness of the latter also $\gamma([0,\infty)) \subseteq J(x,y)$. Defining $z_n:=\gamma(n)$ gives a sequence $z_n \leq z_{n+1} \leq y$ which does not converge, contradicting $(iv)$.
\end{proof}

It would be interesting to examine whether the closedness assumption on the causal order can be removed from the above result, and if not, to construct an example demostrating this.

\begin{Remark}[On the generality of the equivalence]
The only tool we used to prove the equivalence of chronological/causal forward/backward completeness and global hyperbolicity in this section was the limit curve theorem for inextendible causal curves, as well as approximability by points in their timelike futures/pasts in conjunction with the openness of the timelike relation. In particular, Theorem \ref{Theorem: equivalence of order completeness and glob hyp} holds verbatim for Lorentz-Finsler spacetimes. Moreover, in the generality of Lorentzian pre-length spaces as studied by Kunzinger-Sämann \cite{KS:18}, its proof yields the equivalence of compactness of causal diamonds with chronological/causal forward/backward completeness provided the Lorentzian pre-length space is causally closed, localisable, and its background positive definite metric is proper (cf.\ \cite[Thm.\ 3.14]{KS:18}). It is unclear in this generality whether causality and compactness of causal diamonds are enough to recover global hyperbolicity in the sense of \cite[Def.\ 2.35(v)]{KS:18}. 
\end{Remark}

\section*{Data availability statement}

There is no data associated with this manuscript.

\section*{Conflict of interest statement}

The authors confirm that no conflict of interest exists for this work.

\section*{Acknowledgments}
The authors would like to thank Mathias Braun and Mauricio Che for guiding them to the literature on Orlicz--Wasserstein spaces. They are also grateful to Stefan Suhr for fruitful discussions and his ideas regarding the content of Appendix \ref{Appendix}, and to Matteo Calisti and Alec Metsch for helpful discussions concerning the majority of the paper and for their comments on preliminary versions of this manuscript. Furthermore, they are thankful for the hospitality of the Bernoulli Centre at EPFL during the workshop ``Optimal Transport and Metric Geometry across Structures" in October 2025. 

Marta Sálamo Candal acknowledges the support of the Vienna School of Mathematics. 

This research was funded in part by the Austrian Science Fund (FWF) [Grant DOI\\10.55776/EFP6 and 10.55776/J4913]. For open access purposes, the authors have applied a CC BY public copyright license to any author accepted manuscript version arising from this submission.

\addcontentsline{toc}{section}{References}
\bibliography{Bibliography} 
\bibliographystyle{acm}

\end{document}